\documentclass[reqno]{amsart}
\usepackage[normalem]{ulem}
\usepackage{amsmath,amsthm,amsfonts,amscd,amssymb, tikz-cd}
\usepackage{verbatim}       
\usepackage{mathtools}
\usepackage{epsfig}         
\usepackage{url} 
\usepackage{epic,eepic,latexsym}
\usepackage{graphicx}
\usepackage{color}
\usepackage{lcd}
\usepackage{mathrsfs}
\usepackage[centering,text={15.5cm,22cm},
        marginparwidth=20mm,dvips]{geometry}
\usepackage[hidelinks]{hyperref}
\usepackage[capitalize]{cleveref}
\usepackage{lscape}
\usepackage{tabularx}
\usepackage{marginnote}
\usepackage[all]{xy}
\usepackage{enumerate}
\usepackage{stmaryrd}
\usepackage{tipa}
\usepackage{upgreek}
\usepackage{tikz-cd}
\usepackage{adjustbox}
\usepackage{bbm}
\usepackage{chngcntr}
\usepackage[title]{appendix}
\usepackage{tikz}
\usetikzlibrary{positioning}
\usepackage{subcaption}
\usepackage{float}

\counterwithin{equation}{section}

\let\m\mathbb
\let\c\mathcal
\let\f\mathfrak

\let\b\textbf
\let\r\textnormal
\let\i\textit

\theoremstyle{plain}
 \newtheorem{thm}{Theorem}[section]
 
 \newtheorem{lem}[thm]{Lemma}
  
  \newtheorem{prop}[thm]{Proposition}
  \newtheorem{conj}[thm]{Conjecture}

\theoremstyle{definition}
 
 \newtheorem{dfn*}{Definition}[section]

 \newtheorem{eg}[thm]{Example}
 
 \newtheorem{rem}[thm]{Remark}
 
\theoremstyle{remark}
 
  \newtheorem*{rmk*}{Remark}

\title{Jacobi algebra presentations for fundamental group algebras}
\author{Vivek Mistry}
\address{School of Mathematics, University of Edinburgh, Edinburgh, United Kingdom}
\email{s1829507@ed.ac.uk}

\begin{document}

\maketitle

\begin{abstract}
    We prove a special case of a conjecture of Davison which pertains to superpotential descriptions of fundamental group algebras $k[\pi_1(X)]$. We consider the case in which the manifold $X$ is the mapping torus $M_{g, \varphi}$ of a genus $g$ Riemann surface $\Sigma_g$ and a finite order automorphism $\varphi$, and the superpotential structure is given by the Jacobi algebra of a quiver with potential.
\end{abstract}

\tableofcontents

\section{Introduction}
Given a $d$-dimensional manifold $X$ we can define its fundamental group algebra over a field $k$ as the group ring $k[\pi_1(X)].$ In \cite{gin} it is stated that if $X$ is compact, orientable and has a contractible universal cover then $k[\pi_1(X)]$ is a Calabi-Yau algebra of dimension $d$ (see [\cite{gin} Corollary 6.1.4] and [\cite{d3} Proposition 5.2.6]). Many Calabi-Yau algebras turn out to be what are known as superpotential algebras and so it was conjectured in \cite{gin} that in the dimension 3 case these fundamental group algebras which were Calabi-Yau were also superpotential algebras.

However in \cite{d3} Davison showed that this was not the case in general (for $d \geq 2$), and that in order to have a superpotential structure the algebra $k[\pi_1(X)]$ (and hence the manifold $X$) had to have certain specific properties. In particular a superpotential structure implies a stronger notion of an \emph{exact} Calabi-Yau structure which Davison showed required non-trivial central units in $k[\pi_1(X)]$. Therefore Davison proposed an updated conjecture [\cite{d3} Conjecture 7.1.1] regarding the possibility of a superpotential structure on $k[\pi_1(X)]$ when $X$ is a circle bundle, precisely because in such a manifold $X$ non-trivial central units in $k[\pi_1(X)]$ are easily found. In this paper we focus on the 3-dimensional case of this conjecture ie. when $X$ is a Seifert fibre bundle.

The ultimate goal of such a superpotential description of $k[\pi_1(X)]$ is in regards to calculating the Donaldson-Thomas (DT) invariants of $X$. Given a superpotential structure the DT invariants can then be though of in terms of vanishing cycles and Milnor fibres of a globally defined function on a smooth space which gives us a much stronger possibility of calculating them concretely. We remark on how this may be accomplished towards the end of the paper.

\subsection*{Acknowledgements}
I would like to thank my supervisor Ben Davison for proposing this project and many helpful discussions and ideas while writing this paper. This research was supported by the Royal Society studentship RGF\textbackslash R1\textbackslash 180093.

\section{Background}
Fix a field $k$ of characteristic 0 and let $A$ be a $k$-algebra. Henceforward all unadorned tensor products will be taken over the base field $k$ while tensor products over any other ring will be adorned with the respective ring, unless otherwise stated.

\begin{dfn*}
The \emph{enveloping algebra} of $A$ is the algebra
$$A^e= A \otimes A^{\r{op}}.$$
\end{dfn*}

Note that the category $A$-\b{Bimod} of $A$-bimodules is naturally isomorphic to $A^e$-\b{Mod} the category of left $A^e$-modules and we will freely swap between the two. From now on we shall work in the derived category and drop all such extra notation on the relevant derived functors. Define a functor
$$-^\vee : D(A\r{-\b{Bimod}}) \longrightarrow D(A\r{-\b{Bimod}})$$
by sending the bimodule $M$ to $\r{Hom}_{A^e}(M,A \otimes A)$ where $A\otimes A$ has the outer $A$-bimodule structure and $M^\vee$ has the $A$-bimodule structure induced by the inner $A$-bimodule structure on $A \otimes A$.

\subsection{Calabi-Yau algebras and Ginzburg differential graded algebras}
The Calabi-Yau structure is essential for defining DT invariants and suggests a superpotential description so we give an overview here following \cite{gin} and \cite{d3}. The subsequent definition for Calabi-Yau algebras is due to Ginzburg.

\begin{dfn*}
An object $M^\bullet$ in the derived category $D(A\r{-\b{Mod}})$ is called \emph{perfect} if it is isomorphic to a bounded complex of projective $A$-modules.
\end{dfn*}

\begin{dfn*}
An algebra $A$ is called \emph{homologically finite} if it is perfect as an $A^e$-module.
\end{dfn*}

\begin{dfn*}
A \emph{Calabi-Yau structure of dimension $d$} on a homologically finite algebra $A$ is an isomorphism in $D(A\r{-\b{Bimod}})$
$$f:A \xrightarrow{\,\,\sim\,\,} A^\vee[d] \quad \r{such that} \quad f=f^\vee[d].$$
\end{dfn*}
If $A$ is homologically finite we have an isomorphism
$$\r{HH}_d(A) \cong \r{Ext}^{-d}_{A^e}(A^\vee,A) = \r{Hom}_{A^e}(A^\vee[d],A)$$
where $\r{HH}_d(A)$ is the $d$th Hochschild homology of $A.$ So an isomorphism $A \cong A^\vee[d]$ corresponds to an element in $\r{HH}_d(A)$ (we call such elements \emph{non-degenerate}), and the self-duality $f=f^\vee[d]$ corresponds to the element being fixed by the induced map on homology of the flip isomorphism $\beta: A \otimes_{A^e} A \xrightarrow{\sim} A \otimes_{A^e} A$ that swaps the copies of $A$ in the tensor product. However it turns out that the map on Hochschild homology induced by $\beta$ is just the identity (see [\cite{vdb} Proposition C.1]) and so any non-degenerate element in $\r{HH}_d(A)$ or indeed any isomorphism $A \cong A^\vee[d]$ gives rise to a $d$-dimensional Calabi-Yau structure on $A$.

For an algebra $A$ recall we have the following long exact sequence in cyclic and Hochschild homology

$$\cdots \rightarrow \r{HC}_{n+1}(A) \rightarrow \r{HC}_{n-1}(A) \xrightarrow{\partial} \r{HH}_n(A) \rightarrow  \r{HC}_n(A) \rightarrow \cdots $$

\begin{dfn*}
An \emph{exact Calabi-Yau structure of dimension $d$} on an algebra $A$ is a non-degenerate element $\nu \in \r{HH}_d(A)$ that is in the image of the boundary map $\partial$.
\end{dfn*}

Now let $A$ be a differential graded algebra (dga). Its \emph{bimodule of 1-forms} is
$$\Omega^1A = \r{Ker}(A \otimes A \xrightarrow{\,\,\,m\,\,\,} A)$$
where $m$ is the multiplication map. $\Omega^1A$ inherits a grading from $A$ making it a differential graded $A$-bimodule.

\begin{dfn*}
A finitely generated dga $A$ is \emph{smooth} if $\Omega^1A$ is projective as an $A$-bimodule.
\end{dfn*}

Given a finitely generated negatively graded bimodule $V$ over a smooth algebra $A$ we define $T_A(V)$ to be the tensor-algebra generated by $V$ over $A$. If $V$ is free as a bimodule then we call $T_A(V)$ a \emph{noncommutative vector bundle} over $A.$

\begin{lem}[\cite{cq} Proposition 5.3 (3)]\label{lem2.1}
Let $A$ be smooth and let $V$ be a finitely generated negatively graded projective bimodule over $A$. Then the algebra $T_A(V)$ is smooth.
\end{lem}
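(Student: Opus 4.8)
The plan is to show directly that $\Omega^1 B$ is projective as a $B$-bimodule, where $B := T_A(V)$; note first that $B$ is finitely generated since $A$ is and $V$ is finitely generated as a bimodule, so this projectivity is all that smoothness requires. I would obtain it by exhibiting $\Omega^1 B$ as a direct sum of two manifestly projective $B$-bimodules.

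The main tool is the relative cotangent sequence for the algebra inclusion $A \hookrightarrow B$. First I would write down the exact sequence of $B$-bimodules
$$B \otimes_A \Omega^1 A \otimes_A B \xrightarrow{\ \iota\ } \Omega^1 B \xrightarrow{\ \pi\ } \Omega^1_{B/A} \longrightarrow 0,$$
in which $\iota(x \otimes \omega \otimes y) = x\omega y$ (regarding elements $da$, $a\in A$, inside $\Omega^1 B$ via the universal derivation of $A$) and $\Omega^1_{B/A}$, the relative K\"ahler differentials, is the quotient of $\Omega^1 B$ by the sub-bimodule generated by $dA$. The universal property of the tensor algebra identifies $\Omega^1_{B/A}$ with the free $B$-bimodule $B \otimes_A V \otimes_A B$, via $d_{B/A}v \leftrightarrow 1 \otimes v \otimes 1$. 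The crucial claim is that this sequence is not just right exact but short exact and \emph{split}. To see it splits, I would construct a $B$-bimodule retraction $r$ of $\iota$ using two universal properties in turn: the $A$-linear map $a \mapsto 1 \otimes da \otimes 1$ is a derivation $A \to B \otimes_A \Omega^1 A \otimes_A B$, which by the universal property of $T_A(V)$ extends to a derivation $B \to B \otimes_A \Omega^1 A \otimes_A B$ once we declare it to vanish on the generators $V$; by the universal property of $\Omega^1 B$ this corresponds to a $B$-bimodule map $r \colon \Omega^1 B \to B \otimes_A \Omega^1 A \otimes_A B$, and checking on the generators $x\,da\,y$ shows $r \circ \iota = \r{id}$. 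Hence $\iota$ is split injective and
$$\Omega^1 B \ \cong\ \bigl(B \otimes_A \Omega^1 A \otimes_A B\bigr) \oplus \bigl(B \otimes_A V \otimes_A B\bigr)$$
as $B$-bimodules.

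To finish, I would observe that the induction functor $P \mapsto B \otimes_A P \otimes_A B$ from $A$-bimodules to $B$-bimodules is additive, commutes with finite direct sums and shifts, and sends the free rank-one bimodule $A \otimes A$ to the free rank-one bimodule $B \otimes B$; therefore it carries finitely generated projective $A$-bimodules to finitely generated projective $B$-bimodules. Applying this to $V$ (projective by hypothesis) and to $\Omega^1 A$ (projective since $A$ is smooth) shows that both summands above, and hence $\Omega^1 B$, are finitely generated projective $B$-bimodules, so $B = T_A(V)$ is smooth.

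I expect the middle step to be the only real obstacle: proving that the relative cotangent sequence is left exact and splits over $B$. Right exactness and the identification of $\Omega^1_{B/A}$ are formal, as is the final transfer of projectivity along induction, but the construction of the retraction $r$ uses the tensor-algebra structure essentially and requires care in tracking the inner/outer and left/right bimodule structures throughout.
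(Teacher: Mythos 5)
The paper does not actually prove this lemma; it is quoted directly from Cuntz--Quillen, so there is no in-text argument to compare against. Your proof is correct and is essentially the Cuntz--Quillen argument. The key points all check out: the cotangent sequence for $A \hookrightarrow B = T_A(V)$ is right exact with $\Omega^1_{B/A} \cong B \otimes_A V \otimes_A B$ because $B$ is the free $A$-algebra on $V$; the retraction $r$ is well-defined because a derivation out of $T_A(V)$ into a $B$-bimodule $M$ is freely specified by a derivation $A \to M$ together with an $A$-bimodule map $V \to M$, and the map $a \mapsto 1 \otimes da \otimes 1$ is indeed a derivation $A \to B \otimes_A \Omega^1 A \otimes_A B$; and the induction functor $P \mapsto B \otimes_A P \otimes_A B$ preserves finitely generated projectives since it is additive and sends $A \otimes A$ with its outer bimodule structure to $B \otimes B$. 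One small remark: it would be worth stating explicitly that you are taking $A \otimes A$ and $B \otimes B$ with the \emph{outer} bimodule structures in the last step, since the paper's conventions distinguish carefully between inner and outer structures and the identity $B \otimes_A (A \otimes A) \otimes_A B \cong B \otimes B$ depends on this. Otherwise the argument is complete.
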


For a bimodule $M$ over a dga $A$ let $\r{Der}(A,M)$ denote the graded vector space of super-derivations from $A$ to $M$ and set $\r{Der}(A)=\r{Der}(A,A)$. Giving $A \otimes A$ the outer A-bimodule structure we let $\m D\r{er}(A) =  \r{Der}(A, A \otimes A)$ be the bimodule of \emph{double derivations} on $A$, where the $A$-bimodule structure on $\m D\r{er}(A)$ is induced via the inner bimodule structure on $A \otimes A$.  Note there is a natural isomorphism $\m{D}\r{er}(A) \cong (\Omega^1 A)^\vee$. Let $D:A \rightarrow \Omega^1A$ be the canonical derivation that sends $a \mapsto a \otimes 1 - 1 \otimes a.$ Then we can define the dga $(\Omega^\bullet A,D)$ of \emph{noncommutative differential forms} of $A$ by
$$\Omega^\bullet A = T_A(\Omega^1 A)$$
with differential induced by $D$. We next define the super-commutator quotient
$$\r{DR}(A) = \Omega^\bullet A/ [\Omega^\bullet A, \Omega^\bullet A]$$
called the \emph{cyclic quotient} of $\Omega^\bullet A$, which is a differential graded vector space. Note that both $\Omega^\bullet A$ and $\r{DR}(A)$ are bigraded- they have the usual tensor grading as well as a grading induced from $A.$ Denote by $\Omega^i A$ and $\r{DR}^i(A)$ the $i$th graded parts with respect to the tensor grading.

\begin{lem}\label{lem2.2}
Let $A$ be a finitely generated algebra over $k$ and let $V$ be a finitely generated $A$-bimodule. Then $\Omega^1 T_A(V)$ is generated by homogeneous elements of degree 0 and 1 as a graded $T_A(V)$-bimodule. 
\end{lem}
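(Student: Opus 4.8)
We want to show $\Omega^1 T_A(V)$ is generated in tensor-degrees $0$ and $1$ over the tensor algebra $R := T_A(V)$. The natural strategy is to exploit the standard short exact sequence relating the one-forms of a tensor algebra to those of the base, namely the sequence of $R$-bimodules
$$0 \longrightarrow R \otimes_A \Omega^1 A \otimes_A R \longrightarrow \Omega^1 R \longrightarrow R \otimes_A V \otimes_A R \longrightarrow 0,$$
valid for $R = T_A(V)$ (this is the relative cotangent sequence for $A \to R$, together with the fact that $\Omega^1_{A}(T_A V)$, the relative one-forms, is $R\otimes_A V\otimes_A R$ since $T_A(V)$ is free over $A$). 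Granting this, the two outer terms are manifestly generated by the images of $\Omega^1 A$ (tensor-degree $0$) and of $D(v)$ for $v \in V$ (tensor-degree $1$), so generators in degrees $0$ and $1$ suffice. I would phrase the whole proof around establishing and then reading off this sequence.

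**Carrying it out concretely.** First I would recall that an element of $\Omega^1 R \subset R \otimes R$ is an $R$-linear combination of terms $r\, Dr'\, r''$, and that $D$ is a derivation, so it is enough to control $D$ on a generating set of $R$ as an algebra. Since $R$ is generated as an algebra by $A$ and (a $k$-basis or $A$-bimodule generating set of) $V$, the Leibniz rule lets us reduce any $Dr$ to an $R$-bilinear expression in the elements $Da$ ($a \in A$) and $Dv$ ($v$ in a finite generating set of $V$ as an $A$-bimodule). The elements $Da$ live in tensor-degree $0$ and the elements $Dv$ live in tensor-degree $1$ (here the tensor grading on $\Omega^1 R$ counts the number of $V$-letters, and $Da = a\otimes 1 - 1 \otimes a$ involves none while $Dv$ involves exactly one). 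That is precisely the claimed statement. The finiteness hypotheses on $A$ and $V$ guarantee the generating set can be taken finite, though only the existence of a generating set in degrees $0$ and $1$ is asserted.

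**The main technical point.** The step requiring the most care is verifying that the Leibniz reduction genuinely keeps everything within tensor-degrees $0$ and $1$ and does not secretly produce higher-degree forms: when we expand $D(v_1 a v_2)$ or $D(v_1 v_2)$ we get terms like $(Dv_1)av_2 + v_1 a (Dv_2) + v_1 (Da) v_2$, and one must check that multiplying a degree-$1$ generator $Dv_i$ on the left and right by arbitrary elements of $R$ (which can have high tensor-degree in $V$) is exactly what "generated as a graded $R$-bimodule by degree-$0$ and degree-$1$ elements" permits — i.e.\ the bimodule action is allowed to raise degree, only the chosen generators are constrained. So the subtlety is purely bookkeeping about which grading is being referred to (the tensor grading on $\Omega^\bullet R$ versus the $A$-induced grading), and I would take care to state explicitly that the degree in question is the tensor grading and that the $R$-bimodule multiplication shifts it freely. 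Once that is pinned down the proof is a short induction on word length in $R$ using the derivation property of $D$.
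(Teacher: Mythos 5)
Your proposal is correct and, despite the more structural framing via the Cuntz--Quillen cotangent sequence at the start, the argument you actually carry out in the second paragraph (Leibniz reduction of $D(r)$ to bimodule combinations of $D(a)$ for $a\in A$ in degree $0$ and $D(v)$ for $v\in V$ in degree $1$) is essentially the same as the paper's proof, which just exhibits the explicit factorization $(x_1\otimes_A\ldots\otimes_A x_{i-1})\cdot D(x_i)\cdot(x_{i+1}\otimes_A\ldots\otimes_A x_n)$ of the splitting elements. Your remark that the degree in question is the $V$-tensor degree and that the bimodule action is permitted to raise it is exactly the right point to flag.
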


\begin{proof}
Because $T_A(V)$ is a free algebra $\Omega^1 T_A(V)$ is generated as a vector space by homogeneous elements of the form
$$x= (x_1 \otimes_A \ldots \otimes_A x_i) \otimes (x_{i+1} \otimes_A \ldots \otimes_A x_n) - (x_1 \otimes_A \ldots \otimes_A x_{i-1}) \otimes (x_i \otimes_A \ldots \otimes_A x_n) \in (\Omega^1 T_A(V))^n$$
for $x_m \in V$ and $1 \leq i \leq n$. Then
$$(x_1 \otimes_A \ldots \otimes_A x_{i-1}) \cdot (x_i \otimes 1 -1 \otimes x_i) \cdot (x_{i+1} \otimes_A \ldots \otimes_A x_n) = x$$
and so because $(x_1 \otimes_A \ldots \otimes_A x_{i-1}), \,(x_{i+1} \otimes_A \ldots \otimes_A x_n) \in T_A(V)$ and $(a_i \otimes 1 -1 \otimes a_i) \in (\Omega^1 T_A(V))^1$ we get the result.
\end{proof}

For $\theta \in \r{Der}(A)$ define the degree 0 derivation $L_\theta: \Omega^\bullet \rightarrow \Omega^\bullet$ by
\begin{align*}
    A \ni a &\mapsto \theta(a) \\
   \Omega^1A \ni D(a) &\mapsto D(\theta(a))
\end{align*}
extended to the rest of $\Omega^\bullet A$ using the Leibniz rule and linearity. We also define the \emph{contraction mapping} $i_\theta: \Omega^\bullet \rightarrow \Omega^\bullet$ as the super-derivation given by
\begin{align*}
    A \ni a &\mapsto 0 \\
   \Omega^1A \ni D(a) &\mapsto \theta(a).
\end{align*}
$i_\theta$ has degree $-1$ with respect to the tensor grading and degree 0 with respect to the induced grading by $A$. The derivations $L_\theta$ and $i_\theta$ are related by the \emph{Cartan identity}
\begin{align}
    L_\theta= D \circ i_\theta + i_\theta \circ D \,. \label{eq:1.001}
\end{align}
Both derivations descend to maps $\r{DR}(A) \rightarrow \r{DR}(A)$; indeed if $x \in \Omega^m A$ and $y \in \Omega^n A$ then we have
\begin{align*}
    L_\theta([x,y]) &= L_\theta\big(xy - (-1)^{mn}\, yx\big) \\
    &= L_\theta(x)y + x\, L_\theta(y) - (-1)^{mn}\, L_\theta(y)x - (-1)^{mn}\, y\, L_\theta(x) \\
    &= L_\theta(x)y -(-1)^{mn}\, y\, L_\theta(x) + x\, L_\theta(y) - (-1)^{mn}\, L_\theta(y)x \\
    &= [L_\theta(x),y] + [x, L_\theta(y)]
\end{align*}
and
\begin{align*}
    i_\theta([x,y]) &= i_\theta\big(xy - (-1)^{mn}\, yx\big) \\
    &= i_\theta(x)y + (-1)^m\, x\,i_\theta(y) - (-1)^{mn}\,i_\theta(y)x - (-1)^{mn}(-1)^n\, y\, i_\theta(x) \\
    &= i_\theta(x)y - (-1)^{(m+1)n}\, y\, i_\theta(x) + (-1)^m\, x\,i_\theta(y) - (-1)^{mn}\,i_\theta(y)x  \\
    &= i_\theta(x)y - (-1)^{(m-1)n}\, y\, i_\theta(x) + (-1)^m\Big( x\,i_\theta(y) - (-1)^{m(n-1)}\,i_\theta(y)x\Big) \\
    &= [i_\theta(x),y] + (-1)^m[x, i_\theta(y)].
\end{align*}
For $\lambda \in \m D\r{er}(A)$ we can also define a \emph{contraction mapping} $i_\lambda : \Omega^\bullet A \rightarrow \Omega^\bullet A \otimes \Omega^\bullet A$ as the double super-derivation of degree $-1$ given by
\begin{align*}
    a \in\, A &\mapsto 0 \\
     D(a) \in \, \Omega^1A &\mapsto \lambda(a).
\end{align*}
The \emph{reduced contraction mapping} $\iota_\lambda: \Omega^\bullet A \rightarrow \Omega^\bullet A$ is then defined as $$\iota_\lambda = m(\beta \circ i_\lambda)$$
where $m: \Omega^\bullet A \otimes \Omega^\bullet A \rightarrow \Omega^\bullet A$ is the multiplication map and $\beta$ is the flip isomorphism on $\Omega^\bullet A \otimes \Omega^\bullet A$ which sends homogeneous elements $x \in \Omega^m A$ and $y \in \Omega^n A$ to
$$x \otimes y \longmapsto (-1)^{mn} y \otimes x.$$
The reduced contraction mapping $\iota_\lambda$ descends to a map
$$\iota_\lambda: \r{DR}(A) \rightarrow \Omega^\bullet A$$
(see [\cite{cbeg} Lemma 2.8.6 (i)]).

\begin{dfn*}
Let $\omega \in \r{DR}^2(A)$ be closed with respect to the differential $D$. Define a map\\ 
\mbox{$i^\omega: \r{Der}(A) \rightarrow \r{DR}^1(A)$} by sending
$$\r{Der}(A) \ni \theta \mapsto i_\theta(\omega).$$
We call $\omega$ \emph{symplectic} if $i^\omega$ is an isomorphism. Similarly we can define a map $\iota^\omega : \m D\r{er}(A) \rightarrow \Omega^1 A$ and call $\omega$ \emph{bisymplectic} if $\iota^\omega$ is an isomorphism.
\end{dfn*}

From [\cite{cbeg} section 4.2] we have the following result.

\begin{lem}\label{lem2.3}
Let $A$ be smooth and $\omega$ be a bisymplectic 2-form. Then $\omega$ is symplectic.
\end{lem}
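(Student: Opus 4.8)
The plan is to deduce the symplectic condition from the bisymplectic one by ``reducing modulo commutators'', i.e.\ by applying the functor $-\otimes_{A^e}A$ to the bimodule isomorphism $\iota^\omega$; smoothness is precisely what makes the result identifiable with $i^\omega$. All constructions below are taken in the graded category of $A$-bimodules. Recall that the universal property of $(\Omega^1 A, D)$ gives natural isomorphisms $\r{Der}(A)\cong\r{Hom}_{A^e}(\Omega^1 A, A)$ and $\m D\r{er}(A)\cong\r{Hom}_{A^e}(\Omega^1 A, A\otimes A)=(\Omega^1 A)^\vee$, and that since $A$ is smooth and finitely generated $\Omega^1 A$ is a finitely generated projective $A$-bimodule.

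First I would set up the two identifications the argument rests on. Inspecting the tensor-degree-$1$ part of $[\Omega^\bullet A,\Omega^\bullet A]$ shows $\r{DR}^1(A)=\Omega^1 A/[A,\Omega^1 A]$, and for any $A$-bimodule $M$ there is a canonical isomorphism $M/[A,M]\cong M\otimes_{A^e}A$; hence
\[
\r{DR}^1(A)\;\cong\;\Omega^1 A\otimes_{A^e}A ,
\]
with $\Omega^1 A\to\r{DR}^1(A)$ the quotient map $\alpha\mapsto\alpha\otimes1$. On the other hand, because $\Omega^1 A$ is finitely generated projective, the natural map
\[
\m D\r{er}(A)\otimes_{A^e}A=\r{Hom}_{A^e}(\Omega^1 A, A\otimes A)\otimes_{A^e}A\;\longrightarrow\;\r{Hom}_{A^e}(\Omega^1 A, A)=\r{Der}(A)
\]
is an isomorphism, and under it the canonical map $\m D\r{er}(A)\to\m D\r{er}(A)\otimes_{A^e}A$ corresponds to $\lambda\mapsto m\circ\lambda$, where $m\colon A\otimes A\to A$ is multiplication; this step is where smoothness enters. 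Finally, $\iota^\omega\colon\m D\r{er}(A)\to\Omega^1 A$ is a morphism of $A$-bimodules by construction, so ``$\omega$ bisymplectic'' means exactly that $\iota^\omega$ is a bimodule isomorphism.

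Applying $-\otimes_{A^e}A$ to $\iota^\omega$ then produces an isomorphism $\m D\r{er}(A)\otimes_{A^e}A\xrightarrow{\ \sim\ }\Omega^1 A\otimes_{A^e}A$, i.e.\ an isomorphism $\r{Der}(A)\xrightarrow{\ \sim\ }\r{DR}^1(A)$ after the identifications above. It remains to check that this isomorphism \emph{is} $i^\omega$; equivalently, that the square
\[
\begin{CD}
\m D\r{er}(A) @>{\iota^\omega}>> \Omega^1 A\\
@V{m\circ(-)}VV @VVV\\
\r{Der}(A) @>{i^\omega}>> \r{DR}^1(A)
\end{CD}
\]
commutes, the unlabelled vertical arrow being the quotient map. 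Granting commutativity, surjectivity of the left-hand vertical map together with bijectivity of $\iota^\omega$ forces $i^\omega$ to be an isomorphism, which is the claim. To check commutativity one evaluates on closed $2$-forms: writing a representative of $\omega$ as a sum of terms $D a\cdot D b$, it suffices to compare, for $\theta=m\circ\lambda$, the images in $\r{DR}^1(A)=\Omega^1 A/[A,\Omega^1 A]$ of $i_\theta(D a\,D b)=\theta(a)\,D b-D a\,\theta(b)$ and of $\iota_\lambda(D a\,D b)=m\big(\beta(i_\lambda(D a\,D b))\big)$; unwinding the super-Leibniz rule for $i_\lambda$, the flip $\beta$ and the multiplication $m$, and discarding elements of $[A,\Omega^1 A]$, both reduce to $\theta(a)\,D b-\theta(b)\,D a$.

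I expect this last verification --- the bookkeeping of the Koszul signs produced by $\beta$, together with pinning down the left/right $A^e$-module conventions so that the two identifications in the second paragraph become literal equalities --- to be the only genuinely delicate point; everything else is formal and rests on $\Omega^1 A$ being finitely generated projective.
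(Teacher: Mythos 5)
The paper does not supply its own proof of \cref{lem2.3}; it merely cites [\cite{cbeg} \S4.2]. Your argument is essentially the standard one that underlies that citation, and it is correct in outline. The route --- regard $\iota^\omega$ as a bimodule isomorphism, apply $-\otimes_{A^e}A$, use projectivity of $\Omega^1 A$ (i.e.\ smoothness) to identify $\m D\r{er}(A)\otimes_{A^e}A$ with $\r{Der}(A)$ and $\Omega^1 A\otimes_{A^e}A$ with $\r{DR}^1(A)$, and then check that the induced map is $i^\omega$ via a commuting square --- is exactly how the statement is deduced in CBEG, and the commuting-square verification you flag as the delicate step is precisely their Lemma 2.8.6(ii) (that the reduced contraction $\iota_\lambda$ descends on $\r{DR}$ to $i_{m\circ\lambda}$). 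Your closing sentence compresses the logic a little: rather than ``surjectivity of the left vertical forces $i^\omega$ to be an isomorphism,'' the clean statement is that both verticals are the canonical maps to $-\otimes_{A^e}A$ followed (on the left) by the projectivity isomorphism, so $i^\omega$ is literally $\iota^\omega\otimes_{A^e}\mathrm{id}_A$ transported across those identifications, hence an isomorphism because $\iota^\omega$ is one.

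One genuine imprecision worth fixing: a class in $\r{DR}^2(A)$ need not be represented by a sum of terms $Da\cdot Db$; using the Leibniz rule and cyclicity one can only reduce to sums of terms $a_0\,Da_1\,Da_2$. The commutativity check should therefore be carried out on such representatives. The computation is parallel to yours and lands in the same place --- with $\theta=m\circ\lambda$ and $\lambda(a_i)=a_i'\otimes a_i''$, both $i_\theta(a_0\,Da_1\,Da_2)$ and the image of $\iota_\lambda(a_0\,Da_1\,Da_2)$ in $\r{DR}^1(A)$ reduce, after cyclic permutation, to $a_0\,\theta(a_1)\,Da_2-\theta(a_2)\,a_0\,Da_1$ --- but the extra $a_0$ must be carried through to see that cyclicity is what makes the two sides agree.
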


For $\omega$ bisymplectic and any $a \in A$ we can define a derivation $\{a,-\}_\omega: A \rightarrow A$ by
$$b \mapsto m((\iota^\omega)^{-1}(D(a))(b))$$
which makes sense because, as $\omega$ is bisymplectic, $(\iota^\omega)^{-1}: \Omega^1 A \rightarrow \m D\r{er}(A)$ is a well-defined map. Similarly if $\omega$ is symplectic then for $W \in A/[A,A]$ we can define a derivation in $\r{Der}(A)$ by
$$\{W,-\}_\omega := (i^\omega)^{-1}(D(W)).$$

\begin{dfn*}
An algebra $A$ is \emph{connected} if the sequence
$$0 \rightarrow k \rightarrow \r{DR}^0(A) \rightarrow \r{DR}^1(A)$$
is exact.
\end{dfn*}

Let $\Delta:A \rightarrow A \otimes A$ be the distinguished double derivation that sends $a \mapsto a \otimes 1 - 1 \otimes a$. Then there exists a map
$$\mu_{\r{nc}}: \overline{\r{DR}^2(A)} \rightarrow A/k$$
where $\overline{\r{DR}^2(A)} \subset \r{DR}^2(A)$ are the closed cyclic 2-forms, having the property that
$$D(\mu_{\r{nc}}(\omega))= \iota_\Delta(\omega)$$
in $\Omega^1 A$. $\mu_{\r{nc}}$ is defined as the composition
$$\overline{\r{DR}^2(A)} \xrightarrow{\iota_\Delta} \overline{\Omega^1 A} \xrightarrow{=} \widetilde{\Omega^1 A} \xrightarrow{D^{-1}} A/k$$
where $\widetilde{\Omega^1 A}$ are the exact 1-forms (see [\cite{cbeg} section 4]). Let $\b{w} \in A$ be a representative of $\mu_{\r{nc}}(\omega)$.

\begin{dfn*}
Let $(A,\omega,\xi)$ be a triple of a non-positively graded smooth algebra $A$ such that $\Omega^1 A$ is generated by homogeneous elements of degree $0,-1, \ldots, c$ as a graded $A$-bimodule, a bisymplectic 2-form $\omega \in \r{DR}^2(A)$ which is homogeneous of degree $c$ with respect to the induced grading of $A$, and a super-derivation $\xi \in \r{Der}(A)$ of degree 1 such that $\xi^2=0$, \,  $L_\xi(\omega)=0$, and $\xi(\b{w})=0$. We define the \emph{Ginzburg differential graded algebra} (Gdga) $\f D(A,\omega,\xi)$ of this triple to be the following data:
\begin{itemize}
    \item The underlying algebra of $\f D(A,\omega,\xi)$ is the free product of algebras $A * k[t]$ where $t$ has degree $c-1.$
    \item The differential $d$ is given by
    \begin{align*}
        A \ni a &\mapsto \xi(a) \\
        t &\mapsto \b{w}.
    \end{align*}
\end{itemize}
\end{dfn*}

\begin{rem}\label{rem2.4}
\begin{enumerate}
    \item[]
    \item As $\mu_{\r{nc}}(\omega) \in A/k$ the representative $\r{\b{w}}$ is only determined up to some constant in $k$. Thus we choose $\r{\b{w}}$ to be homogeneous (of degree $\r{deg}(\omega) = c$) so it is unique. This can cause a small issue when $c=0$ but this will not be relevant to us; see [\cite{d3} Remark 4.3.1] for more details. \\
    \item That $d^2=0$ follows from the assumptions $\xi^2=0$ and $\xi(\b{w})=0$. One way of ensuring $\xi(\r{\b{w}}) =0$ is to assume the additional condition $k \cap \xi(A)= 0$. By [\cite{cbeg} Proposition 4.1.3] we have for any $\theta \in \r{Der}(A)$
    $$\mu_{\r{nc}}(L_\theta(\omega))=L_\theta(\mu_{\r{nc}}(\omega))$$
    hence
    $$\xi(\mu_{\r{nc}}(\omega)) = L_\xi(\mu_{\r{nc}}(\omega)) = \mu_{\r{nc}}(L_\xi(\omega)) = \mu_{\r{nc}}(0) =0.$$
    It follows that $\xi(\b{w}) \in k$ and so if $k \cap \xi(A)= 0$ we get $\xi(\b{w}) =0$.
    \item Since $A$ is smooth $\omega$ is also symplectic by \cref{lem2.3}.
\end{enumerate}
\end{rem}

\begin{dfn*}
A \emph{superpotential algebra} $B$ is an algebra (viewed as a dga concentrated in degree 0) that is quasi-isomorphic to a Gdga $\f D(A,\omega, \xi)$ in which $A$ is connected and $\xi=\{W,-\}_\omega$ for some $W \in A/[A,A].$ W is called a \emph{potential}.
\end{dfn*}

We shall see some examples of Gdgas when we look at quivers and Jacobi algebras.

\begin{thm}[\cite{gin} Theorem 3.6.4]\label{thm2.5}
Let $\f D(A,\omega, \xi)$ be a Gdga with $\omega$ of degree $c$ and suppose that $H^i(\f D(A,\omega,\xi)) =0$ for all $i \neq 0.$ Then $H^0(\f D(A,\omega,\xi))$ is Calabi-Yau of dimension $-c+2$.
\end{thm}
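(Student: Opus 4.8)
The plan is to reduce the statement to two claims about the Ginzburg dga $\f D := \f D(A,\omega,\xi)$ itself: that it is homologically finite, and that it admits an isomorphism $\f D \xrightarrow{\,\sim\,} \f D^\vee[-c+2]$ in $D(\f D\r{-\b{Bimod}})$. The hypothesis $H^i(\f D)=0$ for $i\neq 0$ says exactly that the canonical surjection $\f D \to H^0(\f D) =: B$ is a quasi-isomorphism of dgas, and the plan is to transport both properties along it. One observation makes this transport clean: once an isomorphism $B \cong B^\vee[-c+2]$ has been produced, the self-duality requirement $f = f^\vee[-c+2]$ costs nothing, since the flip acts as the identity on Hochschild homology by [\cite{vdb}, Proposition C.1] (as recorded in the discussion following the definition of Calabi-Yau structures), so any such isomorphism already underlies a Calabi-Yau structure of dimension $-c+2$.

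First I would check that $\f D$ is a smooth dga. Writing $V := A\otimes k t\otimes A$ for the free rank-one $A$-bimodule generated by $t$—finitely generated and negatively graded, since $t$ has degree $c-1<0$ and $A$ is non-positively graded—we have $\f D = A*k[t] = T_A(V)$, so $\f D$ is smooth by \cref{lem2.1}; hence $\Omega^1\f D$ is a finitely generated projective $\f D$-bimodule, and by \cref{lem2.2} it is generated in tensor-degrees $0$ and $1$, splitting concretely as $\f D\otimes_A\Omega^1 A\otimes_A\f D \ \oplus\ \f D\otimes k\,D(t)\otimes\f D$ with the second summand concentrated in internal degree $c-1$. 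Out of the building blocks $\Omega^1 A$, its bimodule dual $\m D\r{er}(A)\cong(\Omega^1 A)^\vee$, the rank-one piece $k\,D(t)$, and the unit term $\f D\otimes\f D$, one assembles the Ginzburg bimodule resolution $\mathsf P^\bullet \to \f D$ of [\cite{gin}, \S 3.6] (in the form extended in [\cite{cbeg}, \S 4] and [\cite{d3}]): a bounded complex of finitely generated dg-projective $\f D^e$-modules, with nonzero terms in cohomological degrees $c-2,\dots,0$, whose differential combines the tensor-algebra differential with the internal differential induced by $\xi$ and $t\mapsto\b{w}$. In particular $\f D$ is perfect over $\f D^e$, hence homologically finite; and since $k$ is a field, $\f D^e\to B^e$ is a quasi-isomorphism by K\"unneth, so restriction of scalars gives an equivalence $D(B\r{-\b{Bimod}})\simeq D(\f D\r{-\b{Bimod}})$ carrying $B$ to $\f D$ and compatible with $[1]$ and with $(-)^\vee$; in particular $B$ is homologically finite too.

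The heart of the matter is the self-duality of $\mathsf P^\bullet$. Since $\omega$ is bisymplectic, the map $\iota^\omega:\m D\r{er}(A)\to\Omega^1 A$ is an isomorphism, homogeneous of internal degree $c$; as $\m D\r{er}(A)=(\Omega^1 A)^\vee$ this says precisely that $(-)^\vee$ interchanges the pieces of $\mathsf P^\bullet$ built from $\Omega^1 A$ and from $\m D\r{er}(A)$ up to a degree-$c$ twist, while the two copies of the (self-dual) bimodule $\f D\otimes\f D$ at the ends are interchanged and the $k\,D(t)$-piece is matched using that $D(t)$ has internal degree $c-1$. Tracking these identifications converts the degree $c$ of $\omega$ into a total shift and yields a graded isomorphism $\mathsf P^\bullet\cong(\mathsf P^\bullet)^\vee[-c+2]$; that it is in fact a map of complexes, not merely a term-by-term isomorphism, is where the remaining hypotheses on the triple are consumed—$L_\xi(\omega)=0$ together with the Cartan identity \eqref{eq:1.001} control how $\xi$ interacts with the contraction operators defining the pairing, and $\xi(\b{w})=0$ (equivalently $d^2=0$) makes the $t$-component of the comparison consistent. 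This gives $\f D\cong\f D^\vee[-c+2]$ in $D(\f D\r{-\b{Bimod}})$; transporting it along $\f D\xrightarrow{\,\sim\,}B$ through the equivalence above produces $B\cong B^\vee[-c+2]$, which by the first paragraph is a Calabi-Yau structure of dimension $-c+2$ on $B=H^0(\f D)$. I expect the main obstacle to be exactly this self-duality verification: writing $\mathsf P^\bullet$ out explicitly and checking—signs, internal degrees and the $t$-generator all included—that $\iota^\omega$ really intertwines $\mathsf P^\bullet$ with $(\mathsf P^\bullet)^\vee[-c+2]$ as complexes; the rest of the argument is formal.
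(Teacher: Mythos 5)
Note first that the paper offers no proof of this statement: it is quoted verbatim as [\cite{gin}, Theorem~3.6.4] and used as an imported black box (just like \cref{thm2.6} and \cref{thm2.9}), so there is no ``paper's own proof'' to compare against. Your proposal is therefore really an attempt to reconstruct Ginzburg's original argument. The overall skeleton you lay out is the right one: identify $\f D=A*k[t]$ with $T_A(V)$ for $V=A\otimes kt\otimes A$, apply \cref{lem2.1} to get smoothness, invoke the quasi-isomorphism $\f D\to H^0(\f D)$ to transport the Calabi-Yau structure (and the flip-triviality observation from [\cite{vdb}] to discharge the self-duality condition $f=f^\vee[-c+2]$ for free), and reduce everything to producing a self-dual bounded dg-projective $\f D^e$-resolution of $\f D$. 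The reduction steps and the bookkeeping on gradings ($c\le 0$, so $V$ is negatively graded, etc.) are sound.

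However, the argument as written has a genuine gap exactly where you flag it: the entire content of Ginzburg's Theorem~3.6.4 is the construction and self-duality of the bimodule resolution $\mathsf P^\bullet$, and you merely describe what it should look like (two self-dual ends, the $\Omega^1A/\m D\r{er}(A)$ pieces swapped by $\iota^\omega$ with a degree-$c$ twist, the $kD(t)$ piece matched) and note that $L_\xi(\omega)=0$ and $\xi(\b w)=0$ ``should'' make it a chain map. Without writing the differential on $\mathsf P^\bullet$ explicitly and verifying that $\iota^\omega$ (extended over the $t$-generator) intertwines $d_{\mathsf P^\bullet}$ with $d_{(\mathsf P^\bullet)^\vee[-c+2]}$, there is no proof, only a roadmap. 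A secondary point worth tightening: the assertion that restriction of scalars along $\f D^e\to B^e$ is ``compatible with $(-)^\vee$'' needs a short justification (one must compare $\mathrm{RHom}_{\f D^e}(\f D,\f D\otimes\f D)$ with $\mathrm{RHom}_{B^e}(B,B\otimes B)$ across the equivalence; this is where homological finiteness and the fact that $k$ is a field are actually used, not just in the K\"unneth step). If you intend this as a self-contained proof, those two items must be filled in; if you only intend it as a reduction to [\cite{gin}], then that is precisely what the paper itself does by citing the result.
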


\begin{thm}[\cite{d3} Theorem 4.3.8]\label{thm2.6}
Let $\f D(A,\omega, \xi)$ be as in \cref{thm2.5} and also suppose that $A$ is connected. Then $H^0(\f D(A,\omega,\xi))$ is exact Calabi-Yau of dimension $-c+2$. In particular a superpotential algebra is exact Calabi-Yau.
\end{thm}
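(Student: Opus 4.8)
The plan is to work throughout with the Ginzburg dga $\f D := \f D(A,\omega,\xi)$ rather than with $H^0(\f D)$ directly. Since $H^i(\f D)=0$ for all $i\neq 0$, the canonical surjection $\f D\to H^0(\f D)$ is a quasi-isomorphism of dgas, so it induces isomorphisms on Hochschild homology, on cyclic homology, and on the whole Connes long exact sequence connecting them. It therefore suffices to exhibit the non-degenerate class $\nu\in\r{HH}_{-c+2}(\f D)$ underlying the Calabi-Yau structure of \cref{thm2.5} as an element of the image of $\partial:\r{HC}_{-c+1}(\f D)\to\r{HH}_{-c+2}(\f D)$.

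The essential input is that $\f D$ is smooth: its underlying graded algebra is the free product $A*k[t]$ of two finitely generated smooth algebras, and a free product of smooth algebras is smooth. For a smooth algebra $R$ the truncated noncommutative de Rham complex $\r{DR}^{\leq 1}(R)$ computes $\r{HH}_\bullet(R)$ and the full complex $(\r{DR}^\bullet(R),D)$ computes (reduced) cyclic homology (Cuntz--Quillen, see \cite{cq}); totalising these against the internal differential coming from $d$ upgrades the statement to the dg setting, and under the resulting dictionary the Connes boundary $\partial$ is induced by the de Rham differential $D$. So I would reformulate the goal as: in the de Rham model of $\r{HH}_\bullet(\f D)$, the class $\nu$ is $D$ of a $D$-closed cyclic class.

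The natural candidate for that cyclic class is $\omega$ itself. I expect the construction of the Gdga to furnish a $D$-closed cyclic $2$-form $\widetilde\omega\in\r{DR}^2(\f D)$ of internal degree $c$ that restricts to $\omega$ on $A$ --- the variable $t$, with $dt=\b{w}$, is introduced precisely so as to supply the self-pairing making $\xi$ Hamiltonian, i.e.\ so that $\omega$ becomes bisymplectic on $\f D$ --- and being closed for both $D$ and the totalising differential it determines a class $[\widetilde\omega]\in\r{HC}_{-c+1}(\f D)$. The crux is to check that $\partial[\widetilde\omega]$ is exactly Ginzburg's non-degenerate class: this amounts to making the proof of \cref{thm2.5} explicit --- in terms of the symmetric bimodule resolution of $\f D$ assembled from $\omega$ and $d$ --- and tracing the resulting element of $\r{HH}_{-c+2}(\f D)$ through the de Rham identifications, where it should appear precisely as the de Rham differential of $\widetilde\omega$. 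Connectedness of $A$ enters here: by definition it says $H^0(\r{DR}^\bullet(A),D)=k$, which is what keeps the reduced-versus-unreduced cyclic comparison honest in low degrees and guarantees $[\widetilde\omega]$ is a genuine cyclic class with the expected image rather than one absorbed by a spurious lower-degree contribution. Granting all of this, $\nu=\partial[\widetilde\omega]$ lies in $\r{im}(\partial)$, so the Calabi-Yau structure is exact; and since a superpotential algebra is by definition quasi-isomorphic to such a $\f D$ with $A$ connected (and is concentrated in degree $0$, so automatically $H^i(\f D)=0$ for $i\neq 0$), the final assertion follows.

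The hard part will be the identification $\nu=\partial[\widetilde\omega]$. It forces one to unwind the (usually implicit) construction of the Calabi-Yau class in \cref{thm2.5} all the way to the (co)chain level, push it through the Cuntz--Quillen description of $\r{HH}$ and $\r{HC}$, and keep two interacting gradings straight throughout --- the tensor degree of forms and the internal $A$-grading, with $t$ in internal degree $c-1$ --- while pinning down exactly where connectedness is needed. Establishing the dg refinement of the Cuntz--Quillen comparison (that totalising $\r{DR}^\bullet(\f D)$ against $d$ computes $\r{HC}_\bullet(\f D)$ and intertwines $\partial$ with $D$) is a further ingredient, but a fairly routine one once smoothness of $\f D$ is in hand.
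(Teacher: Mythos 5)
The paper cites this theorem from \cite{d3} without reproducing the argument, so there is no in-paper proof for you to match; I can only assess your sketch on its own terms.

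Your outline has the right shape (pass through the quasi-isomorphism $\f D \to H^0(\f D)$, use smoothness of $\f D$ and a Cuntz--Quillen/de Rham model, find a cyclic class whose image under $\partial$ is the CY class), but the step that \emph{is} the theorem --- the identification $\nu=\partial[\widetilde\omega]$ --- is asserted, not proved: phrases like ``I expect the construction of the Gdga to furnish'' and ``should appear precisely as'' are not an argument. Three concrete points you would need to settle before this counts as a proof. First, degrees: $\widetilde\omega$ lives in $\r{DR}^2(\f D)$ with internal degree $c$, so with the paper's conventions (cohomological internal grading, homological tensor grading) its total homological degree is $2-c$, whereas you need a class in $\r{HC}_{-c+1}=\r{HC}_{1-c}$; there is an apparent off-by-one that must be reconciled, either by a degree shift in the cyclic model or by taking a different candidate (e.g.\ a $1$-form such as $i_\xi\omega$ or a lift of $D\b{w}$). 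Second, the role of connectedness: ``keeps the reduced-versus-unreduced comparison honest'' is not a mathematical statement; connectedness of $A$ (exactness of $0 \to k \to \r{DR}^0(A) \to \r{DR}^1(A)$) has a specific job, namely controlling the kernel of $D$ on $0$-forms so that the representative $\b{w}$ of $\mu_{\r{nc}}(\omega)$ and the preimage under $\partial$ are determined up to the right ambiguity, and you should say exactly where you invoke it. Third, you wave at the dg upgrade of the Cuntz--Quillen comparison as ``fairly routine''; this is precisely where completion and filtration issues bite for cyclic homology of dgas, and it needs either a reference that actually covers the graded/filtered setting or a careful argument. As written, the proposal identifies the right landscape but does not cross it.
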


\subsection{Ginzburg differential graded algebras in the relative case}
We can strengthen \cref{thm2.5} by looking at a smaller class of Gdgas arising in dimension 3. First we must extend the previous story of noncommutative algebras over a field $k$ to noncommutative algebras over a finite dimensional semisimple $k$-algebra $R$. Almost everything follows in a natural way to the relative case. We introduce notation with ``$R$" in the subscript to distinguish this change. Recall unadorned tensor products are to be taken over the base field $k$. We shall explore examples of these relative Gdgas in the next section on quivers and Jacobi algebras.

Let $A$ be an $R$-algebra, then
$$\Omega^1_R A = \r{Ker}(A \otimes_R A \xrightarrow{m} A)$$
is the bimodule of \emph{relative 1-forms}, and similarly
$$\Omega_R^\bullet A = T_A(\Omega_R^1 A)$$
is the dga of \emph{relative noncommutative differential forms}, with its cyclic quotient
$$\r{DR}_R(A) = \Omega_R^\bullet A/[\Omega_R^\bullet A,\Omega_R^\bullet A].$$
An important difference is in the bimodule of \emph{relative double derivations} $\m D\r{er}_R(A)$ which are derivations $\lambda : A \rightarrow A \otimes A$ such that $\lambda \circ (R \rightarrow A) = 0.$ This is defined so that we again have a natural isomorphism $\m D\r{er}_R(A) \cong (\Omega^1_R A)^\vee$. From now on we let $R=k I$ be the algebra of $k$-valued functions on a finite indexing set $I$. For each $i \in I$ let $1_i \in kI$ be the function that sends $i \mapsto 1$ and everything else to 0, and let $e= \sum_{i \in I} 1_i \otimes 1_i \in R \otimes R$. Define $\delta \in \m D\r{er}_R(A)$ as the distinguished relative double derivation given by
$$A \ni a \longmapsto a \cdot e - e \cdot a \in A \otimes A.$$
Then $\delta$ is the appropriate relative counterpart to the distinguished double derivation $\Delta$ we had in Section 2.1.

As in the non-relative case we have a map $\mu_{\r{nc}} : \overline{\r{DR}_R^2(A)} \rightarrow (A/R)^R$, where $(A/R)^R$ is the centraliser of $R$ in $A/R$, defined by the property that
$$D(\mu_{\r{nc}}(\omega)) = \iota_\delta(\omega)$$
in $\Omega_R^1 A$. $\mu_{\r{nc}}$ is given by the composition
$$\overline{\r{DR}_R^2(A)} \xrightarrow{\iota_\Delta} (\overline{\Omega^1 A})^R \xrightarrow{=} (\widetilde{\Omega^1 A})^R \xrightarrow{D^{-1}} (A/R)^R$$
(again see [\cite{cbeg} section 4]). Taking $\b{w} \in A$ to be a representative of $\mu_{\r{nc}}(\omega)$ we can define a relative Gdga analogously to the non-relative case.

\begin{dfn*}\label{def2.11}
Let $A$ be a  non-positively graded smooth $R$-algebra such that $\Omega_R^1 A$ is generated by homogeneous elements of degree $0,-1, \ldots, c$ as a graded $A$-bimodule, $\omega \in \r{DR}_R^2(A)$ a bisymplectic 2-form of degree $c$, and $\xi \in \r{Der}_R(A)$ a super-derivation of degree 1 such that $\xi^2=0$,  $L_\xi(\omega)=0$ and $\xi(\b{w})=0$. The \emph{Ginzburg differential graded algebra} $\f D(A,\omega,\xi)$ is the free product of $R$-algebras $A *_R R[t]$ with $t$ in degree $c-1$ and differential $d$ given by
\begin{align*}
    A \ni a &\mapsto \xi(a) \\
    t &\mapsto \b{w}.
\end{align*}
\end{dfn*}

Following [\cite{gin} section 5] let $F$ be a smooth algebra over $R$ and let $\alpha \in \r{DR}_R^1(F)$ be a cyclic 1-form such that
\begin{align}
    D(\alpha)=0 \quad \r{and} \quad \iota_\delta(\alpha)=0. \label{eq:2.1}
\end{align}
Set $A$ to be the tensor-algebra $T_F(\m D\r{er}_R(F))$ with non-positive grading given by negating the natural tensor grading. $A$ is smooth by [\cite{cbeg} Theorem 5.1.1]. There is a canonical closed cyclic bisymplectic 2-form $\omega \in \r{DR}_R^2(A)$ of degree -1 (see [\cite{cbeg} Theorem 5.1.1 and Proposition 5.4.1]) and by \cref{lem2.2} $\Omega_R^1 A$ is generated by homogeneous elements of degree 0 and $-1$. Let $\xi_\alpha$ be the super-derivation which is the image of $\alpha$ under the following composition 
$$\r{DR}_R^1(F) \hookrightarrow \r{DR}_R^1(A) \xrightarrow[\sim]{(i^\omega)^{-1}} \r{Der}_R(A)$$
where the first inclusion is induced by the map $F \xrightarrow{\sim} A^0 \hookrightarrow A.$

\begin{lem}\label{lem2.7}
With the algebra A, bisymplectic 2-form $\omega$ and super-derivation $\xi_\alpha$ as above we have
$$\xi_\alpha^2=0 \quad \r{and} \quad L_{\xi_\alpha}(\omega)=0.$$
\end{lem}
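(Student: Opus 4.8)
The plan is to treat the two assertions separately: the vanishing $L_{\xi_\alpha}(\omega)=0$ comes immediately from the Cartan identity together with the closedness of $\omega$ and of $\alpha$, while $\xi_\alpha^2=0$ follows from the graded commutator identity of the noncommutative Cartan calculus relating Lie derivatives and contractions, combined with a short grading argument. Throughout I use that $A = T_F(\m D\r{er}_R(F))$ is smooth, so that by \cref{lem2.3} the bisymplectic form $\omega$ is in fact symplectic; this is what makes $\xi_\alpha = (i^\omega)^{-1}(\alpha)$ well defined, and it gives the identity $i_{\xi_\alpha}(\omega) = i^\omega(\xi_\alpha) = \alpha$ in $\r{DR}^1_R(A)$, where $\alpha$ is regarded in $\r{DR}^1_R(A)$ via the inclusion induced by $F\hookrightarrow A$.

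For the first claim, working on $\r{DR}_R(A)$ the Cartan identity \eqref{eq:1.001} applied to $\omega$ gives $L_{\xi_\alpha}(\omega) = D\big(i_{\xi_\alpha}(\omega)\big) + i_{\xi_\alpha}\big(D(\omega)\big)$. The canonical $2$-form $\omega$ is closed, so the second term vanishes, and the first equals $D(\alpha)$ by the identity above. Since the inclusion $\r{DR}^\bullet_R(F)\hookrightarrow\r{DR}^\bullet_R(A)$ intertwines the de Rham differentials, the hypothesis $D(\alpha)=0$ from \eqref{eq:2.1} persists in $\r{DR}^2_R(A)$, whence $L_{\xi_\alpha}(\omega)=0$.

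For the second claim, note that as $\xi_\alpha$ is odd its square satisfies $[\xi_\alpha,\xi_\alpha] = 2\xi_\alpha^2$, so in characteristic $0$ it suffices to prove $[\xi_\alpha,\xi_\alpha]=0$. I would invoke the graded commutator identity $[L_{\xi_\alpha},i_{\xi_\alpha}] = i_{[\xi_\alpha,\xi_\alpha]}$ of operators on $\Omega^\bullet_R A$ (which descends to $\r{DR}_R(A)$); evaluating both sides on $\omega$ and using the first claim to kill the term involving $L_{\xi_\alpha}(\omega)$ yields $i_{[\xi_\alpha,\xi_\alpha]}(\omega) = \pm\, L_{\xi_\alpha}(\alpha)$. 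Expanding $L_{\xi_\alpha}(\alpha)$ by the Cartan identity once more and again using $D(\alpha)=0$ gives $L_{\xi_\alpha}(\alpha) = D\big(i_{\xi_\alpha}(\alpha)\big)$. Finally $i_{\xi_\alpha}(\alpha)=0$ for degree reasons: $\alpha$ is built out of $F$, hence is homogeneous of degree $0$ with respect to the grading induced from $A$, while $\xi_\alpha$ has degree $1$, so $i_{\xi_\alpha}$ raises that grading by $1$ while lowering the tensor degree by $1$; thus $i_{\xi_\alpha}(\alpha)$ lands in the degree-$1$ part of $\r{DR}^0_R(A)$, which is zero because $A$, and hence $\r{DR}^0_R(A)$, is non-positively graded. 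Therefore $i_{[\xi_\alpha,\xi_\alpha]}(\omega)=0$, and since $\omega$ is symplectic the map $i^\omega$ is injective, forcing $[\xi_\alpha,\xi_\alpha]=0$ and hence $\xi_\alpha^2=0$.

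The only genuinely delicate point is the sign bookkeeping in the graded calculus, in particular the commutator identity $[L_{\xi_\alpha},i_{\xi_\alpha}] = i_{[\xi_\alpha,\xi_\alpha]}$ for the \emph{odd} derivation $\xi_\alpha$: one should either quote it from the noncommutative Cartan calculus of \cite{cbeg} or verify it directly on the algebra generators $a$ and $D(a)$ of $\Omega^\bullet_R A$. Everything else is formal, the crucial structural inputs being that $\omega$ is closed and symplectic and that the contraction $i_{\xi_\alpha}$ strictly increases the $A$-induced grading, which forces $i_{\xi_\alpha}(\alpha)$ (indeed also $i_{\xi_\alpha}i_{\xi_\alpha}(\omega)$) to vanish. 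I remark that only the condition $D(\alpha)=0$ from \eqref{eq:2.1} is used here; the condition $\iota_\delta(\alpha)=0$ plays no role in this lemma.
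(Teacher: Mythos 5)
Your first assertion $L_{\xi_\alpha}(\omega)=0$ is proved exactly as in the paper: Cartan identity, $i_{\xi_\alpha}(\omega)=\alpha$, and $D(\alpha)=D(\omega)=0$.

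Your second assertion $\xi_\alpha^2=0$ takes a genuinely different route from the paper (which computes $\xi_\alpha^2$ directly on the tensor monomials $\theta_1\otimes_F\cdots\otimes_F\theta_n$ from the explicit formula for $\xi_\alpha$), but the key step does not go through. The graded commutator identity $[L_{\xi_\alpha},i_{\xi_\alpha}]=i_{[\xi_\alpha,\xi_\alpha]}$ is \emph{false} here. With the paper's conventions ($L_\theta$ is an even, tensor-degree-$0$ derivation and $i_\theta$ an odd, tensor-degree-$(-1)$ super-derivation, Koszul signs taken in the tensor grading) the commutator $[L_{\xi_\alpha},i_{\xi_\alpha}]=L_{\xi_\alpha}i_{\xi_\alpha}-i_{\xi_\alpha}L_{\xi_\alpha}$ is a derivation of tensor degree $-1$; on the generators of $\Omega^\bullet_R A$ it gives $0$ on $a\in A$ and, on $D(a)$, $L_{\xi_\alpha}(\xi_\alpha(a))-i_{\xi_\alpha}(D(\xi_\alpha(a)))=\xi_\alpha^2(a)-\xi_\alpha^2(a)=0$. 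So $[L_{\xi_\alpha},i_{\xi_\alpha}]\equiv0$, whereas $i_{[\xi_\alpha,\xi_\alpha]}=i_{2\xi_\alpha^2}$ evaluates on $D(a)$ to $2\xi_\alpha^2(a)$. Thus the identity you invoke is \emph{equivalent} to the statement $\xi_\alpha^2=0$ you are trying to prove, and the argument is circular; this is precisely the sign/parity issue you flagged but did not resolve.

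The good news is that the degree argument you already deploy for $i_{\xi_\alpha}(\alpha)=0$ disposes of $\xi_\alpha^2=0$ directly, with no Cartan calculus at all. Since $\xi_\alpha$ is a degree-$1$ super-derivation of $A$, $\xi_\alpha^2=\tfrac12[\xi_\alpha,\xi_\alpha]$ is an (even) derivation of $A$ of degree $2$. But $A=T_F(\m D\mathrm{er}_R(F))$ is generated over $R$ by $F=A^0$ and $\m D\mathrm{er}_R(F)=A^{-1}$, and $A$ is concentrated in non-positive degrees, so $\xi_\alpha^2$ sends these generators into $A^2=A^1=0$. Being a derivation it therefore vanishes identically. (Equivalently $i_{\xi_\alpha^2}(\omega)$ would have $A$-degree $+1$ and hence is $0$; then injectivity of $i^\omega$ gives $\xi_\alpha^2=0$, as you wished to argue.) Either phrasing repairs your proof while bypassing the problematic commutator identity; the paper's own route is the explicit cancellation, which avoids these bookkeeping issues entirely.
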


\begin{proof}
In \cite{gin} in the proof of Proposition 5.2.4 we are given that $\xi_\alpha$ sends
\begin{align}
    A^0=F \ni f &\mapsto 0 \label{eq:1.002}\\
    A^1= \m D \r{er}_R(F) \ni \theta &\mapsto \iota_\theta(\alpha) \in A^0. \nonumber
\end{align}
It follows that (omitting the $F$ subscript in the tensor product to reduce clutter)
$$A^n \ni \theta_1 \otimes \ldots \otimes \theta_n \xmapsto{\,\,\, \xi_\alpha \,\,\,} \sum_{i=1}^n (-1)^{i-1} \theta_1 \otimes \ldots \otimes \theta_{i-1} \otimes
\iota_{\theta_i}(\alpha)\, \theta_{i+1} \otimes \ldots \otimes \theta_n \in A^{n-1}$$
hence
\begin{align*}
    \xi_\alpha^2(\theta_1 \otimes \ldots \otimes \theta_n) &= \sum_{i=1}^n (-1)^{i-1} \bigg( \sum_{j < i} (-1)^{j-1}\, \theta_1 \otimes \ldots \iota_{\theta_j}(\alpha)\, \theta_{j+1} \ldots \iota_{\theta_i}(\alpha)\, \theta_{i+1} \ldots \otimes \theta_n \\
    &\qquad \qquad + \sum_{j > i} (-1)^{j}\, \theta_1 \otimes \ldots \iota_{\theta_i}(\alpha)\, \theta_{i+1} \ldots \iota_{\theta_j}(\alpha)\, \theta_{j+1} \ldots \otimes \theta_n \bigg) \\
    &=\sum_{i=1}^{n-1} \bigg( \sum_{j>i}^n (-1)^{i+j-1} \, \theta_1 \otimes \ldots \iota_{\theta_i}(\alpha)\, \theta_{i+1} \ldots \iota_{\theta_j}(\alpha)\, \theta_{j+1} \ldots \otimes \theta_n \\
    &\qquad \qquad + (-1)^{i+j-2}\, \theta_1 \otimes \ldots \iota_{\theta_i}(\alpha)\, \theta_{i+1} \ldots \iota_{\theta_j}(\alpha)\, \theta_{j+1} \ldots \otimes \theta_n \bigg) \\
    &=0.
\end{align*}
Then because the elements $\theta_1 \otimes \ldots \otimes \theta_n$ generate $A^n$ over $F$ we have that $\xi_\alpha^2(x)=0$ for any $x \in A^n$ and any $n \in \m N$.

To show $L_{\xi_\alpha}(\omega)=0$, by definition we have $\xi_\alpha=(i^\omega)^{-1}(\alpha)$ and so $i_{\xi_\alpha}(\omega)= \alpha$. Hence by the Cartan identity \eqref{eq:1.001} we get
$$L_{\xi_\alpha}(\omega)= D(i_{\xi_\alpha}(\omega)) + i_{\xi_\alpha}(D(\omega)) = D(\alpha) + 0 =0$$
using the assumption in \eqref{eq:2.1} that $D(\alpha)=0$ and the fact that $\omega$ is closed.
\end{proof}

\begin{dfn*}\label{def2.12}
Let $F$ be a smooth $R$-algebra and $\alpha \in \r{DR}_R^1(F)$ a cyclic 1-form satisfying the conditions in \eqref{eq:2.1}. Define the \emph{Gdga associated to the data $F, \alpha$} to be
$$\f D(F,\alpha) =\f D(A, \omega, \xi_\alpha)$$
where $A  = T_F(\m D\r{er}_R(F))$, and $\omega$ and $\xi_\alpha$ are as above.
\end{dfn*}

\begin{rem}\label{rem2.8}
As per the definition of a Gdga we still need to establish the condition $\xi_\alpha(\b{w}) = 0$. From [\cite{gin} Proposition 5.2.4 and Remark 5.2.5] we do not necessarily have the additional condition $R \cap \xi_\alpha(A) = 0$ analogous to $k \cap \xi(A)=0$ which we had in the non-relative setting in \cref{rem2.4}. However $\xi_\alpha(\b{w}) = 0$ instead follows from the assumption $\iota_\delta(\alpha)=0$ in \eqref{eq:2.1}. Indeed, from \eqref{eq:1.002} we have that $\xi_\alpha(\delta)=\iota_\delta(\alpha) = 0$. Then by [\cite{cbeg} Theorem 5.1.1] $\delta$ is a representative of $\mu_{\r{nc}}(\omega)$ giving the required result.
\end{rem}

This smaller class of Gdgas have some nice properties which were studied by Ginzburg in [\cite{gin} section 5.3]. In particular he showed that the zeroth cohomology $H^0(\f D(F, \alpha))$ of the Gdga is Calabi-Yau of dimension 3 if and only if the so-called \emph{completed} dga $\widehat{\f D}(F, \alpha)$ is acyclic. Alternatively if $\f D(F, \alpha)$ has an additional strictly positive grading (i.e. $(\f D(F, \alpha))_0 = k$) which is preserved by the differential $d$, then $H^0(\f D(F, \alpha))$ is Calabi-Yau of dimension 3 if and only if $\f D(F, \alpha)$ is acyclic. Later on it was shown by Keller and Van den Bergh that $\f D(F, \alpha)$ is in fact itself Calabi-Yai of dimension 3.

\begin{thm}[\cite{kel} Theorem A.12]\label{thm2.9}
The Gdga $\f D(F, \alpha)$ is Calabi-Yau of dimension 3.
\end{thm}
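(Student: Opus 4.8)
The natural strategy is to exhibit $\mathfrak{D}(F,\alpha)$ as a (relative) Ginzburg dga of the shape to which the general Keller–Van den Bergh machinery applies, and then invoke their result verbatim. Concretely, I would first unwind the construction of $A = T_F(\mathbb{D}\mathrm{er}_R(F))$ together with its canonical degree $-1$ bisymplectic form $\omega$ and the derivation $\xi_\alpha = (i^\omega)^{-1}(\alpha)$, and observe — using Lemma 2.7, Remark 2.8, and the fact (Remark 2.4(3)) that bisymplectic plus smooth forces symplectic — that all the hypotheses in the definition of a relative Gdga are met, so that $\mathfrak{D}(F,\alpha) = \mathfrak{D}(A,\omega,\xi_\alpha) = A *_R R[t]$ with $t$ in degree $c-1 = -2$ is genuinely defined. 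The point is that this is not just any Gdga: because $A$ is itself of tensor-algebra type over the smooth algebra $F$, with the tautological symplectic structure coming from the $T^*$-type construction of \cite{cbeg} Section 5, the triple $(A,\omega,\xi_\alpha)$ is exactly the input that [\cite{kel} Theorem A.12] (or the surrounding results in that appendix) recognizes as a deformed $3$-Calabi–Yau completion / Ginzburg dga attached to the smooth algebra $F$ and the class $\alpha \in \mathrm{DR}^1_R(F)/(\text{exact})$.

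The key steps, in order: (1) identify $\mathfrak{D}(F,\alpha)$ with Ginzburg's dga $\mathfrak{D}(F,\alpha)$ as originally written in \cite{gin} Section 5, checking that the generators and differential match — $F$ in degree $0$, a copy of $\mathbb{D}\mathrm{er}_R(F)$ (equivalently, $\Omega^1_R F{}^\vee$) in degree $-1$ with differential reading off $\iota_{(-)}(\alpha)$, and the degree $-2$ loop $t$ with $dt = \mathbf{w}$, where by Remark 2.8 we may take $\mathbf{w}$ to represent $\mu_{\mathrm{nc}}(\omega) = \delta$; (2) cite the identification, due to Keller and Van den Bergh, of this dga with the (relative) $3$-Calabi–Yau completion of $F$ twisted by $\alpha$, or directly quote [\cite{kel} Theorem A.12] which asserts precisely that such Ginzburg dgas carry a canonical $d$-Calabi–Yau structure with $d=3$; (3) transport the bimodule self-duality $\mathfrak{D} \xrightarrow{\sim} \mathfrak{D}^\vee[3]$ along this identification and note it is strong (satisfies $f = f^\vee[3]$), which is part of the Keller–Van den Bergh statement, so no separate symmetry check is needed.

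Since essentially everything is being quoted, the only real content is the \emph{bookkeeping} of conventions: matching gradings (Ginzburg's $T^*$-construction puts $\omega$ in degree $-1$, so $c = -1$ and $t$ sits in degree $c-1 = -2$, giving Calabi–Yau dimension $-c+2 = 3$, consistent with Theorem 2.5), matching the relative structure over $R = kI$ (double derivations killing $R$, the element $e = \sum_i 1_i \otimes 1_i$, $\delta$ versus $\Delta$), and matching sign/orientation conventions for $i^\omega$, $\iota_\lambda$ and the flip $\beta$ so that $\xi_\alpha = (i^\omega)^{-1}(\alpha)$ here is the same derivation Keller–Van den Bergh use. The main obstacle is therefore not a deep argument but ensuring that the hypotheses of [\cite{kel} Theorem A.12] — in particular that $F$ is smooth over $R$ (given) and that $\alpha$ satisfies $D\alpha = 0$, $\iota_\delta\alpha = 0$ (our \eqref{eq:2.1}), which are exactly the conditions guaranteeing $\xi_\alpha^2 = 0$ and $\xi_\alpha(\mathbf{w}) = 0$ — are literally the hypotheses under which their theorem is proved; once that dictionary is in place the theorem follows immediately, and I would present it as such rather than reproving the Calabi–Yau property from scratch.
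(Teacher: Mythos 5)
Your proposal is correct and matches the paper's treatment: the paper states this result as a direct citation of [\cite{kel} Theorem A.12] and gives no independent proof, so the only content is exactly the convention-matching you describe (identifying $\f D(F,\alpha)$ with the Ginzburg/Keller–Van den Bergh dga, checking $c=-1$ so the Calabi–Yau dimension is $-c+2=3$, and confirming hypotheses \eqref{eq:2.1} give $\xi_\alpha^2=0$ and $\xi_\alpha(\b w)=0$ via \cref{lem2.7} and \cref{rem2.8}). Nothing further is needed.
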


\subsection{Quivers and Jacobi algebras}
A \i{quiver} $Q$ is a pair $(Q_0,Q_1)$ consisting of a finite set of vertices $Q_0$, and a finite set of directed edges or arrows $Q_1$ between those vertices. We allow loops and cycles in our quivers. For each quiver we have two maps, $s,t: Q_1 \rightarrow Q_0$, the \i{source} and \i{target} maps that send an arrow to its source vertex or target vertex respectively.
For $i \in Q_0$ let $e_i$ denote the constant path in $Q$ at the vertex $i$. 

\begin{dfn*}
The \i{path algebra} $k Q$ of the quiver $Q$ is the algebra obtained by taking the free $k$-algebra over all the constant paths $e_i$ and all the arrows in $a \in Q_1$, modulo the relations that if two paths do not concatenate then their product is 0.
\end{dfn*}

\begin{dfn*}
A \i{representation} of a quiver $Q$ is a vector space $V$ that comes with a decomposition $V= \bigoplus_{i \in Q_0} V_i$ and linear maps $f_a: V_{i} \rightarrow V_{j}$ for each arrow $a:i \rightarrow j \in Q_1$.
\end{dfn*}

We shall focus on finite dimensional representations which can be grouped via a dimension vector $n = (n_i) \in \m N^{Q_0}$ given by the decomposition, i.e. $n_i = \r{dim}(V_i)$ for each $i$. Let $\r{Rep}_n(Q)$ denote the stack of $n$-dimensional representations of $Q$. Explicitly we have the following global quotient stack description
$$\r{Rep}_n(Q) \cong \prod_{a:i \rightarrow j \in Q_1} \r{Mat}_{n_i\times n_j} (k)/\prod_{l \in Q_0} \r{GL}_{n_l}$$
where $(g_l) \in \prod_{l \in Q_0} \r{GL}_{n_l}$ acts on $(f_a) \in \prod_{a\in Q_1} \r{Mat}_{n_i\times n_j} (k)$ via $(g_l)  \cdot (f_a) = (g_j f_a g_i^{-1}).$

To get more interesting stacks of representations we may also consider quivers with potential. Recall a \i{potential} is an element $W \in kQ/[kQ, kQ]$ and in this case can be written as a sum of cycles in $Q$ up to cyclic permutation. Given a potential $W$ we can consider its ``noncommutative derivatives" $\partial W/\partial a$ which can be defined by taking all the cycles in $W$ that contain the arrow $a$, cyclically permuting $a$ to the front of those cycles and then deleting $a$. Concretely let $\f d: kQ/[kQ,kQ] \rightarrow kQ$ be the map that sends a cycle 
$$a_na_{n-1} \ldots a_1 \mapsto \sum_j a_ja_{j-1} \ldots a_1 a_n \ldots a_{j+1},$$
and for $a \in Q_1$ let $a^{-1}:kQ \rightarrow kQ$ send the path
$$p=b_m b_{m-1} \ldots b_1 \mapsto 
\begin{cases}
b_{m-1} \ldots b_1 &\r{if} \,\, b_m=a \\
0 &\r{otherwise}
\end{cases}$$
then we define $\partial W/ \partial a$ as $a^{-1} \circ \f d(W)$. These derivatives give us an elements in the path algebra and so we obtain the ideal 
$$I_W = (\partial W/\partial a \,:\, a \in Q_1).$$

\begin{dfn*}
The \i{Jacobi algebra} of the quiver $Q$ with potential $W$ is the quotient algebra
$$\r{Jac}(Q,W) = kQ/I_W.$$
\end{dfn*}

The potential $W$ also describes a natural map $\r{Tr}(W)_n : \r{Rep}_n(Q) \rightarrow k$ that sends a representation $(f_a)$ to the trace of the matrix $W(f_a)$.

\begin{prop}[\cite{seg} Proposition 3.8]\label{prop2.10}
There is an isomorphism of stacks
$$\r{Rep}_n(\r{Jac}(Q,W)) \cong \r{crit}(\r{Tr}(W)_n).$$
\end{prop}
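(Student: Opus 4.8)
The plan is to unwind both sides of the claimed isomorphism and match them explicitly. On the left, a point of $\r{Rep}_n(\r{Jac}(Q,W))$ is a representation of the path algebra $kQ$ of dimension vector $n$ — that is, a tuple $(f_a)_{a \in Q_1}$ of matrices — which in addition satisfies all the relations in the ideal $I_W$, i.e. $(\partial W/\partial a)(f_a) = 0$ for every arrow $a \in Q_1$. So as a substack of $\r{Rep}_n(Q) \cong \prod_{a} \r{Mat}_{n_i \times n_j}(k)/\prod_l \r{GL}_{n_l}$, it is cut out by the matrix equations $(\partial W/\partial a)(f_\bullet) = 0$, one for each arrow, with the same $\prod_l \r{GL}_{n_l}$-action. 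On the right, $\r{crit}(\r{Tr}(W)_n)$ is the critical locus of the function $\r{Tr}(W)_n : \r{Rep}_n(Q) \to k$, which descends from the $\prod_l \r{GL}_{n_l}$-invariant function $(f_a) \mapsto \r{tr}\, W(f_a)$ on the affine space $\prod_a \r{Mat}_{n_i \times n_j}(k)$; its critical locus is the vanishing of all partial derivatives of this function with respect to the matrix entries of the $f_a$. So both sides are quotient stacks of closed subschemes of the same affine space by the same group, and it suffices to identify these two subschemes.

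First I would fix the linear-algebra computation that is the heart of the matter: for a cycle $c = a_n a_{n-1} \cdots a_1$ in $Q$ and a fixed arrow $a \in Q_1$, differentiating the scalar $\r{tr}(f_{a_n} \cdots f_{a_1})$ with respect to the $(p,q)$ entry of the matrix $f_a$ picks out exactly the terms of the cycle where $a$ occurs, cyclically rotates that occurrence of $a$ to the outside of the trace, and leaves behind the $(q,p)$ entry of the product of the remaining matrices — which is precisely the matrix obtained by evaluating the noncommutative derivative $\partial c/\partial a$ (defined via $a^{-1} \circ \f d$) at the representation, transposed appropriately. Concretely, using $\partial \r{tr}(XBY C)/\partial X = (BYCX \,\cdots)^{T}$-type identities and summing over all occurrences of $a$ in $c$ (and over all cycles appearing in $W$), one gets
\begin{align*}
\frac{\partial}{\partial (f_a)} \, \r{tr}\, W(f_\bullet) \;=\; \Big( \big(\tfrac{\partial W}{\partial a}\big)(f_\bullet) \Big)^{T},
\end{align*}
so that the vanishing of this matrix of partials is literally equivalent to $(\partial W/\partial a)(f_\bullet) = 0$ in $\r{Mat}_{n_i \times n_j}(k)$. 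Ranging over all $a \in Q_1$ then shows the two subschemes of $\prod_a \r{Mat}_{n_i \times n_j}(k)$ coincide, not just set-theoretically but scheme-theoretically, since the defining equations agree on the nose (the Jacobi-ideal relations are exactly the matrix entries of these gradients). Passing to the quotient by $\prod_l \r{GL}_{n_l}$ — which acts compatibly on both descriptions, since $\r{Tr}(W)_n$ is invariant and the Jacobi relations are $\prod_l \r{GL}_{n_l}$-equivariant — yields the asserted isomorphism of stacks.

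The main obstacle, and the one place to be careful, is the bookkeeping in the derivative computation: getting the cyclic permutations, the transposes, and the index placements exactly right so that the matrix of partial derivatives of $\r{tr}\, W$ is identified with the correct evaluation of $\partial W/\partial a$ rather than, say, its transpose or $\partial W/\partial a$ for the opposite quiver. Once the sign- and transpose-bookkeeping is pinned down for a single cycle (and the general case follows by linearity in $W$ and by summing over occurrences of $a$), the rest — compatibility of the group actions and the descent to stacks — is formal. I would also remark that this is essentially the standard ``noncommutative critical locus'' computation, so one may alternatively just cite \cite{seg} Proposition 3.8 directly; the sketch above is how one reconstructs it.
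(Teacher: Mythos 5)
The paper does not give a proof of this proposition: it is cited directly from \cite{seg} (Proposition 3.8) and used as a black box, so there is no in-paper argument to compare against. Your reconstruction is correct and matches the standard proof found in the literature (and essentially what Segal does): the central computation — that differentiating $\r{tr}\,W(f_\bullet)$ with respect to the entries of $f_a$ picks out the occurrences of $a$ in each cycle, cyclically rotates them to the front, and leaves the $(q,p)$ entry of the evaluation of $\partial W/\partial a$, hence the gradient equals $\big((\partial W/\partial a)(f_\bullet)\big)^T$ — is exactly right, and both the scheme-theoretic matching of the defining ideals and the $\prod_l \r{GL}_{n_l}$-equivariant descent to quotient stacks are handled correctly.
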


Unfortunately not all Jacobi algebras are superpotential algebras or even Calabi-Yau algebras but there is a natural Gdga associated to them. The following comes from [\cite{gin} section 4.2]. We work in the relative setting and let $R=kQ_0$ be the $k$-algebra over the vertex set of the quiver $Q$ or equivalently the $k$-algebra over the constant paths. The constant paths $e_i$ then take the role of the functions $1_i$ mentioned in Section 2.2.

\begin{lem}\label{lem2.11}
Let $V$ be the vector space $k Q_1$ viewed in the natural way as an $R$-bimodule. Then we have an isomorphism of algebras
\begin{align}
    kQ \cong T_R(V). \label{eq2.2}
\end{align}
In particular $kQ$ is a smooth $R$-algebra.
\end{lem}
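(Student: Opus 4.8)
The plan is to exhibit an explicit $R$-algebra isomorphism $T_R(V) \xrightarrow{\,\sim\,} kQ$ by matching monomial bases, and then to deduce smoothness from \cref{lem2.1} applied over $R$. Recall that $T_R(V) = \bigoplus_{n \geq 0} V^{\otimes_R n}$ with $V^{\otimes_R 0} = R$, and that the natural $R = kQ_0$-bimodule structure on $V = kQ_1$ is $e_l \cdot a = a$ if $l = t(a)$ and $0$ otherwise, and $a \cdot e_l = a$ if $l = s(a)$ and $0$ otherwise, matching the identity $a = e_{t(a)}\, a\, e_{s(a)}$ in $kQ$.

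First I would pin down a $k$-basis of each homogeneous piece $V^{\otimes_R n}$. Since $R$ is spanned by the orthogonal idempotents $e_l$ ($l \in Q_0$), the relations defining the tensor product over $R$ force $a_n \otimes_R \cdots \otimes_R a_1 = 0$ unless $s(a_{m+1}) = t(a_m)$ for $1 \leq m < n$, i.e. unless the arrows $a_1, \ldots, a_n$ concatenate to a path of $Q$; and the surviving elements, indexed by such composable tuples, are readily checked to be $k$-linearly independent. Hence $V^{\otimes_R n}$ has a $k$-basis in bijection with the length-$n$ paths of $Q$ (and $V^{\otimes_R 0} = R$ has the $e_l$ as a basis, i.e. the length-$0$ paths), which is precisely the standard path basis of $kQ$ graded by length.

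Next I would build the map via the universal property of the tensor algebra: the $R$-bimodule inclusion $V = kQ_1 \hookrightarrow kQ$, $a \mapsto a$, extends uniquely to an $R$-algebra homomorphism $\varphi : T_R(V) \to kQ$, explicitly $\varphi(a_n \otimes_R \cdots \otimes_R a_1) = a_n a_{n-1} \cdots a_1$ and $\varphi|_R = \r{id}_R$. By the basis description above, $\varphi$ carries the path basis of $V^{\otimes_R n}$ bijectively onto the basis of length-$n$ paths of $kQ$, so $\varphi$ is a $k$-linear bijection and hence an isomorphism of $R$-algebras, establishing \eqref{eq2.2}.

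Finally, $R = kQ_0 \cong k^{|Q_0|}$ is finite-dimensional semisimple, hence so is $R^e = R \otimes R^{\r{op}}$; thus $\Omega^1_R R = \r{Ker}(R \otimes_R R \xrightarrow{\,\sim\,} R) = 0$ is projective (so $R$ is smooth over itself), and the finitely generated $R$-bimodule $V = kQ_1$ is automatically projective. Placing $V$ in degree $0$, \cref{lem2.1} then yields that $T_R(V)$ is smooth, and transporting along $\varphi$ so is $kQ$. I do not expect any real obstacle here; the only step demanding genuine care is the basis computation for $V^{\otimes_R n}$ in the second paragraph — verifying that tensoring over $kQ_0$ kills exactly the non-composable tuples of arrows while leaving the composable ones linearly independent — after which everything follows formally from the universal property and \cref{lem2.1}.
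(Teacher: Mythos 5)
Your proof is correct and takes essentially the same approach as the paper: the paper's proof simply declares the isomorphism ``immediate from the definition of the multiplication on the path algebra versus on the tensor algebra'' and then applies \cref{lem2.1} to the smooth algebra $R$ and the finitely generated projective $R$-bimodule $V$. You have filled in exactly the details the paper leaves implicit (the $R$-bimodule structure, the basis of $V^{\otimes_R n}$ as composable paths, the universal-property construction, and why $R$ is smooth and $V$ projective), but the route is the same.
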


\begin{proof}
The isomorphism is immediate from the definition of the multiplication on the path algebra versus on the tensor algebra. For smoothness, clearly $R$ is smooth and as $V$ is finite dimensional it is projective as an $R$-bimodule hence by \cref{lem2.1} $T_R(V)$ is smooth. 
\end{proof}

Given a quiver $Q$ with potential $W$ consider the new quiver $\widehat Q$ which has vertex set $\widehat Q_0 = Q_0$ and arrows
\begin{align*}
    &a : i \rightarrow j \quad\,\, \r{of degree  0 for any}\,\,\,\, a : i \rightarrow j \in Q_1 \\
    &a^* : j \rightarrow i \quad \r{of degree -1 for any}\,\,\,\, a : i \rightarrow j \in Q_1 \\
    \r{loops} \,\,\, &t_i : i \rightarrow i \quad\,\, \r{of degree -2 for any}\,\,\,\, i \in Q_0.
\end{align*}

\begin{dfn*}
The \emph{Gdga associated to $(Q,W)$}, denoted by $\Gamma_{Q,W}$, is the path algebra $k \widehat Q$ along with the differential $d$ that sends
\begin{align*}
    &a \mapsto 0 \\
    &a^* \mapsto \partial W/\partial a \\
    &t_i \mapsto e_i\Big(\sum_{a \in Q_1} [a,a^*]\Big)e_i.
\end{align*}
\end{dfn*}

\begin{prop}
$\Gamma_{Q,W}$ is a Gdga in the sense of \cref{def2.11}.
\end{prop}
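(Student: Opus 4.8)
The plan is to recognise $\Gamma_{Q,W}$ as the Gdga $\f D(F,\alpha)$ of \cref{def2.12} attached to the data $F = kQ$ and $\alpha = D(W) \in \r{DR}^1_R(kQ)$. Once that identification is in place the conclusion is automatic: $\f D(F,\alpha)$ is by \cref{def2.12} of the form $\f D(A,\omega,\xi_\alpha)$ with $A = T_F(\m D\r{er}_R F)$, and \cref{lem2.2}, \cref{lem2.7}, \cref{rem2.8}, \cref{lem2.11} together with [\cite{cbeg} Theorem 5.1.1] already supply exactly the hypotheses of \cref{def2.11} (smoothness of $A$, $\Omega^1_R A$ generated in degrees $0,-1$, a bisymplectic $\omega$ of degree $c=-1$, $\xi_\alpha^2 = 0$, $L_{\xi_\alpha}(\omega)=0$, $\xi_\alpha(\b{w})=0$). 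So there are really only two things to carry out: check that $(F,\alpha)$ satisfies \eqref{eq:2.1}, and match up the underlying algebras and the differentials.

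For the first, $F = kQ$ is a smooth $R$-algebra by \cref{lem2.11}, and for $\alpha = D(W)$ the condition $D(\alpha) = 0$ is immediate since $D^2 = 0$ on $\r{DR}_R(F)$. The condition $\iota_\delta(\alpha) = 0$ I would obtain from the direct computation that $\iota_\delta(D(p)) = 0$ for every path $p$ of $Q$: using $i_\delta(D(p)) = \delta(p)$ and $\delta(a) = a \otimes e_{s(a)} - e_{t(a)} \otimes a$ for an arrow $a$, together with the Leibniz rule for the double super-derivation $i_\delta$, the two contributions cancel after applying $m \circ \beta$ (one checks the single-arrow case directly, and the general case follows by induction on the length of $p$). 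Hence $\f D(kQ, D(W))$ is defined.

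For the identification I would invoke the standard description of the double derivations of a path algebra: $\m D\r{er}_R(kQ)$ is the free $kQ$-bimodule on the partial-derivative double derivations $\{\partial_a : a\in Q_1\}$, and the $R$-bimodule weights of $\partial_a$ are those of an arrow from $t(a)$ to $s(a)$ of degree $-1$. Consequently $A = T_F(\m D\r{er}_R F)$ is the path algebra of the quiver with arrows $\{a\}\cup\{a^*\}$ (that is, $\widehat Q$ with the loops $t_i$ deleted), under $a\leftrightarrow a$ and $\partial_a \leftrightarrow a^*$, and $A *_R R[t]$ is then $k\widehat Q$ under $e_i t \leftrightarrow t_i$ (recall $t$ sits in degree $c-1 = -2$). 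Under these identifications the two differentials agree on generators: on each $a$ both are $0$; on each $a^*$ the formula \eqref{eq:1.002} (from the proof of \cref{lem2.7}) gives $\xi_\alpha(\partial_a) = \iota_{\partial_a}(D(W))$, which equals $\partial W/\partial a$ because $D(W) \equiv \sum_{b} (\partial W/\partial b)\, D(b)$ in $\r{DR}^1_R(kQ)$ (cyclically permute each cycle of $W$ so that $b$ comes last) while $\iota_{\partial_a}$ annihilates $D(b)$ for $b\neq a$ and carries $D(a)$ to an idempotent of $R$; and on $e_i t$ we get $e_i\,\b{w}\,e_i$, where by [\cite{cbeg} Theorem 5.1.1] the moment map value $\mu_{\r{nc}}(\omega)$ is represented by $\delta = \sum_a [a,\partial_a] = \sum_a [a,a^*]$, a sum of loops, so $e_i \b{w} e_i = e_i\big(\sum_a[a,a^*]\big)e_i$, exactly the prescribed image of $t_i$.

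The step requiring care — and the main obstacle — is precisely this last identification: pinning down the $R$-bimodule structure on $\m D\r{er}_R(kQ)$ so that $\partial_a$ genuinely is the arrow $a^*\colon t(a)\to s(a)$, fixing normalisations so that the signs appearing in the canonical $2$-form $\omega$, in $\iota_{\partial_a}(D(W))$, and in the chosen definition of $\partial W/\partial a$ are mutually consistent, and confirming the moment-map identity $\mu_{\r{nc}}(\omega) = [\delta]$ in the explicit form $\sum_a[a,a^*]$. None of these is deep, but they are where all the conventions must be reconciled.
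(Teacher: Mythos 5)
Your proposal is correct, but it takes a genuinely different route from the paper's proof. The paper proves the proposition by a direct verification: it explicitly identifies $\omega = \sum_a D(a^*)D(a)$ and checks bisymplecticity from scratch (a surjectivity/injectivity argument for $\iota^\omega$ using the path basis of $k\overline{Q} \otimes_R k\overline{Q}$), then directly computes $\iota_\delta(\omega)$, $L_\xi(\omega)$, $\xi^2$, and $\xi(\b{w})$ on the doubled quiver. You instead recognise $\Gamma_{Q,W}$ as $\f D(kQ, D(W))$ in the sense of \cref{def2.12} and let \cref{lem2.2}, \cref{lem2.7}, \cref{rem2.8}, and the citations to \cite{cbeg} discharge the axioms of \cref{def2.11}; the only work left to you is verifying \eqref{eq:2.1} for $\alpha = D(W)$ and matching generators and differentials. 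Interestingly, the paper \emph{also} carries out exactly that identification, but only \emph{after} the proposition's proof (in order to deduce that $\Gamma_{Q,W}$ is $3$-Calabi-Yau via \cref{thm2.9}). Your argument therefore essentially re-orders the paper's material: establish the identification first and deduce the Gdga property as a corollary. That buys you the bisymplecticity of $\omega$ for free from [\cite{cbeg} Theorem 5.1.1] rather than by the paper's somewhat involved path-length induction, at the cost of having to pin down the identification $T_{kQ}(\m D\r{er}_R(kQ)) \cong k\overline{Q}$ and the associated sign normalisations precisely (which you correctly flag as the delicate part, and which the paper also handles when it proves the identification).

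Two small points. First, your verification of $\iota_\delta(D(W)) = 0$ is clean once you observe the telescoping $i_\delta(D(p)) = \delta(p) = p \otimes e_{s(p)} - e_{t(p)} \otimes p$, whence $\iota_\delta(D(p)) = e_{s(p)}p - pe_{t(p)}$, which vanishes for every path $p$. The paper actually obtains $\iota_\delta(\alpha) = 0$ the other way round, from $\xi(\b{w})=0$ via \cref{rem2.8}, so your direct computation makes the argument self-contained in a way the paper's post-hoc remark is not. Second, the phrase "$\iota_{\partial_a}$ carries $D(a)$ to an idempotent of $R$" is slightly off as stated: the reduced contraction must be applied to the whole term $D(a)\,\partial W/\partial a$, since $\iota_{\partial_a} = m \circ \beta \circ i_{\partial_a}$ interleaves the two tensor factors, and $\iota_{\partial_a}(D(a))$ by itself is $e_{s(a)}e_{t(a)}$, which vanishes unless $a$ is a loop. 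The end result $\iota_{\partial_a}(D(W)) = \partial W/\partial a$ is correct and is exactly the computation the paper records.
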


\begin{proof}
Let $\overline{Q}$ be the double quiver of $Q$, then the underlying algebra of $\Gamma_{Q,W}$ is $k\overline{Q} *_R R[t]$ where $\sum_{i \in Q_0} t_i = t$. As $k \overline{Q}$ is a path algebra of a quiver it is smooth by \cref{lem2.11}. If $p$ is a path in $k \overline{Q}$ and we write $p=p_1xp_2$ for some arrow $x \in \overline{Q}_1$, then $\Omega^1_R k \overline{Q}$ is generated as a $R$-bimodule by
$$p_1 x \otimes_R p_2-p_1 \otimes_R xp_2$$
for all paths $p$ and arrows $x$. Hence, because
$$p_1 \cdot (x \otimes_R 1 -1 \otimes_R x) \cdot p_2 = p_1 x \otimes_R p_2-p_1 \otimes_R xp_2$$
$\Omega^1_R k \overline{Q}$ is generated as a $k \overline{Q}$-bimodule by the homogeneous elements of degree 0 and -1. From [\cite{cbeg} section 8.1 and Proposition 8.1.1] we have that the 2-form $\omega$ is \footnote{note this is $-1 \cdot$ the 2-form given in \cite{cbeg} due to a sign error in [\cite{cbeg} Lemma 3.1.1 (ii)]}
$$-\sum_{a \in Q_1} D(a) \cdot D(a^*) = \sum_{a \in Q_1} D(a^*) \cdot D(a)$$
which has degree $-1$, and the super-derivation is $\xi=\{W,-\}_\omega$.

We first show that $\omega$ is bisymplectic. Let $\lambda \in \m D\r{er}_R(k \overline{Q})$ be a double derivation defined on arrows $x \in \overline{Q}_1$ by
$$\lambda(x) = \sum_r c_r^x p_r^x \otimes q_r^x$$
where $c_r^x \in k$ and $p_r^x,\, q_r^x$ are paths in $k\overline{Q}$. Note that for each term in $\lambda(x)$ we have $s(x)=s(q_r^x)$ and $t(x)=t(p_r^x)$ for the paths $p_r^x, q_r^x$. We consider the map $\iota^\omega$ that sends $\lambda \mapsto \iota_\lambda(\omega).$ Then
\begin{align}
    \iota_\lambda(\omega) &= m(\beta \circ i_\lambda(\omega)) \nonumber \\ 
    &= m \Big(\beta \Big(\sum_{a \in Q_1} -\lambda(a) \cdot D(a^*) + D(a) \cdot \lambda(a^*) \Big) \Big) \nonumber \\
    &=  m \Big(\beta \Big(\sum_{a \in Q_1} -\Big( \sum_r c_r^a p_r^a \otimes q_r^a \Big) \cdot D(a^*) + D(a) \cdot \Big(\sum_s c_s^{a^*} p_s^{a^*} \otimes q_s^{a^*} \Big) \Big) \Big) \nonumber \\
    &= m \Big(\beta \Big(\sum_{a \in Q_1} -\Big( \sum_r c_r^a p_r^a \otimes (q_r^a D(a^*)) + \sum_s c_s^{a^*}(D(a)\, p_s^{a^*}) \otimes q_s^{a^*} \Big) \Big) \Big) \nonumber \\
    &= \sum_{a \in Q_1} -\Big( \sum_r c_r^a q_r^a D(a^*)\,p_r^a  - \sum_s c_s^{a^*}q_s^{a^*} D(a)\, p_s^{a^*}\Big) \nonumber \\
    &= \sum_{a \in Q_1} -\Big( \sum_r c_r^a (q_r^a a^* \otimes_R p_r^a -q_r^a \otimes_R a^*p_r^a)  - \sum_s c_s^{a^*}(q_s^{a^*} a \otimes_R p_s^{a^*}-q_s^{a^*} \otimes_R ap_s^{a^*})\Big). \label{eq:2.3}
\end{align}
For surjectivity of $\iota^\omega$, as $\Omega^1_R k \overline{Q}$ is generated by $p_1 x \otimes_R p_2-p_1 \otimes_R xp_2$ for all paths $p=p_1 x p_2$ and arrows $x \in k \overline{Q}_1$, if we define $\lambda_{p,x} \in \m D\r{er}_R(k \overline{Q})$ by $\lambda_{p,x}(x^*)=p_2 \otimes p_1$ and $\lambda(y)=0$ for all other $y \in \overline{Q}_1$ (\i{notation: $(x^*)^*=x$}) then we can see that $\iota_{\lambda_{p,x}}(\omega)=p_1x \otimes_R p_2 - p_1 \otimes_R xp_2$ using equation \eqref{eq:2.3}. For injectivity suppose that $\iota_\lambda(\omega)=0$. Each simple tensor element in the sum given in equation \eqref{eq:2.3} is non-zero since $s(q_r^x)=s(x)=t(x^*)$ and $t(p_r^x)=t(x)=s(x^*)$. Then because simple tensor elements give a basis of $k \overline{Q} \otimes_R k \overline{Q}$, for each $x_0 \in \overline{Q}_1$ and each $r_0$ with paths $p_{r_0}^{x_0}, q_{r_0}^{x_0}$ in $k \overline{Q}$ given by $\lambda(x_0)$ there must exist at least one $x_1 \in \overline{Q}_1$ and some $r_1$ with paths $p_{r_1}^{x_1}, q_{r_1}^{x_1}$ in $k \overline{Q}$ given by $\lambda(x_1)$ such that
$$q_{r_0}^{x_0} x_0^* \otimes_R p_{r_0}^{x_0}=q_{r_1}^{x_1} \otimes_R x_1^* p_{r_1}^{x_1}$$
in order for the overall coefficient of the term $q_{r_0}^{x_0} x_0^* \otimes_R p_{r_0}^{x_0}$ in equation \eqref{eq:2.3} to be 0. Let the length of the path $p_{r_0}^{x_0}$  be $l_1$ and the length of $q_{r_0}^{x_0}$ be $l_2$. Then the length of $p_{r_1}^{x_1}=l_1-1$ and the length of $q_{r_1}^{x_1}=l_2+1$. Now equation \eqref{eq:2.3} also contains the non-zero term $c_{r_1}^{x_1} q_{r_1}^{x_1} x_1^* \otimes_R p_{r_1}^{x_1}$. Hence we repeat this for $x_1$ to find a $x_2 \in \overline{Q}_1$ and $r_2$ such that
$q_{r_1}^{x_1} x_1^* \otimes_R p_{r_1}^{x_1}=q_{r_2}^{x_2} \otimes_R x_2^* p_{r_2}^{x_2}$ where the length of $p_{r_2}^{x_2}=l_1-2$ and the length of $q_{r_2}^{x_2}=l_2+2$. We continue to repeat this argument and after $l_1$ successive iterations we find an arrow $x_{l_1} \in \overline{Q}_1$ and some $r_{l_1}$ such that the length of $p_{r_{l_1}}^{x_{l_1}} = 0$ i.e. $p_{r_{l_1}}^{x_{l_1}}$ is just a constant path in $\overline{Q}$. However now we have the non-zero term $c_{r_{l_1}}^{x_{l_1}} q_{r_{l_1}}^{x_{l_1}} x_{l_1}^* \otimes_R p_{r_{l_1}}^{x_{l_1}}$ in equation \eqref{eq:2.3} but there cannot exist some $y \in \overline{Q}_1$ some $s$ and paths $p_s^y, q_s^y$ in $k \overline{Q}$ such that
$$q_{r_{l_1}}^{x_{l_1}} x_{l_1}^* \otimes_R p_{r_{l_1}}^{x_{l_1}}= q_s^y \otimes_R y^* p_s^y$$
because $y^* p_s^y$ cannot be a constant path. It follows that the coefficient of the term $q_{r_{l_1}}^{x_{l_1}} x_{l_1}^* \otimes_R p_{r_{l_1}}^{x_{l_1}}$ in equation \eqref{eq:2.3} is $c_{r_{l_1}}^{x_{l_1}} \neq 0$ which contradicts our initial assumption that $\iota_\lambda(\omega)=0$.

Next we check the differential $d$. In the definition of the Gdga the differential is given on $A$ by the super-derivation $\xi$, so we want to explicitly verify that for all $x \in \overline{Q}_1$ we have $d(x)=\xi(x)= \{W,x\}_\omega$. $\xi$ is defined as the unique super-derivation such that $i_\xi(\omega)=D(W)$ so write $W= \sum_{t=1}^m a_{t,n_t} \ldots a_{t,1}$ for arrows $a_{t,j} \in \overline{Q}_1$. Then
$$\frac{\partial W}{\partial a} = \sum_{\substack{t,j |\\ a_{t,j}=a}} a_{t,j-1} \ldots a_{t,1}\, a_{t,n_t} \ldots a_{t,j+1}.$$
Now because we are in the quotient $\r{DR}_R^1(k \overline{Q})$ we can cyclically permute terms (up to a sign, which in this case is always $+1$ since we are dealing with terms of degree 0 and a term of degree 1 in $\Omega_R^\bullet k \overline{Q}$), hence we get that
\begin{align*}
D(W) &= \sum_{t=1}^m \bigg(\sum_{j=1}^{n_t} a_{t,n_t} \ldots a_{t,j+1}\, D(a_{t,j})\, a_{t,j-1} \ldots a_{t,1} \bigg)\\
&= \sum_{t=1}^m \bigg(\sum_{j=1}^{n_t} D(a_{t,j})\, a_{t,j-1} \ldots a_{t,1}\, a_{t,n_t} \ldots a_{t,j+1}\bigg)\\
&= \sum_{a \in Q_1} \bigg( \sum_{\substack{t,j |\\ a_{t,j}=a}} D(a_{t,j})\, a_{t,j-1} \ldots a_{t,1}\, a_{t,n_t} \ldots a_{t,j+1} \bigg)\\
&= \sum_{a \in Q_1} D(a) \frac{\partial W}{\partial a}.
\end{align*}

Then as
$$i_\xi(\omega) =\sum_{a \in Q_1} -\xi(a) D(a^*) + D(a) \xi(a^*)$$
by comparing coefficients between these two expressions we must have that $\xi(a)=0$ and $\xi(a^*) = \partial W/\partial a$ for any $a \in Q_1$, as required. 

We must also check the differential on $t$. To do this we calculate a representative $\b{w}$ of $\mu_{\r{nc}}(\omega)$. Recall $\mu_{\r{nc}}(\omega)$ was defined such that $D(\mu_{\r{nc}}(\omega)) = \iota_\delta(\omega)$ where $\delta: k \overline{Q} \rightarrow k \overline{Q} \otimes k \overline{Q}$ was the distinguished double derivation introduced in Section 2.2. So
\begin{align*}
    \iota_\delta(\omega) &= m(\beta \circ i_\delta(\omega)) \\
    &= m\Big(\beta\Big(-\sum_{a \in Q_1} \delta(a) \cdot D(a^*) - D(a) \cdot \delta(a^*) \Big)\Big) \\
    &= m\Big(\beta\Big(-\sum_{a \in Q_1} (a \otimes e_{s(a)} - e_{t(a)} \otimes a) \cdot (a^* \otimes_R 1 - 1 \otimes_R a^*)\\
    &\qquad \qquad \quad - (a \otimes_R 1 - 1 \otimes_R a) \cdot (a^* \otimes e_{s(a^*)} - e_{t(a^*)} \otimes a^*) \Big)\Big)\\
    &= m\Big(\beta\Big(-\sum_{a \in Q_1} a \otimes (a^* \otimes_R 1- e_{s(a)} \otimes_R a^*) - e_{t(a)} \otimes (a a^* \otimes_R 1 - a \otimes_R a^*) \\
    &\qquad \qquad \quad -(a \otimes_R a^* - 1 \otimes_R a a^*) \otimes e_{t(a)} + (a \otimes_R e_{t(a^*)} - 1 \otimes_R a) \otimes a^* \Big)\Big) \\
    &= -\sum_{a \in Q_1} a^* \otimes_R a - e_{s(a)} \otimes_R a^* a - a a^* \otimes_R e_{t(a)} + a \otimes_R a^* \\
    &\qquad \qquad- a \otimes_R a^* + e_{t(a)} \otimes_R a a^* + a^* a \otimes_R e_{s(a)} - a^* \otimes_R a \\
    &= \sum_{a \in Q_1} e_{s(a)} \otimes_R a^* a - a^* a \otimes_R e_{s(a)} + a a^* \otimes_R e_{t(a)} - e_{t(a)} \otimes_R a a^*
\end{align*}
while on the other hand
\begin{align*}
   D\big([a,a^*]\big) &= D(a)a^*+aD(a^*)-D(a^*)a-a^*D(a) \\
   &= (a \otimes_R e_{s(a)} - e_{t(a)} \otimes_R a) a^* + a (a^* \otimes_R e_{s(a^*)} - e_{t(a^*)} \otimes_R a^*) \\
   &\quad \,\, - (a^* \otimes_R e_{s(a^*)} - e_{t(a^*)} \otimes_R a^*) a - a^* (a \otimes_R e_{s(a)} - e_{t(a)} \otimes_R a) \\
   &= a \otimes_R a^* - e_{t(a)} \otimes_R a a^* + a a^* \otimes_R e_{t(a)} - a \otimes_R a^* \\
   &\quad \,\, - a^* \otimes_R a + e_{s(a)} \otimes_R a^* a - a^* a \otimes_R e_{s(a)} + a^* \otimes_R a \\
   &= e_{s(a)} \otimes_R a^* a - a^* a \otimes_R e_{s(a)} + a a^* \otimes_R e_{t(a)} - e_{t(a)} \otimes_R a a^*.
\end{align*}
It follows that a homogeneous representative of degree -1 of $\mu_{\r{nc}}(\omega)$ is $\sum_{a \in Q_1} [a,a^*]$ hence 
$$d(t) = \b{w} = \sum_{a \in Q_1} [a,a^*] = \sum_{i \in Q_0} d(t_i) = d\Big(\sum_{i \in Q_0} t_i\Big).$$
Finally we must check the conditions $\xi^2=0$,\, $L_\xi(\omega)=0$ and $\xi(\b{w}) =0$. The first equality is clear from the explicit description given above of how $\xi$ acts on arrows. For the second equality we have
\begin{align*}
    L_\xi(\omega) &= \sum_{a \in Q_1} L_\xi(D(a)) \cdot D(a^*) + D(a) \cdot L_\xi(D(a^*)) \\
    &= \sum_{a \in Q_1} D(\xi(a)) \cdot D(a^*) + D(a) \cdot D(\xi(a^*)) \\
    &= \sum_{a \in Q_1}  D(a) \cdot D\left(\frac{\partial W}{\partial a}\right) \\
    &= \sum_{a \in Q_1}  D^2(a) \cdot \frac{\partial W}{\partial a} + D(a) \cdot D\left(\frac{\partial W}{\partial a}\right) \\
    &= D^2(W) =0.
\end{align*}
For the third equality again write $W= \sum_{t=1}^m a_{t,n_t} \ldots a_{t,1}$. Then
\begin{align*}
    \xi(\b{w}) &= \xi \bigg(\sum_{a \in Q_1} [a,a^*] \bigg) = \sum_{a \in Q_1} \xi(aa^*)- \xi(a^*a) \\
    &=  \sum_{a \in Q_1} \xi(a)a^* + a\, \xi(a^*) -\xi(a^*)a - (-1) a^*\, \xi(a)\\
    &= \sum_{a \in Q_1} a\, \frac{\partial W}{\partial a} - \frac{\partial W}{\partial a}\, a \\
    &= \sum_{a \in Q_1} \bigg(\sum_{\substack{t,j |\\ a_{t,j}=a}} a_{t,j}\, a_{t,j-1} \ldots a_{t,1}\, a_{t,n_t} \ldots a_{t,j+1} \\
    &\qquad \quad- \sum_{\substack{s,i |\\ a_{s,i}=a}} a_{s,i-1} \ldots a_{s,1}\, a_{s,n_s} \ldots a_{s,i+1}\, a_{s,i} \bigg) \\
    &= \sum_{t,j} a_{t,j}\, a_{t,j-1} \ldots a_{t,1}\, a_{t,n_t} \ldots a_{t,j+1} \\
    &\quad\, - \sum_{s,i} a_{s,i-1} \ldots a_{s,1}\, a_{s,n_s} \ldots a_{s,i+1}\, a_{s,i} \\
    &=0.
\end{align*}
\end{proof}

\begin{prop}\label{prop2.13}
$\r{Jac}(Q,W)=H^0(\Gamma_{Q,W})$.
\end{prop}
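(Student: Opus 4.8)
The plan is to identify $H^0(\Gamma_{Q,W})$ directly as a cokernel. Since $\Gamma_{Q,W} = k\widehat{Q}$ is concentrated in non-positive degrees, with differential $d$ of cohomological degree $+1$, the complex in the top degrees reads $(\Gamma_{Q,W})^{-1} \xrightarrow{d} (\Gamma_{Q,W})^0 \to 0$, so $H^0(\Gamma_{Q,W}) = (\Gamma_{Q,W})^0 / d\big((\Gamma_{Q,W})^{-1}\big)$ (there is no kernel condition in degree $0$). The degree $0$ part $(\Gamma_{Q,W})^0$ consists of the $k$-span of paths in $\widehat{Q}$ built only from the original arrows $a \in Q_1$, and this subalgebra is canonically identified with $kQ$ via \cref{lem2.11}. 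So it remains to compute the image of $d$ in degree $0$.

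Next I would describe $(\Gamma_{Q,W})^{-1}$ explicitly. As $a$ has degree $0$, $a^*$ has degree $-1$ and $t_i$ has degree $-2$, a path in $\widehat{Q}$ of total degree $-1$ must use exactly one arrow of the form $a^*$ and otherwise only original arrows (a $t_i$ would force degree $\leq -2$ with no way to compensate). Hence $(\Gamma_{Q,W})^{-1}$ is the $k$-span of paths $p_1\, a^*\, p_2$ with $a \in Q_1$ and $p_1, p_2$ paths in $Q$ (possibly trivial constant paths). Applying the graded Leibniz rule and using $d(p_1) = d(p_2) = 0$ (as $d$ kills every original arrow) together with $d(a^*) = \partial W/\partial a$ gives $d(p_1\, a^*\, p_2) = p_1 \,(\partial W/\partial a)\, p_2$, with no sign since $p_1$ lies in degree $0$. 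Therefore the image of $d$ in degree $0$ is exactly the $k$-span of all $p_1 \,(\partial W/\partial a)\, p_2$, which is precisely the two-sided ideal $I_W = (\partial W/\partial a : a \in Q_1)$.

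Combining these observations yields $H^0(\Gamma_{Q,W}) = (\Gamma_{Q,W})^0 / d\big((\Gamma_{Q,W})^{-1}\big) = kQ/I_W = \r{Jac}(Q,W)$, as claimed. I do not expect a serious obstacle here: the only points that need care are the grading bookkeeping showing that the degree $-1$ component is spanned precisely by paths with a single $a^*$ insertion, and checking that the Koszul signs in the graded Leibniz expansion all vanish because the paths $p_1, p_2$ sit in degree $0$.
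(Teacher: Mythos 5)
Your proposal is correct and takes essentially the same approach as the paper: identify $(\Gamma_{Q,W})^0$ with $kQ$, describe $(\Gamma_{Q,W})^{-1}$ as the $kQ$-bimodule on the $a^*$, apply the Leibniz rule to see that the image of $d$ in degree $0$ is the two-sided ideal $(\partial W/\partial a : a \in Q_1)$, and conclude. You simply spell out a couple of steps the paper leaves implicit, namely why $t_i$ cannot appear in a degree $-1$ path and why there is no kernel condition in degree $0$.
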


\begin{proof}
Since $(\Gamma_{Q,W})_0= kQ$ and $(\Gamma_{Q,W})_{-1}= kQ\{ a^* : a \in Q_1 \}$ we get
\begin{align*}
  H^0(\Gamma_{Q,W})&= \frac{\r{Ker}\,\, d_0}{\r{Im}\,\, d_{-1}}  \\
  &=\frac{kQ}{( \partial W/\partial a : a \in Q_1 )} \\
  &=\r{Jac}(Q,W).
\end{align*}
\end{proof}

There is a natural identification of graded algebras $k \overline{Q} \cong T_{k Q}(\m D\r{er}_R(k Q))$ that sends the arrow $a^*$ to the double derivation $\lambda_{a^*}$ given on arrows by
$$Q_1 \ni x \longmapsto
\begin{cases}
1 \otimes 1 &\r{if} \,\, x=a \\
0 &\r{otherwise}.
\end{cases}$$
It is easy to see that under this identification the distinguished relative double derivation $\delta \in \m D\r{er}_R(k Q)$ is sent to $\sum_{a \in Q_1} [a,a^*] \in k \overline{Q}$. Setting the cyclic 1-form $\alpha \in \r{DR}_R^1(k Q)$ to be $D(W)$ we have that the super-derivation $\{W,-\}_\omega \in \r{Der}_R(k \overline{Q})$ corresponds to the super-derivation $\xi_\alpha \in \r{Der}_R(T_{k Q}(\m D\r{er}_R(k Q)))$. Indeed it suffices to check this for degree 0 and -1 elements. The degree 0 parts of $k \overline{Q}$ and $T_{k Q}(\m D\r{er}_R(k Q))$ are both isomorphic to $k Q$ and both super-derivations send degree 0 elements to 0. Then in degree -1 we have $\{W,a^*\}_\omega = \partial W/\partial a$ which under the identification above corresponds to sending $\lambda_{a^*} \mapsto \partial W/\partial a$. Hence we must show that $\xi_\alpha(\lambda_{a^*})= \iota_{\lambda_{a^*}}(D(W))$ is equal to $\partial W/\partial a$. Then
\begin{align*}
    \iota_{\lambda_{a^*}}(D(W)) &= m \circ \beta\big(i_{\lambda_{a^*}}(D(W)\big) \\
    &= m \circ \beta\bigg(i_{\lambda_{a^*}}\bigg(\sum_{b \in Q_1}D(b)\frac{\partial W}{\partial b}\bigg)\bigg) \\
    &= m \circ \beta\bigg(\sum_{b \in Q_1}\lambda_{a^*}(b)\frac{\partial W}{\partial b}\bigg) \\
    &= m \circ \beta\bigg(1 \otimes_R 1 \cdot \frac{\partial W}{\partial a}\bigg) \\
    &= \frac{\partial W}{\partial a}.
\end{align*}
We also have $\iota_\delta(\alpha)=0$ from \cref{rem2.8}. It follows that the Gdga $\Gamma_{Q,W}$ is naturally isomorphic to the Gdga $\f D(kQ, D(W))$ described in \cref{def2.12}. The proceeding result then follows immediately from \cref{thm2.9}.

\begin{prop}
The Gdga $\Gamma_{Q,W}$ is Calabi-Yau of dimension 3.
\end{prop}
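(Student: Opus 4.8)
The plan is to recognise $\Gamma_{Q,W}$ as a Gdga of the special form $\f D(F,\alpha)$ covered by \cref{thm2.9}, and then simply quote that theorem. Concretely I would set $F = kQ$ and $\alpha = D(W) \in \r{DR}_R^1(kQ)$, with $R = kQ_0$ the finite-dimensional semisimple algebra over the vertex set of $Q$.

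First I would verify that $F$ and $\alpha$ meet the hypotheses of \cref{def2.12}. Smoothness of $F = kQ$ over $R$ is exactly \cref{lem2.11}. For the conditions in \eqref{eq:2.1} on $\alpha$: the equality $D(\alpha) = D^2(W) = 0$ holds because $D$ is a differential, and $\iota_\delta(\alpha) = 0$ is precisely the statement recorded in \cref{rem2.8} (equivalently, it is the content of the verification, in the proof that $\Gamma_{Q,W}$ is a Gdga, that $\sum_{a \in Q_1}[a,a^*]$ represents $\mu_{\r{nc}}(\omega)$ and that $\xi(\b{w}) = 0$).

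Next I would use the natural identification of graded algebras $k\overline{Q} \cong T_{kQ}(\m D\r{er}_R(kQ))$ from the paragraph above, sending $a^* \mapsto \lambda_{a^*}$. Under it, $\delta$ goes to $\sum_{a}[a,a^*]$, the canonical degree $-1$ bisymplectic form on $T_{kQ}(\m D\r{er}_R(kQ))$ goes to $\omega = \sum_{a} D(a^*)\cdot D(a)$, and $\xi_\alpha = (i^\omega)^{-1}(\alpha)$ goes to $\{W,-\}_\omega$; the last point is the computation $\iota_{\lambda_{a^*}}(D(W)) = \partial W/\partial a$ in degree $-1$, together with the observation that both super-derivations kill degree $0$. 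Hence $\Gamma_{Q,W} \cong \f D(kQ, D(W))$ as Gdgas, and \cref{thm2.9} gives that $\Gamma_{Q,W}$ is Calabi-Yau of dimension $3$ (consistently, the form has degree $c = -1$ and $-c+2 = 3$, in agreement with \cref{thm2.5}).

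Since all of this was essentially assembled in the preceding pages, I do not expect any genuine obstacle: the proof is a one-line appeal to \cref{thm2.9} once the identification with $\f D(kQ, D(W))$ has been noted. The only item worth a moment's care is the bookkeeping of degree and sign conventions — in particular the footnoted sign on $\omega$ — to be sure that the canonical $2$-form on $T_{kQ}(\m D\r{er}_R(kQ))$ coming from \cite{cbeg} is exactly the $\omega$ used to define $\Gamma_{Q,W}$, so that the isomorphism of Gdgas is on the nose rather than merely up to rescaling.
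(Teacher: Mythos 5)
Your proposal is correct and takes exactly the route the paper does: the paragraph preceding this proposition already establishes the identification $\Gamma_{Q,W}\cong\f D(kQ,D(W))$ (including the check $\iota_{\lambda_{a^*}}(D(W))=\partial W/\partial a$ and the verification of the conditions in \eqref{eq:2.1}), and the proposition is then a direct citation of \cref{thm2.9}. Your observation that $\iota_\delta(\alpha)=0$ is really the content of the earlier check that $\sum_a[a,a^*]$ represents $\mu_{\r{nc}}(\omega)$ and $\xi(\b{w})=0$ is a slightly more careful reading than the paper's bald reference to \cref{rem2.8}, but it is the same argument.
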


\begin{prop}[\cite{am} Proposition 2.3]\label{prop2.15}
There exists a natural $t$-structure for the derived category $D(\Gamma_{Q,W})$  whose heart is equivalent to $\r{Jac}(Q,W)$-$\r{\b{Mod}}$.
\end{prop}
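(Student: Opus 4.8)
The plan is to obtain \cref{prop2.15} as an instance of the general principle that the derived category of a connective (non-positively cohomologically graded) dg algebra carries a canonical $t$-structure whose heart is the module category of its zeroth cohomology --- this is precisely [\cite{am} Proposition 2.3] --- once we observe that $\Gamma_{Q,W}$ is connective and invoke \cref{prop2.13} to identify its zeroth cohomology.

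First I would record the connectivity: the underlying algebra $k\widehat Q$ of $\Gamma_{Q,W}$ is generated over $R$ by arrows in cohomological degrees $0,-1,-2$ only, so $\Gamma_{Q,W}=\bigoplus_{i\le 0}\Gamma_{Q,W}^i$ and in particular $H^i(\Gamma_{Q,W})=0$ for $i>0$, while $d$ has degree $+1$. Connectivity is exactly what makes truncation work: for any dg $\Gamma_{Q,W}$-module $M$ the good truncation $\tau^{\le n}M=(\cdots\to M^{n-1}\to\ker d^n\to 0)$ is again a dg submodule, since an element of $\Gamma_{Q,W}^{j}$ with $j<0$ strictly lowers degree, while $\Gamma_{Q,W}^0$ acts by $d$-linear endomorphisms (because $\Gamma_{Q,W}$ has nothing in degree $1$, forcing $d$ to vanish on $\Gamma_{Q,W}^0$) and hence preserves the cocycles $\ker d^n$. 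Thus $\tau^{\le n}$ and dually $\tau^{\ge n}M=M/\tau^{\le n-1}M$ pass to $D(\Gamma_{Q,W})$, and I would then check that $D^{\le 0}(\Gamma_{Q,W})=\{M:H^iM=0\ \text{for}\ i>0\}$ and $D^{\ge 0}(\Gamma_{Q,W})=\{M:H^iM=0\ \text{for}\ i<0\}$ define a $t$-structure. The one substantive axiom is the orthogonality $\r{Hom}_{D(\Gamma_{Q,W})}(M,N)=0$ for $M\in D^{\le -1}$ and $N\in D^{\ge 0}$; I would prove it by replacing $N$ by $\tau^{\ge 0}N$ (so $N^i=0$ for $i<0$) and $M$ by a K-projective resolution $P$ with $P^i=0$ for $i\ge 0$ --- such a resolution exists for connective $\Gamma_{Q,W}$, built downward from the top nonvanishing degree --- whereupon the degree-$0$ part of $\r{Hom}_{\Gamma_{Q,W}}(P,N)$ is the zero complex.

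Next I would pin down the heart $\c H=D^{\le 0}(\Gamma_{Q,W})\cap D^{\ge 0}(\Gamma_{Q,W})=\{M:H^iM=0\ \text{for}\ i\ne 0\}$. Connectivity gives $d|_{\Gamma_{Q,W}^0}=0$, and the Leibniz rule makes $d(\Gamma_{Q,W}^{-1})$ a two-sided ideal of $\Gamma_{Q,W}^0$, so there is a surjective dg algebra map $\pi\colon\Gamma_{Q,W}\twoheadrightarrow\tau^{\ge 0}\Gamma_{Q,W}=H^0(\Gamma_{Q,W})$, which by \cref{prop2.13} is $\r{Jac}(Q,W)$. Restriction along $\pi$ yields a functor $\pi^\ast\colon\r{Jac}(Q,W)\text{-}\b{Mod}\to D(\Gamma_{Q,W})$ landing in $\c H$, and $H^0\colon\c H\to\r{Jac}(Q,W)\text{-}\b{Mod}$ goes back; note that for an object of $\c H$ represented by a complex concentrated in degree $0$ the dg-module axiom already forces the $\Gamma_{Q,W}$-action to annihilate $d(\Gamma_{Q,W}^{-1})$, i.e.\ to factor through $\pi$. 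One composite is $H^0\circ\pi^\ast=\r{id}$ on the nose, and for the other the natural truncation zig-zag $M\xleftarrow{\ \sim\ }\tau^{\le 0}M\xrightarrow{\ \sim\ }\tau^{\ge 0}\tau^{\le 0}M$ is a pair of quasi-isomorphisms for $M\in\c H$ with target $\pi^\ast H^0(M)$; naturality of the truncation functors upgrades this to a natural isomorphism $\r{id}_{\c H}\cong\pi^\ast\circ H^0$. Applying $H^0$ shows $\pi^\ast$ is faithful, and this natural isomorphism then yields fullness, so $\pi^\ast$ is an equivalence $\r{Jac}(Q,W)\text{-}\b{Mod}\xrightarrow{\ \sim\ }\c H$.

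The step I expect to be the main obstacle is the orthogonality axiom --- equivalently, the existence of K-projective resolutions of objects of $D^{\le -1}(\Gamma_{Q,W})$ that are themselves concentrated in degrees $\le -1$; this is the genuine content of the connective-dga statement, while the truncation functors, the ideal structure producing $\pi$, and the heart equivalence are essentially formal once it is available. In the write-up I would either cite [\cite{am} Proposition 2.3] directly for this point or appeal to the standard theory of $t$-structures on derived categories of connective dg algebras.
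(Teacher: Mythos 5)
The paper does not actually prove this statement: it is quoted verbatim as [\cite{am} Proposition 2.3] and used as a black box, with no proof supplied. Your reconstruction is the standard argument for the canonical $t$-structure on the derived category of a connective dg algebra and is correct --- $\Gamma_{Q,W}$ is indeed concentrated in degrees $\le 0$ (arrows in degrees $0,-1,-2$), the smart truncations give the aisles, and the heart identification via $\pi\colon\Gamma_{Q,W}\twoheadrightarrow H^0(\Gamma_{Q,W})=\r{Jac}(Q,W)$ (using \cref{prop2.13}) is exactly as you describe, including the observation that a dg module concentrated in degree $0$ automatically factors through $\pi$. Since the paper delegates the whole statement to the cited reference, there is nothing to compare against beyond noting that your sketch matches the expected content of that reference; the only thing I would change in your write-up is to state explicitly at the outset that this is the general connective-dga $t$-structure being specialised, which is how the paper frames the citation.
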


\cref{prop2.15} allows us to view the moduli stack of representations $\r{Rep}_n(\r{Jac}(Q,W))$ as a moduli stack in a 3 Calabi-Yau category regardless of whether the algebra $\r{Jac}(Q,W)$ is 3 Calabi-Yau or not. This is important for us to be able to define DT invariants. If however $\r{Jac}(Q,W)$ is in fact Calabi-Yau and $W$ is homogeneous we get something stronger. There is a natural grading on the path algebra $k Q$ of a quiver given by the path length, which makes the isomorphism \eqref{eq2.2} an isomorphism of graded algebras. If $W$ is homogeneous of degree $m$ then this descends to a grading on $\r{Jac}(Q,W)$ too. This gives us a second grading on $\Gamma_{Q,W}$, which is strictly positive and preserved by the differential $d$ if we set the degree of the dual arrows $a^*$ to be $m-1$ and degree of the loops $t_i$ to be $m$. By [\cite{cbeg} Proposition 8.1.1] and its proof we have that the path algebra $k \overline{Q}$ of the doubled quiver is connected. Then by \cref{prop2.15}, and [\cite{gin} Proposition 3.7.7 and Theorem 5.3.1] we get the following: 

\begin{thm}[cf. \cite{gin} Corollary 5.4.3]\label{thm2.16}
Let $W$ be a homogeneous potential with respect to the path grading on $kQ$. Then $\r{Jac}(Q,W)$ is Calabi-Yau of dimension 3 if and only if it is a superpotential algebra.
\end{thm}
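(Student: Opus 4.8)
The plan is to assemble the pieces already in place. Recall we have shown $\Gamma_{Q,W}\cong\f D(kQ,D(W))=\f D(k\overline{Q},\omega,\{W,-\}_\omega)$, where the underlying $R$-algebra $k\overline{Q}$ is connected by [\cite{cbeg} Proposition 8.1.1] and its proof, $\omega$ is bisymplectic of degree $-1$, the super-derivation is the bracket $\{W,-\}_\omega$ against the potential $W$, and $H^0(\Gamma_{Q,W})=\r{Jac}(Q,W)$ by \cref{prop2.13}. The new feature in the homogeneous case is the path-length grading on $\Gamma_{Q,W}$ — with $\r{deg}(a)=1$, $\r{deg}(a^*)=m-1$ and $\r{deg}(t_i)=m$ when $W$ is homogeneous of degree $m$ — which is strictly positive and preserved by the differential, so that Ginzburg's acyclicity criterion [\cite{gin} Theorem 5.3.1] (recalled just before \cref{thm2.9}) becomes available. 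Moreover \cref{prop2.15} lets us work inside the $3$-Calabi–Yau triangulated category $D(\Gamma_{Q,W})$ throughout, with the module category $\r{Jac}(Q,W)$-$\r{\b{Mod}}$ appearing as the heart of a $t$-structure on it.

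The ``if'' direction is essentially formal: if $\r{Jac}(Q,W)$ is a superpotential algebra then, being concentrated in degree $0$, it is quasi-isomorphic to a Gdga meeting the hypotheses of \cref{thm2.5}, so by \cref{thm2.6} it is exact Calabi–Yau; the dimension is $3$ because a superpotential algebra in this graded setting is Calabi–Yau of dimension exactly $3$, which is [\cite{gin} Proposition 3.7.7]. Hence $\r{Jac}(Q,W)$ is Calabi–Yau of dimension $3$.

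For the ``only if'' direction, which carries the content, I would argue as follows. Suppose $\r{Jac}(Q,W)$ is Calabi–Yau of dimension $3$. Since $\r{Jac}(Q,W)=H^0(\Gamma_{Q,W})$ and $\Gamma_{Q,W}$ is a positively graded Gdga whose grading is respected by its differential, [\cite{gin} Theorem 5.3.1] forces $\Gamma_{Q,W}$ to be acyclic, i.e. $H^i(\Gamma_{Q,W})=0$ for all $i\neq 0$. Then the canonical projection $\Gamma_{Q,W}\twoheadrightarrow H^0(\Gamma_{Q,W})=\r{Jac}(Q,W)$ is a quasi-isomorphism, and since $\Gamma_{Q,W}=\f D(k\overline{Q},\omega,\{W,-\}_\omega)$ with $k\overline{Q}$ connected and super-derivation $\{W,-\}_\omega$, this is precisely a presentation of $\r{Jac}(Q,W)$ as a superpotential algebra in the sense of the definition in Section 2.1.

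The step I expect to be the main obstacle is the passage of Ginzburg's results — above all the acyclicity criterion [\cite{gin} Theorem 5.3.1] and the ``superpotential $\Rightarrow$ $3$-Calabi–Yau'' statement [\cite{gin} Proposition 3.7.7], together with the connectedness input [\cite{cbeg} Proposition 8.1.1] — from the absolute setting over $k$ to the relative setting over the semisimple base $R=kQ_0$, and the accompanying check that the path-length grading on $\Gamma_{Q,W}$ really is strictly positive and differential-stable. Everything else — bisymplecticity of $\omega$, the identification with $\f D(kQ,D(W))$, connectedness of $k\overline{Q}$, and the $t$-structure of \cref{prop2.15} — has been established already, so the argument is in the end a matter of citing these facts in the right order.
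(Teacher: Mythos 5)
Your argument matches the paper's intended proof: the paper does not give a full argument but simply records the ingredients (the positive path-length grading on $\Gamma_{Q,W}$ preserved by $d$, connectedness of $k\overline{Q}$ via [\cite{cbeg} Proposition 8.1.1], \cref{prop2.15}, and [\cite{gin} Proposition 3.7.7 and Theorem 5.3.1]), and your ``only if'' direction is precisely the standard way to assemble them: CY of dimension $3$ plus the positive differential-stable grading forces acyclicity of $\Gamma_{Q,W}$ by [\cite{gin} Theorem 5.3.1], at which point $\Gamma_{Q,W}\twoheadrightarrow \r{Jac}(Q,W)$ is a quasi-isomorphism exhibiting $\r{Jac}(Q,W)$ as a superpotential algebra since $k\overline{Q}$ is connected and $\xi=\{W,-\}_\omega$. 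The one place where you are slightly soft is the ``if'' direction: a superpotential algebra is by the paper's definition quasi-isomorphic to \emph{some} Gdga $\f D(A,\omega,\xi)$ with connected $A$, whose $\omega$ could a priori have degree $c\neq -1$, so \cref{thm2.6} alone only gives ``exact Calabi--Yau of dimension $-c+2$''; the appeal to ``a superpotential algebra in this graded setting is Calabi--Yau of dimension exactly $3$'' and [\cite{gin} Proposition 3.7.7] is doing real work there and deserves to be spelled out (e.g. via uniqueness of the Calabi--Yau dimension of a fixed algebra, together with the fact that $\Gamma_{Q,W}$ is itself $3$-Calabi--Yau). This is the same implicit step in the paper's citation chain, so it is not a departure from the paper, but if you write this up you should make the degree identification explicit rather than leaving it as a parenthetical.
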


\subsection{Brane tilings}
\begin{dfn*}
A \i{brane tiling} $\Delta$ of a genus $g$ Riemann surface $\Sigma_g$ is an embedding $\Gamma \hookrightarrow \Sigma_g$ of a bipartite graph $\Gamma$ such that each connected component of $\Sigma_g \setminus \Gamma$ is simply connected. We choose a partition of the vertex set of $\Gamma$ into 2 disjoint subsets of black and white vertices, such that every edge in $\Gamma$ goes between a black vertex and a white vertex.
\end{dfn*}

From a brane tiling we can obtain a quiver with potential $(Q_\Delta, W_\Delta)$. The underlying graph of $Q_\Delta$ is the dual graph to $\Gamma$ in $\Sigma_g$ and it is directed so that arrows in $Q_\Delta$ go clockwise around a white vertex and anticlockwise around a black vertex. For a vertex $v \in \Gamma$ let $c_v$ denote the minimal cycle in $Q_\Delta$ that goes around it, i.e. $c_v$ consists of all the arrows that are dual to the edges that come out of $v$. Then we take the potential to be
$$W_\Delta = \sum_{v \, \r{white}} c_v - \sum_{u \, \r{black}} c_u.$$

\begin{eg}\label{eg2.1}
The following genus 2 example will be used throughout the following sections and will be useful to keep in mind to picture what's going on. It can also easily be extended to higher genus surfaces.
A brane tiling $\Delta$ of $\Sigma_2$ and its dual quiver $Q_\Delta$ are given in the following picture.
\begin{figure}[h]
    \centering
    \includegraphics[scale=0.5]{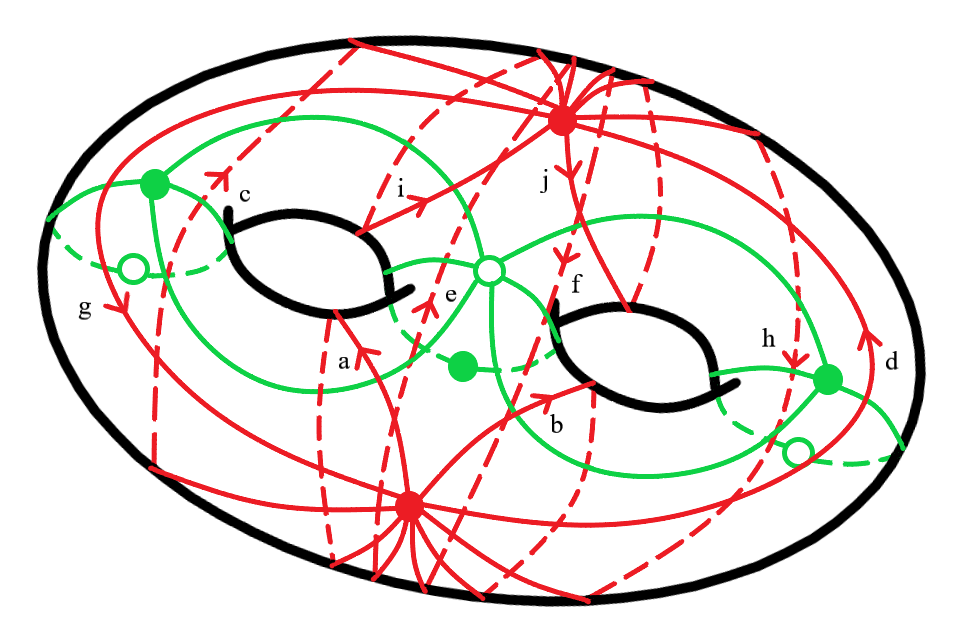}
    \caption{A brane tiling $\Delta$ of $\Sigma_2$ in green with its dual quiver $Q_\Delta$ in red.}
\end{figure}
Explicitly the dual quiver $Q_\Delta$ is
\[
\begin{tikzcd}[row sep=large, column sep=3cm]
1 \arrow[out=90, in=150, swap, loop,"a"] \arrow[out=210, in=270, swap, loop,"b"] \arrow[r, bend left= 30,"c"] \arrow[r, shift left=1.0ex, bend left=45,"d"] \arrow[r, shift left=2ex, bend left=60,"e"]
& 2 \arrow[out=90, in=30, loop,"i"] \arrow[out=330, in=270, loop,"j"] \arrow[l,bend left= 30, "h"] \arrow[l, shift left=1ex, bend left=45,"g"] \arrow[l, shift left=2ex, bend left=60,"f"]
\end{tikzcd}
\]
with potential 
$$W_\Delta=abfjie+gc+hd-agic-bhjd-fe.$$
\end{eg}

The idea behind using brane tilings in the context of fundamental groups of Riemann surfaces is as follows (from \cite{d1}). If the arrow $a \in Q_{\Delta,1}$ is dual to the edge between the white vertex $v$ and black vertex $u$ in $\Delta$ then $a \cdot \partial W_\Delta/\partial a = c_v - c_u.$ This tells us that in $\r{Jac}(Q_\Delta, W_\Delta)$ we can identify the paths $c_v$ and $c_u$, which are homotopic in $\Sigma_g$ via a homotopy which is spanned by the edge from the brane tiling. This allows us to link the algebra over the fundamental group of the Riemann surface $\Sigma_g$ with a Jacobi algebra. However not all homotopic paths in $\m C Q_\Delta$ are identified in $\r{Jac}(Q_\Delta, W_\Delta)$, for example the minimal cycles $c_v$ are null-homotopic but are not equal to any constant paths $e_i$ in $\r{Jac}(Q_\Delta, W_\Delta)$.

\begin{dfn*}
A dimer $D$ for a brane tiling $\Delta$ is a choice of edges in $\Delta$ such that every vertex in $\Delta$ is an endpoint of exactly one edge in $D$.
\end{dfn*}

Dimers are useful because after dualising they give us a collection of arrows in $Q_\Delta$ with the property that every cycle in $W_\Delta$ contains exactly one arrow from this set. This is straightforward from the definition of the dimer and the potential $W_\Delta$; if $a$ is dual to the edge in $D$ that goes between the vertices $v$ and $u$ then $a$ will appear in the minimal cycles $c_v$ and $c_u$ in $W_\Delta$, and since vertices in $\Delta$ are endpoints to exactly one edge in $D$ no other arrow in the dual of $D$ will appear in $c_v$ or $c_u$ and every term in $W$ contains one of these arrows.

\subsection{Fundamental group algebras}
This final preliminary section gives an overview of existing results relating algebras over fundamental groups to the aforementioned Calabi-Yau and superpotential properties, motivating the conjecture we aim to partially solve.

\begin{dfn*}
A manifold $X$ is called \emph{acyclic} if its universal cover is contractible.
\end{dfn*}

For the rest of this section let $X$ be a compact, acyclic, orientable manifold of dimension $n$ and let $k[\pi_1(X)]$ be its fundamental group algebra. 

\begin{thm}[\cite{d3} Theorem 5.2.2]
The fundamental group algebra $k[\pi_1(X)]$ is homologically finite.
\end{thm}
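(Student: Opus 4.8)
The plan is to recall that $k[\pi_1(X)]$ is the group algebra of the fundamental group of a compact, acyclic, orientable $n$-manifold and to show it is perfect as a bimodule over itself, i.e. as a $k[\pi_1(X)]^e$-module. Write $G = \pi_1(X)$ and $A = k[G]$. Since $X$ is acyclic its universal cover $\widetilde X$ is contractible, and $X = \widetilde X / G$ is a $K(G,1)$; since $X$ is moreover a compact manifold it has the homotopy type of a finite CW-complex. The key input is therefore that $G$ has a finite classifying space, so that $BG$ is a finite $CW$-model for a $K(G,1)$; equivalently $G$ is a group of type $FL$.

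First I would pass from the topology to homological algebra: the cellular chain complex $C_\bullet(\widetilde X; k)$ of the universal cover is a complex of free $kG$-modules, one free generator for each $G$-orbit of cells of $X$, hence finitely generated in each degree and concentrated in degrees $0, \dots, n$. Because $\widetilde X$ is contractible this complex is a finite free resolution of the trivial module $k$ over $kG$. Next I would upgrade this to a bimodule resolution of $A = kG$ itself. One standard way: the group $G \times G^{\mathrm{op}}$, or rather the relevant observation that $A^e = k[G] \otimes k[G]^{\mathrm{op}} \cong k[G \times G]$, and the $A$-bimodule $A$ corresponds under this identification to the $k[G\times G]$-module $k[G]$ induced from the diagonal subgroup $\Delta G \cong G$. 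Then $X \times X$ (with the $G \times G$ action on $\widetilde X \times \widetilde X$) together with the diagonal gives, via the finite free resolution of $k$ over $kG$ tensored appropriately, a finite resolution of $A$ by finitely generated free (hence projective) $A^e$-modules, of length $n$. Concretely one can take $P_\bullet = C_\bullet(\widetilde X;k) \otimes_{kG} A \otimes_{kG} C_\bullet(\widetilde X;k)$ — a bit more carefully, the two-sided bar-type construction — and check it resolves $A$ using that each $C_\bullet(\widetilde X;k)$ is a bounded complex of finitely generated frees resolving $k$, so that the total complex is a bounded complex of finitely generated free $A^e$-modules quasi-isomorphic to $k \otimes^{\mathbf L}_{kG} A \otimes^{\mathbf L}_{kG} k$, which reduces to $A$.

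The main obstacle — really the only substantive point — is verifying that this explicitly-built bounded complex of finitely generated projective $A^e$-modules is genuinely quasi-isomorphic to $A$, i.e. that no higher homology survives. This is where acyclicity of $\widetilde X$ is used twice and where one must be careful about the interaction of the two tensor factors; the clean way is to observe $A \cong k \otimes^{\mathbf L}_{kG} A$ on one side and then resolve the remaining $k$, or to invoke that $A$ is flat as a one-sided $kG$-module so that the hyper-Tor computation collapses. Once this is done, $A$ is by definition perfect over $A^e$, which is exactly the assertion that $k[\pi_1(X)]$ is homologically finite, and the explicit length $n$ of the resolution records that its projective dimension as a bimodule is at most $n$.
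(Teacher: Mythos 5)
The paper does not prove this statement---it is imported from \cite{d3} Theorem 5.2.2---so there is no in-text argument to compare against; I assess your proposal on its own terms. Your high-level strategy is the right one: compactness and acyclicity of $X$ give a bounded complex $C_\bullet(\widetilde X;k)$ of finitely generated free $kG$-modules resolving the trivial module $k$, and the identification $A^e\cong k[G\times G]$ together with $A\cong \mathrm{Ind}_{\Delta G}^{G\times G}k$ (the module induced from the trivial module on the diagonal subgroup) is exactly the bridge from a one-sided resolution of $k$ to an $A^e$-resolution of $A$. You name all of these ingredients.

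However, the explicit complex you write down does not resolve $A$. The total complex of $C_\bullet(\widetilde X;k)\otimes_{kG} A \otimes_{kG} C_\bullet(\widetilde X;k)$ represents $k\otimes^{\mathbf L}_{kG} A \otimes^{\mathbf L}_{kG} k$, and since $A=kG$ is free (in particular flat) as a one-sided $kG$-module the inner factor collapses to $k\otimes_{kG}A\cong k$, leaving $k\otimes^{\mathbf L}_{kG}k$, which is group homology $H_\bullet(G;k)\cong H_\bullet(X;k)$, not $A$. Already for $X=S^1$, $G=\mathbb Z$, this is $k\oplus k[1]$ rather than $k[\mathbb Z]$, so the assertion ``which reduces to $A$'' is false, and your fallback (flatness of $A$ over $kG$) is precisely what produces the collapse to the wrong answer. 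The repair is to drop the two-sided tensor and simply \emph{induce} the one-sided resolution along the diagonal: set $P_\bullet := k[G\times G]\otimes_{k\Delta G} C_\bullet(\widetilde X;k)$. Induction is exact because $k[G\times G]$ is free as a right $k\Delta G$-module, so $P_\bullet\xrightarrow{\ \sim\ } k[G\times G]\otimes_{k\Delta G}k=\mathrm{Ind}_{\Delta G}^{G\times G}k\cong A$ in $D(A^e\text{-}\mathbf{Mod})$; and it sends the finitely generated free $k\Delta G$-module $k\Delta G$ to $k[G\times G]\cong A^e$, so each $P_i$ is a finitely generated free $A^e$-module and the complex is bounded (concentrated in degrees $0,\dots,n$). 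That gives the perfect $A^e$-resolution you are after, without any hyper-Tor bookkeeping.
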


Let $LX$ denote the \emph{free loop-space} of $X$ i.e. the space of continuous maps $S^1 \rightarrow X.$ As path components of $LX$ are in one-to-one correspondence with conjugacy classes $c \in \pi_1(X)$, let $(LX)_c$ denote the path component corresponding to $c.$ Denote by $const: X \rightarrow LX$ the map that sends a point to the constant loop at that point, let $bp: LX \rightarrow X$ denote the map that sends a loop to its basepoint, and let $const_*$ and $bp_*$ denote the respective pushforwards in homology. 

\begin{thm}[\cite{jon} Theorem 6.2]
For all $d \in \m N$ we have a natural isomorphism
$$\r{HH}_d(k[\pi_1(X)]) \cong H_d(LX).$$
\end{thm}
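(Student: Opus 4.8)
The plan is to reduce the statement to a purely homotopy-theoretic identification. First I would use the hypothesis that $X$ is acyclic: since the universal cover $\widetilde X$ is contractible we have $\pi_n(X)=\pi_n(\widetilde X)=0$ for $n\geq 2$, so $X$ is aspherical and $X\simeq BG$ with $G=\pi_1(X)$. Consequently $H_d(LX)\cong H_d(LBG)$ and $k[\pi_1(X)]=k[G]$, so it suffices to produce a natural isomorphism $\r{HH}_d(k[G])\cong H_d(LBG)$. (The asphericity is essential here: this is exactly the range in which $\Omega X$ is equivalent to the discrete monoid $G$, so that $\r{HH}$ of $k[\pi_1(X)]$ literally computes a loop-space invariant.)

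Next I would rewrite the left-hand side through the cyclic bar construction. Hochschild homology $\r{HH}_*(k[G])$ is the homology of the complex with $n$-th term $k[G]^{\otimes(n+1)}=k[G^{n+1}]$ and differential $b=\sum_{i=0}^{n}(-1)^i d_i$, where for $i<n$ the map $d_i$ multiplies the $i$-th and $(i+1)$-st group elements and $d_n$ cyclically multiplies the last element into the front. This is precisely the normalized simplicial chain complex with $k$-coefficients of the cyclic nerve $N^{\r{cy}}(G)$, the simplicial set whose $n$-simplices are $G^{n+1}$. Hence $\r{HH}_d(k[G])\cong H_d\big(|N^{\r{cy}}(G)|;k\big)$, and the whole theorem reduces to the homotopy equivalence $|N^{\r{cy}}(G)|\simeq LBG$.

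This equivalence is the core of the argument, and I would establish it in one of two ways. The first is to write $BG=|N_\bullet G|$ for the ordinary nerve and exploit the cyclic structure on the simplicial circle, which identifies $\r{Map}(S^1,|N_\bullet G|)$ with the realization of the cyclic nerve; this is the Burghelea--Fiedorowicz / Goodwillie comparison (see e.g. Loday, \emph{Cyclic Homology}, Ch.~7). The second, which I find more transparent, is to decompose both sides over conjugacy classes: writing $LBG=\bigsqcup_{[g]}(LBG)_{[g]}$ indexed by free homotopy classes of loops, the evaluation fibration $\Omega BG\to LBG\to BG$ together with its monodromy --- the conjugation action of $G=\pi_1(BG)$ on $\Omega BG\simeq G$ --- identifies the $[g]$-component with $BZ_G(g)$ for the centralizer $Z_G(g)$; dually, splitting the cyclic bar complex of $k[G]$ over the conjugacy class of $g_0\cdots g_n$ and contracting each summand yields the Burghelea decomposition $\r{HH}_d(k[G])\cong\bigoplus_{[g]}H_d(Z_G(g);k)$. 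Either route gives $\r{HH}_d(k[G])\cong H_d(LBG)\cong H_d(LX)$.

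Finally I would record naturality: the ingredients $G\mapsto BG$, $G\mapsto N^{\r{cy}}(G)$, geometric realization, and the cyclic-bar model of $\r{HH}$ are all functorial, so the isomorphism is natural in $X$. In particular the constant loops form the identity-conjugacy-class component, on which $const\colon X\to LX$ and $bp\colon LX\to X$ are mutually inverse homotopy equivalences, and this matches the summand $\r{HH}_*(k[G])_{[e]}\cong H_*(G;k)\cong H_*(X;k)$ --- the compatibility the rest of the paper uses to feed the fundamental class of $X$ into $\r{HH}_3$. The main obstacle is precisely the equivalence $|N^{\r{cy}}(G)|\simeq LBG$: whichever route one takes, the genuine work is the simplicial/cyclic bookkeeping and checking that geometric realization interacts correctly with mapping out of $S^1$ (or, on the other route, making the evaluation fibration and the conjugacy-class analysis rigorous); everything else is formal.
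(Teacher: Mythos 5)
The paper does not prove this theorem; it is a background fact cited from Jones and used as a black box in the subsequent discussion. Your proof is correct and follows the standard route for a discrete group: replace $X$ by the aspherical model $BG$ with $G=\pi_1(X)$, identify $\r{HH}_*(k[G])$ with the simplicial homology of the cyclic nerve $N^{\r{cy}}(G)$, and establish $|N^{\r{cy}}(G)|\simeq LBG$ either directly (Burghelea--Fiedorowicz, Goodwillie) or by matching the Burghelea decomposition of $\r{HH}_*(k[G])$ over conjugacy classes against the decomposition $LBG\simeq\bigsqcup_{[g]}BZ_G(g)$. One thing worth making explicit: the cited source proves a more general comparison for the chain algebra $C_*(\Omega X)$ of a reasonable space $X$, and the form used here, with $k[\pi_1(X)]$, follows precisely because acyclicity makes $\Omega X$ homotopy-discrete, so that $C_*(\Omega X)\simeq k[\pi_1(X)]$ as dgas --- this is exactly the reduction you carry out in your first paragraph, so your argument and the cited one meet there. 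Your closing observation, that the constant-loop component of $LX$ corresponds under the isomorphism to the trivial-conjugacy-class summand $H_*(G;k)\cong H_*(X;k)$, is the compatibility the surrounding discussion of $const_*([X])\in\r{HH}_n$ relies on, so it is appropriate that you checked it explicitly.
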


Recall for an algebra $A$ that an element $\nu \in \r{HH}_d(A)$ is called non-degenerate if induces a (quasi-) isomorphism $A \cong A^\vee[d]$. There is an action of the centre $Z(A)$ on $\r{HH}_*(A)$ induced by $Z(A)$ acting on the first copy of $A$ in $A \otimes_{A^e} A$.

\begin{prop}[\cite{d3} Proposition 5.2.6]\label{prop2.19}
An element $\nu \in \r{HH}_n(k[\pi_1(X)])$ is non-degenerate if and only if
$\nu = z \cdot const_*([X])$ for some central unit $z \in k[\pi_1(X)].$ In particular $k[\pi_1(X)]$ is Calabi-Yau of dimension $n.$
\end{prop}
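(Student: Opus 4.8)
The plan is to determine the non-degenerate classes in $\r{HH}_n(A)$, where $A := k[\pi_1(X)]$, by working through the identification
$\r{HH}_n(A)\cong\r{Ext}^{-n}_{A^e}(A^\vee,A)=\r{Hom}_{D(A^e)}(A^\vee[n],A)$,
available because $A$ is homologically finite ([\cite{d3} Theorem 5.2.2]) and under which a class is non-degenerate exactly when the corresponding morphism is an isomorphism. Everything hinges on one key claim: that $const_*([X])$ is \emph{non-degenerate}. Granting it, let $\psi_0\colon A^\vee[n]\xrightarrow{\sim}A$ be the isomorphism it induces. Since $\r{End}_{D(A^e)}(A)=\r{HH}^0(A)=Z(A)$ as rings, the automorphism group $\r{Aut}_{D(A^e)}(A)$ is exactly the group $Z(A)^\times$ of central units of $A$, and every isomorphism $A^\vee[n]\to A$ is uniquely $u\circ\psi_0$ with $u\in Z(A)^\times$. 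Hence the non-degenerate classes are precisely $\{u\cdot const_*([X]):u\in Z(A)^\times\}$, where $u\cdot(-)$ means post-composition with $u$ viewed in $\r{End}_{D(A^e)}(A)$; unwinding the identifications (using $A^{\vee\vee}\cong A$, valid as $A$ is perfect as a bimodule) one checks this post-composition action agrees with the $Z(A)$-action on $\r{HH}_*(A)$ by multiplication on the first tensor factor of $A\otimes_{A^e}A$ in the statement. This yields both implications: $z\cdot const_*([X])$ induces $z\circ\psi_0$, an isomorphism, for every central unit $z$; and conversely a non-degenerate $\nu$ induces some isomorphism, hence equals $u\cdot const_*([X])$ for a unique central unit $u$. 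The last sentence of the proposition is then immediate, since non-degenerate classes exist and, as recalled after the definition of a Calabi--Yau structure, the self-duality $f=f^\vee[n]$ is automatic ([\cite{vdb} Proposition C.1]); so $k[\pi_1(X)]$ is Calabi--Yau of dimension $n$.

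It remains to prove the key claim. First I would locate $const_*([X])$ under the Jones isomorphism $\r{HH}_n(A)\cong H_n(LX)$ ([\cite{jon} Theorem 6.2]): constant loops are null-homotopic, so $const$ factors through the component $(LX)_{[e]}$ of the trivial conjugacy class, and $const\colon X\to(LX)_{[e]}$ is a homotopy equivalence, being a section of the basepoint fibration $(LX)_{[e]}\to X$ whose fibre is the component of the constant loop in $\Omega X$ and hence weakly contractible ($X$ is aspherical). Thus $const_*\colon H_n(X)\xrightarrow{\sim}H_n((LX)_{[e]})$, and the latter group is $k\cdot[X]$ because $X$ is a compact orientable $n$-manifold and $\r{char}\,k=0$; so $const_*([X])$ generates the summand $H_n((LX)_{[e]})\cong k$ of $\r{HH}_n(A)$ and has zero component in the summands indexed by the remaining conjugacy classes.

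The real content, and the step I expect to be the main obstacle, is to prove that this diagonal fundamental class induces an \emph{isomorphism} $A^\vee[n]\to A$: this is a bimodule-level (Van den Bergh-style) enhancement of Poincar\'e--Lefschetz duality, where the delicate part is not the duality itself but the bookkeeping matching the topological class $const_*([X])$ with the algebraic duality isomorphism. Since $X$ is compact, acyclic and orientable of dimension $n$, the group $G=\pi_1(X)$ is an orientable Poincar\'e duality group of dimension $n$, so $H^i(G;kG)=0$ for $i\neq n$, $H^n(G;kG)\cong k$ with trivial action, and $G$ is of type $\r{FP}$ over $k$. Taking a finite complex of finitely generated free $kG$-modules resolving $k$ (from a finite CW model of $X$) and inducing it along the diagonal $\Delta G\subset G\times G$ produces a finite projective bimodule resolution $P_\bullet\to A$; via the adjunction $\r{Ext}^*_{A^e}(A,A\otimes A)\cong\r{Ext}^*_{\Delta G}(k,\r{Res}_{\Delta G}(A\otimes A))$, the fact that $\r{Res}_{\Delta G}(A\otimes A)$ is free over $kG$, and type $\r{FP}$, this bimodule $\r{Ext}$ collapses to copies of $H^*(G;kG)$ and one obtains $A^\vee\cong A[-n]$. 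One must then verify that the chain map $P_\bullet^\vee[n]\to P_\bullet$ representing $const_*([X])$ is, under this collapse, cap product with the fundamental class of $\widetilde X$ performed $G\times G$-equivariantly along the diagonal — a quasi-isomorphism by Poincar\'e--Lefschetz duality — equivalently that the isomorphism $A^\vee\cong A[-n]$ just produced corresponds to a nonzero multiple of $const_*([X])$ in the $[e]$-summand. Granting this, the remaining pieces — the $\r{Hom}$-space torsor argument, $\r{HH}^0(A)=Z(A)$, compatibility of the two $Z(A)$-actions, and the homotopy analysis of $(LX)_{[e]}$ — are formal or standard, and assemble into the proof.
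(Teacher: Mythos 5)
The paper does not prove this proposition: it is quoted verbatim from [\cite{d3} Proposition 5.2.6] with no argument supplied, so there is no ``paper's own proof'' to compare against. Judged on its own terms your sketch is sound and reconstructs the natural argument. The torsor step is correct and complete: once one isomorphism $\psi_0:A^\vee[n]\xrightarrow{\sim}A$ exists, composition gives a bijection between $\r{Hom}_{D(A^e)}(A^\vee[n],A)$ and $\r{End}_{D(A^e)}(A)=\r{HH}^0(A)=Z(A)$, matching isomorphisms with units; and you rightly note one must check that post-composition by $z\in Z(A)$ agrees with the $Z(A)$-action the paper defines on $\r{HH}_*(A)$ (acting on the first tensor factor of $A\otimes_{A^e}A$), which is a routine unwinding. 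Your identification of $const_*([X])$ as the generator of the $[e]$-summand of $H_n(LX)$ via the homotopy equivalence $X\simeq (LX)_{[e]}$ (both fibre and section arguments are correct for aspherical $X$) is also fine. You also correctly record why the self-duality condition $f=f^\vee[n]$ comes for free, citing the same [\cite{vdb} Proposition C.1] the paper uses.

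The only place the argument is genuinely incomplete — and you flag it yourself — is the core non-degeneracy claim: that the $A^e$-module duality isomorphism $A^\vee\simeq A[-n]$, produced by inducing a finite free $kG$-resolution of $k$ along the diagonal and invoking that $G$ is an orientable $\r{PD}_n$-group, is the one classified by $const_*([X])$ rather than by some other nonzero element of $H_n((LX)_{[e]})\cong k$ (or worse, an element whose component at some nontrivial conjugacy class is nonzero, though your computation of $H_n((LX)_{[e]})$ rules this out once you know the class lives in the $[e]$-summand). Over a field this ``normalisation'' issue reduces to showing the class is nonzero in the $[e]$-summand, which your $const_*:H_n(X)\xrightarrow{\sim}H_n((LX)_{[e]})$ computation does; so the remaining work is really the bimodule-level adjunction and PD bookkeeping you outline, which is standard for $\r{PD}_n$-groups. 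In short: correct strategy, correct identification of the hard step, details of that step left at the level of a plausible sketch rather than a proof.
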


\begin{thm}[\cite{d3} Theorem 6.1.3]\label{thm2.20}
$k[\pi_1(X)]$ is exact Calabi-Yau of dimension $n$ only if it contains a non-trivial central unit $z.$
\end{thm}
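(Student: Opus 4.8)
The plan is to take the central unit $z$ produced by \cref{prop2.19} and show it cannot be a scalar. By definition an exact Calabi--Yau structure of dimension $n$ is a non-degenerate class $\nu \in \r{HH}_n(k[\pi_1(X)])$ lying in the image of $\partial \colon \r{HC}_{n-1} \to \r{HH}_n$, and by exactness of the Connes sequence displayed above this image equals $\ker\!\big(I \colon \r{HH}_n \to \r{HC}_n\big)$, where $I$ is the natural map to cyclic homology. By \cref{prop2.19} write $\nu = z \cdot const_*([X])$ for a central unit $z$. It therefore suffices to prove that $I\!\big(z \cdot const_*([X])\big) = 0$ forces the coefficient of $1 \in \pi_1(X)$ in $z$ to vanish; since a unit is nonzero, this shows $z$ is not a scalar multiple of the identity, i.e.\ it is a non-trivial central unit.

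The first step is to pass to the identity conjugacy class. The cyclic bar complex of the group algebra $k[G]$, $G := \pi_1(X)$, splits as a direct sum over conjugacy classes of $G$, a chain $g_0 \otimes \dots \otimes g_m$ lying in the summand indexed by the class of the product $g_0 g_1 \cdots g_m$; all of Connes' structure maps (in particular $I$) respect this splitting, as does the $Z(k[G])$-action on $\r{HH}_*$ described above, which is compatible with the evident conjugacy grading of $Z(k[G])$. Writing $(-)_{[e]}$ for the summand of the identity class, under the isomorphism $\r{HH}_*(k[G]) \cong H_*(LX)$ this summand is $H_*$ of the path component $(LX)_e$ of null-homotopic loops; since $X$ is acyclic the fibre of $bp \colon (LX)_e \to X$ is the constant-loop component of $\Omega X$, namely the based loop space of the contractible universal cover, which is contractible, so $bp$ is a homotopy equivalence with homotopy inverse $const$. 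Hence $\r{HH}_n(k[G])_{[e]} \cong H_n(X) \cong k$ (using that $X$ is compact orientable of dimension $n$), generated by $const_*([X])$. Finally, if $z = \lambda_e\cdot 1 + \sum_{[h]\neq[e]}\lambda_{[h]}\,\widehat h$ is the expansion of $z$ in conjugacy-class sums, then each $\widehat h$ with $h \neq e$ moves $\r{HH}_*(k[G])_{[e]}$ out of the identity summand (a chain $(x g_0)\otimes g_1 \otimes \dots \otimes g_m$ obtained by acting with $x \in [h]$ has product $x$, of class $[h]\neq[e]$), so the $[e]$-component of $\nu = z\cdot const_*([X])$ is exactly $\lambda_e\cdot const_*([X])$.

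The second step is that $I$ is injective on $\r{HH}_n(k[G])_{[e]}$; this is the only point where the circle action genuinely intervenes. Because the constant loops are fixed by loop rotation and $(LX)_e \simeq X$, the $S^1$-action on $(LX)_e$ is homotopically trivial, so the Burghelea--Goodwillie description of the cyclic homology of a group ring gives $\r{HC}_n(k[G])_{[e]} \cong \bigoplus_{i\geq 0} H_{n-2i}(X)$, with $I$ the inclusion of the $i=0$ summand $H_n(X)$, which is injective. (Equivalently, one runs the Gysin-type long exact sequence relating $H_*(LX)$ and $H^{S^1}_*(LX)$ on the component $(LX)_e$, where the action being trivial makes the periodicity map surjective and hence the connecting map zero.) Combining the two steps: $I(\nu)=0$ gives $\lambda_e\cdot const_*([X]) = 0$ in $H_n(X)=k$, so $\lambda_e = 0$; as $z$ is a unit it is nonzero, so $z$ is not a scalar, and $k[\pi_1(X)]$ contains the non-trivial central unit $z$.

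The main obstacle is the injectivity of $I$ on the identity-conjugacy-class summand, equivalently the fact that $\r{HC}_*(k[G])_{[e]}$ is strictly larger than $\r{HH}_*(k[G])_{[e]}$ in a compatible way. This is precisely Burghelea's computation of the cyclic homology of group rings (or, topologically, Goodwillie's identification of cyclic homology with $S^1$-equivariant homology of the free loop space), and is where the circle action does real work; the remaining ingredients --- the conjugacy decomposition, the contractibility argument identifying $\r{HH}_n(k[G])_{[e]}$ with $H_n(X)$, and the bookkeeping for the central action --- are formal.
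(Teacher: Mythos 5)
Your argument is correct and mirrors the paper's sketch of [\cite{d3} Theorem 6.1.3]: both decompose Hochschild and cyclic homology by conjugacy classes, identify the identity summand $\r{HH}_n(k[G])_{[e]}$ with $H_n(X)\cong k$ via the weak equivalence $const:X\simeq(LX)_e$, observe that multiplication by $z$ sends that summand to itself only through the scalar part $\lambda_e$, and conclude that exactness forces $\lambda_e=0$, so $z$ is non-trivial. The only presentational difference is that the paper states the key vanishing as $\partial'$ being zero on the identity component $(LX)_0$ (citing \cite{d3} for the proof), whereas you phrase the equivalent fact as injectivity of $I:\r{HH}_n\to\r{HC}_n$ on the $[e]$-summand and justify it via Burghelea's/Goodwillie's identification $\r{HC}_n(k[G])_{[e]}\cong\bigoplus_{i\ge 0}H_{n-2i}(X)$; as you observe yourself, these two formulations agree by exactness of the Connes long exact sequence.
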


The idea behind the proof of \cref{thm2.20} is as follows. First, from \cite{jon}, we have an isomorphism of long exact sequences (where $A=k[\pi_1(X)]$)
\[\begin{tikzcd}
\cdots \arrow[r] & \r{HC}_n(A) \arrow[r] \arrow[d] & \r{HC}_{n-2}(A) \arrow[r, "\partial"] \arrow[d] & \r{HH}_{n-1}(A) \arrow[r] \arrow[d] & \r{HC}_{n-1}(A) \arrow[r] \arrow[d] & \cdots \\
\cdots \arrow[r] & H^{S^1}_n(LX) \arrow[r] & H^{S^1}_{n-2}(LX) \arrow[r, "\partial'"] & H_{n-1}(LX) \arrow[r] & H^{S^1}_{n-1}(LX) \arrow[r] & \cdots
\end{tikzcd}\]
Hence an exact Calabi-Yau structure of dimension $d$ on $A$ is equivalent to a non-degenerate element $\eta \in H_{d}(LX)$ which is in the image of $\partial'.$ By \cref{prop2.19} a non-degenerate $\eta$ must be of the form $\eta=z \cdot const_*([X])$ for a central unit $z \in A$. Now there exists a natural grading of both $H^{S^1}_{d-1}(LX)$ and $H_{d}(LX)$ by conjugacy classes in $\pi_1(X)$ (the grading exists at the level of chains) which is preserved by $\partial'$, so the existence of some $\lambda \in H^{S^1}_{d-1}(LX)$ such that $\partial'(\lambda)=\eta$ is equivalent to equalities $\partial(\lambda_c) = a_cs_c \cdot const_*([X])$ where $c$ is a conjugacy class in $\pi_1(X)$, $a_c \in k$, $s_c=\sum_{g \in c} g$,  and $\lambda_c \in H^{S^1}_{d-1}((LX)_c)$ such that $\eta= \sum_c a_cs_c \cdot const_*([X])$. 
If we were to take the central unit $z=1$ then we would need to look for some $\lambda_{0} \in H^{S^1}_{d-1}((LX)_{0})$ such that $\partial'(\lambda_0)=const_*([X]).$ But, from [\cite{d3} Theorem 6.1.3] and its proof, it turns out the map $H^{S^1}_{d-1}((LX)_{0}) \rightarrow H_d(LX)$ is 0 hence 1 does not give an exact Calabi-Yau structure.

\begin{eg}[\cite{d3} Corollary 6.2.3 and Corollary 6.2.4]
Let $X$ be a compact hyperbolic manifold of dimension $> 1.$ Then $k[\pi_1(X)]$ is not a superpotential algebra. Indeed one can show that $k[\pi_1(X)]$ has trivial centre, hence by \cref{thm2.20} it is not exact Calabi-Yau. But \cref{thm2.6} says that every superpotential algebra is exact Calabi-Yau.
\end{eg}

We saw earlier that when $A$ is connected a Gdga with cohomology concentrated in degree 0 will have an exact Calabi-Yau cohomology algebra. \cref{thm2.20} describes the manifolds which cannot have an exact Calabi-Yau fundamental group algebra. This gives a rudimentary justification for looking for superpotential descriptions for the fundamental group algebras of circle bundles; often circle bundles have a natural non-trivial central element in their fundamental groups. In particular for the dimension 3 case we will be looking for isomorphisms between the fundamental group algebra and the Jacobi algebra of a quiver with potential. Then because our fundamental group algebras are always Calabi-Yau (by \cref{prop2.19}) we get using \cref{thm2.16} that the Jacobi algebra presentation is a superpotential algebra provided that the potential is homogeneous. As a starting point of this conjecture we have:

\begin{thm}[\cite{d1} Proposition 4.2]\label{thm2.22}
Let $\Delta$ be a brane tiling of a Riemann surface $\Sigma_g$ and let $\m C \widetilde{Q}_\Delta$ denote the localisation of the path algebra $\m C Q_\Delta$ with respect to all the arrows in $Q_{\Delta, 1}$. Then we have an isomorphism of algebras
$$\r{Jac}(\m C \widetilde{Q}_\Delta,W_\Delta) \cong \r{Mat}_{r\times r}( \m C[\pi_1(\Sigma_g \times S^1)])$$
where $r$ is the number of vertices in $Q_\Delta.$
\end{thm}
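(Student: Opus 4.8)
The plan is to realise $\r{Jac}(\m C \widetilde{Q}_\Delta, W_\Delta)$ as a matrix algebra over $\m C[\pi_1(\Sigma_g)]$ adjoined one central invertible variable, that variable being the class of the $S^1$-fibre. Write $G$ for the underlying graph of $Q_\Delta$; by definition of a brane tiling $G$ is the one-skeleton of a CW structure on $\Sigma_g$ in which the $2$-cells are the faces of $G$, one for each vertex $v$ of $\Gamma$, attached along the minimal cycle $c_v$. Since $G$ is connected and every arrow is being inverted, $\m C \widetilde{Q}_\Delta$ is the algebra of the fundamental groupoid of $G$, that is $\r{Mat}_{r\times r}(\m C[\pi_1(G,\ast)])$ for any base vertex $\ast$, with $\pi_1(G,\ast)$ free. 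Forming $\r{Jac}(\m C \widetilde{Q}_\Delta, W_\Delta)$ imposes the localised relations $\partial W_\Delta/\partial a = 0$; invoking the brane-tiling identity $a\cdot\partial W_\Delta/\partial a = c_{v} - c_{u}$ (with $v$ and $u$ the white and black faces meeting the edge dual to $a$) and invertibility of $a$, each such relation becomes the identification $c_{v(a)} = c_{u(a)}$ of the two face-cycles adjacent to $a$.

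The heart of the argument is producing a central unit. Around a fixed vertex $i$ of $Q_\Delta$ the incident faces appear in cyclic order, and any two consecutive ones are exactly the pair of faces adjacent to some arrow of $Q_\Delta$ at $i$; chaining the relations $c_{v(a)} = c_{u(a)}$ around $i$ therefore forces all the cycles $c_v$ with $v$ a face at $i$, read as loops based at $i$, to be equal to a single element $z_i$, which is a unit because it is a word in the (now invertible) arrows. Comparing such a face-cycle based at the source versus the target of an arrow $a$ lying on it shows $aZ = Za$, where $Z := \sum_{i} z_i$; hence $Z$ is a central unit of $\r{Jac}(\m C \widetilde{Q}_\Delta, W_\Delta)$, realising the $S^1$-fibre class. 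Now choose a dimer $D$ of $\Delta$: every term of $W_\Delta$, hence every $c_v$, uses exactly one arrow dual to $D$, so assigning those arrows degree $1$ and all other arrows degree $0$ defines a $\m Z$-grading of $\m C \widetilde{Q}_\Delta$ under which the relations $c_{v(a)} = c_{u(a)}$ are homogeneous; it descends to $\r{Jac}(\m C \widetilde{Q}_\Delta, W_\Delta)$, with each $z_i$ and hence $Z$ homogeneous of degree $1$. As $Z$ is a central homogeneous unit of degree $1$, multiplication by $Z$ carries each graded piece isomorphically onto the next, whence $\r{Jac}(\m C \widetilde{Q}_\Delta, W_\Delta) \cong A_0[Z^{\pm 1}]$ with $Z$ central, where $A_0 = \r{Jac}(\m C \widetilde{Q}_\Delta, W_\Delta)/(Z-1)$ is the degree-zero part.

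It remains to identify $A_0$. Modulo $(Z-1)$ every $c_v$ becomes a constant path, and conversely once all $c_v$ are constant the relations $c_{v(a)} = c_{u(a)}$ hold automatically, so $A_0$ is the algebra of the fundamental groupoid of $G$ with all face-cycles $c_v$ set equal to identities. Since $\Sigma_g$ is obtained from $G$ by gluing a $2$-cell along each $c_v$, van Kampen identifies this quotient groupoid with the fundamental groupoid of $\Sigma_g$, giving $A_0 \cong \r{Mat}_{r\times r}(\m C[\pi_1(\Sigma_g)])$. Therefore $\r{Jac}(\m C \widetilde{Q}_\Delta, W_\Delta) \cong \r{Mat}_{r\times r}(\m C[\pi_1(\Sigma_g)])[Z^{\pm 1}] \cong \r{Mat}_{r\times r}\big(\m C[\pi_1(\Sigma_g)\times\m Z]\big) = \r{Mat}_{r\times r}\big(\m C[\pi_1(\Sigma_g\times S^1)]\big)$, using $\pi_1(\Sigma_g\times S^1) = \pi_1(\Sigma_g)\times\m Z$.

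The step I expect to be the main obstacle is the construction of $Z$: one must verify carefully that all face-cycles through a given vertex of $Q_\Delta$ really do coincide in the Jacobi quotient and that the resulting $Z$ is genuinely central, and this is exactly where the fine combinatorics of the brane tiling enters — the cyclic order of faces and arrows around each vertex of $Q_\Delta$, the fact that each arrow bounds exactly two faces, and the bipartite structure recorded by $W_\Delta = \sum_{\text{white}} c_v - \sum_{\text{black}} c_u$. The remaining inputs — the groupoid-algebra description of a fully localised path algebra, the $\m Z$-grading coming from a dimer, and the CW presentation of $\pi_1(\Sigma_g)$ — are standard.
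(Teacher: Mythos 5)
Your overall strategy is sound and most of the individual steps check out: $\m C\widetilde{Q}_\Delta$ is indeed the fundamental groupoid algebra of the underlying graph $G$; the localised Jacobi relations reduce to the face identifications $c_{v(a)}=c_{u(a)}$; chaining these around the boundary of the tile dual to a quiver vertex $i$ does force all face cycles through $i$ to one element $z_i$, and your centrality check of $Z=\sum_i z_i$ via $z_j a = a z_i$ for $a\colon i\to j$ is correct, as is the van Kampen identification of $A/(Z-1)$ with $\r{Mat}_{r\times r}(\m C[\pi_1(\Sigma_g)])$. The gap is the step ``choose a dimer $D$ of $\Delta$.'' A dimer need not exist for an arbitrary brane tiling: the present paper proves \cref{lem3.11} precisely to arrange that a tiling can be \emph{extended} to one admitting a dimer, and its proof begins by adding vertices ``until the number of black vertices equals the number of white vertices,'' so even the necessary condition $|W|=|B|$ for a perfect matching can fail. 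Without a dimer, the $\m Z$-grading you use (every term of $W_\Delta$ of degree $1$) does not exist, and the decomposition $\r{Jac}(\m C\widetilde{Q}_\Delta,W_\Delta)\cong A_0[Z^{\pm 1}]$ you build from it has no content.

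This is not cosmetic: when $|W|\neq|B|$ the statement itself appears to fail. In $H_1(G)$ one has $\sum_{v\ \r{white}}[c_v]=\sum_{a\in Q_{\Delta,1}}[a]=\sum_{u\ \r{black}}[c_u]$, since each arrow lies in exactly one white and one black face cycle; after the Jacobi identifications $[c_v]=[c_u]$ the common class $[z]$ therefore satisfies $(|W|-|B|)[z]=0$ in the abelianisation of the quotient groupoid, forcing $[z]$ to be torsion, whereas $H_1(\Sigma_g\times S^1)=\m Z^{2g}\oplus\m Z$ is torsion-free and the fibre class is a free generator. So you should at least isolate the hypothesis your argument needs, namely a grading making $W_\Delta$ homogeneous of degree $1$ (equivalently a dimer). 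Even assuming $|W|=|B|$ but with only a rational grading (scaled to make $W_\Delta$ homogeneous of integer degree $N>1$), the claim ``$A_0=A/(Z-1)$'' is false because the degree-zero subalgebra is then a proper subalgebra of $A/(Z-1)$; in that case one should instead argue that the central extension $1\to\langle Z\rangle\to G\to\pi_1(\Sigma_g)\to 1$ is trivial because the retraction $G\to\m Z$, $Z\mapsto N$, kills $N$ times the extension class in $H^2(\pi_1(\Sigma_g);\m Z)\cong\m Z$, which is torsion-free.
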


As remarked earlier there is a natural way to think about how the relations in the Jacobi algebra of the brane tiling identify homotopic paths in the surface $\Sigma_g$, but also not all relations in the fundamental group can be obtained from the potential. Adding in this extra $S^1$ direction fully rectifies this issue whereby paths in the Jacobi algebra that would be null-homotopic in $\Sigma_g$ are instead sent to loops around the circle in $\Sigma_g \times S^1$. Explicitly we grade the arrows in $Q_\Delta$ such that the potential $W_\Delta$ is homogeneous of degree 1 (note this is entirely separate to the path grading we already have on the path algebra) and then define a new embedding $Q_\Delta \hookrightarrow \Sigma_g \times S^1$ where this grading determines how far the arrows go around the $S^1$ direction. In particular, for any genus $g$ surface it is possible to find a tiling $\Delta$ such that the potential $W_\Delta$ is homogeneous with respect to this grading (e.g. see [\cite{d3} Figure 2]).

If in addition $W_\Delta$ is homogeneous with respect to the path grading, since \cref{thm2.22} holds for any brane tiling, we get a superpotential description for the fundamental group algebra $\m C[\pi_1(\Sigma_g \times S^1)]$.

\begin{conj}\label{conj2.1}
Let $X$ be a compact, acyclic, orientable circle bundle of dimension $n$. Then $k[\pi_1(X)]$ is a superpotential algebra.
\end{conj}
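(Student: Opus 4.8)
We only aim at the special case announced in the abstract, where $X=M_{g,\varphi}$ is the mapping torus of an automorphism $\varphi$ of $\Sigma_g$ of some finite order $m$, working over $\m C$ (or any field containing a primitive $m$-th root of unity); such an $X$ is a Seifert fibre bundle over the orbifold $\Sigma_g/\langle\varphi\rangle$, and --- this is what everything hinges on --- $X=(\Sigma_g\times S^1)/(\m Z/m)$ for the free action applying $\varphi$ to the surface and rotating the circle by $1/m$, giving a (generally non-split) extension $1\to\pi_1(\Sigma_g\times S^1)\to\pi_1(X)\to\m Z/m\to 1$. The plan is to construct a quiver with potential $(Q,W)$ in which $W$ is homogeneous for the path grading on $\m C Q$, together with a Morita equivalence between $\r{Jac}(Q,W)$ and $\m C[\pi_1(X)]$. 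Granting this, \cref{prop2.19} says $\m C[\pi_1(X)]$, hence $\r{Jac}(Q,W)$, is Calabi-Yau of dimension $3$, so \cref{thm2.16} makes $\r{Jac}(Q,W)$ a superpotential algebra, and a routine Morita-invariance argument (the basic algebra in this Morita class is again of the form $\r{Jac}(\bar Q,\bar W)$ with $\bar W$ path-homogeneous) transfers this to $\m C[\pi_1(X)]$ itself, which is the conjecture in this case.

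To build $(Q,W)$ I would bootstrap from \cref{thm2.22}. First choose a brane tiling $\Delta$ of $\Sigma_g$ whose potential $W_\Delta$ is homogeneous \emph{both} for the path grading and for the auxiliary $S^1$-grading, so that $\r{Jac}(\m C\widetilde{Q}_\Delta,W_\Delta)\cong\r{Mat}_{r\times r}(\m C[\pi_1(\Sigma_g\times S^1)])$. Because $\varphi$ has finite order it can be realised as a symmetry of such a tiling --- average a $\varphi$-equivariant cell structure on $\Sigma_g$ and bipartition the dual graph equivariantly --- so that $\m Z/m=\langle\varphi\rangle$ acts on $(Q_\Delta,W_\Delta)$ by potential-preserving quiver automorphisms, with a compatible action on the $S^1$-direction dictated by the mapping-torus gluing. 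Now apply the Galois-covering / skew-group calculus for quivers with potential: the $\m Z/m$-action has an orbit quiver with potential $(Q',W')$ with $\r{Jac}(Q',W')$ Morita equivalent to the crossed product $\r{Jac}(\m C\widetilde{Q}_\Delta,W_\Delta)\rtimes\m Z/m$, which by the description of $X$ above is $\r{Mat}_{r\times r}(\m C[\pi_1(X)])$ up to Morita equivalence, the non-splitting of the extension being absorbed by a change of variables using the chosen root of unity. Tracking the path grading through the orbit construction, $W:=W'$ is still homogeneous, which completes the plan modulo the two checks below.

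Those two checks are where the genuine work lies, and I expect the second to be the main obstacle. First, \cref{thm2.22} produces only the localisation $\m C\widetilde{Q}_\Delta$, whereas \cref{thm2.16} requires a genuine finite quiver with potential; one must therefore show that inverting the arrows of $Q_\Delta$ --- in fact only those dual to a fixed dimer of $\Delta$, which is enough since every term of $W_\Delta$ meets that set exactly once --- can be realised by arrow-contraction (reduction) moves that stay inside the class of $3$-Calabi-Yau Jacobi algebras rather than leaving it, and moreover that a tiling can still be found for which $W_\Delta$ is simultaneously path-homogeneous and $S^1$-homogeneous (a nontrivial combinatorial constraint which may force choosing $\Delta$, or even the model of $X$, carefully). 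Second, when $\varphi$ has fixed points the base $\Sigma_g/\langle\varphi\rangle$ is a genuine orbifold and the Seifert fibration acquires exceptional fibres, so no $\varphi$-invariant brane tiling can have $\varphi$ act freely; near each fixed point the orbit-quiver construction must be replaced by a local model in which the nontrivial cyclic stabiliser contributes extra arrows and relations encoding the Seifert invariants. Showing that these local models carry a path-homogeneous potential and glue to the global $(Q',W')$ without destroying homogeneity or the identification with $\m C[\pi_1(X)]$ is, I expect, the crux of the argument; finiteness of $\varphi$ is essential here precisely because it limits the local models to the finite list of cyclic-quotient data.
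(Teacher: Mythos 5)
The overall scaffolding you propose (a $\varphi$-invariant brane tiling, an ``orbit'' quiver carrying the $\m Z/n$-action, and an identification of the resulting Jacobi algebra with $\r{Mat}_{m\times m}(\m C[\pi_1(M_{g,\varphi})])$) matches the paper's strategy, but two of your assessments of where the real work lies are off. First, you flag the fixed-point/orbifold case as ``the main obstacle.'' It is not: \cref{lem3.0} and \cref{lem3.1} show that any $\varphi$-invariant tiling can be refined (by adding vertices at fixed points of $\varphi^d|_{S_i}$ and equivariantly subdividing the offending tiles) so that $\varphi$ acts with \emph{all} vertex-orbits of full size $n$; no separate ``local model'' at exceptional fibres is needed, and indeed the paper never builds one. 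Second, your route to the isomorphism --- identify $\r{Jac}(Q',W')$ with the crossed product $\r{Jac}(\m C\widetilde{Q}_\Delta,W_\Delta)\rtimes\m Z/m$ and then claim this is $\r{Mat}_{r\times r}(\m C[\pi_1(X)])$ up to Morita equivalence with ``the non-splitting of the extension absorbed by a change of variables using the chosen root of unity'' --- glosses over the crux. The extension $1\to\pi_1(\Sigma_g\times S^1)\to\pi_1(X)\to\m Z/m\to1$ is non-central (the $\m Z/m$ acts nontrivially on $\pi_1(\Sigma_g)$), so there is no root-of-unity trick available; the non-splitting enters as a genuine twist in the $\m Z/m$-action on $\r{Mat}_{r\times r}(\m C[\pi_1(\Sigma_g\times S^1)])$ arising from the fact that $\varphi$ does not preserve the choice of maximal tree underlying \cref{thm2.22}. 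The paper avoids abstract crossed-product formalism entirely and instead constructs the isomorphism $\Psi$ by hand (\cref{thm3.12}), controlling this twist explicitly through a degree grading on the isomorphism arrows (\cref{lem3.5}, \cref{lem3.6}); the injectivity argument in particular hinges on the potential being homogeneous of degree $n$ in that grading so that powers of the minimal cycle are detected by the $\m Z$-component.

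You do, however, put your finger on a gap that the paper itself leaves open and is candid about: \cref{thm3.12} produces a Jacobi algebra over the fully localised quiver $\widetilde{Q'}$, with $W'$ homogeneous only in the isomorphism-arrow grading, whereas \cref{thm2.16} requires a finite quiver with a \emph{path}-graded homogeneous potential. Neither the paper nor your proposal closes that gap --- the paper says explicitly that it provides ``more evidence for \cref{conj2.1} if a path-graded homogeneous potential can be found'' --- so the proposal as written does not constitute a proof of the conjecture even in the mapping torus case, only a plan with the same missing ingredient. There is also a smaller unaddressed point: even granting such a presentation, you would have shown that $\r{Mat}_{m\times m}(\m C[\pi_1(X)])$ is a superpotential algebra, and passing to $\m C[\pi_1(X)]$ itself requires knowing the superpotential property is inherited by the basic algebra $e\,\r{Jac}(Q,W)\,e$, which is asserted as ``routine'' but would need an argument.
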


Guided by \cref{thm2.22} we shall provide a Jacobi algebra description for the case of a mapping torus of a Riemann surface by a finite-order, orientation-preserving automorphism. Thereby we provide more evidence for \cref{conj2.1} if a path-graded homogeneous potential can be found.

\section{Main results}
\subsection{A Jacobi algebra presentation for $\m C [\pi_1(M_{g,\varphi})]$}
From now on we work over the field $\m C$.

Let $\Sigma_g$ be a genus $g$ Riemann surface and let $\varphi$ be an orientation-preserving automorphism of $\Sigma_g$ of order $n$. Let $M_{g,\varphi}= \Sigma_g \times [0,1]/ \sim_\varphi$ denote the mapping torus of $\varphi$.

Consider a brane tiling $\Delta$ of $\Sigma_g$ giving rise to a dual quiver $Q=Q_\Delta$ and potential $W=W_\Delta$ as described in Section 2.4. We assume that the brane tiling and the chosen colouring of the vertices in the tiling are preserved by $\varphi.$ This implies that $\varphi$ induces an automorphism on $Q$ and hence also on the path algebra $\m C Q$ (we denote this automorphism by $\varphi$ as well). We focus on the case in which the size of the orbit of every vertex in $Q$ under $\varphi$ is $n$.

\begin{lem}[\cite{ck} Proposition 3.3 1.]\label{lem3.0}
Let $f: D^2 \xrightarrow{\sim} D^2$ be an orientation-preserving automorphism of the disc $D^2$ of order $n$. Then
\begin{enumerate}[a)]
    \item the fixed point set of $f$ is a single point in the interior of $D^2$.
    \item the fixed point set of the composition $f^i$ is equal to the fixed point set of $f$ for all $i \neq 0 \,\, \r{mod} \,\, n$.
\end{enumerate}
\end{lem}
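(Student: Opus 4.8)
The plan is to reduce both statements to the model case of a rigid rotation of the Euclidean disc, using the classical linearisation theorem of Brouwer and Ker\'ekj\'art\'o (this is essentially the content of \cite{ck}): any orientation-preserving homeomorphism $f\colon D^2\xrightarrow{\sim}D^2$ of finite order $n$ is topologically conjugate to the rotation $\rho\colon z\mapsto e^{2\pi i k/n}z$ of the closed unit disc in $\m C$ for some integer $k$. Since conjugate homeomorphisms have the same order, $\rho$ has order exactly $n$; as the order of $\rho$ equals $n/\gcd(k,n)$, this forces $\gcd(k,n)=1$, and in particular $k\not\equiv 0\pmod n$ (here $n\ge 2$, the statement being inapplicable when $f=\mathrm{id}$). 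Fix a homeomorphism $h\colon D^2\xrightarrow{\sim}D^2$ with $hfh^{-1}=\rho$; then $hf^ih^{-1}=\rho^i$ for all $i$, so $h$ restricts to a homeomorphism $\mathrm{Fix}(f^i)\xrightarrow{\sim}\mathrm{Fix}(\rho^i)$, and $h$ preserves the interior and the boundary of $D^2$ since these are distinguished topologically (e.g.\ by local homology).

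Granted this, part a) amounts to the observation that $\rho(z)=z$ if and only if $(e^{2\pi i k/n}-1)z=0$, i.e.\ if and only if $z=0$, because $e^{2\pi i k/n}\neq 1$; hence $\mathrm{Fix}(\rho)=\{0\}$ is a single point of the interior, and so is $\mathrm{Fix}(f)=h^{-1}(\{0\})$. For part b), the same conjugacy gives that $\rho^i$ is the rotation $z\mapsto e^{2\pi i ki/n}z$; when $i\not\equiv 0\pmod n$, the coprimality $\gcd(k,n)=1$ yields $ki\not\equiv 0\pmod n$, so $e^{2\pi i ki/n}\neq 1$ and the computation of part a) applied to $\rho^i$ gives $\mathrm{Fix}(\rho^i)=\{0\}=\mathrm{Fix}(\rho)$. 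Pulling back along $h$ we get $\mathrm{Fix}(f^i)=h^{-1}(\{0\})=\mathrm{Fix}(f)$, as required.

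The only real content here is the Brouwer--Ker\'ekj\'art\'o linearisation theorem; after it, everything is elementary bookkeeping, so that theorem is the main obstacle. The orientation-preserving hypothesis is essential and enters precisely through it — an orientation-reversing involution of $D^2$, such as a reflection, has a whole diameter of fixed points, so both conclusions fail without it. If one wanted a more self-contained route one could instead take a Brouwer fixed point $p$, smooth the $\m Z/n$-action near $p$ and linearise it there by Bochner's averaging trick, and then use Smith theory (for each prime $q\mid n$ the fixed set of the order-$q$ homeomorphism $f^{n/q}$ is $\m Z/q$-acyclic, hence connected) to propagate the local rotation picture to the global statements a) and b); but this just recovers a special case along the lines of \cite{ck}, so citing the theorem directly is cleaner.
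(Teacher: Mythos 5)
The paper supplies no proof of \cref{lem3.0}: the result is cited directly from \cite{ck} (Proposition 3.3) without argument, so there is nothing in the text for your proposal to diverge from. Your argument is correct and, as you observe yourself, is essentially a reconstruction of the cited result: the Brouwer--Ker\'ekj\'art\'o linearisation theorem is the one nontrivial input, and the reduction to the rotation $z \mapsto e^{2\pi i k/n} z$, the deduction that $\gcd(k,n)=1$ from the order condition, and the transport of fixed-point sets and of the interior/boundary decomposition through the conjugating homeomorphism are all routine and sound. You are also right to flag the implicit hypothesis $n \geq 2$; this is harmless in context, since the lemma is invoked only in the proof of \cref{lem3.1}, on an automorphism $\varphi^d|_{S_i}$ that cannot be the identity (otherwise $\varphi^d$ would fix the open tile $S_i$ pointwise and so, having finite order, would equal the identity on all of $\Sigma_g$, contradicting $0 < d < n = \mathrm{ord}(\varphi)$).
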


\begin{lem}\label{lem3.1}
Let $\varphi$ be an orientation-preserving automorphism of $\Sigma_g$ of order $n$. Then there exists a brane tiling $\Delta$ which is preserved by $\varphi$ such that every vertex in the dual quiver $Q = Q_\Delta$ has an orbit of size $n$ under the induced action of $\varphi$ on $Q$.
\end{lem}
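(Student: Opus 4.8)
The plan is to reduce everything to controlling the fixed-point sets of the powers of $\varphi$. First I would make $\varphi$ geometric: after smoothing (harmless in dimension two) and averaging an arbitrary Riemannian metric over the finite group $\langle\varphi\rangle$, we may assume $\varphi$ is an orientation-preserving isometry. Then for each $i$ with $i\not\equiv 0\bmod n$ the map $\varphi^i$ is a non-trivial orientation-preserving isometry, hence near any fixed point acts by a non-trivial rotation in geodesic normal coordinates; its fixed locus is therefore discrete, and by compactness finite. Consequently $P:=\bigcup_{i=1}^{n-1}\mathrm{Fix}(\varphi^i)$ is a finite, $\varphi$-invariant subset of $\Sigma_g$. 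The quotient $S:=\Sigma_g/\langle\varphi\rangle$ is a closed orientable surface and $q\colon \Sigma_g\to S$ is a branched covering with finite branch locus $\overline P:=q(P)$.

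Next I would build the tiling downstairs and pull it back, precisely so that the required invariance, colouring, and fixed-point conditions all hold simultaneously. Choose a simplicial triangulation $T'$ of $S$ whose vertex set contains $\overline P$, and let $T=q^{-1}(T')$ be the induced triangulation of $\Sigma_g$ (over a branch point of ramification $k$ one gets $k$ copies of the local fan of triangles, so $T$ is again a genuine triangulation), which is $\varphi$-invariant. From $T$ I form a bipartite graph $\Gamma\hookrightarrow\Sigma_g$ whose white vertices are the vertices of $T$, whose black vertices are barycentres of the $2$-simplices of $T$ — chosen $\varphi$-equivariantly, using \cref{lem3.0}(a) to take the unique invariant interior point of any simplex fixed by some $\varphi^i$ — and whose edges join each vertex of $T$ to the barycentre of every triangle containing it, via a non-crossing arc in that triangle. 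One checks directly that every edge of $\Gamma$ runs between a white and a black vertex, that $\Gamma$ is $\varphi$-invariant with its colouring preserved, and that the components of $\Sigma_g\setminus\Gamma$ are the open ``quadrilaterals'' bounded by the four arcs $v_i b_\tau,\ b_\tau v_j,\ v_j b_{\tau'},\ b_{\tau'} v_i$, one for each edge $v_i v_j$ of $T$ lying in triangles $\tau,\tau'$; in particular each component is an open disc, so $\Delta:=(\Gamma\hookrightarrow\Sigma_g)$ is a brane tiling preserved by $\varphi$ together with its colouring. By construction $P$ lies in the vertex set of $\Gamma$.

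Finally I would check that every face of $\Gamma$ — equivalently every vertex of the dual quiver $Q=Q_\Delta$ — has $\varphi$-orbit of size exactly $n$, i.e. trivial stabiliser. Suppose $\varphi^i(F)=F$ for a face $F$ and some $i\not\equiv 0\bmod n$. Its closure $\overline F$ is a disc and $\varphi^i$ restricts to a finite-order orientation-preserving self-homeomorphism of $\overline F$; this restriction is not the identity, since $\varphi^i$ is a non-trivial isometry so $\mathrm{Fix}(\varphi^i)$ has empty interior while $\overline F$ does not. By \cref{lem3.0}(a), $\varphi^i$ then has a fixed point in the interior of $\overline F$, i.e. in the open face $F$; but this point lies in $P$, which is contained in the $1$-skeleton of $\Gamma$ and so meets no open face — a contradiction. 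Hence $\mathrm{Stab}_{\langle\varphi\rangle}(F)$ is trivial and the orbit of $F$ has size $n$, as required. I expect the only real care to be needed in the middle step: getting $\varphi$-invariance, preservation of the bipartite colouring, and the containment $P\subseteq\Gamma$ to hold at once — and passing to the quotient surface before choosing the combinatorics is exactly what makes these three compatible, after which \cref{lem3.0} finishes the argument essentially formally.
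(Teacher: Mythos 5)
Your argument is correct but takes a genuinely different route from the paper's. The paper begins with an arbitrary $\varphi$-preserved brane tiling $\Delta_0$ and iteratively repairs it: for each tile $S_i$ whose stabiliser is generated by some $\varphi^d$, it uses \cref{lem3.0} to locate the unique interior fixed point of $\varphi^d|_{S_i}$, adds new tiling vertices at that point and its $\varphi$-orbit, and connects them by edges to a boundary vertex so as to subdivide $S_i$ into $n/d$ tiles with trivial stabiliser. You instead build the tiling in one shot: average a metric to make $\varphi$ an isometry, collect the finite fixed locus $P=\bigcup_{0<i<n}\mathrm{Fix}(\varphi^i)$, descend to the quotient surface, triangulate it with the branch points as vertices, pull back to a $\varphi$-invariant triangulation of $\Sigma_g$, and take the induced bipartite graph on vertices and barycentres. \Cref{lem3.0} is again the crux, but you use it in the opposite direction---to show that a face of $\Gamma$ with nontrivial stabiliser would contain an interior fixed point, which would lie in $P$ and hence in the $0$-skeleton rather than any open face. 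The trade-off: your route is more self-contained (the paper tacitly assumes the existence of some $\varphi$-preserved tiling to start from, whereas your quotient-and-pullback produces one directly) and treats all fixed points globally at once, while the paper's incremental subdivision meshes more naturally with the surrounding discussion (\cref{rem3.2}, \cref{lem3.11}), where one repeatedly grafts additional vertices and edges onto a tiling already in hand while preserving the orbit condition. One small point to tidy in your write-up: $q^{-1}(T')$ need not be a \emph{simplicial} complex even when $T'$ is (lifted triangles can share two edges), but it is always a $\varphi$-invariant triangulation whose $0$-skeleton contains $P$, which is all your argument actually uses; if you insist on simplicial you can pass to a $\varphi$-equivariant barycentric subdivision.
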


\begin{proof}
A point in $p \in \Sigma_g$ will have orbit size strictly less than $n$ under $\varphi$ if and only if $p$ is a fixed point of $\varphi^d$ for some $0<d<n$ that divides $n$. Let $\Delta_0$ be a brane tiling of $\Sigma_g$ with colouring that is preserved by $\varphi$. Then a vertex $i$ in the dual quiver $Q_{\Delta_0}$ will have orbit size strictly less than $n$ if and only if the tile $S_i$ in $\Delta_0$ that is dual to $i$ is such that $\varphi^d(S_i) = S_i$ for some $0<d<n$ that divides $n$. Suppose that there exists a vertex $i \in Q_{\Delta_0,0}$ with orbit size less than $n$ and suppose that $d$ is the smallest positive integer such that $\varphi^d(S_i) = S_i$. Then $\varphi^d|_{S_i}: S_i \xrightarrow{\sim} S_i$ is an (orientation-preserving) automorphism of $S_i$. Therefore by \cref{lem3.0} a) it has a single fixed point $p$ in the interior of $S_i$, and because by \cref{lem3.0} b) the fixed point set of $\varphi^{dj}$ is the same point for all $j=1, \ldots, n/d-1$ every other point in $S_i$ has orbit size $n$ under $\varphi$.

We add new brane tiling vertices of the same colour at $p, \varphi(p), \ldots, \varphi^{d-1}(p)$, then choose an opposite colour vertex $v$ on the boundary of $S_i$ and add an edge between the new vertex at $p$ and $v$. We then add the images of this edge under $\varphi^j$ for all $j=1, \ldots, n-1$, which go between the vertices $\varphi^j(p)$ and $\varphi^j(v)$, to the brane tiling. This gives us a new brane tiling $\Delta_1$ that is also preserved by $\varphi$, and in which the tile $S_i$ in $\Delta_0$ has been subdivided into tiles $S_{i_1}, \ldots, S_{i_{n/d}}$ (see \cref{pic:3.0}). Indeed because the vertex $v$ has orbit size $n$ we have added $n/d$ distinct edges into the tile $S_i$ that only intersect at the point $p$ and so we do in fact end up with a brane tiling (each of the distinct tiles $S_i, \varphi(S_i), \ldots, \varphi^{d-1}(S_i)$ each have $n/d$ edges added, giving us a total of $n$ new edges added to $\Delta_0$ as expected). 

We claim that $\varphi^k(S_{i_j}) \neq S_{i_j}$ for all $i_j$ and all $0<k<n$, and hence the vertex $i \in Q_{\Delta_0,0}$ which had orbit size less than $n$ has been replaced by the vertices $i_j \in Q_{\Delta_1,0}$ all of orbit size $n$. So suppose not i.e. there exists some $j$ and $k$ such that $\varphi^k(S_{i_j})= S_{i_j}$. Note that $k$ must be multiple of $d$ since otherwise
$$\varphi^{md+l}(S_{i_j}) \subset \varphi^l(S_i) \neq S_i$$
for $0<l<d$. If $k=md$ then we have that $\varphi^{md}|_{S_{i_j}}: S_{i_j} \xrightarrow{\sim} S_{i_j}$ is an automorphism of $S_{i_j}$ and so again by \cref{lem3.0} a) it has a fixed point in the interior of $S_{i_j}$. But this would also be a fixed point of $\varphi^{md}|_{S_i}$ and it would be distinct from $p$ contradicting \cref{lem3.0} b).

We repeat this procedure for all such vertices in the dual quiver with orbit size less than $n$ giving us the required brane tiling $\Delta$.
\end{proof}

\begin{figure}[H]
\centering
\begin{subfigure}{0.3\textwidth}
\begin{tikzpicture}
[black/.style={circle, draw=black!120, fill=black!120, thin, minimum size=3mm},
white/.style={circle, draw=black!120, thick, minimum size=3mm},
empty/.style={circle, minimum size=1pt, inner sep=1pt},
blacksmall/.style={circle, draw=black!120, fill=black!120, thin, scale=0.25}]

\node[black] (1) {};
\node[white] (2) [right=of 1] {};
\node[black] (3) [below right=of 2] {};
\node[white] (4) [below left=of 3] {};
\node[black] (5) [left=of 4] {};
\node[white] (6) [below left=of 1] {};
\node[blacksmall,label=above right:{$p$}] (8) at (0.7,-1.3) {};

\draw[-] (1) -- (2);
\draw[-] (2) -- (3);
\draw[-] (3) -- (4);
\draw[-] (4) -- (5);
\draw[-] (5) -- (6);
\draw[-] (6) -- (1);
\end{tikzpicture}
\end{subfigure}
\begin{subfigure}{0.15\textwidth}
\centering
\includegraphics[scale=0.12]{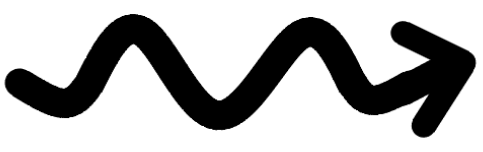}
\end{subfigure}
\begin{subfigure}{0.3\textwidth}
\begin{tikzpicture}
[black/.style={circle, draw=black!120, fill=black!120, thin, minimum size=3mm},
white/.style={circle, draw=black!120, thick, minimum size=3mm},
empty/.style={circle, minimum size=1pt, inner sep=1pt},
redsmall/.style={circle, draw=red!120, fill=red!120, thin, scale=0.6}]

\node[black] (1) {};
\node[white] (2) [right=of 1] {};
\node[black] (3) [below right=of 2] {};
\node[white] (4) [below left=of 3] {};
\node[black] (5) [left=of 4] {};
\node[white] (6) [below left=of 1] {};
\node[white] (8) at (0.7,-1.3) {};

\draw[-] (1) -- (2);
\draw[-] (2) -- (3);
\draw[-] (3) -- (4);
\draw[-] (4) -- (5);
\draw[-] (5) -- (6);
\draw[-] (6) -- (1);
\draw[-] (1) -- (8) node [right,midway] {$\widehat{a}$};
\draw[-,dashed] (3) -- (8) node [below, midway] {$\varphi^d(\widehat{a})$};
\draw[-,dashed] (5) -- (8) node [above left, midway] {$\varphi^{2d}(\widehat{a})$};
\end{tikzpicture}        
\end{subfigure}
    \caption{An example of a tile in $\Delta_0$ for which we suppose the induced action of $\varphi^d$ is clockwise rotation by $120^\circ$ and the tile has orbit size $d=n/3<n$. The fixed point of this rotation is the point $p$. We add a vertex at this point (in this case a white vertex), and then attach this to the brane tiling via an edge $\widehat{a}$. We then add all the images under $\varphi, \ldots, \varphi^{n-1}$ of both the new vertex and edge to $\Delta_0$. In the resulting brane tiling the three new tiles are no longer preserved by $\varphi^d$.}
    \label{pic:3.0} 
\end{figure}

\begin{rem}\label{rem3.2}
Take a brane tiling $\Delta$ such that every vertex in the dual quiver has an orbit of size $n$ under the induced action of $\varphi$ on $Q = Q_\Delta$. Then we may freely add additional vertices and edges to $\Delta$ along with their images under $\varphi, \ldots, \varphi^{n-1}$ and still maintain a brane tiling which is preserved by $\varphi$ such that the orbit of every vertex in the dual quiver has size $n$. Indeed, given such a $\Delta$ adding extra vertices and edges will amount to subdividing existing tiles in $\Delta$. From the proof of \cref{lem3.1} we see that if one of these new tiles $S_{i_j}$, which subdivides the old tile $S_i$, gives a dual vertex with orbit size less than $n$ then we must have that $\varphi^d(S_{i_j})=S_{i_j}$ for some $d<n$, and hence $\varphi^d|_{S_{i_j}}$ has a fixed point in $S_{i_j}$. Then as $\varphi$ and hence $\varphi^d$ preserves $\Delta$ we know that $\varphi^d(S_i)$ is also a tile in $\Delta$. But $\varphi^d$ has a fixed point inside $S_{i_j} \subset S_i$ and so $\varphi^d(S_i)$ must equal $S_i$ which contradicts $\Delta$ not having any dual vertices of orbit size less than $n$.
\end{rem}

Consider the semidirect product $S := \m C Q \rtimes_\varphi \m Z/n \m Z$, which has the following description 
$$\m CQ \rtimes_\varphi \m Z/n \m Z = \bigoplus_{l=0}^{n-1} \m CQ \times \{l\}$$ with multiplication given by
$$(a,l) \cdot (b,m) = (a\, \varphi^l(b),\,\, l+m)$$
where $a, b$ are paths in $\m CQ$ and $l,m$ and $l+m$ are elements in $\m Z/n \m Z.$

\begin{lem}\label{lem3.2}
$S$ is Morita equivalent to a localised path algebra of a quiver.
\end{lem}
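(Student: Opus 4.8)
The plan is to show that $S = \m C Q \rtimes_\varphi \m Z/n\m Z$ is Morita equivalent to a path algebra of a suitably constructed quiver, localised at its arrows, by exhibiting an explicit quiver $\widetilde{Q}^\varphi$ whose vertex set records $\varphi$-orbits of vertices of $Q$ together with the cyclic group data, and an arrow set lifting the arrows of $Q$. The key observation is that $\m C[\m Z/n\m Z]$ is semisimple (we are over $\m C$ with $n$ invertible), so $S$ is a crossed-product-type algebra whose module category decomposes nicely. First I would set up the idempotent decomposition: in $R = \m C Q_0$ each vertex $i$ has $\varphi$-orbit $\{i, \varphi(i), \ldots, \varphi^{n-1}(i)\}$ of size $n$ by hypothesis, and the group $\m Z/n\m Z$ permutes the primitive idempotents $e_i$ freely. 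Choosing one representative $i_0$ from each orbit, the sum $\sum_{l} (e_{i_0}, l)$ over the cyclic factor, suitably twisted, gives an idempotent in $S$; the collection of these (one per orbit, and then refined using characters of $\m Z/n\m Z$ acting on the stabiliser, which here is trivial) should be a full set of idempotents whose corner algebra is the path algebra we want.

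The main steps, in order, are: (1) identify $S$ as a free module of rank $n$ over $\m C Q$ and describe its multiplication concretely as above; (2) observe $e := \sum_{i \in \mathcal{O}} e_i \in R$, where $\mathcal{O}$ is a fixed set of orbit representatives, gives $e$ a full idempotent in $S$ — fullness because every $(b,m)$ can be written using the $\m Z/n\m Z$-action to move any vertex into $\mathcal{O}$, i.e. $SeS = S$; (3) compute the corner algebra $eSe$ and recognise it as (isomorphic to) $\m C \widetilde{Q}^\varphi$ for an explicit quiver $\widetilde{Q}^\varphi$ — here the vertices are the orbit representatives, and for each arrow $a\colon i \to j$ in $Q$ one gets a family of arrows in $\widetilde{Q}^\varphi$ indexed by $\m Z/n\m Z$ connecting the representatives of the orbits of $i$ and $j$, corresponding to the elements $(a, l) \cdot (\text{shift})$; (4) conclude via the standard fact that $A$ and $eAe$ are Morita equivalent when $e$ is full, together with the observation that this corner algebra is already a localised path algebra (the localisation entering because the $\m Z/n\m Z$-generator, realised inside $eSe$ as a path, must be invertible — it is a cycle that squares up to the identity after $n$ steps, so its inverse already lies in the algebra, and more importantly one localises at all arrows exactly as in \cref{thm2.22}).

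The technical heart — and the step I expect to be the main obstacle — is step (3): writing down the corner algebra $eSe$ explicitly and matching generators and relations with a bona fide path algebra. One must be careful that the product $(a,l)\cdot(b,m) = (a\,\varphi^l(b), l+m)$ interacts correctly with the idempotents $e_{i_0}$, so that a typical element of $eSe$ is a $\m C$-linear combination of terms $e_{i_0}(p, l) e_{j_0}$ where $p$ is a path from $\varphi^l(j_0)$ to $i_0$; organising these into arrows of a new quiver requires choosing, for each vertex $i$ of $Q$, the unique group element $l_i$ with $\varphi^{l_i}(i) \in \mathcal{O}$, and then checking that composition of paths in $Q$ translates into concatenation in $\widetilde{Q}^\varphi$ with the cyclic-group bookkeeping absorbed into a loop at each vertex. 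A clean way to handle this, which I would adopt, is to first prove the cleaner statement that $S \cong \mathrm{Mat}_{n \times n}(\m C Q^\varphi)$-flavoured object is false in general, and instead directly verify $eSe \cong \m C \widetilde{Q}^\varphi$ by constructing mutually inverse algebra homomorphisms on generators; once the arrows and the loop generators of $\widetilde Q^\varphi$ are pinned down, the verification that these maps are well-defined and inverse is routine but bookkeeping-heavy, so I would present it as a computation on generators only, deferring to \cref{lem2.11} for the path-algebra/tensor-algebra identification and to the standard Morita theory of full idempotents for the final conclusion.
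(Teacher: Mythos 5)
Your approach, via a full idempotent and corner algebra, is a genuinely different route from the paper's. The paper never computes a corner algebra: it works directly at the level of representations, unpacking the data of a representation of $S = \m CQ \rtimes_\varphi \m Z/n\m Z$ as a representation $(V,(f_a))$ of $Q$ together with isomorphisms $r_i\colon V_i\to V_{\varphi(i)}$ satisfying $r^n=\mathrm{id}$ and $f_{\varphi(a)} = r_{t(a)} f_a r_{s(a)}^{-1}$, and then observing that this data is determined by one linear map per $\varphi$-orbit of arrows and $n-1$ isomorphisms per $\varphi$-orbit of vertices --- precisely the data of a representation of a localised path algebra $\m CQ'$ on all of $Q_0$. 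Your step (2), fullness of $e=\sum_{i_0\in\mathcal O}(e_{i_0},0)$, is correct: if $j=\varphi^l(i_0)$ then $(e_j,0)=(e_j,l)(e_{i_0},0)(e_{i_0},-l)\in SeS$, so $SeS=S$; combined with the standard Morita theory of full idempotents, the plan would give the lemma.

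However, the description of $\widetilde Q^\varphi$ in step (3) and the localisation claim in step (4) are wrong. An element of $(e_{i_0},0)\,S\,(e_{j_0},0)$ has the form $(p,l)$ with $p$ a path $\varphi^l(j_0)\to i_0$ in $Q$, and the length-one such elements are $(a,l_a)$ for arrows $a\in Q_1$ with $t(a)\in\mathcal O$, $l_a$ being the unique class with $\varphi^{-l_a}(s(a))\in\mathcal O$. Each $\varphi$-orbit of arrows contains exactly one arrow with target in $\mathcal O$ (the vertex orbits being free of size $n$), so there are $|Q_1|/n$ generating arrows --- not ``a family indexed by $\m Z/n\m Z$'' per arrow of $Q$. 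Moreover there is no residual ``loop at each vertex'' and no localisation needed: the group elements $(e_{i_0},k)$ with $k\neq 0$ are simply not in $eSe$, since $\varphi^{-k}(i_0)\notin\mathcal O$, and a routine check (peeling off arrows from the target end) shows every $(p,l)\in eSe$ factors uniquely as a product of the $(a,l_a)$'s, so $eSe$ is an honest un-localised path algebra with $|Q_0|/n$ vertices. That still proves the lemma (trivial localisation), but note it yields a different Morita partner than the paper's $\m CQ'$; the paper's choice, with all $|Q_0|$ vertices and invertible isomorphism arrows $r_{i_u,t}$, is the one actually required by the subsequent constructions (the inclusion $\xi$ of \cref{lem3.4}, the degree function, and \cref{thm3.12}), so the paper's representation-theoretic proof is doing double duty.
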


\begin{proof}
A representation of $S$ is a representation of $Q$ with the extra data of the action of $\mathbbm 1 \in \m Z/n \m Z$, subject to the multiplication relations in $S$. Let $V=\bigoplus_{i\in Q_0} V_i$ and $(f_a)_{a \in Q_1}$ be the data of a representation of $Q$. The action of $\mathbbm 1$ is then given by isomorphisms $V_i \xrightarrow{r_i} V_{\varphi(i)}$ for each $i \in Q_0$ such that if we let $r:V \xrightarrow{\sim} V$ be the isomorphism given component-wise on $V_i$ by $r_i$, we have $r^n=\r{id}_V$. The data $(V,(f_a),(r_i))$ then gives a representation of $S$ if we also have that for all $a:i \rightarrow j \in Q_1, \,\,\, f_{\varphi(a)}=  r_j \circ f_a \circ r_i^{-1}.$ Equivalently if we partition the orbits of $\varphi$ in $Q$ as follows; for the vertices we have orbits $O_1=\{i_1, \varphi(i_1), \ldots, \varphi^{n-1}(i_1)\}, \, \ldots,\, O_y=\{i_y, \ldots, \varphi^{n-1}(i_y)\}$ and for the arrows we have orbits $P_1=\{a_1,\ldots, \varphi^{n-1}(a_1)\},\, \ldots,\, P_z=\{a_z, \ldots, \varphi^{n-1}(a_z)\}$, then the data of a representation of $S$ is given by the vector space $V=\bigoplus_{i\in Q_0} V_i$, linear maps $f_{a_t}$ for $t \in \{1, \ldots, z\}$ and isomorphisms $r_{i_u,0}: V_{i_u} \xrightarrow{\sim} V_{\varphi(i_u)}, \,\, r_{i_u,1}: V_{\varphi(i_u)} \xrightarrow{\sim} V_{\varphi^2(i_u)},\, \ldots,\,\, r_{i_u,n-2}: V_{\varphi^{n-2}(i_u)} \xrightarrow{\sim} V_{\varphi^{n-1}(i_u)} $ for $u \in \{1, \ldots, y\}$. 

Let $Q^\#$ denote the quiver with vertices $Q^\#_0 = Q_0$ and arrows $Q^\#_1 = \{a_1, \ldots, a_z,\, r_{i_u,t} \,:\, u=1, \ldots, y \,\, \r{and} \,\, t= 0, \ldots, n-2\}$, and let $\m CQ'$ denote the localisation of the path algebra $\m CQ^\#$ at the arrows $\{r_{i_u,t}\}$. Then it is clear that the data of a representation of $S$ is then exactly the same as the data of a representation of $\m CQ'$.
\end{proof}

We shall use the notation $Q'$ for the ``localised" quiver associated to the partially localised path algebra $\m CQ'$. We call the arrows in $Q^\# \subset Q'$ \emph{generating arrows} and the arrows $r_{i_u,t} \in Q'_1$ \emph{isomorphism arrows}.

\begin{eg}\label{eg3.1}
Consider a Riemann surface of genus 2 and let the automorphism $\varphi$ be rotation by $180^\circ$ around the $z$-axis through the centre of the surface. We take the brane tiling from \Cref{eg2.1}.
\begin{figure}[H]
    \centering
    \includegraphics[scale=0.4]{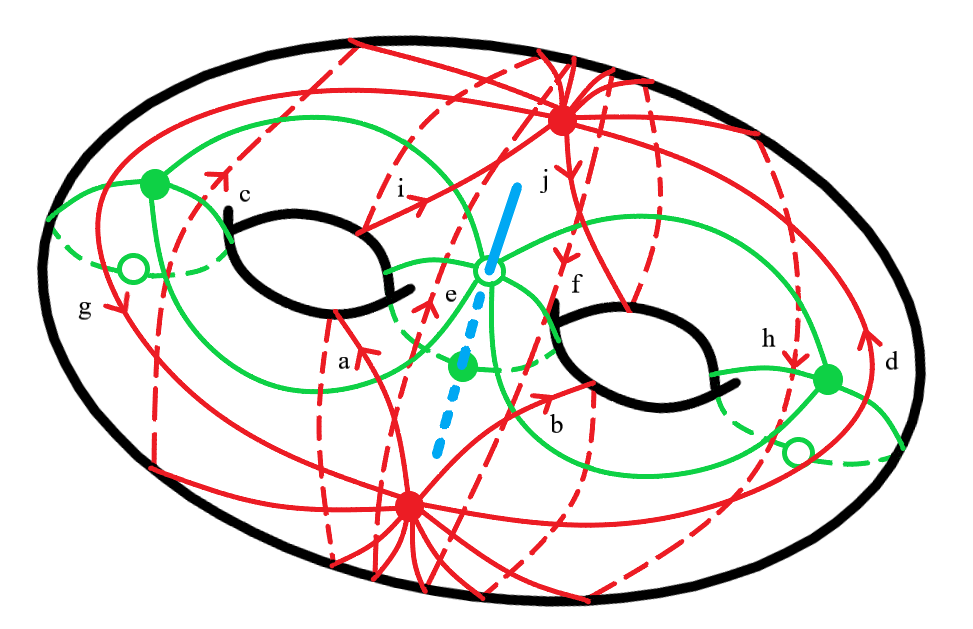}
    \caption{Let $\varphi$ be the rotation by $180^\circ$ around the blue axis.}
\end{figure}
Recall this gives us the following dual quiver $Q$
\[
\begin{tikzcd}[row sep=large, column sep=3cm]
1 \arrow[out=90, in=150, swap, loop,"a"] \arrow[out=210, in=270, swap, loop,"b"] \arrow[r, bend left= 30,"c"] \arrow[r, shift left=1.0ex, bend left=45,"d"] \arrow[r, shift left=2ex, bend left=60,"e"]
& 2 \arrow[out=90, in=30, loop,"i"] \arrow[out=330, in=270, loop,"j"] \arrow[l,bend left= 30, "h"] \arrow[l, shift left=1ex, bend left=45,"g"] \arrow[l, shift left=2ex, bend left=60,"f"]
\end{tikzcd}
\]
and potential $W=abfjie+gc+hd-agic-bhjd-fe$. $\varphi$ then swaps the two vertices 1 and 2 and also the arrows $a$ and $j$, $b$ and $i$, $c$ and $h$, $d$ and $g$, $e$ and $f$. The quiver $Q'$ corresponding to the algebra $\m CQ \rtimes \m Z/2 \m Z$ is
 \[
\begin{tikzcd}[row sep=large, column sep=3cm]
1 \arrow[out=90, in=150, swap, loop,"a"] \arrow[out=210, in=270, swap, loop,"b"] \arrow[r,"r^{-1}"] \arrow[r, bend left= 30,"c"] \arrow[r, shift left=1.0ex, bend left=45,"d"] \arrow[r, shift left=2ex, bend left=60,"e"]
& 2 \arrow[l, bend left= 30, "r"]
\end{tikzcd}
\]
as the arrows $a,b,c,d,e$ generate $Q$ under $\varphi$ and the arrows $r,r^{-1}$ give the isomorphism between the vector spaces at the two vertices.
\end{eg}

\begin{lem}\label{lem3.4}
There is an inclusion of algebras $\xi: \m CQ \hookrightarrow \m CQ'$.
\end{lem}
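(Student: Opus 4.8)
The plan is to define $\xi$ explicitly on the generators of $\m C Q$, extend it to an $R$-algebra homomorphism (with $R=\m C Q_0=\m C Q'_0$) using $\m C Q\cong T_R(\m C Q_1)$ (\cref{lem2.11}) and the universal property of the tensor algebra, and then prove injectivity by tracking the image of the path basis of $\m C Q$ inside the reduced-walk basis of the localised algebra $\m C Q'$. On vertices set $\xi(e_i)=e_i$, which is legitimate since $Q'_0=Q^\#_0=Q_0$. Since every $\varphi$-orbit of vertices has size $n$, so does every $\varphi$-orbit of arrows (if $\varphi^d$ fixed an arrow it would fix its source), so each arrow of $Q$ equals $\varphi^k(a_t)$ for a unique generating arrow $a_t$ and a unique $k\in\{0,\dots,n-1\}$. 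Inside each $\varphi$-orbit $O_u$ of vertices the isomorphism arrows $r_{i_u,0},\dots,r_{i_u,n-2}$ form an oriented $A_n$-line $i_u\to\varphi(i_u)\to\cdots\to\varphi^{n-1}(i_u)$, and after localising they become invertible in $\m C Q'$, so for each $v\in O_u$ and each $k$ there is a canonical \emph{reduced} path $\rho_v^k\in e_{\varphi^k(v)}\,\m C Q'\,e_v$ from $v$ to $\varphi^k(v)$, namely the unique reduced walk joining those two vertices of the line, built from these arrows and their inverses. I then define, for $a=\varphi^k(a_t):i\to j$,
$$\xi(a)\;:=\;\rho^k_{t(a_t)}\cdot a_t\cdot\bigl(\rho^k_{s(a_t)}\bigr)^{-1}\;\in\;e_j\,\m C Q'\,e_i$$
(for example in \cref{eg3.1} this fixes $a,b,c,d,e$ and sends $j\mapsto r^{-1}ar$, $i\mapsto r^{-1}br$, $f\mapsto rer$, $g\mapsto rdr$, $h\mapsto rcr$). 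As $a\mapsto\xi(a)$ is visibly a morphism of $R$-bimodules $\m C Q_1\to\m C Q'$, it extends uniquely to an $R$-algebra homomorphism $\xi:\m C Q\to\m C Q'$, with multiplicativity requiring no further check.

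For injectivity I would use that $\m C Q'$ has a $\m C$-basis of \emph{reduced walks} in $Q'$ — words in the arrows of $Q^\#$ and the formal inverses of the isomorphism arrows containing no subword $r_{i_u,t}r_{i_u,t}^{-1}$ or $r_{i_u,t}^{-1}r_{i_u,t}$ — which holds because the isomorphism arrows form a forest (a disjoint union of $A_n$-lines) in $Q^\#$, so their universal inversion imposes no relations beyond $rr^{-1}=e$, $r^{-1}r=e$, and the usual rewriting argument applies. Given a path $p=a_m\cdots a_1$ in $Q$ with $a_l=\varphi^{k_l}(a_{t_l})$, expanding $\xi(p)=\xi(a_m)\cdots\xi(a_1)$ yields an alternating product of reduced iso-paths and their inverses with the generating arrows $a_{t_m},\dots,a_{t_1}$; each inner iso-factor is $\bigl(\rho^{k_{l+1}}_{s(a_{t_{l+1}})}\bigr)^{-1}\rho^{k_l}_{t(a_{t_l})}$, and because $p$ is a path we have $t(a_l)=s(a_{l+1})$, which forces $t(a_{t_l})$ and $s(a_{t_{l+1}})$ into the same $\varphi$-orbit; hence this factor lies among the isomorphism arrows of a single orbit and collapses in $\m C Q'$ to the unique reduced walk between its endpoints. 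Thus $\xi(p)$ is a single reduced walk $w(p)$ in which the non-isomorphism letters, in the order they occur, are $a_{t_m},\dots,a_{t_1}$. From $w(p)$ one recovers $p$: the letters give $a_{t_1},\dots,a_{t_m}$; the source vertex of $w(p)$ equals $s(a_1)=\varphi^{k_1}(s(a_{t_1}))$, which pins down $k_1\in\{0,\dots,n-1\}$ and hence $a_1$; then $t(a_1)=s(a_2)$ pins down $k_2$, and so on. So $p\mapsto w(p)$ is injective, whence $\xi$ maps the path basis of $\m C Q$ injectively into the reduced-walk basis of $\m C Q'$ and is in particular injective.

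The definition and its extension are routine; the content, and the step most prone to error, is the reduction analysis above — especially the ``wrap-around'' inside a $\varphi$-orbit, where there is no isomorphism arrow $\varphi^{n-1}(i_u)\to i_u$ but only the inverses of $r_{i_u,0},\dots,r_{i_u,n-2}$, which is exactly why $\rho_v^k$ must be taken to be a reduced walk rather than a naive composition, and why the inner iso-factors must be checked to genuinely collapse. A more conceptual alternative for the injectivity step: restriction along the obvious inclusion $\m C Q\hookrightarrow S$, $a\mapsto(a,0)$, sends the faithful regular module ${}_SS$ to a faithful $\m C Q$-module (indeed a direct sum of copies of $\m C Q$), and by construction of $\xi$ this agrees with the restriction along $\xi$ of the $\m C Q'$-module corresponding to ${}_SS$ under \cref{lem3.2} — one checks that $\xi(\varphi^k(a_t))$ acts as $f_{\varphi^k(a_t)}$, using the $S$-module compatibility relation $f_{\varphi(a)}=r_j\circ f_a\circ r_i^{-1}$ — so $\ker\xi=0$ by faithfulness.
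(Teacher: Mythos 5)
Your proof is correct and follows essentially the same route as the paper's: the same explicit definition of $\xi$ (conjugation of a generating arrow by paths of isomorphism arrows), and injectivity by showing paths of $Q$ land injectively in the reduced-walk basis of $\m C Q'$, with the orbit-size-$n$ hypothesis used to recover each exponent $k_l$ from the source vertices. You are somewhat more careful than the paper about why the inner iso-factors of $\xi(a_m)\cdots\xi(a_1)$ collapse to a single reduced walk (the paper takes this for granted when it asserts that $\{\xi(p)\}$ lies in the path basis of $\m C Q'$), and the module-theoretic faithfulness argument you sketch at the end is a legitimate alternative the paper does not pursue, though it would need a bit of care in transporting ${}_S S$ through the Morita equivalence of \cref{lem3.2}, which is set up at the level of representation data rather than via an explicit bimodule.
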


\begin{proof}
The map $\xi$ is induced by the natural inclusion $\m CQ \times \{0\} \hookrightarrow S$. As per the proof of \cref{lem3.2} we make a choice of a generating set of arrows $\{a_1, \ldots, a_z\}$ of $Q$ under $\varphi$ as well as a choice of isomorphism arrows $r_{i_u,t}:\varphi^t(i_u) \rightarrow \varphi^{t+1}(i_u)$ for $u = 1, \ldots, y$ and $t = 0, \ldots ,n-2$. Then define the map $\xi$ as follows; since $Q_0=Q_0'$ send the constant paths $e_i \mapsto e_i$, and then map the arrow $a \in Q_1$ to the path $p_a a_j q_a$ where $a$ is in the orbit under $\varphi$ of the generating arrow $a_j$ and $p_a: t(a_j) \rightarrow t(a), \, q_a:s(a) \rightarrow s(a_j)$ are paths comprised solely of the isomorphism arrows or their inverses (see \cref{pic:3.01} for an example).
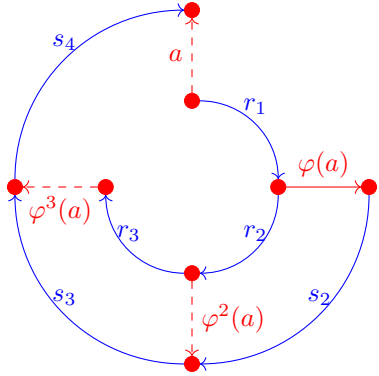
\begin{figure}[H]
    \centering
\begin{tikzpicture}
[black/.style={circle, draw=black!120, fill=black!120, thin, minimum size=3mm},
white/.style={circle, draw=black!120, thick, minimum size=3mm},
empty/.style={circle, minimum size=1pt, inner sep=1pt},
redsmall/.style={circle, draw=red!120, fill=red!120, thin, scale=0.6}]

\node[redsmall] (1) {};
\node[redsmall] (2) [above=of 1] {};
\node[redsmall] (3) [below right=of 1] {};
\node[redsmall] (4) [right=of 3] {};
\node[redsmall] (5) [below left=of 3] {};
\node[redsmall] (6) [below=of 5] {};
\node[redsmall] (7) [below left=of 1] {};
\node[redsmall] (8) [left=of 7] {};

\draw[red,dashed,->] (1) -- (2) node [left, midway] {$a$};
\draw[red,->] (3) -- (4) node [above, midway] {$\varphi(a)$};
\draw[red,dashed,->] (5) -- (6) node [right, midway] {$\varphi^2(a)$};
\draw[red,dashed,->] (7) -- (8) node [below, midway] {$\varphi^3(a)$};
\draw[blue,->] (1) to[out=0,in=90] node [above, midway] {$r_1$} (3);
\draw[blue,->] (3) to[out=-90,in=0] node [above, midway] {$r_2$} (5);
\draw[blue,->] (4) to[out=-90,in=0] node [above, midway] {$s_2$} (6);
\draw[blue,->] (5) to[out=-180,in=-90] node [above, midway] {$r_3$} (7);
\draw[blue,->] (6) to[out=-180,in=-90] node [above, midway] {$s_3$} (8);
\draw[blue,->] (8) to[out=90,in=180] node [above, midway] {$s_4$} (2);
\end{tikzpicture}
    \caption{An example of how $Q$ fits inside $Q'$. Let the red vertices and (dashed) arrows be in $Q$ with $\varphi$ being clockwise rotation by $90^\circ$. If we take the generating arrow of the orbit of $a$ to be $\varphi(a)$ and let the blue arrows denote the chosen isomorphism arrows, then $\xi(a) = s_4 s_3 s_2 \varphi(a) r_1$, $\xi(\varphi(a)) = \varphi(a)$, $\xi(\varphi^2(a)) = s_2 \varphi(a) r_2^{-1}$, and $\xi(\varphi^3(a)) = s_3 s_2 \varphi(a) r_2^{-1} r_3^{-1}$.}
    \label{pic:3.01}
\end{figure}
$\xi$ is well-defined because by construction of $Q'$ all such paths $a_j, \, p_a, \, q_a$ are unique and because both $a$ and $\xi(a)$ are paths $s(a) \rightarrow t(a)$ all the trivial relations in $\m CQ$ are satisfied. 

To see that $\xi$ is an inclusion it suffices to show that $\xi$ is injective on paths and that
$$\{\xi(p) : p \,\, \r{is a path in} \,\, \m CQ\}$$
is linearly independent. So first suppose we have paths $p,\, q \in \m CQ$ such that $\xi(p)=\xi(q).$ Write
\begin{align*}
    p &= b_n b_{n-1}\ldots b_2b_1 \\
    q &= c_m c_{m-1} \ldots c_2c_1
\end{align*} 
for arrows $b_i, c_j \in Q_1$. Then the equality $\xi(p)=\xi(q)$ implies that both $b_1$ and $c_1$ lie in the same orbit of some generating arrow $a_1$ i.e. 
$$b_1 = \varphi^{i_1}(a_1)\quad \r{and} \quad c_1=\varphi^{j_1}(a_1)$$
for some $i_1,j_1$. But as paths in $\m C Q'$ they must have the same source, and so by the assumption that the size of the orbit of each vertex in $Q$ under $\varphi$ has size $n$ we know that the source of $\varphi^{i_1}(a_1)$ and $\varphi^{j_1}(a_1)$ are equal only when $i_1=j_1$ and so $b_1=c_1$. We can then repeat this argument for the rest of the arrows in $p$ and $q$ giving us that $m=n$ and $b_i=c_i$ for all $i$, hence $p=q$ and $\xi$ is injective on paths. Then certainly we have
$$\{\xi(p) : p \,\, \r{is a path in} \,\, \m CQ\} \subseteq \{p' : p' \,\, \r{is a path in} \,\, \m CQ'\}$$
and since $\{p' : p' \,\, \r{is a path in} \,\, \m CQ'\}$ is a $\m C$-basis of $\m CQ'$ we get the required result.
\end{proof}

We grade the isomorphism arrows $r_{i_u,t}$ in $Q'$ with degree 1, their inverses $r_{i_u,t}^{-1}$ with degree $-1$, and all other arrows in $Q'$ with degree 0.

\begin{lem}\label{lem3.5}
For all arrows $a \in Q_1$ the element $\xi(a) \in \m CQ'$ either has degree 0, degree $n$ or degree $-n$.
\end{lem}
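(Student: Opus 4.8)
The plan is to reduce the statement to a degree count involving only the isomorphism arrows. Recall from the proof of \cref{lem3.4} that if $a = \varphi^k(a_j)$ for the generating arrow $a_j$ of its $\varphi$-orbit, where we may take $0 \le k \le n-1$ (arrow orbits have size $n$ because vertex orbits do), then $\xi(a) = p_a\, a_j\, q_a$ with $q_a\colon s(a)\to s(a_j)$ and $p_a\colon t(a_j)\to t(a)$ built solely from isomorphism arrows and their inverses. Since $a_j$ has degree $0$ this gives $\deg\xi(a) = \deg p_a + \deg q_a$. First I would note that the chosen grading is compatible with the localisation relations $r_{i_u,t}\,r_{i_u,t}^{-1} = e_{\varphi^{t+1}(i_u)}$ and $r_{i_u,t}^{-1}\,r_{i_u,t} = e_{\varphi^{t}(i_u)}$ (both sides of degree $0$), so the grading on $\m CQ'$ descends to the localisation and $\deg\xi(a)$ does not depend on the choice of representative paths.

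Next I would record the structure of the isomorphism arrows lying over one vertex-orbit: for a fixed representative $i_u$ the arrows $r_{i_u,0},\dots,r_{i_u,n-2}$ form a linear chain $i_u\to\varphi(i_u)\to\cdots\to\varphi^{n-1}(i_u)$, and crucially there is \emph{no} arrow $\varphi^{n-1}(i_u)\to i_u$ closing it into a cycle. Hence the underlying graph of these arrows is a tree, so between any two vertices $\varphi^s(i_u)$ and $\varphi^t(i_u)$ of the orbit — where $s,t\in\{0,\dots,n-1\}$ denote positions along the chain, well-defined because the orbit has size $n$ — there is a unique reduced path in $\m CQ'$ using only these arrows, and with the given grading this path has degree $t-s$. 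Consequently $\deg q_a$ and $\deg p_a$ are differences of positions along the source-orbit chain and the target-orbit chain respectively.

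The key computation then goes as follows. Write $s(a_j) = \varphi^{\alpha}(i_u)$ and $t(a_j) = \varphi^{\beta}(i_v)$ with $\alpha,\beta\in\{0,\dots,n-1\}$, where $i_u$ and $i_v$ are the orbit representatives of $s(a)$ and $t(a)$ (possibly $i_u = i_v$). Applying $\varphi^k$ places $s(a)$ at position $(k+\alpha)\bmod n$ and $t(a)$ at position $(k+\beta)\bmod n$, so $\deg q_a = \alpha - \bigl((k+\alpha)\bmod n\bigr)$ and $\deg p_a = \bigl((k+\beta)\bmod n\bigr) - \beta$. Writing $(k+\alpha)\bmod n = k+\alpha - n\epsilon_\alpha$ and $(k+\beta)\bmod n = k+\beta - n\epsilon_\beta$ with $\epsilon_\alpha = \lfloor(k+\alpha)/n\rfloor$ and $\epsilon_\beta = \lfloor(k+\beta)/n\rfloor$, the terms involving $k$ cancel and one finds $\deg\xi(a) = n(\epsilon_\alpha - \epsilon_\beta)$. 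Since $0\le k\le n-1$ and $0\le\alpha,\beta\le n-1$ force $k+\alpha,\,k+\beta\in\{0,\dots,2n-2\}$, both $\epsilon_\alpha$ and $\epsilon_\beta$ lie in $\{0,1\}$, whence $\deg\xi(a)\in\{-n,0,n\}$.

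I do not expect a genuine obstacle: this is essentially a bookkeeping lemma. The one step deserving care is the well-definedness (uniqueness) of the reduced connector paths $p_a,q_a$, which rests precisely on the isomorphism arrows forming a tree — a chain rather than a cycle — over each vertex-orbit; after that the only thing to control is the ``wrap-around'' defect in reducing positions modulo $n$, which can be only $0$ or $1$ on each side because $k$, $\alpha$ and $\beta$ are each at most $n-1$. It is also worth remarking that the argument runs identically whether or not $a$ is a loop and whether or not $s(a)$ and $t(a)$ lie in the same $\varphi$-orbit, since the position count is carried out independently over the source-orbit and the target-orbit.
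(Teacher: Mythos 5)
Your proof is correct and follows essentially the same approach as the paper: both exploit the chain structure of the isomorphism arrows over each vertex-orbit to read off $\deg\xi(a)$ from position differences along the source and target chains, with the only nonzero contribution coming from a possible wrap-around on each side (so each contributes $0$ or $\pm n$). Your reformulation via $(k+\alpha)\bmod n$ and the carry bits $\epsilon_\alpha,\epsilon_\beta\in\{0,1\}$ is just a cleaner, more explicit version of the paper's case-by-case phrasing (``$k$ or $n-k$ isomorphism arrows'' on each side).
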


\begin{proof}
This is clear from the assumption that each orbit of vertices/arrows under $\varphi$ has size $n.$ Indeed if the arrow $a$ is in the orbit of the generating arrow $b$ then we can write $a=\varphi^k(b)$ for some $k.$ Therefore, depending upon the choice of the isomorphism arrows $r_{i_u,t}$ for each orbit of vertices, between the source vertices of $a$ and $b$ there will either be $k$ or $n-k$ isomorphism arrows and similarly between the target vertices of $a$ and $b$ there will either be $k$ or $n-k$ isomorphism arrows. Hence $\xi(a)$ is either going to be the composition of $k$ (resp. $n-k$) isomorphism arrows followed by $b$ followed by $k$ (resp. $n-k$) inverses and hence it will have degree 0, or it will be the composition of $k$ (resp. $n-k$) isomorphism arrows followed by $b$ followed by $n-k$ (resp. $k$) isomorphism arrows and hence it will have degree $n$ or it will be the composition of $k$ (resp. $n-k$) inverses arrows followed by $b$ followed by $n-k$ (resp. $k$) more inverse arrows and hence it will have degree $-n$ (see \cref{pic:3.02} for an illustration of each case).
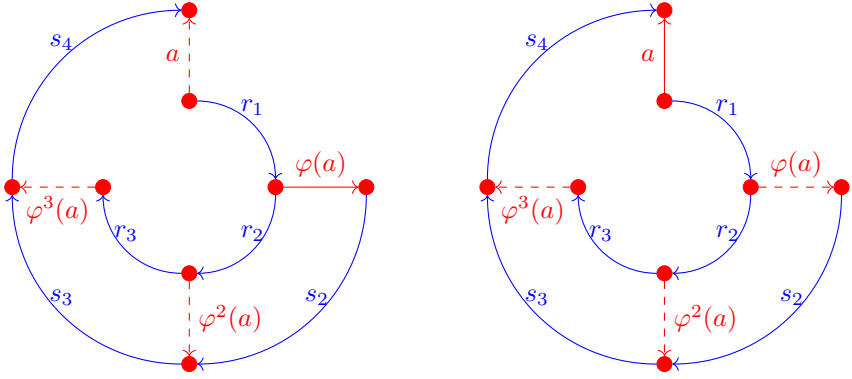
\begin{figure}
\begin{subfigure}{0.4\textwidth}
\begin{tikzpicture}
[black/.style={circle, draw=black!120, fill=black!120, thin, minimum size=3mm},
white/.style={circle, draw=black!120, thick, minimum size=3mm},
empty/.style={circle, minimum size=1pt, inner sep=1pt},
redsmall/.style={circle, draw=red!120, fill=red!120, thin, scale=0.6}]

\node[redsmall] (1) {};
\node[redsmall] (2) [above=of 1] {};
\node[redsmall] (3) [below right=of 1] {};
\node[redsmall] (4) [right=of 3] {};
\node[redsmall] (5) [below left=of 3] {};
\node[redsmall] (6) [below=of 5] {};
\node[redsmall] (7) [below left=of 1] {};
\node[redsmall] (8) [left=of 7] {};

\draw[red,dashed,->] (1) -- (2) node [left, midway] {$a$};
\draw[red,->] (3) -- (4) node [above, midway] {$\varphi(a)$};
\draw[red,dashed,->] (5) -- (6) node [right, midway] {$\varphi^2(a)$};
\draw[red,dashed,->] (7) -- (8) node [below, midway] {$\varphi^3(a)$};
\draw[blue,->] (1) to[out=0,in=90] node [above, midway] {$r_1$} (3);
\draw[blue,->] (3) to[out=-90,in=0] node [above, midway] {$r_2$} (5);
\draw[blue,->] (4) to[out=-90,in=0] node [above, midway] {$s_2$} (6);
\draw[blue,->] (5) to[out=-180,in=-90] node [above, midway] {$r_3$} (7);
\draw[blue,->] (6) to[out=-180,in=-90] node [above, midway] {$s_3$} (8);
\draw[blue,->] (8) to[out=90,in=180] node [above, midway] {$s_4$} (2);
\end{tikzpicture}
\end{subfigure}
\begin{subfigure}{0.4\textwidth}
\begin{tikzpicture}
[black/.style={circle, draw=black!120, fill=black!120, thin, minimum size=3mm},
white/.style={circle, draw=black!120, thick, minimum size=3mm},
empty/.style={circle, minimum size=1pt, inner sep=1pt},
redsmall/.style={circle, draw=red!120, fill=red!120, thin, scale=0.6}]

\node[redsmall] (1) {};
\node[redsmall] (2) [above=of 1] {};
\node[redsmall] (3) [below right=of 1] {};
\node[redsmall] (4) [right=of 3] {};
\node[redsmall] (5) [below left=of 3] {};
\node[redsmall] (6) [below=of 5] {};
\node[redsmall] (7) [below left=of 1] {};
\node[redsmall] (8) [left=of 7] {};

\draw[red,->] (1) -- (2) node [left, midway] {$a$};
\draw[red,dashed,->] (3) -- (4) node [above, midway] {$\varphi(a)$};
\draw[red,dashed,->] (5) -- (6) node [right, midway] {$\varphi^2(a)$};
\draw[red,dashed,->] (7) -- (8) node [below, midway] {$\varphi^3(a)$};
\draw[blue,->] (1) to[out=0,in=90] node [above, midway] {$r_1$} (3);
\draw[blue,->] (3) to[out=-90,in=0] node [above, midway] {$r_2$} (5);
\draw[blue,->] (4) to[out=-90,in=0] node [above, midway] {$s_2$} (6);
\draw[blue,->] (5) to[out=-180,in=-90] node [above, midway] {$r_3$} (7);
\draw[blue,->] (6) to[out=-180,in=-90] node [above, midway] {$s_3$} (8);
\draw[blue,->] (8) to[out=90,in=180] node [above, midway] {$s_4$} (2);
\end{tikzpicture}
\end{subfigure}
    \caption{Let $\varphi$ be clockwise rotation by $90^\circ$ on the quiver. The left-hand side is the same setup as we had in \cref{pic:3.01} with generating arrow for the orbit $\varphi(a)$. As we saw $\xi(a) = s_4 s_3 s_2 \varphi(a) r_1$ has degree 4 whilst $\xi(\varphi^2(a)) = s_2 \varphi(a) r_2^{-1}$ has degree 0 due to the choices of isomorphism arrows between the sources and targets of the arrows in this orbit. On the right-hand side we instead take the generating arrow of this orbit to be $a$ and choose the same isomorphism arrows. In this case we would have, for example, $\xi(\varphi(a)) = s_2^{-1} s_3^{-1} s_4^{-1} a r_1^{-1}$ which has degree $-4$.}
    \label{pic:3.02}
\end{figure}
\end{proof}

From now on when we say a path $p \in \m CQ'$ is made from isomorphism arrows we mean that the only arrows that appear in $p$ are the isomorphism arrows $r_{i_u,t}$ or their inverses $r_{i_u,t}^{-1}$.

\begin{lem}\label{lem3.6}
For all paths $p' \in \m CQ'$ we can write
$$p'= q\, \xi(p)$$
for some path $p \in \m CQ$ and a path $q \in \m CQ'$ made from isomorphism arrows. In particular if $\r{deg}(p') \equiv 0 \, \r{mod}\, n$ then $p' \in\, \r{Im}(\xi)$.
\end{lem}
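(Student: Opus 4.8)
The plan is to induct on the length of $p'$, meaning the number of arrows appearing in it when it is written as a reduced path in $\m CQ'$ (each generating arrow, isomorphism arrow, or inverse isomorphism arrow counting once). If $p' = e_i$ is a constant path then $p' = e_i\cdot\xi(e_i)$ and there is nothing to do, so suppose $p'$ has positive length, peel off its leftmost arrow $p' = x\cdot p''$, and apply the inductive hypothesis to the shorter path $p''$ to obtain $p'' = q''\,\xi(p_0)$ with $p_0$ a path in $\m CQ$ and $q''$ a path in $\m CQ'$ made from isomorphism arrows. If $x$ is itself an isomorphism arrow or the inverse of one, then $x\,q''$ is again a path made from isomorphism arrows and $p' = (x\,q'')\,\xi(p_0)$ already has the desired form.

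The substantive case --- the one I expect to be the main obstacle --- is when $x = a_j$ is a generating arrow, and the key ingredient is a ``straightening'' move: a string $q''$ of isomorphism arrows sitting immediately to the right of a generating arrow can be pushed past it to the left, at the price of replacing $a_j$ by a suitable $\varphi$-translate $\varphi^l(a_j)$ and producing a new string of isomorphism arrows. Concretely, the concatenation $a_j q''$ means $t(q'') = s(a_j)$, so $q''$ starts at some vertex $v$ lying in the same $\varphi$-orbit of vertices as $s(a_j)$ (isomorphism arrows never leave such an orbit); since this orbit has size exactly $n$, there is an $l$ with $s(\varphi^l(a_j)) = v$. By the construction of $\xi$ in \cref{lem3.4}, $\xi(\varphi^l(a_j)) = p_l\, a_j\, q_l$ where $p_l$ and $q_l$ are paths made from isomorphism arrows and $q_l$ runs from $s(\varphi^l(a_j)) = v$ to $s(a_j)$; by the uniqueness of reduced isomorphism-arrow paths between two fixed vertices (exploited already in the proof of \cref{lem3.4}) this forces $q_l = q''$ in $\m CQ'$. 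Since $p_l$ is a product of isomorphism arrows it is invertible in the localisation $\m CQ'$, with inverse again a path made from isomorphism arrows, so $a_j\, q'' = p_l^{-1}\,\xi(\varphi^l(a_j))$ and hence $p' = p_l^{-1}\,\xi(\varphi^l(a_j)\, p_0)$, which is of the required form with $q = p_l^{-1}$ and $p = \varphi^l(a_j)\, p_0$. The routine but fiddly part of this step is checking that all the concatenations involved are legitimate, which amounts to bookkeeping the indices within the $\varphi$-orbit of the relevant base vertex $i_u$.

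For the final assertion, suppose $\r{deg}(p') \equiv 0 \,\r{mod}\, n$ and write $p' = q\,\xi(p)$ as above with $q$ in reduced form. By \cref{lem3.5} each $\xi(a)$ with $a \in Q_1$ has degree $0$, $n$ or $-n$, so $\r{deg}(\xi(p))$ is a sum of multiples of $n$, whence $\r{deg}(q) = \r{deg}(p') - \r{deg}(\xi(p)) \equiv 0 \,\r{mod}\, n$. On the other hand, within a single $\varphi$-orbit the isomorphism arrows $r_{i_u,0},\dots,r_{i_u,n-2}$ form a chain of $n-1$ arrows, so a reduced path among them from $\varphi^a(i_u)$ to $\varphi^b(i_u)$ has degree $b-a$ with $|b-a| \le n-1 < n$. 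Therefore $\r{deg}(q) = 0$, which forces $q$ to end at its starting vertex; a reduced path from a vertex to itself in such a chain is trivial, so $q$ is a constant path and $p' = \xi(p) \in \r{Im}(\xi)$.
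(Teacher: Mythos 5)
Your proof is correct and follows essentially the same route as the paper's: the key move is identical (absorb the string of isomorphism arrows lying immediately to the right of a generating arrow $a_j$ into $\xi(\varphi^l(a_j))$, using the uniqueness of isomorphism-arrow paths between two vertices in an orbit), and the degree argument for the final assertion, combining \cref{lem3.5} with the observation that a chain of only $n-1$ isomorphism arrows cannot support a reduced path of degree a nonzero multiple of $n$, is also the same. The only difference is presentational --- the paper carries out the straightening in a single right-to-left pass over the normal form $p' = r_{m+1}a_m\cdots a_1 r_1$, whereas you package exactly the same step as an induction on the number of arrows.
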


\begin{rem}
Having the path of isomorphism arrows $q$ at the end is somewhat arbitrary, it is also possible to write $p'=\xi(p)q'$ or $p'=q'' \xi(p) q'''$. 
\end{rem}

\begin{proof}
Write
$$p'=r_{m+1}a_m \ldots a_2r_2a_1r_1$$
for arrows $a_i \in Q'_1$ coming from the generating set of $Q_1$ under $\varphi$, and paths of isomorphism arrows $r_i$. Let $\widetilde{a_1}$ be the arrow in $Q_1$ such that $r_1$ is the unique path of isomorphism arrows in $\m CQ'$ between $s(a_1)$ and $s(\widetilde{a_1})$ (in particular $\widetilde{a_1} = \varphi^{k_1}(a_1)$ for some $k_1$). Then $\xi(\widetilde{a_1})= s_1a_1r_1$ where $s_1$ is the unique path of isomorphism arrows in $\m CQ'$ between $t(a_1)$ and $t(\widetilde{a_1})$. Then let $\widetilde{a_2}$ be the arrow in $Q_1$ such that $r_2s_1^{-1}$ is the unique path of isomorphism arrows in $\m CQ'$ between $s(a_2)$ and $s(\widetilde{a_2})$, and write $\xi(\widetilde{a_2})=s_2a_2r_2s_1^{-1}$. Repeating this procedure for all the generating arrows $a_i$ in $p'$ it is clear that
$$p'=r_{m+1}s_m^{-1} \xi(\widetilde{a_m} \ldots \widetilde{a_1}).$$
If $\r{deg}(p') \equiv 0 \,\, \r{mod} \,\, n$ this implies that
$$\r{deg}(r_{m+1}s_m^{-1}) + \r{deg}(\xi(\widetilde{a_m} \ldots \widetilde{a_1})) \equiv 0 \,\, \r{mod} \,\, n.$$
By \cref{lem3.5} $\r{deg}(\xi(\widetilde{a_m} \ldots \widetilde{a_1})) \equiv 0 \,\, \r{mod} \,\, n$ and so the path of isomorphism arrows $r_{m+1}s_m^{-1}$ has degree a multiple of $n$. Now a path made of isomorphism arrows must go between vertices in $Q'$ in the same orbit, but in any orbit of a vertex there are only $n-1$ isomorphism arrows that connect the whole orbit. Hence the degree of $r_{m+1}s_m^{-1}$ is 0 and so it is just a constant path.
\end{proof}

It is clear that $\xi$ descends to a map $\m C Q/[\m C Q,\m CQ] \rightarrow  \m C Q'/[\m C Q', \m C Q']$ so let $W'$ be the image of $W$ under this morphism, giving a potential on $Q'$. Therefore we can construct the Jacobi algebra $\r{Jac}(Q',W')$. In our running example \Cref{eg3.1} $\xi$ sends
\begin{align*}
    a &\longmapsto a, \qquad \quad f \longmapsto rer \\
    b &\longmapsto b, \qquad \quad g \longmapsto rdr \\
    c &\longmapsto c, \qquad \quad h\longmapsto rcr \\
    d &\longmapsto d, \qquad \quad i\longmapsto r^{-1}br \\
    e &\longmapsto e, \qquad \quad j\longmapsto r^{-1}ar
\end{align*}
and 
\begin{align}
    W'= abreabre+2rdrc-2ardbrc-rere. \label{eq:3.91}
\end{align}
There is a choice involved in taking generators of $Q$ under $\varphi$ and the isomorphism arrows $r_{i_u,t}$ (or equivalently a choice of $\xi$) and hence the algebra $\m CQ'$ is not unique (but it is unique up to canonical isomorphism). We choose $\xi$ so that $W'$ does not contain both $r_{i_u,t}$ and $r_{i_u,t}^{-1}$, so that we may view it as a potential on the un-localised quiver algebra and hence the noncommutative derivative description of $\r{Jac}(Q',W')$ makes sense.

\begin{lem}\label{lem3.8}
There exists a choice of $\xi$ such that for each isomorphism arrow $r_{i_u,t} \in Q'_1$ the potential $W'$ does not contain both $r_{i_u,t}$ and $r_{i_u,t}^{-1}$. 
\end{lem}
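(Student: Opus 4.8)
The plan is to reduce the statement to a combinatorial assertion about how the chosen generating arrows compose around the minimal cycles occurring in $W$, and then to fix the remaining freedom in $\xi$ (essentially the choice of orbit representatives, i.e.\ of where the ``missing'' isomorphism arrow of each vertex-orbit sits) using a connected fundamental domain for the $\varphi$-action.

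First I would write $W = W_\Delta = \sum_{v\ \r{white}} c_v - \sum_{u\ \r{black}} c_u$ and group the brane-tiling vertices into $\varphi$-orbits, so that $W' = \xi(W)$ becomes a signed sum of blocks $\sum_{j} \xi(\varphi^j(c_v))$, one per $\varphi$-orbit of brane-tiling vertices (with the obvious multiplicity when $v$ has $\varphi$-orbit of size $<n$). Thus it is enough to understand, and to make globally compatible, the isomorphism-arrow content of each $\xi(c_v)$. Writing a minimal cycle as $c_v = b_\ell \cdots b_1$ (cyclically) with $b_i = \varphi^{k_i}(b_i')$ for $b_i'$ the chosen generator of the $\varphi$-orbit of $b_i$, one computes — using that consecutive arrows of $c_v$ share a vertex, so the inner isomorphism factors of $\xi(b_{i+1})\xi(b_i)$ telescope — that
\[
\xi(c_v) = b_\ell'\, s_{\ell-1}\, b_{\ell-1}' \cdots b_1'\, s_0 \quad\text{(cyclically)},
\]
where $s_i$ is the unique \emph{reduced} path of isomorphism arrows from $t(b_i')$ to $s(b_{i+1}')$, these two vertices lying in a common $\varphi$-orbit $O_{u(i)}$ (both being $\varphi$-translates of $t(b_i)=s(b_{i+1})$). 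The elementary observation driving everything is that, for each vertex-orbit, the isomorphism arrows $r_{i_u,0},\dots,r_{i_u,n-2}$ form a linear (type $A$) subquiver of $Q'$, so a reduced path in it is \emph{monotone}: it never uses an isomorphism arrow together with its inverse, and in fact uses each such arrow at most once. Hence no further cancellation is possible inside $\xi(c_v)$ once written as above, and the Lemma becomes: choose $\xi$ so that, for every vertex-orbit $O_u$ and every isomorphism arrow $r$ of $O_u$, all the segments $s_i$ (ranging over all $v$ and all $i$ with $u(i)=u$) that traverse $r$ traverse it in the same direction.

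For the construction, regard each vertex-orbit $O_u = \{i_u,\varphi(i_u),\dots,\varphi^{n-1}(i_u)\}$ as a copy of $\m Z/n$; the only freedom in $\xi$ affecting $O_u$ is the choice of representative $i_u$, equivalently the position of the ``cut'' (the consecutive pair $\varphi^{c_u}(i_u)\to\varphi^{c_u+1}(i_u)$ that is \emph{not} made an isomorphism arrow), and each segment $s_i$ landing in $O_u$ is an arc between two points of $\m Z/n$ whose orientation is then forced by the cut. To choose the cuts coherently I would pass to the quotient: since $\varphi$ preserves $\Delta$ and its colouring, $\Delta$ descends to a brane tiling $\overline\Delta$ of the orbifold $\overline\Sigma = \Sigma_g/\varphi$ with dual quiver $\overline Q = Q/\varphi$; pick a spanning tree of $\overline Q$ and lift it to a connected subtree $\widetilde T\subset Q$ containing exactly one vertex of each orbit, take those vertices as the representatives $i_u$, take the lifted tree edges as the generators of the corresponding arrow-orbits (and choose generators of the remaining arrow-orbits arbitrarily), and place each cut so that the union $F$ of the tiles dual to $\widetilde T$ is a connected fundamental domain whose $n$ translates $F,\varphi(F),\dots,\varphi^{n-1}(F)$ are cyclically ordered in agreement with the cyclic order $0,1,\dots,n-1$ of $O_u$. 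With this choice the segment $s_i$ records exactly the change of translate that $c_v$ undergoes passing from the tile before $b_i$ to the tile after $b_i$; and since $c_v$ is the oriented boundary of a small disc around the brane-tiling vertex $v$ (so it visits the tiles around $v$ once each, in cyclic order), these changes of translate proceed monotonically around the cyclic arrangement of the $\varphi^j(F)$, whence every isomorphism arrow of $O_u$ is traversed by all the $s_i$'s in one and the same direction.

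It remains to handle the branch points. By \cref{lem3.0} and the standing assumption that every tile- and edge-orbit has size $n$, the only cone points of $\overline\Sigma$ sit at brane-tiling vertices; for such a $v$, of orbit size $n/d$, the block $\sum_j \xi(\varphi^j(c_v))$ is $d$ copies of a single $\varphi^{n/d}$-invariant cyclic word, and the argument above applies after replacing $\varphi$ by $\varphi^{n/d}$ (equivalently, one observes that the local monodromy of $c_v$ about $v$ is a fixed power of $\varphi$ rotating the translates meeting at $v$ in one sense). I expect the main obstacle to be making the geometry of the previous paragraph combinatorially precise — verifying that, with the fundamental-domain choice, the translate-index really does change monotonically along $c_v$; this amounts to checking that a crossing of $\partial F$ out of $F$ always lands in the cyclically-next translate, and it is here (and, for cone points, in the analysis of the rotation by a root of unity) that the careful bookkeeping has to be done. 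The remaining ingredients — the telescoping computation of $\xi(c_v)$, the monotonicity of reduced paths in a type $A$ quiver, and the reduction to orbits — are routine.
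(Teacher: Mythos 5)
Your reduction --- writing $\xi(c_v)$ as a cyclic word with the chosen generators separated by isomorphism-arrow segments $s_i$, noting that each vertex-orbit contributes a linear (type $A$) subquiver so that a reduced segment is monotone, and hence reducing the Lemma to the requirement that all segments crossing a given $r_{i_u,t}$ do so in a consistent direction --- is a correct and natural restatement of what has to be arranged. The problem is with the construction you propose to achieve it, and the gap is the one you yourself flag: you need the claim that a crossing of $\partial F$ out of the chosen connected fundamental domain $F$ always lands in the cyclically-next translate $\varphi(F)$, so that the translate index changes monotonically along $c_v$. I do not see how to establish this: for a fundamental domain of a $\m Z/n$-action on a surface, distinct boundary arcs of $F$ are in general identified by distinct powers of $\varphi$, so crossing out of $F$ can land in $\varphi^j(F)$ for various $j$. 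Furthermore, even granting local monotonicity for a single $c_v$, the Lemma requires directional consistency across \emph{all} $v$ and all $i$ with $u(i)=u$, and the cyclic-ordering-around-$v$ argument does not obviously deliver that global coherence. Until this claim is proved the argument does not close, and the separate cone-point analysis compounds the difficulty.

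The paper avoids all of this with a much more economical choice. For each vertex-orbit $O_u$ fix the representative $i_u$, and for every orbit of arrows whose sources lie in $O_u$ take as generator the unique arrow of that orbit whose source \emph{is} $i_u$. Then for every pair of consecutive arrows $b_i,b_{i+1}$ in a cycle of $W$, the isomorphism-arrow segment of $\xi(c)$ between the generators $b_i'$ and $b_{i+1}'$ runs from $t(b_i')$ to $s(b_{i+1}')$, and by construction $s(b_{i+1}')$ \emph{is} the representative of its orbit, i.e.\ position $0$ in the corresponding type $A$ quiver. Hence every reduced segment terminates at position $0$, is therefore monotone decreasing, and uses only the inverses $r_{i_u,t}^{-1}$; so in fact no forward arrow $r_{i_u,t}$ appears in $W'$ at all, which is stronger than the Lemma asks. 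One then simply relabels $r_{i_u,t}^{-1}$ as the formal generator when regarding $W'$ as a potential on the un-localised algebra, as the paper notes after \cref{lem3.11}. No spanning tree, fundamental domain, or cone-point case analysis is needed: the standing assumption that every vertex-orbit of $Q$ has size $n$ already makes this choice of generators well-defined, irrespective of whether the brane-tiling vertex $v$ underlying $c_v$ is a cone point.
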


\begin{proof}
Consider the orbit of vertices $\{i_u, \varphi(i_u), \ldots, \varphi^{n-1}(i_u)\}$ in $Q'$, where $r_{i_u,t}: \varphi^{t}(i_u) \rightarrow \varphi^{t+1}(i_u)$. If $W'$ were to contain both $r_{i_u,t}$ and $r_{i_u,t}^{-1}$ then there would need to be arrows $a,\,b,\,c,\,d \in Q_1$ such that $ba$ and $dc$ are part of cycles in $W$ and we would need to choose $\xi$ so that if we let $a_0,\,b_0,\,c_0,\,d_0$ be the corresponding generators of the orbits of the arrows then  $t(a_0)=\varphi^{k_1}(i_u)$, $s(b_0)=\varphi^{k_2}(i_u)$, and $t(c_0)=\varphi^{l_1}(i_u)$, $s(d_0)=\varphi^{l_2}(i_u)$ with $k_1, l_2 \leq t$, $k_2, l_1 \geq t+1$. Indeed if this was the case then 
$$\xi(ba) = p_4\, b_0\, p_3\, r_{i_u,t}\, p_2\, a_0\, p_1$$
and
$$\xi(dc) = q_4\, d_0\, q_3\, r_{i_u,t}^{-1}\, q_2\, c_0\, q_1$$
for paths of isomorphism arrows $p_i,\, q_i$ (see \cref{pic:3.021} for an illustration of this). Hence if we simply choose $\xi$ so that the generators of arrows of two different paths in the cycles in $W$ that cross in the same orbit of vertices do not overlap in this way we remove instances of both $r_{i_u,t}$ and $r_{i_u,t}^{-1}$ appearing in $W'$. One way of ensuring this is for each orbit of vertices in $Q$, when choosing generators of the orbits of arrows whose sources lie in this orbit of vertices we have that the sources of the each of the generators are equal. This would amount to taking $s(b_0)=s(d_0)$ and hence at most either $r_{i_u,t}$ or $r_{i_u,t}^{-1}$ can appear in $W'$.
\end{proof}
\begin{figure}[H]
    \centering
    \includegraphics[scale=0.7]{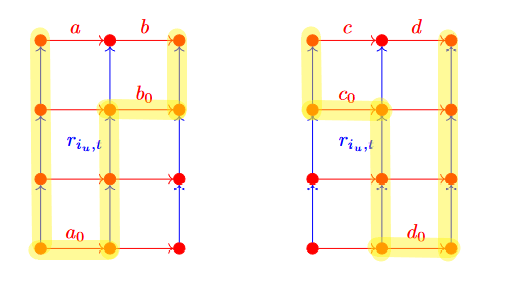}
    \caption{Two parts $ba$ and $dc$ of cycles that appear in $W$ that cross at the orbit of the vertex $i_u$. The chosen generators for the orbits are $a_0,\,b_0,\,c_0,\,d_0$ giving $\xi(ba)$ and $\xi(dc)$ as the paths highlighted in yellow. We can see that, due to these choices of generators, $r_{i_u,t}$ appears in $\xi(ba)$ and $r_{i_u,t}^{-1}$ appears in $\xi(dc)$ and so both appear in $W' = \xi(W)$. A choice of $\xi$ where this will not occur would be for example to take $b$ and $d$ to be the generators of their orbits. As the sources of $b$ and $d$ are equal this means no matter what choice we take for the generators of the orbits for $a$ and $c$, we can never see both $r_{i_u,t}$ and $r_{i_u,t}^{-1}$ in $W'$ (assuming no other parts of cycles in $W$ go through the orbit of $i_u$).}
    \label{pic:3.021} 
\end{figure}

\begin{lem}\label{lem3.9}
\begin{enumerate}[a)]
    \item[]
    \item Let $a \in Q'_1$ be a generating arrow which is dual to the edge in $\Delta$ that goes between the white vertex $v$ and the black vertex $u$. Then
    \begin{align}
        a \cdot \frac{\partial W'}{\partial a} = n \, \big(\xi(c_{v})-\xi(c_{u}) \big) \label{eq:3.1}
    \end{align}
    for the minimal cycles $c_v$ and $c_u$ in $W$.
    \item The relations $\partial W'/\partial r_{i_u,t}$ (or if relevant $\partial W'/\partial r_{i_u,t}^{-1}$) can be derived from the relations $\partial W'/\partial a$ for generating arrows $a \in Q'_1$.
\end{enumerate}
\end{lem}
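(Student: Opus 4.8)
The plan is to prove (a) by organising the cycles of $W'$ in which a generating arrow $a$ occurs according to the $\varphi$-orbit structure, reducing to the identity $a\cdot\partial W_\Delta/\partial a=c_v-c_u$ of Section 2.4 via a ``transport'' identity for $\xi$; and to prove (b) not by dissecting $\partial W'/\partial r$ directly but by combining the universal identity $\sum_{y}[\,y,\partial W'/\partial y\,]=0$ with the observation that the isomorphism arrows form disjoint chains, along which one inducts using that they are units in $\m C Q'$.

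\smallskip\noindent\emph{Part (a).} Recall $W'=\xi(W)=\sum_{v\text{ white}}\xi(c_v)-\sum_{u\text{ black}}\xi(c_u)$, that $\varphi$ fixes $W$, and that $c_{\varphi^m(w)}=\varphi^m(c_w)$ for a tiling vertex $w$. Write $\xi(b)=p_b\,o(b)\,q_b$, where $o(b)$ is the generating arrow in the orbit of $b$ and $p_b,q_b$ are the unique isomorphism-arrow paths of \cref{lem3.4}; then a generating arrow $a$ occurs in a cyclic word $\xi(c_w)$ only inside a block $\xi(b)$ with $b$ in the orbit of $a$, and there exactly once. By the brane-tiling description each $\varphi^m(a)$, $0\le m\le n-1$, lies in precisely two cycles of $W$: in $c_{\varphi^m(v)}$ with sign $+$ and in $c_{\varphi^m(u)}$ with sign $-$, where $v,u$ are the white and black tiling vertices of the edge dual to $a$. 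Hence $a\cdot\partial W'/\partial a$ is the signed sum, over $m$ and over the two colours, of ``the corresponding cycle of $W'$ rewritten so that that occurrence of $a$ is leftmost.'' The key tool is the transport identity: for any path $x$ in $Q$ one has $\xi(\varphi^m(x))=\tau^{t(x)}_m\,\xi(x)\,(\tau^{s(x)}_m)^{-1}$ in $\m C Q'$, where $\tau^i_m\colon i\to\varphi^m(i)$ is the unique isomorphism-arrow path; by multiplicativity this reduces to a single arrow, where it is immediate from uniqueness of such paths. Taking $c_v=a\,\gamma_v$ for the representative with $a$ leftmost ($\gamma_v\colon t(a)\to s(a)$), we get $\xi(c_{\varphi^m(v)})=\xi(\varphi^m(a))\,\xi(\varphi^m(\gamma_v))=p_{\varphi^m(a)}\,a\,q_{\varphi^m(a)}\,\xi(\varphi^m(\gamma_v))$, so rewriting that $a$ to the front gives $a\,q_{\varphi^m(a)}\,\xi(\varphi^m(\gamma_v))\,p_{\varphi^m(a)}$. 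Since $o(\varphi^m(a))=a$, uniqueness of isomorphism-arrow paths forces $q_{\varphi^m(a)}=(\tau^{s(a)}_m)^{-1}$ and $p_{\varphi^m(a)}=\tau^{t(a)}_m$, and applying the transport identity to $\gamma_v$ collapses the expression to $a\,\xi(\gamma_v)=\xi(c_v)$; the same computation for $u$ gives $-\xi(c_u)$. Summing over the $n$ values of $m$ yields $a\cdot\partial W'/\partial a=n\big(\xi(c_v)-\xi(c_u)\big)$. (This never uses that $v,u$ have orbit size $n$: if $v$ has orbit size $d\mid n$ the $n$ occurrences simply redistribute as $d$ white tiling-vertices contributing $n/d$ occurrences each, each still contributing $\xi(c_v)$.)

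\smallskip\noindent\emph{Part (b).} By \cref{lem3.8} each isomorphism arrow occurs in $W'$ only as $r_{i_u,t}$ or only as $r_{i_u,t}^{-1}$ (if neither, the corresponding derivative is $0$); write $\rho_{i_u,t}$ for the arrow in that slot, so $W'$ is a genuine potential on an unlocalised quiver $Q^\#$ whose arrows are the generating arrows together with the $\rho_{i_u,t}$, and $\rho_{i_u,t}$ is a unit in $\m C Q'$. For any potential on a quiver, $\sum_{y}[\,y,\partial W'/\partial y\,]=0$ in the path algebra (telescope the cyclic rotations of each cycle). Multiplying by the idempotent $e_j$ on both sides and using $\partial W'/\partial y\in e_{s(y)}\m C Q^\# e_{t(y)}$ gives
\[
\sum_{a\text{ gen},\ t(a)=j}a\,\tfrac{\partial W'}{\partial a}-\sum_{a\text{ gen},\ s(a)=j}\tfrac{\partial W'}{\partial a}\,a+\sum_{\rho\text{ iso},\ t(\rho)=j}\rho\,\tfrac{\partial W'}{\partial\rho}-\sum_{\rho\text{ iso},\ s(\rho)=j}\tfrac{\partial W'}{\partial\rho}\,\rho=0,
\]
and the first two sums lie in the two-sided ideal $J=(\partial W'/\partial a:a\text{ generating})$ of $\m C Q'$. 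Now $\rho_{i_u,t}$ is incident only to $\varphi^t(i_u)$ and $\varphi^{t+1}(i_u)$, so the only isomorphism arrows touching $\varphi^t(i_u)$ lie in $\{\rho_{i_u,t-1},\rho_{i_u,t}\}$, and the chain endpoint $i_u$ is touched by $\rho_{i_u,0}$ alone. Taking $j=i_u$ therefore gives $\rho_{i_u,0}\,\partial W'/\partial\rho_{i_u,0}\in J$ or $\partial W'/\partial\rho_{i_u,0}\,\rho_{i_u,0}\in J$, whence $\partial W'/\partial\rho_{i_u,0}\in J$ since $\rho_{i_u,0}$ is invertible. Inductively, if $\partial W'/\partial\rho_{i_u,t-1}\in J$ then the identity at $j=\varphi^t(i_u)$ involves among isomorphism arrows only $\rho_{i_u,t-1}$ and $\rho_{i_u,t}$, forcing $\partial W'/\partial\rho_{i_u,t}\in J$. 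So every $\partial W'/\partial\rho_{i_u,t}\in J$, which is the assertion.

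\smallskip\noindent\emph{Main obstacle.} I expect the real work to be the bookkeeping in (a): an occurrence of $a$ in the cyclic word $\xi(c_{\varphi^m(v)})$ sits \emph{inside} the block $\xi(\varphi^m(a))$ rather than at the front, and one must match the resulting rewritten cycle with the canonical representative $\xi(c_v)=a\,\xi(\gamma_v)$ — which is exactly what the transport identity $\xi(\varphi^m(x))=\tau^{t(x)}_m\xi(x)(\tau^{s(x)}_m)^{-1}$, together with uniqueness of isomorphism-arrow paths, is for; once it is in place the computation is mechanical. For (b) the only idea needed is to bypass a term-by-term chain-rule analysis of $\partial W'/\partial r$ (awkward because an isomorphism arrow can occur in $\xi(b)$ on either side of $o(b)$) in favour of the global commutator identity; the induction along the chains then uses nothing beyond invertibility of the isomorphism arrows and is insensitive to the orientations that \cref{lem3.8} leaves them with.
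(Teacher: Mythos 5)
Your proof of part (a) is correct and is essentially the paper's argument: the ``transport identity'' $\xi(\varphi^m(x))=\tau^{t(x)}_m\,\xi(x)\,(\tau^{s(x)}_m)^{-1}$ is exactly the paper's observation $\xi(c_{v_j})=p_j\,\xi(c_{v_0})\,p_j^{-1}$, and the organisation by occurrences of $a$ (one per $m$) is the same. Your parenthetical about tiling vertices with orbit size $d<n$ is a good catch — the paper's phrasing (``the relevant terms are $\xi(c_{v_j})-\xi(c_{u_j})$ for $j=0,\ldots,n-1$'') reads as if the $n$ cycles are distinct, which fails in examples like the running one, but the counting by occurrences that both you and the paper actually use is unaffected.

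Part (b) is where you genuinely diverge from the paper, and your route is cleaner. The paper works term-by-term with the chain rule: it expands $r\cdot\partial W'/\partial r$ into blocks of the shape (3.21)--(3.24), tracks which blocks cancel because $W'$ contains no $r^{-1}$, and then rewrites each surviving block via part (a). You instead invoke the universal commutator identity $\sum_y[\,y,\partial W'/\partial y\,]=0$ (already implicit in the paper's verification of $\xi(\mathbf w)=0$ in Section~2.3), cut down by the idempotent $e_j$ to get, for each vertex $j$,
\[
\sum_{t(y)=j} y\,\frac{\partial W'}{\partial y}-\sum_{s(y)=j}\frac{\partial W'}{\partial y}\,y=0,
\]
and then run an induction along the chain $i_u\to\varphi(i_u)\to\cdots\to\varphi^{n-1}(i_u)$ using that at the endpoint $i_u$ only one isomorphism arrow is incident, and that each $\rho_{i_u,t}$ is a unit in $\m CQ'$. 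This avoids the ``arrow on either side of $o(b)$'' bookkeeping that the paper has to wrestle with, is sign-clean, and makes transparent that the result depends only on the chain structure of the isomorphism arrows plus their invertibility. What the paper's argument buys in exchange is an explicit formula for $\partial W'/\partial r$ in terms of the generating-arrow derivatives (conjugated and scaled by $1/n$), which your argument shows only to exist, not to have a specific shape; neither application in the paper needs the explicit form, so your version is adequate.

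One small point to tighten: state explicitly that the ideal $J=(\partial W'/\partial a:a\text{ generating})$ in which you land is the two-sided ideal in $\m CQ'$ (not just in $\m CQ^\#$), since you are multiplying by the units $\rho_{i_u,t}^{-1}$, which only exist after localisation; the paper's conclusion is also stated in the localised algebra, so this is consistent, but worth flagging.
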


\begin{proof}
a) \quad Write $\varphi^j(a)=a_j \in Q_1$ (so $a_0=a$). Then $\xi(a_j)=p_j a \, q_j$ where $p_j, q_j$ are paths in $\m CQ'$ made from isomorphism arrows. For each $j$ write $c_{v_j}$ and $c_{u_j}$ for the minimal cycles in $W$ such that $a_j$ is dual to the edge in $\Delta$ between the white vertex $v_j$ and black vertex $u_j$; as these cycles can be written up to cyclic permutation we write them so that $a_j$ appears at the end of these cycles. As $\varphi$ preserves the tiling $\Delta$ we have for any vertex $v \in \Delta$ that $\varphi(c_v)=c_{\varphi(v)}$. Hence $c_{v_j}=\varphi^j(c_{v_0})$ and $c_{u_j}=\varphi^j(c_{u_0})$. Then because we wrote the cycles so that $a_j$ appears at the end it follows that $\xi(c_{v_j})= p_j \xi(c_{v_0}) p_j^{-1}$ and $\xi(c_{u_j})= p_j \xi(c_{u_0}) p_j^{-1}$, and hence up to cyclic permutation $\xi(c_{v_j})= \xi(c_{v_0})$ and $\xi(c_{u_j})= \xi(c_{u_0})$. This is because $\xi(c_{v_0})$ (resp. $\xi(c_{u_0})$) ends with $a$ so is a cycle at the vertex $t(a)$ whilst $c_{v_j}$ (resp. $c_{u_j}$) is a cycle at the vertex $t(a_j)$, and $p_j$ is the unique path of isomorphism arrows between $t(a)$ and $t(a_j)$.
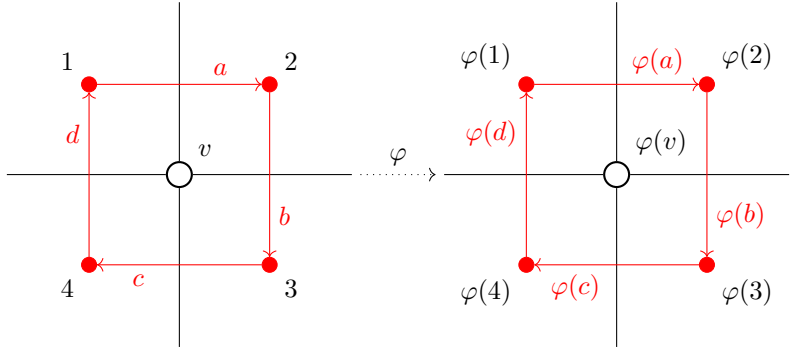
\begin{figure}[H]
    \centering
\begin{tikzpicture}
[black/.style={circle, draw=black!120, fill=black!120, thin, minimum size=3mm},
white/.style={circle, draw=black!120, thick, minimum size=3mm},
empty/.style={circle, minimum size=1pt, inner sep=1pt},
redsmall/.style={circle, draw=red!120, fill=red!120, thin, scale=0.6}]

\node[white,label=above right:{$v$}] (1) {};
\node[redsmall,label=above left:{$1$}] (2) [above left=of 1] {};
\node[redsmall,label=above right:{$2$}] (3) [above right=of 1] {};
\node[redsmall,label=below right:{$3$}] (4) [below right=of 1] {};
\node[redsmall,label=below left:{$4$}] (5) [below left=of 1] {};
\node[empty] (6) [above=of 1] {};
\node[empty] (7) [left=of 1] {};
\node[empty] (8) [right=of 1] {};
\node[empty] (9) [below=of 1] {};
\node[empty] (20) [above=of 6] {};
\node[empty] (21) [left=of 7] {};
\node[empty] (22) [right=of 8] {};
\node[empty] (23) [below=of 9] {};

\node[empty] (16) [right=of 22] {};
\node[empty] (25) [right=of 16] {};
\node[white,label=above right:{$\varphi(v)$}] (10) [right=of 25] {};
\node[redsmall,label=above left:{$\varphi(1)$}] (11) [above left=of 10] {};
\node[redsmall,label=above right:{$\varphi(2)$}] (12) [above right=of 10] {};
\node[redsmall,label=below right:{$\varphi(3)$}] (13) [below right=of 10] {};
\node[redsmall,label=below left:{$\varphi(4)$}] (14) [below left=of 10] {};
\node[empty] (15) [above=of 10] {};
\node[empty] (17) [right=of 10] {};
\node[empty] (18) [below=of 10] {};
\node[empty] (24) [above=of 15] {};
\node[empty] (26) [right=of 17] {};
\node[empty] (27) [below=of 18] {};

\draw[-] (1) -- (20);
\draw[-] (1) -- (21);
\draw[-] (1) -- (22);
\draw[-] (1) -- (23);
\draw[red,->] (2) -- (3) node [above, near end] {$a$};
\draw[red,->] (3) -- (4) node [right, near end] {$b$};
\draw[red,->] (4) -- (5) node [below, near end] {$c$};
\draw[red,->] (5) -- (2) node [left, near end] {$d$};

\draw[-] (10) -- (24);
\draw[-] (10) -- (16);
\draw[-] (10) -- (26);
\draw[-] (10) -- (27);
\draw[red,->] (11) -- (12) node [above, near end] {$\varphi(a)$};;
\draw[red,->] (12) -- (13) node [right, near end] {$\varphi(b)$};
\draw[red,->] (13) -- (14) node [below, near end] {$\varphi(c)$};
\draw[red,->] (14) -- (11) node [left, near end] {$\varphi(d)$};

\draw[dotted,->] (22) -- (16) node [above, midway] {$\varphi$};

\end{tikzpicture}
    \caption{An example of part of a brane tiling with the action of $\varphi$. Let $\varphi$ send the part of the brane tiling on the left to the part on the right. Take the generators of the orbits to be $a,\,b,\,c,\,d$ and write $p_i$ for the path in $\m CQ'$ made of isomorphism arrows between the vertices $i$ and $\varphi(i)$. Hence $\xi(\varphi(a)) = p_2 a p_1^{-1}$, $\xi(\varphi(b)) = p_3 b p_2^{-1}$, $\xi(\varphi(c)) = p_4 c p_3^{-1}$, and $\xi(\varphi(d)) = p_1 d p_4^{-1}$. Therefore $\xi(c_{\varphi(v)}) = \xi(\varphi(d)) \xi(\varphi(c)) \xi(\varphi(b)) \xi(\varphi(a)) = p_1 dcba p_1^{-1} = p_1 \xi(c_v) p_1^{-1}$ and all the intermediary paths of isomorphism arrows cancel, as required.}
    \label{pic:3.03}
\end{figure}
To calculate $\partial W'/\partial a$, the relevant terms in $W'$ are
$$\xi(c_{v_j})-\xi(c_{u_j}) = \xi(c_{v_0})-\xi(c_{u_0}) \,\, \r{(up to cyclic permutation)}$$
for $j \in \{0, \ldots, n-1\}$. Writing the cycles $\xi(c_{v_0})$ and $\xi(c_{u_0})$ so that $a$ is at the end, it follows that
$$ a \cdot \frac{\partial W'}{\partial a} = n \, \big(\xi(c_{v_0})-\xi(c_{u_0}) \big).$$

b) \quad Morally this is true because $r_{i_u,t} \cdot \partial W'/\partial r_{i_u,t}$ is a sum whose terms are just the cycles in $W'$ that contain $r_{i_u,t}$ with $r_{i_u,t}$ cyclically permuted to the front. But the cycles in $W'$ can be made from the relations $\partial W'/\partial a$ for generating arrows $a$ and so because $r_{i_u,t}$ is invertible in $\m CQ'$, $\partial W'/\partial r_{i_u,t}$ is $r_{i-u,t}^{-1}$ multiplied by terms that can be obtained from the relations $\partial W'/\partial a$. We just need to check which generating arrows $a$ are needed to obtain the relevant cycles in $W'$ which contain $r_{i_u,t}$.

So consider the relation $\partial W'/\partial r_{i_u,t}$ and without loss of generality we assume that $W'$ does not contain $r_{i_u,t}^{-1}$. For clarity write $r=r_{i_u,t}$. If $a \in Q_1$ has $r$ in its image under $\xi$ and if $b \in Q_1$ has $r^{-1}$ in its image under $\xi$ then we can write
$$\xi(a) = p'_a\, r^{x_a}\, p''_a\, a_0\, q'_a\, r^{y_a}\, q''_a$$
and
$$\xi(b) = p'_b\, r^{x_b}\, p''_b\, b_0\, q'_b\, r^{y_b}\, q''_b$$
for $a_0$ and $b_0$ the chosen generators of the orbits, $p'_a,\, p''_a,\, q'_a,\, q''_a$ and $p'_b,\, p''_b,\, q'_b,\, q''_b$ paths of isomorphism arrows that do not contain $r$ or $r^{-1}$, and $x_a,\, y_a \in \{0,1\}$ and $x_b,\, y_b \in \{-1,0\}$. If $a$ is dual to the edge in $\Delta$ between the white vertex $v_a$ and black vertex $u_a$ and we write the minimal cycles $c_{v_a}$ and $c_{u_a}$ so that $a$ is at the end, then $r \cdot \partial W'/\partial r$ will contain terms like
\begin{align}
    r\, p''_a\, a_0\, q'_a\, r^{y_a}\, q''_a \Big(\xi(a^{-1} c_{v_a}) - \xi(a^{-1} c_{u_a}) \Big) p'_a \label{eq:3.21}
\end{align}
and
\begin{align}
    r\, q''_a \Big(\xi(a^{-1} c_{v_a}) - \xi(a^{-1} c_{u_a}) \Big) p'_a\, r^{x_a}\, p''_a\, a_0\, q'_a. \label{eq:3.22}
\end{align} 
However since $W'$ does not contain $r^{-1}$ whenever an arrow $b$ appears in $W$ the $r^{-1}$ it gives under $\xi$ must be cancelled out by an $r$ given by some $a$ under $\xi$. Hence we must remove those terms from $r \cdot \partial W'/\partial r$ that are cancelled. What is removed is exactly of the form
\begin{align}
    r\, r^{-1}\, p''_b\, b_0\, q'_b\, r^{y_b}\, q''_b \Big(\xi(b^{-1} c_{v_b}) - \xi(b^{-1} c_{u_b}) \Big) p'_b \label{eq:3.23}
\end{align}
or
\begin{align}
    r\, r^{-1}\, q''_b \Big(\xi(b^{-1} c_{v_b}) - \xi(b^{-1} c_{u_b}) \Big) p'_b\, r^{x_b}\, p''_b\, b_0\, q'_b \label{eq:3.24}
\end{align}
and so $r \cdot \partial W'/\partial r$ is a sum of terms like \eqref{eq:3.21} and \eqref{eq:3.22} minus terms like \eqref{eq:3.23} and \eqref{eq:3.24}. Now \eqref{eq:3.21} can be written as
\begin{align*}
    r\, p''_a\, a_0\, q'_a\, r^{y_a}\, q''_a \Big(\xi(a^{-1} c_{v_a}) - \xi(a^{-1} c_{u_a}) \Big) p'_a  &= p_a^{\prime -1} \Big(p_a'\, r\, p''_a\, a_0\, q'_a\, r^{y_a}\, q''_a \Big(\xi(a^{-1} c_{v_a}) - \xi(a^{-1} c_{u_a}) \Big)\Big) p'_a\\
    &= p_a^{\prime -1} \Big(\xi(a) \xi(a^{-1}) \Big(\xi(c_{v_a}) - \xi(c_{u_a}) \Big)\Big) p'_a\\
    &= \frac{1}{n}\, p_a^{\prime -1} \Big(p_a\, a_0 \cdot \partial W'/\partial a_0\, p_a^{-1} \Big) p'_a
\end{align*}
where $p_a=p'_a r p''_a$ and the last line follows from part a). Similarly \eqref{eq:3.22}, \eqref{eq:3.23}, \eqref{eq:3.24} can all be written in terms of the derivatives of $W'$ with respect to the generating arrows $a_0$ or $b_0$. This gives the result.
\end{proof}

\begin{lem}\label{lem3.10}
$\xi$ induces an inclusion $\widetilde{\xi}: \r{Jac}(Q,W) \hookrightarrow \r{Jac}(Q',W')$.
\end{lem}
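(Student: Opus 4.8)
The statement asserts two things: that $\xi$ carries the relation ideal $I_W=(\partial W/\partial a:a\in Q_1)$ into $I_{W'}$, so that $\widetilde\xi$ is defined, and that the induced map is injective. Since $\xi$ is injective by \cref{lem3.4}, injectivity of $\widetilde\xi$ is equivalent to $\xi^{-1}(I_{W'})\subseteq I_W$, which in turn follows once one knows $I_{W'}\cap\r{Im}(\xi)\subseteq\xi(I_W)$. I would prove well-definedness first. Fix $b\in Q_1$, write $b=\varphi^j(b_0)$ for its chosen generating arrow $b_0$, so that $\xi(b)=p_j\,b_0\,q_j$ with $p_j,q_j$ paths of isomorphism arrows as in the proof of \cref{lem3.4}. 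If $b$ is dual to the edge of $\Delta$ between the white vertex $v$ and black vertex $u$, then $b\cdot\partial W/\partial b=c_v-c_u$ in $\m CQ$; applying $\xi$ and substituting $\xi(c_v)-\xi(c_u)=p_j(\xi(c_{v_0})-\xi(c_{u_0}))p_j^{-1}$ and $b_0\cdot\partial W'/\partial b_0=n(\xi(c_{v_0})-\xi(c_{u_0}))$ from the proof of \cref{lem3.9}(a) gives $p_j\,b_0\,q_j\,\xi(\partial W/\partial b)=\tfrac1n\,p_j\,b_0\,(\partial W'/\partial b_0)\,p_j^{-1}$ in $\m CQ'$. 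Cancelling $p_j$, then the arrow $b_0$ (left multiplication by an arrow is injective on the span of paths it can be composed with), then $q_j$, all on the left, yields $\xi(\partial W/\partial b)=\tfrac1n\,q_j^{-1}(\partial W'/\partial b_0)p_j^{-1}\in I_{W'}$. Hence $\xi(I_W)\subseteq I_{W'}$ and $\widetilde\xi$ is a well-defined algebra homomorphism.

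For injectivity the key point is that the degree on $\m CQ'$ given by the isomorphism arrows, taken modulo $n$, makes $\m CQ'$ a $\m Z/n\m Z$-graded algebra whose degree-$0$ component is precisely $\r{Im}(\xi)$: the inclusion $\r{Im}(\xi)\subseteq(\m CQ')_{[0]}$ is \cref{lem3.5}, and the reverse inclusion is the last assertion of \cref{lem3.6}. By \cref{lem3.9}(a) each generator $\partial W'/\partial a=n\,\xi(\partial W/\partial a)$ of $I_{W'}$ (for $a$ a generating arrow, and these suffice by \cref{lem3.9}(b)) lies in $(\m CQ')_{[0]}$, so $I_{W'}$ is a graded ideal and $I_{W'}\cap\r{Im}(\xi)$ is its degree-$0$ part, which is spanned by products $\mu\cdot(\partial W'/\partial a)\cdot\nu$ in which $\mu$ and $\nu$ are homogeneous of degrees $e$ and $-e$ modulo $n$. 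It therefore suffices to show every such product lies in $\xi(I_W)$.

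Expanding $\mu$ and $\nu$ as linear combinations of paths and applying \cref{lem3.6} together with its remark, each path occurring in $\mu$ has the form $\alpha\,\xi(p)$ and each path occurring in $\nu$ has the form $\xi(q)\,\beta$, with $p,q$ paths in $\m CQ$ and $\alpha,\beta$ reduced paths of isomorphism arrows; comparing degrees modulo $n$ (a reduced isomorphism-arrow path is determined by its endpoints, which lie in a single $\varphi$-orbit, and its degree mod $n$ records the intervening power of $\varphi$) forces $\alpha$ to be the unique isomorphism-arrow path from $t(p)$ to $\varphi^{e}(t(p))$ and $\beta$ the unique one from $\varphi^{e}(s(q))$ to $s(q)$. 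The telescoping computation in the proof of \cref{lem3.9}(a) then shows, for any $w\in\m CQ$ supported on paths from $s(q)$ to $t(p)$, that $\alpha\,\xi(w)\,\beta=\xi(\varphi^{e}(w))$. Taking $w=p\,(\partial W/\partial a)\,q\in I_W$ and using that $\varphi(I_W)=I_W$ (immediate since $\varphi W=W$ and $\varphi$ permutes $Q_1$), we get $\alpha\,\xi(w)\,\beta=\xi(\varphi^{e}(w))\in\xi(I_W)$; summing over the path components and recalling $\partial W'/\partial a=n\,\xi(\partial W/\partial a)$ shows $\mu\cdot(\partial W'/\partial a)\cdot\nu\in\xi(I_W)$, as required. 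I expect the main obstacle to be exactly this last step: keeping careful track of the isomorphism-arrow degrees and of which vertices the paths $\alpha,\beta$ connect, so that the various left-cancellations and the telescoping identity genuinely apply; the rest is routine bookkeeping.
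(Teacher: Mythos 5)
Your proof is correct, and the well-definedness half is essentially the paper's argument: both reduce $\xi(\partial W/\partial b)$ to $\tfrac1n q^{-1}(\partial W'/\partial a)p^{-1}$ via \cref{lem3.9}(a), the only cosmetic difference being that you spell out the left-cancellation of $b_0$ while the paper leaves it implicit.

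Your injectivity argument, however, takes a genuinely different route. The paper works directly from an expression $\xi(x)=\sum_a p'_a\,(\partial W'/\partial a)\,q'_a$: it peels a path of isomorphism arrows off each $q'_a$ using \cref{lem3.6}, absorbs it into a $\varphi$-translate of a derivative via equation \eqref{eq:3.4}, checks by \cref{lem3.5} that the remaining left factor $p''_a$ has degree $\equiv 0\pmod n$, pulls it back through $\xi$ with \cref{lem3.6}, and finally invokes injectivity of $\xi$. You instead observe that the $\m Z/n\m Z$-grading by isomorphism-arrow degree makes $\m CQ'$ a graded algebra with $(\m CQ')_{[0]}=\r{Im}(\xi)$, that $I_{W'}$ is a graded ideal since its generators $\partial W'/\partial a=n\,\xi(\partial W/\partial a)$ are degree $0$, and then you identify the degree-$0$ piece of $I_{W'}$ with $\xi(I_W)$ via the conjugation identity $\alpha\,\xi(w)\,\beta=\xi(\varphi^e(w))$ and $\varphi$-invariance of $I_W$. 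This is a more structural packaging of the same degree bookkeeping: the paper's ``absorb-and-pull-back'' computation is replaced by a single clean statement (``$I_{W'}$ is graded and its degree-$0$ part is $\xi(I_W)$''), at the cost of establishing the slightly more general conjugation identity (the paper only proves the special case $\xi(\varphi^j c_v)=p_j\,\xi(c_v)\,p_j^{-1}$ for minimal cycles inside the proof of \cref{lem3.9}(a)). Both proofs rest on the same three inputs — \cref{lem3.5}, \cref{lem3.6}, and \cref{lem3.9}(a) — so neither is shorter, but yours makes the underlying mechanism (degree-$0$ localisation) more transparent, while the paper's is more self-contained in that it never states the conjugation identity beyond the cycle case.

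One small point worth being careful about in your write-up: when you assert that ``comparing degrees modulo $n$ forces $\alpha$ to be the unique isomorphism-arrow path from $t(p)$ to $\varphi^e(t(p))$,'' the honest degree of the reduced isomorphism path from $t(p)$ to $\varphi^e(t(p))$ is either $e$ or $e-n$ depending on where $t(p)$ sits in its chain of orbit vertices; it is the class modulo $n$ that is forced by the homogeneity of $\mu$, and it does determine the target vertex, as you say, because the reduced path within a single orbit is determined by its endpoints. You state this correctly in the parenthetical, but the phrase ``its degree mod $n$ records the intervening power of $\varphi$'' is doing real work and deserves to be flagged.
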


\begin{proof}
We first show the induced map is well-defined. Let $b \in Q_1$ and suppose $b=\varphi^k(a)$ for some arrow $a$ in the chosen generating set of $Q_1$. As in the proof of \cref{lem3.9} we write $\xi(b)=p_k a\, q_k$ where $p_k,\, q_k$ are paths of isomorphism arrows in $\m CQ'$. Then we see that
\begin{align}
    \xi \left( b \cdot \frac{\partial W}{\partial b} \right) &= \xi(c_{v_b}) - \xi(c_{u_b}) \nonumber \\
    &= p_k \big(\xi(c_{v_a}) - \xi(c_{u_a})\big)p_k^{-1}. \label{eq:3.3}
\end{align}
Combining \eqref{eq:3.3} with \eqref{eq:3.1} gives
$$\xi \left(b \cdot \frac{\partial W}{\partial b}\right) = \frac{1}{n}\, p_k\, a \cdot \frac{\partial W'}{\partial a} p_k^{-1}$$
and so from the equation $\xi(b)= p_k a \, q_k$ we get that
\begin{align}
    \xi\left(\frac{\partial W}{\partial b}\right) =\frac{1}{n}  q_k^{-1} \frac{\partial W'}{\partial a}\, p_k^{-1}. \label{eq:3.4}
\end{align}
As for injectivity, we must show that given some $x \in \m CQ$ such that $\xi(x) \in I_{W'}$ then $x \in I_W$. Write
$$\xi(x) = \sum_a p'_a \frac{\partial W'}{\partial a}\, q'_a$$
for paths $p'_a,\, q'_a \in \m CQ'$. Using \cref{lem3.6}  we can write this as
$$\xi(x) = \sum_a p'_a \frac{\partial W'}{\partial a}\, r_a\, \xi(q_a)$$
for paths $q_a \in \m CQ$ and paths of isomorphism arrows $r_a \in \m CQ'$. Let $\varphi^{k_a}(a)$ be the arrow in $Q_1$ such that $r_a^{-1}$ is the unique path of isomorphism arrows in $\m CQ'$ between $t(a)$ and $t(\varphi^{k_a}(a))$, then using \eqref{eq:3.4} we can write
$$\xi(x) = \sum_a p''_a \, \xi \left(\frac{\partial W}{\partial \varphi^{k_a}(a)} \, q_a \right)$$
where $p''_a =p'_a s_a$ and $s_a$ is the unique path of isomorphism arrows between $s(a)$ and $s(\varphi^{k_a}(a))$. Taking degrees we see from \cref{lem3.5} that $\r{deg}(p''_a) \equiv 0 \, \r{mod} \, n$ and so \cref{lem3.6} gives us a path $p_a \in \m CQ$ such that $\xi(p_a)=p''_a$. As $\xi$ is injective we get
$$x = \sum_a p_a\, \frac{\partial W}{\partial \varphi^{k_a}(a)}\, q_a$$
giving the result.
\end{proof}

We make the further assumption that we can choose generators and isomorphism arrows such that additionally $W'$ will be homogeneous of degree $n = \r{deg}(\varphi)$. We can see that in the case of the running example \Cref{eg3.1} that $W' = abreabre+2rdrc-2ardbrc-rere$ indeed has degree 2 in the isomorphism arrow $r$. We give some justification about why this assumption can be made.

\begin{lem}\label{lem3.11}
Let $\varphi$ be an automorphism of order $n$ of the Riemann surface $\Sigma_g$ and let $\Delta_0$ be a brane tiling of $\Sigma_g$ which is preserved by $\varphi$. Then we can extend $\Delta_0$ into a brane tiling $\Delta$ which is also preserved by $\varphi$ such that there exists a dimer for $\Delta$.
\end{lem}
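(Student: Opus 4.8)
The plan is to enlarge $\Delta_0$ by repeatedly attaching $\varphi$-orbits of two elementary ``gadgets'', each of which only subdivides existing faces into disks --- so that at every stage we remain inside the class of $\varphi$-invariant brane tilings, exactly as in the proof of \cref{lem3.1} and \cref{rem3.2} --- until the underlying bipartite graph $\Gamma$ of the enlarged tiling carries a perfect matching (= dimer). Write $W$ and $B$ for the sets of white and black vertices of $\Gamma$. The first gadget is a \emph{pendant}: at a vertex $v$ we place, in a face incident to $v$, a new vertex $v^\natural$ of the opposite colour joined to $v$ by one new edge. This slits a disk face along an arc (still a disk), it forces $v$ to be matched to $v^\natural$ in any perfect matching, and it changes $|W|-|B|$ by $\pm 1$. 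The second gadget is a \emph{dangler} from a black vertex $b$ to a white vertex $w$: an alternating path $b - x_1 - \dots - x_{2\ell} - w$ of new vertices drawn along an embedded arc from $b$ to $w$ through a chain of faces, meeting $\Gamma$ only at its endpoints. It subdivides the faces it crosses into disks, and its sole effect on matchings is that of adjoining a ``virtual edge'' $b\sim w$ to $\Gamma$: in a perfect matching such a path is used either in its \emph{connecting} configuration $\{bx_1, x_2x_3,\dots,x_{2\ell}w\}$, covering $b$, $w$ and every $x_i$, or in its \emph{decoupled} configuration $\{x_1x_2,\dots,x_{2\ell-1}x_{2\ell}\}$, covering only the $x_i$.

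The first task is to arrange $|W|=|B|$. A $\varphi$-orbit of pendants based at a vertex with trivial $\varphi$-stabiliser consists of $n$ pendants of one colour and shifts $|W|-|B|$ by $\pm n$; at a point $p$ fixed by a nontrivial power $\varphi^k$ one can instead, if $p$ lies in the interior of a face $F$, insert the canonical (hence $\mathrm{Stab}(F)$-invariant) ``wheel'' at $p$ --- a new central vertex joined to a $\mathrm{Stab}(F)$-invariant set of corners of $\partial F$ --- whose $\varphi$-orbit has $n/|\mathrm{Stab}(F)|$ members and shifts $|W|-|B|$ by $\pm n/|\mathrm{Stab}(F)|$; and when such a fixed point already carries a vertex of $\Gamma$ one has had, as in the subdivision step of \cref{lem3.1}, the freedom to choose that vertex's colour. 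The content of this step is that, after such preparation, the imbalance $|W|-|B|$ lies in the subgroup of $\mathbb Z$ generated by the available equivariant shifts; this is a Riemann--Hurwitz-type count on the quotient orbifold $\Sigma_g/\langle\varphi\rangle$, whose cone points, by \cref{lem3.0}, are precisely the images of the fixed points of the powers of $\varphi$ and sit either at vertices or at face-centres of the induced tiling (a fixed point cannot lie in the interior of an edge, as $\varphi^k$ would then swap or pointwise-fix that edge). Granting this, finitely many of the above moves produce a tiling with $|W|=|B|$.

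Now call a vertex \emph{active} if it carries no pendant; since pendant-bearing vertices are forced to match their pendants, a dimer is the same as a perfect matching of the active vertices together with a perfect matching of the vertices internal to the gadgets. Let $N=|W_{\mathrm{act}}|=|B_{\mathrm{act}}|$, fix any bijection $M:B_{\mathrm{act}}\xrightarrow{\ \sim\ }W_{\mathrm{act}}$, and --- after subdividing once more so that faces are plentiful --- attach, for each pair $(b,w)\in M$, the full $\varphi$-orbit of a dangler from $b$ to $w$, all these arcs drawn through pairwise disjoint chains of faces and avoiding the fixed points of the powers of $\varphi$ (the whole family is chosen $\varphi$-equivariantly by fixing one arc per orbit and translating). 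Call the resulting tiling $\Delta$: it is a $\varphi$-invariant brane tiling extending $\Delta_0$, and it has a dimer, namely all the pendant edges, together with the connecting configuration of the dangler realising $b\sim w$ for each $(b,w)\in M$, together with the decoupled configuration of every other dangler. Indeed each active vertex is covered exactly once (by its $M$-dangler), each pendant-bearing vertex once (by its pendant edge), each pendant once, and each internal dangler-vertex once. Note this matching is visibly not $\varphi$-invariant, which is harmless --- only $\Delta$ must be.

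The main obstacle is the colour-balancing step: proving that $|W_0|-|B_0|$ really can be killed equivariantly requires the careful analysis, sketched above, of how the fixed points of the powers of $\varphi$ distribute among vertices and face-centres and of the arithmetic relating their orders to $|W_0|-|B_0|$ via Riemann--Hurwitz, and this is where the finite-order, orientation-preserving hypotheses on $\varphi$ are genuinely used (through \cref{lem3.0}). The remaining points are routine: that the pendant, wheel and dangler gadgets together with \emph{all} their $\varphi$-images can be placed simultaneously and disjointly inside the faces while leaving every complementary region a disk follows by first subdividing enough (\cref{rem3.2}) to create the room and then routing the finitely many new arcs one orbit at a time, and the existence of the dimer at the end is immediate from the explicit matching just described.
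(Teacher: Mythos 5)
Your gadget-based argument is a genuinely different route from the paper's. The paper constructs a dimer greedily, only ever adding a new edge between two vertices that are already at distance $1$ (that is, lying on the boundary of a common tile), and handles dead-ends via an augmenting-path/distance-decreasing argument: if the stuck black vertex $u$ and the free white vertex $v$ are far apart, it re-pairs along a chain of tile-adjacent vertices until the gap is $1$. Your approach instead fixes an arbitrary bijection $M$ between active black and white vertices and realizes each pair by a ``dangler'' --- which, if it worked, would be a cleaner matching argument than the paper's iterated re-pairing.

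However, there is a genuine gap in the dangler construction, and it is exactly the point that the paper's distance bookkeeping is designed to avoid. You describe the dangler as ``an embedded arc from $b$ to $w$ through a chain of faces, meeting $\Gamma$ only at its endpoints.'' But $\Sigma_g\setminus\Gamma$ is a disjoint union of open disks, so any arc whose interior avoids $\Gamma$ is confined to a single face; such an arc from $b$ to $w$ exists only when $b$ and $w$ already lie on the boundary of a common tile, which certainly need not hold for an arbitrary pair in $M$. If the arc is instead allowed to cross interior edges of $\Gamma$, you must subdivide each crossed edge, which inserts further vertices of both colours (to preserve bipartiteness you need two per crossing) and invalidates the clean ``connecting vs.\ decoupled'' count at the heart of the virtual-edge mechanism. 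The remark ``after subdividing once more so that faces are plentiful'' does not repair this: subdivision only shrinks faces, it never brings two far-apart vertices onto a common tile boundary. To make your argument go through, you would need either to show that $M$ can be chosen so that each pair shares a tile (which is essentially what the paper proves by its augmenting-path induction), or to specify a dangler that legitimately traverses several tiles while preserving the brane-tiling and parity conditions. As it stands, the central step of adjoining a virtual $b\sim w$ edge for an arbitrary $(b,w)$ is not justified.

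On the colour-balancing step you are candid that it is only sketched; fair enough --- the paper itself is quite terse there --- but note that your plan turns on an as-yet-unproved Riemann--Hurwitz count, whereas the paper's version, being applied after \cref{lem3.1}, can restrict to adding free $\varphi$-orbits (and implicitly leans on that). In any case the colour balancing is a secondary concern compared to the dangler routing, which I would regard as the real obstruction to your argument.
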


\begin{proof}
In a brane tiling define the \emph{distance} between two distinct vertices $u$ and $v$, denoted by $\r{dist}(u,v)$, to be 1 if there exists a tile in the brane tiling for which both $u$ and $v$ are in the perimeter of it. Then recursively define the distance of two vertices to be $x+1$ if there exists another vertex $u'$ such that $\r{dist}(u,u')=1$ and $\r{dist}(u',v)=x$ or vice versa.

For our brane tiling $\Delta_0$ we first add vertices and their images under $\varphi, \ldots, \varphi^{n-1}$ until the number of black vertices equals the number of white vertices. Next we add edges and their images under $\varphi, \ldots, \varphi^{n-1}$ until we end up with a brane tiling which we call $\Delta_0'$. We then begin to construct a dimer $D$ for $\Delta_0'$ by choosing pairs of vertices in $\Delta_0'$ (one black and one white) which are connected by an edge. Once a vertex has been chosen in a pair it is then removed from further consideration for future pairings.

In this way we can see that if we are able to have all vertices in $\Delta_0'$ in one of these pairs then we will indeed have a dimer for $\Delta_0'$. However it might turn out that due to our choices there exists a (w.l.o.g) black vertex $u$ such that all the white vertices adjacent to it are already paired off. 

To remedy such an issue with constructing our dimer, we note that because the number of black and white vertices is equal there must exist at least one white vertex $v$ that has not been paired off. If $\r{dist}(u,v)=1$ we add an edge to $\Delta_0'$ (along with all its images under $\varphi, \ldots, \varphi^{n-1}$) connecting $u$ to $v$ and then take this to be in $D$. If $\r{dist}(u,v)>1$ we consider all the white vertices $v_1$ such that $\r{dist}(u,v_1)=1$. We then choose a white vertex $v_1$ such that $v_1$ is paired with the black vertex $u_1$ and $\r{dist}(u_1,v)<\r{dist}(u,v)$. Such a vertex $u_1$ must exist due to our definition of distance in the brane tiling. We then add an edge between $u$ and $v_1$ to the brane tiling (along with its images under $\varphi, \ldots, \varphi^{n-1}$) unless one already exists in $\Delta_0'$. Then we replace the pair $(u_1,v_1)$ in $D$ with $(u,v_1)$, thereby shifting the issue we are having with constructing our dimer onto $u_1$. We continue to repeat this, each time noting that the distance between the black vertex $u_i$ and our white vertex $v$ is decreasing. Hence after a finite number of steps we obtain a black vertex $u_y$ such that $\r{dist}(u_y,v)=1$ and so we can add an edge between $u_y$ and $v$ into the brane tiling and then add the pair $(u_y,v)$ into $D$. See \cref{pic:3.25} for an illustrative example. 

This will fully rectify the issue we had of not being able to include the vertex $u$ as a pair into $D$ and also not introduce any further issues since all vertices that had been paired up in $D$ beforehand still remain in $D$ (albeit some will be paired with different vertices now). Continuing to do this for each vertex will ensure that all vertices end up in $D$ and hence $D$ will be a dimer for the brane tiling $\Delta$ we end up with.
\begin{figure}
\begin{subfigure}{1.0\textwidth}
\centering
\begin{tikzpicture}
[black/.style={circle, draw=black!120, fill=black!120, thin, minimum size=3mm},
white/.style={circle, draw=black!120, thick, minimum size=3mm},
empty/.style={circle, minimum size=1pt, inner sep=1pt},
redsmall/.style={circle, draw=red!120, fill=red!120, thin, scale=0.6}]

\node[black] (1) {};
\node[white] (2) [right=of 1] {};
\node[white] (3) [below left=of 1] {};
\node[black] (4) [below right=of 2] {};
\node[black] (5) [below right=of 3] {};
\node[white,label=above:{$v$}] (6) [right=of 5] {};
\node[white] (7) [below left=of 5] {};
\node[black] (8) [below right=of 6] {};
\node[black] (9) [below right=of 7] {};
\node[white] (10) [right=of 9] {};
\node[white] (11) [right=of 4] {};
\node[white] (12) [right=of 8] {};
\node[black] (14) [below right=of 11] {};
\node[white] (15) [right=of 14] {};
\node[black] (16) [above right=of 11] {};
\node[white] (17) [right=of 16] {};
\node[black,label=above:{$u$}] (18) [below right=of 17] {};
\node[black] (19) [below right=of 15] {};
\node[black] (20) [below right=of 12] {};
\node[white] (21) [right=of 20] {};

\draw[green,-] (1) -- (2);
\draw[green,-] (3) -- (5);
\draw[green,-] (7) -- (9);
\draw[green,-] (10) -- (8);
\draw[green,-] (4) -- (11);
\draw[green,-] (12) -- (14);
\draw[green,-] (15) -- (19);
\draw[green,-] (16) -- (17);
\draw[green,-] (20) -- (21);
\draw[black,-] (1) -- (3);
\draw[black,-] (2) -- (4);
\draw[black,-] (5) -- (6);
\draw[black,-] (5) -- (7);
\draw[black,-] (9) -- (10);
\draw[black,-] (6) -- (4);
\draw[black,-] (6) -- (8);
\draw[black,-] (8) -- (12);
\draw[black,-] (11) -- (14);
\draw[black,-] (11) -- (16);
\draw[black,-] (12) -- (20);
\draw[black,-] (14) -- (15);
\draw[black,-] (17) -- (18);
\draw[black,-] (15) -- (18);
\draw[black,-] (21) -- (19);

\end{tikzpicture}
\end{subfigure}

\vspace{5em}

\begin{subfigure}{1.0\textwidth}
\centering
\begin{tikzpicture}
[black/.style={circle, draw=black!120, fill=black!120, thin, minimum size=3mm},
white/.style={circle, draw=black!120, thick, minimum size=3mm},
empty/.style={circle, minimum size=1pt, inner sep=1pt},
redsmall/.style={circle, draw=red!120, fill=red!120, thin, scale=0.6}]

\node[black] (1) {};
\node[white] (2) [right=of 1] {};
\node[white] (3) [below left=of 1] {};
\node[black] (4) [below right=of 2] {};
\node[black] (5) [below right=of 3] {};
\node[white,label=above:{$v$}] (6) [right=of 5] {};
\node[white] (7) [below left=of 5] {};
\node[black] (8) [below right=of 6] {};
\node[black] (9) [below right=of 7] {};
\node[white] (10) [right=of 9] {};
\node[white,label=above:{$v'$}] (11) [right=of 4] {};
\node[white] (12) [right=of 8] {};
\node[black] (14) [below right=of 11] {};
\node[white] (15) [right=of 14] {};
\node[black] (16) [above right=of 11] {};
\node[white] (17) [right=of 16] {};
\node[black,label=above:{$u$}] (18) [below right=of 17] {};
\node[black] (19) [below right=of 15] {};
\node[black] (20) [below right=of 12] {};
\node[white] (21) [right=of 20] {};

\draw[green,-] (1) -- (2);
\draw[green,-] (3) -- (5);
\draw[green,-] (7) -- (9);
\draw[green,-] (10) -- (8);
\draw[black,-] (4) -- (11);
\draw[green,-] (12) -- (14);
\draw[green,-] (15) -- (19);
\draw[green,-] (16) -- (17);
\draw[green,-] (20) -- (21);
\draw[black,-] (1) -- (3);
\draw[black,-] (2) -- (4);
\draw[black,-] (5) -- (6);
\draw[black,-] (5) -- (7);
\draw[black,-] (9) -- (10);
\draw[blue,-] (6) -- (4);
\draw[black,-] (6) -- (8);
\draw[black,-] (8) -- (12);
\draw[black,-] (11) -- (14);
\draw[black,-] (11) -- (16);
\draw[black,-] (12) -- (20);
\draw[black,-] (14) -- (15);
\draw[black,-] (17) -- (18);
\draw[black,-] (15) -- (18);
\draw[black,-] (21) -- (19);
\draw[green,dashed] (11) -- (18);

\end{tikzpicture}
\end{subfigure}
    \caption{The top picture shows an example of part of a brane tiling with the partial dimer $D$ in green. There are two vertices $v$ and $u$ which are not contained by an edge in $D$ with $\r{dist}(u,v)=2$. With the current setup it is not possible to include $v$ in $D$ to get a dimer. We rectify this in the second picture by changing the edges we include in $D$ (with the new edge in $D$ shown in blue). This shifts the problem onto the vertex $v'$ which is such that $\r{dist}(u,v')=1$. We can then add an edge that connects $v'$ and $u$ into $\Delta'_0$ (along with all its images under $\varphi, \ldots, \varphi^{n-1}$), and then include this new edge into $D$.}
    \label{pic:3.25} 
\end{figure}
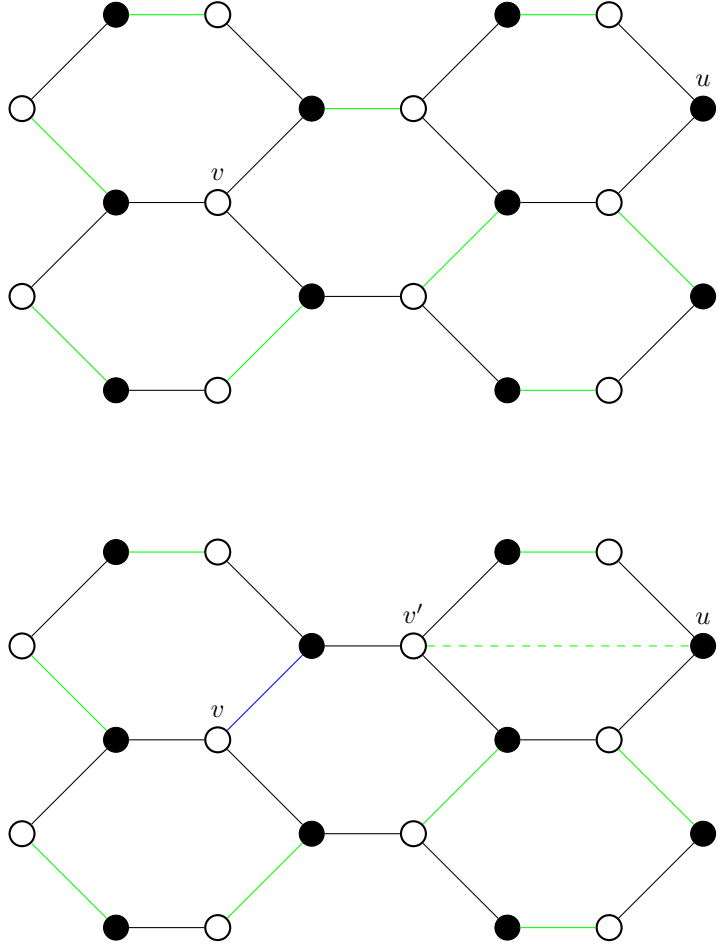
\end{proof}

We must always ensure that our brane tilings are preserved by $\varphi$ and that the orbits of the vertices in the dual quiver $Q$ have size $n$. By \cref{rem3.2} we can add to a brane tiling with this property and retain the property, so if we take a brane tiling $\Delta_0$ that is preserved by $\varphi$ then apply \cref{lem3.1} and then \cref{lem3.11} to it we end up with a brane tiling $\Delta$ for which both the orbit size of every vertex in $Q=Q_\Delta$ is $n$ and for which there exists a dimer $D$ for $\Delta$. To try to make $W'$ homogeneous of degree $n$ in the isomorphism arrows we consider the set $E$ of arrows in $Q$ that are dual to the edges in $D$. It is then hoped that there is a choice of $\xi$ for which the conditions explained in the proof of in \cref{lem3.8} hold and for which $\xi(e)$ has degree $n$ for each $e \in E$ and $\xi(a)$ has degree 0 for every other arrow $a \in Q_1 \setminus E$. This will then give a potential $W'$ which has the properties of not containing both $r_{i_u,t}$ and $r_{i_u,t}^{-1}$, and having degree $n$ in the isomorphism arrows since $W$ is homogeneous of degree 1 in the arrows in $E$.

Given these assumptions on $W'$ hold, similarly to \cite{d1} Proposition 4.2 and the preceding discussion, we define a homomorphism
\begin{align}
    \r{Jac}(\widetilde{Q'},W') \xrightarrow{\,\, \Psi \,\,} \r{Mat}_{m\times m}(\m C[\pi_1(M_{g,\varphi})]) \label{eq:3.01}
\end{align}
where $m$ is the number of vertices in $Q'$ (which is also the number of vertices in $Q$) and the tilde denotes that we are localising the path algebra $\m CQ'$ with respect to every arrow $a \in Q'_1$ before taking the quotient. To construct $\Psi$ explicitly consider a maximal tree $T \subset Q$ such that $\xi(t)$ has degree 0 for all $t \in T$. With regards to the dimer $D$ discussed above, this means that the intersection of $T$ and the set of arrows dual to edges in $D$ is empty. Fix a basepoint $\widetilde{\r{bp}} \in \Sigma_g$ that is invariant under $\varphi$ (note such a basepoint will be a vertex in the brane tiling $\Delta$ as per \cref{lem3.1}), fix a vertex $\r{bp} \in Q_0$, and fix a path $\delta: \widetilde{\r{bp}} \rightarrow \r{bp}$ in $\Sigma_g$. Then we can view
$$\pi_1\big(M_{g, \varphi},\, [\widetilde{\r{bp}},0]\big) \cong \pi_1\big(\Sigma_g,\, \widetilde{\r{bp}}\big) \rtimes_\varphi \m Z.$$
From now on, although technically all loops in $M_{g,\varphi}$ and $\Sigma_g$ have basepoint $\widetilde{\r{bp}}$, we work with loops at the basepoint $\r{bp} \in Q_0 \subset \Sigma_g$ and implicitly use $\delta$ to formally view them as loops at $\widetilde{\r{bp}}$.  For each $i \in Q_0$ let $i_s = \varphi^s(i)$ and let $t_i$ be the unique path $\r{bp} \rightarrow i$ in $\m C \widetilde{Q}'$ comprised solely from arrows in $T$ or their inverses. Let $E_{x,y}(z)$ denote the matrix with entries $z \in \m C[\pi_1(\Sigma_g) \rtimes_\varphi \m Z]$ in the the $(x,y)$-th position and zeroes everywhere else. Then define $\Psi: \r{Jac}(\widetilde{Q'},W') \rightarrow \r{Mat}_{m\times m}(\m C[\pi_1(\Sigma_g) \rtimes_\varphi \m Z])$ by sending
\begin{align*}
    a: i \rightarrow j &\longmapsto E_{j,i}\big(([t_j^{-1} a\, t_i],0) \big)\\
    r_{i_u, s}: i_{u,s} \rightarrow i_{u, s+1} &\longmapsto E_{i_{u, s+1}, i_{u,s}}\big(([\varphi^{-1}(t_{i_{u, s+1}}^{-1})\, t_{i_{u,s}}], -1) \big)
\end{align*}
where $a \in Q'_1$ is a generating arrow, $r_{i_u,s} \in Q'_1$ is an isomorphism arrow, $[l] \in \pi_1(\Sigma_g, \widetilde{\r{bp}})$ denotes the class of the loop $l \in \Sigma_g$ and we view paths $p \in \m CQ$ as paths in $\Sigma_g$ via the natural inclusion $Q \hookrightarrow \Sigma_g$.

\begin{rem}
It is not immediate that $\Psi$ as given above is well-defined.
\end{rem}

\begin{rem}
The assignment of an arrow $a: i \rightarrow j$ to $t_j^{-1} a\, t_i$ in the above homomorphism is equivalent to the contraction of the tree $T$ seen in [\cite{d1} Proposition 4.2].
\end{rem}

\begin{rem}
$\varphi^{-1}(t_{i_{q, p+1}}^{-1})\, t_{i_{q,p}}$ is not a loop in $\Sigma_g$ but in fact is a path $\r{bp} \rightarrow \varphi^{-1}(\r{bp})$. So, first fixing some path $\gamma: \r{bp} \rightarrow \varphi(\r{bp})$ in $\Sigma_g$, what we actually mean by $[\varphi^{-1}(t_{i_{q, p+1}}^{-1}) t_{i_{q,p}}] \in \pi_1(\Sigma_g, \r{bp})$ is the class of the loop $\varphi^{-1}(\gamma) \circ \varphi^{-1}(t_{i_{q, p+1}}^{-1}) t_{i_{q,p}}$. Since $\r{bp} \in Q_0$ we take $\gamma = t_{\varphi(\r{bp})}$. In a similar vein when multiplying in the semi-direct product we technically should write
$$([\beta], m) \cdot ([\alpha], k) = ([\gamma^{-1} \circ \varphi(\gamma^{-1}) \circ \cdots \circ \varphi^{k-1}(\gamma^{-1}) \circ \varphi^k(\beta) \circ \varphi^{k-1}(\gamma) \circ \cdots \circ \varphi(\gamma) \circ \gamma \circ \alpha], k+m)$$
for $k>0$ and
$$([\beta], m) \cdot ([\alpha], k) = ([\varphi^{-1}(\gamma) \circ \cdots \circ \varphi^{k+1}(\gamma^{-1}) \circ \varphi^k(\gamma) \circ \varphi^k(\beta) \circ \varphi^k(\gamma^{-1}) \circ \varphi^{k+1}(\gamma^{-1}) \circ \cdots \circ \varphi^{-1}(\gamma^{-1}) \circ \alpha], k+m)$$
for $k<0$ in order to ensure we are multiplying loops at $\r{bp}$. However it will turn out that in the multiplications we present later these intermediary paths, that are needed to correct for the basepoints of the loops involved, will mostly all cancel and so we omit them from the proceeding discussion to aid in notational clarity. This would not be necessary if we worked with the $\varphi$-invariant basepoint $\widetilde{\r{bp}}$, but it must be done to allow us to work within the quiver $Q'$.
\end{rem}
Note that $\Psi$ sends
$$r_{i_u,s}^{-1} \longmapsto E_{i_{u,s}, i_{u, s+1}}\big(([\varphi(t_{i_{u,s}}^{-1})\, t_{i_{u, s+1}}], 1) \big).$$

\begin{thm}\label{thm3.12}
Let $\Sigma_g$ be a Riemann surface of genus $g$ and let $\varphi$ be an orientation-preserving automorphism of $\Sigma_g$ of order $n$. Let $\Delta$ be a brane tiling of $\Sigma_g$ which is preserved under $\varphi$ such that the size of the order of each vertex in $Q=Q_\Delta$ is $n$. Choose generating arrows from $Q$ and isomorphism arrows to construct the quiver $Q'$ as before, such that the potential $W'$ is homogeneous of degree $n$ and does not contain both an isomorphism arrow $r_{i_u,t}$ and its inverse $r_{i_u,t}^{-1}$. Then the homomorphism of algebras \eqref{eq:3.01}
$$\r{Jac}(\widetilde{Q'},W') \xrightarrow[\,\,\,\, \sim \,\,\,\,]{\Psi} \r{Mat}_{m\times m}(\m C[\pi_1(M_{g,\varphi})])$$
is well-defined and an isomorphism.
\end{thm}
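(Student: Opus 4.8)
The plan is to follow the strategy of [d1, Proposition 4.2], reducing everything to a computation with the fundamental group of the quiver $Q'$. Since $Q$, hence $Q'$, is connected, localising all arrows makes $\m C\widetilde{Q'}$ Morita equivalent to $\mathrm{Mat}_{m\times m}$ of the group algebra $\m C[\pi_1(Q')]$ of the (free) fundamental group of the underlying graph of $Q'$, the equivalence being implemented by contracting the maximal tree $T$, and the $\mathrm{Mat}_{m\times m}$ recording the $m=|Q'_0|$ vertices. Two-sided ideals of a matrix ring are matrices over a two-sided ideal, and by \cref{lem3.9}(a) the ideal generated by the relations $\partial W'/\partial a$ in $\m C\widetilde{Q'}$ corresponds (after contracting $T$) to the two-sided ideal of $\m C[\pi_1(Q')]$ generated by $[\xi(c_v)][\xi(c_u)]^{-1}-1$ over all edges of $\Delta$, i.e. to the kernel of $\m C[\pi_1(Q')]\to\m C[\pi_1(Q')/N]$ where $N$ is the normal closure of $\{[\xi(c_v)][\xi(c_u)]^{-1}\}$. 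Thus, after contracting $T$ on both sides, $\Psi$ is induced by a group homomorphism $\bar\Psi\colon\pi_1(Q')\to\pi_1(M_{g,\varphi})$, and the theorem is equivalent to the three statements: $\bar\Psi$ kills $N$; $\bar\Psi$ is surjective; and $\bar\Psi$ descends to an \emph{isomorphism} $\pi_1(Q')/N\xrightarrow{\sim}\pi_1(M_{g,\varphi})$.

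\emph{Well-definedness.} By \cref{lem3.9}(b) it is enough to check that $\Psi$ annihilates $\partial W'/\partial a$ for the generating arrows $a$, and since $\Psi(a)$ is invertible in the localised target, \cref{lem3.9}(a) reduces this to $\Psi(\xi(c_v))=\Psi(\xi(c_u))$, where $c_v,c_u$ are the minimal cycles of $W$ at the white and black endpoints of the edge of $\Delta$ dual to $a$. As in \cite{d1}, the cycles $c_v$ and $c_u$ are freely homotopic in $\Sigma_g$ through a homotopy that sweeps across the edge dual to $a$, so after contracting $T$ their images in $\pi_1(\Sigma_g)\subset\pi_1(M_{g,\varphi})$ coincide; and because $W'$ is homogeneous of degree $n$, both $\xi(c_v)$ and $\xi(c_u)$ have degree exactly $n$ in the isomorphism arrows, so the contribution of those arrows — a net winding of $n$ around the mapping-torus direction, realising the central element $\tau^{\mp n}$ — is the same for both. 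Hence $\Psi(\xi(c_v))=\Psi(\xi(c_u))$, so $\bar\Psi(N)=1$.

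\emph{Surjectivity.} Since the images $\Psi(t_i)$ of the tree paths are invertible, it suffices to realise every $E_{\mathrm{bp},\mathrm{bp}}(g)$ with $g\in\pi_1(M_{g,\varphi})=\pi_1(\Sigma_g)\rtimes_\varphi\m Z$. Choosing $\mathrm{bp}$ to be an orbit representative, an isomorphism arrow out of $\mathrm{bp}$ composed with a correcting path of arrows gives a unit mapping to $E_{\mathrm{bp},\mathrm{bp}}((\beta,1))$ for some $\beta\in\pi_1(\Sigma_g)$, which accounts for the $\m Z$-direction. For the $\pi_1(\Sigma_g)$-direction, every loop in $\Sigma_g$ is homotopic to a cycle in $Q$ (each face of $\Delta$ being a disc), whose image under $\Psi\circ\xi$ is $E_{\mathrm{bp},\mathrm{bp}}((\beta,-d))$ with $d\equiv 0\bmod n$; multiplying by the appropriate power of the image of some $c_v$, which by the previous paragraph is the central element $E_{\mathrm{bp},\mathrm{bp}}((1,\mp n))$, kills the winding and yields $E_{\mathrm{bp},\mathrm{bp}}((\beta,0))$. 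The elements $(\beta,0)$ and $(\beta,1)$ together generate $\pi_1(M_{g,\varphi})$, so $\Psi$ is surjective.

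\emph{Injectivity.} This is the heart of the matter: one must show the surjection $\pi_1(Q')/N\to\pi_1(M_{g,\varphi})$ is an isomorphism, i.e. that the relations $\partial W'/\partial a$ impose \emph{exactly} the defining relations of $\pi_1(M_{g,\varphi})$. The plan is a direct comparison of presentations. Choose free generators of $\pi_1(Q')$ adapted to $T$ and to a dimer $D$ for $\Delta$ (obtained from \cref{lem3.11}): the generators coming from arrows of $Q$ not dual to dimer edges recover, via the inclusion $\widetilde\xi$ of \cref{lem3.10} together with \cref{thm2.22}, Davison's presentation of the surface-group algebra, while the isomorphism arrows and the dimer arrows implement the semidirect extension by $\m Z$. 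The relations $[\xi(c_v)]=[\xi(c_u)]$ then (i) impose the defining relations of $\pi_1(\Sigma_g)$ exactly as in \cite{d1}, and (ii) identify the various $[\xi(c_v)]$ with one another; because $W'$ is homogeneous of degree $n$ in the isomorphism arrows, this common element is the $n$-th power of the generator realising $\varphi$, and the relations do not force that generator to have finite order, so one obtains the non-split extension $\pi_1(\Sigma_g)\rtimes_\varphi\m Z$ and not $(\pi_1(\Sigma_g)\rtimes_\varphi\m Z/n)\times\m Z$. The main obstacle is precisely this bookkeeping — checking that no relation is lost (all surface relations and all the required identifications are produced, which is where \cref{lem3.9} and \cref{lem3.10} are used) and that none is gained (in particular that ``$\tau^n=1$'' is never forced) — carried out while tracking the basepoint-correcting paths flagged in the remarks before the theorem, which cancel in the relevant products but must be handled with care in the $\varphi$-twisted setting.
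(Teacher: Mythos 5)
Your treatment of well-definedness and surjectivity tracks the paper's argument closely; in particular, the reduction via \cref{lem3.9} to comparing $\Psi(\xi(c_v))$ with $\Psi(\xi(c_u))$, the use of homogeneity of $W'$ to match the $\m Z$-coordinates, and the appeal to \cref{thm2.22} through a projection to $\r{Mat}_{m\times m}(\m C[\pi_1(\Sigma_g)])$ all appear in the paper essentially as you describe them. Those parts are fine.

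Injectivity is where your proposal has a genuine gap. You reformulate the claim as ``$\pi_1(Q')/N \to \pi_1(M_{g,\varphi})$ is an isomorphism'' and propose a direct comparison of group presentations, then defer the core of the argument to unverified ``bookkeeping'': that the relations $[\xi(c_v)]=[\xi(c_u)]$ (i) produce exactly the surface relations ``as in \cite{d1}'' and (ii) never force the extra relation $\tau^n = 1$. But (i) and (ii) \emph{are} the content of the theorem; asserting them doesn't prove them, and carrying them out via a presentation comparison would essentially amount to redoing the hard part of [\cite{d1}, Proposition 4.2] from scratch in a $\varphi$-twisted setting. The paper avoids this entirely with a shorter argument built on two inputs you do not use. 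First, the $\m Z$-grading: composing $\Psi$ with the projection $\pi_{\m Z}$ to $\r{Mat}_{m\times m}(\m C[\m Z])$ shows that if $\Psi(\alpha)=\Psi(\beta)$ for paths $\alpha,\beta$ then $\r{deg}(\beta^{-1}\alpha)=0$, and \cref{lem3.6} then places $\beta^{-1}\alpha$ in $\r{Im}(\widetilde{\xi})$. Second, and crucially, [\cite{d2}, Lemma 2.7]: any path in $\r{Jac}(\widetilde{Q},W)$ that maps to the identity in $\pi_1(\Sigma_g)$ is a power $c_v^d$ of a minimal cycle. This is precisely the structural lemma that makes the ``no relations are gained'' worry tractable: once $\beta^{-1}\alpha = c_v^d$, the fact that $\r{deg}(\widetilde{\xi}(c_v))=n \neq 0$ (another use of homogeneity of $W'$, via $\pi_{\m Z}\circ\Psi$) forces $d=0$. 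Without [\cite{d2}, Lemma 2.7] or an equivalent characterisation of the relations generated by the potential, the bookkeeping you flag cannot be closed, so the injectivity step as written is incomplete.
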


\begin{proof}
Using the definition of $\Psi$ we have that
\begin{align}
    \Psi \circ \widetilde{\xi}(p) = E_{y,x}\big(([t_y^{-1} p\, t_x], -\r{deg}(\widetilde{\xi}(p))) \big) \label{eq:3.02}
\end{align}
for any path $p: x \rightarrow y \in \r{Jac}(\widetilde{Q}, W)$. Indeed for an arrow $b \in Q_1$ we have that $b= \varphi^l(a)$ for some generating arrow $a \in Q_1$ and $l \in \{0, \ldots, n-1 \}$, and hence we can write $\widetilde{\xi}(b)$ as equal to
\begin{align*}
    & s_1^{-1} \ldots s_k^{-1}\, a\, r_k \ldots r_1\\
    \r{or} \quad & s_{l-k} \ldots s_1\, a\, r_k \ldots r_1\\
    \r{or} \quad & s_k \ldots s_1\, a\, r^{-1}_1 \ldots r^{-1}_k\\
    \r{or} \quad & s_1^{-1} \ldots s_{l-k}^{-1}\, a\, r^{-1}_1 \ldots r^{-1}_k
\end{align*}
for isomorphism arrows $r_i: x_{i-1} \rightarrow x_i,\, s_i: y_{i-1} \rightarrow y_i$, and some $0 \leq k \leq l$. In the first of these cases we get that $a: x_k \rightarrow y_k$ and $b: x_0 \rightarrow y_0$, and therefore
\begin{align*}
    \Psi(\widetilde{\xi}(b)) &= \Psi(s_1^{-1} \ldots s_k^{-1}\, a\, r_k \ldots r_1)\\
    &= E_{y_0, y_1}\big(([\varphi(t_{y_0}^{-1}) t_{y_1}], 1) \big) \, \cdots \, E_{y_{k-1}, y_k}\big(([\varphi(t_{y_{k-1}}^{-1}) t_{y_k}], 1) \big) \,\, E_{y_k, x_k}\big(([t_{y_k}^{-1} a\, t_{x_k}], 0) \big)\\
    & \quad \,\, E_{x_k, x_{k-1}}\big(([\varphi^{-1}(t_{x_k}^{-1}) t_{x_{k-1}}], -1) \big) \, \cdots \, E_{x_1, x_0}\big(([\varphi^{-1}(t_{x_1}^{-1}) t_{x_0}], -1) \big)\\
    &= E_{y_0, x_0}\big(([\varphi(t_{y_0}^{-1}) t_{y_1}], 1) \cdot \ldots \cdot ([\varphi(t_{y_{k-1}}^{-1}) t_{y_k}], 1) \cdot ([t_{y_k}^{-1} a\, t_{x_k}], 0) \cdot\\
    & \qquad \qquad \cdot ([\varphi^{-1}(t_{x_k}^{-1}) t_{x_{k-1}}], -1) \cdot \ldots \cdot ([\varphi^{-1}(t_{x_1}^{-1}) t_{x_0}], -1) \big)\\
    &= E_{y_0,x_0}\big(([t_{y_0}^{-1} \varphi^k(a)\, t_{x_0}], 0) \big)\\
    &= E_{y_0,x_0}\big(([t_{y_0}^{-1} b\, t_{x_0}], -\r{deg}(\widetilde{\xi}(b))) \big)
\end{align*}
as required. The other cases follow in a similar manner and since we have shown the statement for all arrows in $Q$ it readily extends to all paths in $\m C \widetilde{Q}$.

By \Cref{lem3.9}, to show that $\Psi$ is well-defined it suffices to check that
$$\Psi \left(\frac{\partial W'}{\partial a} \right) = 0$$
for all generating arrows $a \in Q_1$. But by \Cref{lem3.10} we have that
$$\frac{\partial W'}{\partial a} = n \, \xi \left(\frac{\partial W}{\partial a} \right)$$
and hence we just need to show that $\Psi(\widetilde{\xi}(\partial W/ \partial a)) = 0$. Writing $\partial W/\partial a = p - q$ for $a: i \rightarrow j$, \eqref{eq:3.02} tells us that
\begin{align*}
    \Psi \left(\widetilde{\xi} \left(\frac{\partial W}{\partial a} \right) \right) &= E_{i,j}\big(([t_i^{-1} p t_j^{-1}], -\r{deg}(\widetilde{\xi}(p)) \big) - E_{i,j}\big(([t_i^{-1} q t_j^{-1}], -\r{deg}(\widetilde{\xi}(q)) \big)\\
    &= E_{i,j}\big(([t_i^{-1} p t_j^{-1}], -\r{deg}(\widetilde{\xi}(p)) - ([t_i^{-1} q t_j^{-1}], -\r{deg}(\widetilde{\xi}(q))  \big).
\end{align*}
But [\cite{d1} Proposition 4.2 and Proposition 5.4] implies that $[p]=[q] \in \pi_1(\Sigma_g)$ and since $W'$ is homogeneous $\r{deg}(\widetilde{\xi}(p)) = \r{deg}(\widetilde{\xi}(q))$, hence
$$E_{i,j}\big(([t_i^{-1} p t_j^{-1}], -\r{deg}(\widetilde{\xi}(p)) - ([t_i^{-1} q t_j^{-1}], -\r{deg}(\widetilde{\xi}(q))  \big) = E_{i,j}\big(0\big) = 0$$
as required.

We now move onto showing that $\Psi$ is an isomorphism. For surjectivity we choose $\r{bp} \in Q'_0$ such that there exists an isomorphism arrow $r: \r{bp} \rightarrow \varphi(\r{bp})$ and recall we take $\gamma = t_{\varphi(\r{bp})}$. Then $r$ has image under $\Psi$ given by \begin{align*}
    E_{\varphi(\r{bp}),\r{bp}}\big(([\varphi^{-1}(\gamma) \circ \varphi^{-1}(t_{\varphi(\r{bp})}^{-1})], -1) \big) &= E_{\varphi(\r{bp}),\r{bp}}\big(([\varphi^{-1}(t_{\varphi(\r{bp})}) \circ \varphi^{-1}(t_{\varphi(\r{bp})}^{-1})], -1)\\
    &= E_{\varphi(\r{bp}),\r{bp}}\big(([1], -1)\big)
\end{align*}
where $[1] \in \pi_1(\Sigma_g)$ is the class of the constant path. For $i,j \in Q'_0$ consider the path 
$$p_{ij} = \widetilde{\xi}(t_i\, t_{\varphi(\r{bp})}^{-1})\, r\, \widetilde{\xi}(t_j^{-1})$$
in $\widetilde{Q}'$. Since $\r{deg}(t) = 0$ for all $t \in T$ we have that $\r{deg}(t_i) = 0$ for all $i \in Q'_0$, and so by \eqref{eq:3.02} applying $\Psi$ to $p_{ij}$ gives
\begin{align*}
    & E_{i,\varphi(\r{bp})}\big(([t_i^{-1} t_i t_{\varphi(\r{bp})}^{-1} t_{\varphi(\r{bp})}], 0)\big) \cdot E_{\varphi(\r{bp}),\r{bp}}\big(([1],-1)\big) \cdot E_{\r{bp},j}\big(([t_{\r{bp}}^{-1} t_j^{-1} t_j],0)\big)\\
    &= E_{i,j}\big(([\varphi^{-1}(\gamma) \cdot \varphi^{-1}(t_i^{-1} t_i t_{\varphi(\r{bp})}^{-1} t_{\varphi(\r{bp})}) \cdot \varphi^{-1}(\gamma^{-1}) \cdot t_{\r{bp}}^{-1} t_j^{-1} t_j],-1)\big)\\
    &= E_{i,j}\big(([1],-1)\big)
\end{align*}
because by definition $t_{\r{bp}} = 1$. In a similar way replacing the $r$ with $r^{-1}$ gives the matrices $E_{i,j}\big(([1],1)\big)$ in the image of $\Psi$ too. Hence these paths, varying over all vertices $i,j$ in $Q'$, will generate the $\m Z$ summand in the semi-direct product $\pi_1(\Sigma_g) \rtimes_\varphi \m Z$ for all coordinates $(i,j)$. Next let
$$\pi: \r{Mat}_{m \times m}(\m C[\pi_1(\Sigma_g) \rtimes_\varphi \m Z]) \rightarrow \r{Mat}_{m \times m}(\m C[\pi_1(\Sigma_g)])$$
be the projection map, and note that $\pi$ is not an algebra homomorphism but becomes one when we restrict to the subalgebra $\r{Mat}_{m \times m}(\m C[\pi_1(\Sigma_g) \rtimes_\varphi n \m Z]) = \r{Mat}_{m \times m}(\m C[\pi_1(\Sigma_g) \times n \m Z])$, since $\varphi^n = \r{id}_{\Sigma_g}$. Then \eqref{eq:3.02} tells us that the composition
$$\r{Jac}(\widetilde{Q},W) \xrightarrow{\,\,\widetilde{\xi}\,\,} \r{Jac}(\widetilde{Q}', W') \xrightarrow{\,\,\Psi\,\,} \r{Mat}_{m \times m}(\m C[\pi_1(\Sigma_g) \rtimes_\varphi \m Z]) \xrightarrow{\,\,\pi\,\,} \r{Mat}_{m \times m}(\m C[\pi_1(\Sigma_g)])$$
is the homomorphism that sends a path $p: x \rightarrow y \in \r{Jac}(\widetilde{Q}, W)$ to
$$E_{y,x}([t_y^{-1} p\, t_x]) \in \r{Mat}_{m \times m}(\m C[\pi_1(\Sigma_g)])$$
since $\r{Im}(\Psi \circ \widetilde{\xi})  \subset \r{Mat}_{m \times m}(\m C[\pi_1(\Sigma_g) \rtimes_\varphi n \m Z])$. This homomorphism is surjective by \Cref{thm2.22}. Therefore $\Psi$ is surjective.

To show injectivity suppose $\Psi(\alpha)=\Psi(\beta)$ for paths $\alpha,\beta:i \rightarrow j$ in $\m CQ'$. Then 
$$\Psi(\beta^{-1}\alpha) = E_{i,i}\big(([1], 0) \big).$$
Let $\pi_{\m Z}: \r{Mat}_{m \times m}(\m C[\pi_1(\Sigma_g) \rtimes_\varphi \m Z]) \rightarrow \r{Mat}_{m \times m}(\m C[\m Z])$ then from the definition of $\Psi$ it is clear that the composition $\pi_{\m Z} \circ \Psi$ sends a path $p: i \rightarrow j$ in $\m C \widetilde{Q}'$ to
$$E_{j,i}(-\r{deg}(p)).$$
Hence $\r{deg}(\beta^{-1} \alpha) = 0$ and so by \Cref{lem3.6} $\beta^{-1} \alpha$ lies in the image of $\widetilde{\xi}$. So we can view $\beta^{-1}\alpha$ as a path in $\r{Jac}(\widetilde{Q},W)$ and so by [\cite{d2} Lemma 2.7] we have that $\beta^{-1}\alpha=c_v^d$ for some minimal cycle $c_v$ around a vertex $v \in \Delta$ and some $d \in \m Z.$ Since $\r{deg}(\widetilde{\xi}(c_v)) = n$ from our assumption that the potential $W'$ is homogeneous of degree $n$, this implies that $\pi_{\m Z}(\Psi(\widetilde{\xi}(c_v)) \neq 0$. Therefore $d$ must be equal to 0 and hence $\beta^{-1}\alpha = c_v^0 = e_i$ the constant path at the vertex $i \in Q'_0$, and so $\alpha = \beta$ in $\m C \widetilde{Q}'$.
\end{proof}

\begin{eg}\label{eg:3.16}
We apply the homomorphism $\Psi$ to our running example of the genus 2 surface with $\varphi$ the rotation by $180^\circ$ from \cref{eg3.1}, using the brane tiling given in \cref{eg2.1}. 

Note that a finite presentation for the algebra $\m C[\pi_1(\Sigma_2) \rtimes_\varphi \m Z]$ is given by the generators $\{x,y,z\}$ and the relations
\begin{align}
    & xyx^{-1}y^{-1} z^{-1} xyx^{-1}y^{-1} z = 1 \label{eq:3.915}\\
    & x z^2 = z^2 x \nonumber\\
    & y z^2 = z^2 y. \nonumber
\end{align}

We saw from \eqref{eq:3.91} that for our brane tiling and that particular choice of generators and isomorphism arrows we get the potential
$$W' = abreabre + 2rdrc - 2ardbrc - rere.$$
This satisfies the conditions that $W'$ does not contain both $r$ and $r^{-1}$ and that $W'$ is homogeneous of degree $2 = \r{deg}(\varphi)$ in the isomorphism arrow $r$. We choose the maximal tree $T=\{e\} \subset Q$ since $\xi(e)=e$ has degree $0$. We have the following relations in $\r{Jac}(\widetilde{Q}',W')$
\begin{align*}
    \frac{\partial W'}{\partial a} &= 2breabre - 2rdbrc && \frac{\partial W'}{\partial b} = 2reabrea - 2rcard\\
    \frac{\partial W'}{\partial c} &= 2rdr - 2ardbr && \frac{\partial W'}{\partial d} = 2rcr - 2brcar\\
    \frac{\partial W'}{\partial e} &= 2abreabr - 2rer
\end{align*}
and recall from \cref{lem3.9} b) that we do not need to consider $\partial W'/\partial r$. 

Since the arrow $e$ is in our maximal tree $T$, we can think of $e$ as a means to go between the vertices in $\widetilde{Q}'$ and therefore as a means to go between the different coordinates in $\r{Mat}_{2\times 2}(\m  C[\pi_1(\Sigma_2) \rtimes_\varphi \m Z])$. Equivalently we can simply contract the arrow $e$ in $\widetilde{Q}'$ giving a quiver with 1 vertex and with all the arrows becoming loops at this vertex. So to simplify things, in the above relations we set $e=1$.

Then we have
\begin{align*}
    & \frac{\partial W'}{\partial a} = 0\\
    \Longrightarrow\,\,\, & brabr=rdbrc
\end{align*}
which combined with $\partial W'/\partial e=0$ gives
$$ardbrc=rr$$
and then combining with $\partial W'/\partial c=0$ gives
\begin{align}
    & rdrc = rr \nonumber\\
    \Longrightarrow\,\,\, & d = rc^{-1}r^{-1}. \label{eq:3.92}
\end{align}
Additionally
\begin{align}
    & \frac{\partial W'}{\partial b} = 0 \nonumber\\
    \Longrightarrow\,\,\, & b = a^{-1}carda^{-1}r^{-1} \nonumber\\
    \Longrightarrow\,\,\, & b = a^{-1}carrc^{-1}r^{-1}a^{-1}r^{-1}. \label{eq:3.93}
\end{align}
Using these substitutions we get
\begin{align*}
    & \frac{\partial W'}{\partial e} = 0\\
    \Longrightarrow\,\,\, & carrc^{-1}r^{-1}a^{-1}carrc^{-1}r^{-1}a^{-1}=rr
\end{align*}
and
\begin{align*}
    & \frac{\partial W'}{\partial a} = 0\\
    \Longrightarrow\,\,\, & a^{-1}carrc^{-1}r^{-1}a^{-1}carrc^{-1}r^{-1}a^{-1}=rrc^{-1}r^{-1}a^{-1}carrc^{-1}r^{-1}a^{-1}c
\end{align*}
which combined give
\begin{align}
    & a^{-1}rr = a^{-1}c^{-1}rrc \nonumber\\
    \Longrightarrow\,\,\, & crr = rrc. \label{eq:3.94}
\end{align}
Then combining $\partial W'/\partial b =0$ and $\partial W'/\partial d =0$ gives
\begin{align*}
    & rabra = b^{-1}rcrd\\
    \Longrightarrow\,\,\, & brabra = rcrd\\
    \Longrightarrow\,\,\, & a^{-1}carrrc^{-1}r^{-1}a^{-1}carrc^{-1}r^{-1} = rcrrc^{-1}r^{-1}
\end{align*}
and then using $rrc=crr$ implies that
$$a^{-1}carrrc^{-1}r^{-1}a^{-1}carrc^{-1}r^{-1} = rr.$$
Combining this with $\partial W'/\partial a =0$ and $\partial W'/\partial e =0$ gives
$$rra^{-1} = a^{-1}c^{-1}rrc$$
so again using $rrc=crr$ implies that
\begin{align}
    arr=rra. \label{eq:3.95}
\end{align}
Finally we have
\begin{align}
    & \frac{\partial W'}{\partial d} =0 \nonumber\\
    \Longrightarrow\,\,\, & rcr= a^{-1}carrrc^{-1}r^{-1}a^{-1}r^{-1}rcar \nonumber\\
    \Longrightarrow\,\,\, & rc=a^{-1}carrrc^{-1}r^{-1}a^{-1}ca \nonumber\\
    \Longrightarrow\,\,\, & 1=a^{-1}carc^{-1}ra^{-1}cac^{-1}r^{-1}. \label{eq:3.96}
\end{align}
again using $rrc=crr$ in the last step.

The relations \eqref{eq:3.92} and \eqref{eq:3.93} tell us that the generators $b$ and $d$ depend upon $a,c,r$, and the relations \eqref{eq:3.94}, \eqref{eq:3.95}, and \eqref{eq:3.96} are exactly the relations given in \eqref{eq:3.915}. Hence the map $a^{-1} \mapsto x$, $c \mapsto y$, $r \mapsto z$ is an isomorphism. It is easy to check that the map $\Psi$ described in \cref{thm3.12} is this isomorphism; for example at the $(1,1)$-coordinate $\Psi(a)=([a],0)$, $\Psi(c)=([e^{-1}c],0)$ and $\Psi(r)=([1],-1)$ and the loops $[a],\, [e^{-1}c] \in \Sigma_2$ can be seen using \cref{pic:3.0} as the first pair of the standard generators of $\pi_1(\Sigma_2)$, i.e. $x_1^{-1}$ and $y_1$, as required.
\end{eg}

\section{Future directions}
One of the benefits of a superpotential description of the fundamental group algebra $k[\pi_1(X)]$ is a more approachable way of calculating the (motivic) DT invariants associated to $\pi_1(X)$ (see \cite{dm} and \cite{mein} for good introductions to the Grothendieck ring of varieties and motivic DT invariants). [\cite{d1} Proposition 5.5] says that for a finitely generated algebra $A$ there is an equivalence of stacks 
$$\r{Rep}_d(A) \cong \r{Rep}_{dm}(\r{Mat}_{m\times m}(A))$$
and so it follows from \Cref{thm3.12} and \Cref{prop2.10} that we get a description of $\r{Rep}_d(\m C[\pi_1(M_{g,\varphi})])$ as a critical locus. Thus it is straightforward to define its motivic DT partition function. For an Artin stack $S$ locally of finite type over $\m C$ let $\c M_S^G = \r{K}_0^G(\r{Var}/S)[\m L^{-\frac{1}{2}}]$ denote the Grothendieck ring of naive $G$-equivariant motives over $S$ adjoined $\m L^{-\frac{1}{2}}$, where $\m L$ is the motive of $\m A^1$. Let $\c M^{\widehat{\mu}}_S = \lim_{\underset{r}{\to}} \c M^{\mu_r}_S$ where $\mu_r$ is the group of $r$-th roots of unity. Then explicitly this partition function is 
$$\Phi_{\pi_1(M_{g,\varphi})}(t) = \sum_{m=0}^\infty \Big[\r{Rep}_d(\m C[\pi_1(M_{g,\varphi})])\Big]_{\r{vir}} t^m$$
where for a finitely-generated algebra $A$ whose stack of representations can be written as a critical locus $\r{Rep}_d(A) \cong \r{crit}(f_d)$ for $f_d: X_d \rightarrow \m C$ and $X_d$ smooth we write
$$\Big[\r{Rep}_d(A)\Big]_{\r{vir}} = \int_{\r{Rep}_d(A)} [\phi_{f_d}] \in \c M^{\widehat{\mu}}_{\m C}$$
where $[\phi_{f_d}] \in \c M^{\widehat{\mu}}_{\r{Rep}_d(A)}$ is the (normalised) motivic vanishing cycle of $f_d$.

For the fundamental group algebras of the mapping tori of Riemann surfaces we are considering the isomorphism from \Cref{thm3.12} to the Jacobi algebra gives us the following critical locus structure
$$\r{Rep}_d(\m C[\pi_1(M_{g,\varphi})]) \cong \r{crit}(\widetilde{f}_{d})$$
where $\widetilde{f}_{d} = \r{Tr}(W')_{d}: \r{Rep}_{d}(\widetilde{Q}') \rightarrow \m C$, for $d = (d, \ldots, d) \in \m N^m$. Let $M_{d}(\widetilde{Q}')$ be the space of representations of $\m C\widetilde{Q}'$ which gives a smooth atlas of $\r{Rep}_{d}(\widetilde{Q}')$ since
$$\r{Rep}_{d}(\widetilde{Q}') \cong M_{d}(\widetilde{Q}')/G$$
for $G = \prod_{i \in Q'_0} \r{GL}_d$, and let $f_{d}: M_{d}(\widetilde{Q}') \rightarrow \m C$ be the lift of $\widetilde{f}_{d}$ to this atlas. It follows that
$$\Big[\r{Rep}_d(\m C[\pi_1(M_{g,\varphi})])\Big]_{\r{vir}} = \m L^{\frac{r}{2}}[G]^{-1}\int_{\r{crit}(f_d)} [\phi_{f_{d}}]$$
where $r= \r{dim}(G)$. Hence the problem of calculating motivic DT partition function for $\m C[\pi_1(M_{g,\varphi})]$ comes down to calculating the motivic vanishing cycles $\int [\phi_{f_{d}}]$. We now give some ideas on how one could go about doing this using motivic dimensional reduction \cite{bbs}, \cite{dm}, \cite{np} and power structures \cite{gzlmh}, \cite{bm}.

\subsection{Calculating motivic DT invariants}
In order to calculate the motivic vanishing cycle $\int [\phi_{f_d}]$ we would like to use 
[\cite{dm} Theorem 5.9] which says that
$$\int_{\r{crit}(f_d)} [\phi_{f_d}] = \int_{\r{crit}(f_d)} [\phi_{f_d}^{\r{eq}}] := \m L^{-\frac{\r{dim}\, X_d}{2}} \Big(\big[f_d^{-1}(0)\big] - \big[f_d^{-1}(1)\big]\Big)$$
for the equivariant vanishing cycle $\int [\phi_{f_d}^{\r{eq}}]$ of $f_d$. But we can't apply [\cite{dm} Theorem 5.9] directly as the variety $M_{d}(\widetilde{Q}')$ cannot locally be written in the form $\m A^r \times Z$ with a $\m G_d$-action such that $f_d$ is equivariant and the induced action on $Z$ is trivial. To remedy this we present a conjecture that utilises power structures to overcome this (see \cite{gzlmh}, \cite{bm} for an explanation of power structures on the Grothendieck ring of varieties).

Let $B$ be a finitely generated algebra with potential $W$ and let $A=\r{Jac}(B,W)$ be its Jacobi algebra. Let $\widetilde{\omega} \in B$ be such that its image $\omega \in A$ is central. Then define two new partition functions $\Phi_A^{\omega-\r{nilp}}(t)$ and $\Phi_A^{\omega-\r{inv}}(t)$ for the substacks $\r{Rep}^{\omega-\r{nilp}}(A)$ of representations of $A$ for which $\omega$ acts nilpotently and $\r{Rep}^{\omega-\r{inv}}(A)$ of representations of $A$ for which $\omega$ acts invertibly, by
\begin{align*}
    &\Phi_A^{\omega-\r{nilp}}(t) = \sum_{d=0}^\infty \Big[\r{Rep}_d(A)\Big]_{\r{vir}}^{\omega-\r{nilp}}t^d \\
    &\Phi_A^{\omega-\r{inv}}(t) = \sum_{d=0}^\infty \Big[\r{Rep}_d(A)\Big]_{\r{vir}}^{\omega-\r{inv}} t^d
\end{align*}
where $\big[\r{Rep}_d(A)\big]_{\r{vir}}^{\omega-\r{nilp}}$ (resp. $\big[\r{Rep}_d(A)\big]_{\r{vir}}^{\omega-\r{inv}}$) denotes the pushforward to $\c M^{\hat \mu}_{\m C}$ of the pullback of $\big[\r{Rep}_d(A)\big]_{\r{relvir}}$ to $\r{Rep}_d^{\omega-\r{nilp}}(A)$ (resp. $\r{Rep}_d^{\omega-\r{inv}}(A)$).

\begin{rem}
If we have a function $f:X \rightarrow \m C$ with critical locus $Z = \r{crit}(f)$ and an open subset $U \subset X$ then 
$$[\phi_f]|_{Z \cap U} = [\phi_{f|_U}].$$
Then $\widetilde{\omega}$ acting invertibly on representations of $B$ is an open condition and
$$\r{Rep}_{d}^{\omega-\r{inv}}(A) = \r{crit}(\r{Tr}(W)_d) \cap \r{Rep}_d^{\widetilde{\omega}-\r{inv}}(B).$$
Hence defining the virtual motive
$$\Big[\r{Rep}_d^{\omega-\r{inv}}(A)\Big]_{\r{vir}} := \int_{\r{crit}(\r{Tr}(W)_d)} [\phi_{f_d^{\r{inv}}}]$$
where $f_d^{\r{inv}} = \r{Tr}(W)_d|_{\r{Rep}_d^{\widetilde{\omega}-\r{inv}}(B)}$, we get that
$$\Big[\r{Rep}_d(A)\Big]_{\r{vir}}^{\omega-\r{inv}} = \Big[\r{Rep}_d^{\omega-\r{inv}}(A)\Big]_{\r{vir}}.$$
If we let $A_\omega$ denote the localisation of $A$ with respect to $\omega$ then it is clear that $\r{Rep}_d^{\omega-\r{inv}}(A) \cong \r{Rep}_d(A_\omega)$ hence
$$\Big[\r{Rep}_d(A)\Big]_{\r{vir}}^{\omega-\r{inv}} = \Big[\r{Rep}_d^{\omega-\r{inv}}(A)\Big]_{\r{vir}} = \Big[\r{Rep}_d(A_\omega)\Big]_{\r{vir}}$$
and so
\begin{align}
    \Phi_A^{\omega-\r{inv}}(t) = \Phi_{A_\omega}(t). \label{eq:4.1}
\end{align}
\end{rem}

We construct a new space of representations for our fundamental group algebras which will arise from a partially localised quiver algebra. As before consider a brane tiling $\Delta$ for a Riemann surface $\Sigma_g$ with automorphism $\varphi: \Sigma_g \xrightarrow{\sim} \Sigma_g$ of order $n$, giving dual quiver $Q_\Delta = Q$ and potential $W_\Delta = W$. Choose generating arrows $a \in Q_1$ for the action of $\varphi$ and choose isomorphism arrows $r_{i_u,t}: \varphi^t(i_u) \rightarrow \varphi^{t+1}(i_u)$ for all orbits of vertices and $t = 0, \ldots, n-2$ giving the quiver $Q^\#$ as explained in the proof of \cref{lem3.2}. Then define the algebra $\m C\widehat{Q}'$ to be the localisation of $\m CQ^\#$ with respect to the generating arrows $\{a\}$ (recall that $\m CQ'$ was the localisation of $\m CQ^\#$ with respect to the isomorphism arrows $r_{i_u,t}$). Since we assumed the potential $W'$ does not contain any inverse arrows (more correctly it did not contain both $r_{i_u,t}$ and $r_{i_u,t}^{-1}$ so if it contained an inverse then we simply change the arrow $r_{i_u,t}$ in $Q^\#$ to its inverse $r_{i_u,t}^{-1}$) then $W'$ also gives a potential on $\m C\widehat{Q}'$.

\begin{conj}\label{conj4.1}
Let $B =  \m C\widehat{Q'}$ be the path algebra of $\widehat{Q}'$ with potential $W'$ and let $A= \r{Jac}(B,W')$ be the Jacobi algebra of $(B,W')$. Let $\omega \in A$ be a central element and consider the three partition functions
\begin{align*}
    &\Phi_A(t) = \sum_{d=0}^\infty \Big[ \r{Rep}_d(A)\Big]_{\r{vir}} t^{d}\\
    &\Phi_A^{\omega-\r{nilp}}(t) = \sum_{d=0}^\infty \Big[\r{Rep}_d(A)\Big]_{\r{vir}}^{\omega-\r{nilp}} t^{d} \\
    &\Phi_A^{\omega-\r{inv}}(t) = \sum_{d=0}^\infty \Big[\r{Rep}_d(A)\Big]_{\r{vir}}^{\omega-\r{inv}} t^{d}\,.
\end{align*}
Then 
$$\Phi_A(t)=\big(\Phi_A^{\omega-\r{nilp}}(t) \big)^{\m L}$$
and
$$\Phi_A^{\omega-\r{inv}}(t)= \big(\Phi_A^{\omega-\r{nilp}}(t) \big)^{\m L-1}.$$
\end{conj}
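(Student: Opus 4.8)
The plan is to prove \Cref{conj4.1} by combining a generalised-eigenspace decomposition of representations with respect to the central element $\omega$ with the configuration-space description of power structures on $\c M^{\hat\mu}_{\m C}$ (the prototype being $A=\m C[\omega]$, where the ``eigenvalues of $\omega$'' map identifies the adjoint quotient $\mathfrak{gl}_d/\r{GL}_d$ with configurations of points of $\m A^1$ decorated by nilpotent cones). First I would record the decomposition: if $V$ is a finite-dimensional $A$-module then, $\omega$ being central, $\omega|_V$ is an $A$-module endomorphism, so $V$ splits canonically as $V=\bigoplus_{\lambda\in\m C}V_\lambda$ into generalised eigenspaces, each an $A$-submodule, with no extensions between distinct blocks (Chinese remainder for $\m C[\omega]$); in particular every arrow of $\widehat Q'$ is block-diagonal for this decomposition. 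By definition $V$ is counted by $\Phi_A^{\omega-\r{nilp}}$ exactly when $V=V_0$ and by $\Phi_A^{\omega-\r{inv}}$ exactly when $V_0=0$. Hence $\r{Rep}_d(A)$ is, as a stack, a disjoint union over finitely-supported functions $d_\bullet:\m C\to\m N$ with $\sum_\lambda d_\lambda=d$ of products $\prod_\lambda\r{Rep}^{(\omega-\lambda)-\r{nilp}}_{d_\lambda}(A)$, with the support of $d_\bullet$ treated as an unordered configuration of points of $\m A^1$; this symmetrised bookkeeping is exactly what the operation $(-)^{\m L}$ (and $(-)^{\m L-1}$, when the points are forced into $\m A^1\setminus\{0\}$) encodes.

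Second, I would establish the \emph{shift isomorphism}: for every $\lambda\in\m C$ an identification $\r{Rep}^{(\omega-\lambda)-\r{nilp}}_d(A)\cong\r{Rep}^{\omega-\r{nilp}}_d(A)$ that holds in algebraic families over $\m A^1_\lambda$ and is compatible with the potential $\r{Tr}(W')_d$ up to the transformations under which the motivic vanishing cycle is invariant. In the invertible range ($\lambda\neq 0$) the Remark preceding the conjecture already supplies part of this: $\r{Rep}^{\omega-\r{inv}}_d(A)\cong\r{Rep}_d(A_\omega)$ and $\Phi_A^{\omega-\r{inv}}(t)=\Phi_{A_\omega}(t)$ by \eqref{eq:4.1}, and $A_\omega$ is naturally an algebra over $\m C[\omega^{\pm1}]$, so $\r{Rep}(A_\omega)$ fibres over $\m A^1\setminus\{0\}=\operatorname{Spec}\m C[\omega^{\pm1}]$ via the spectrum of $\omega$ — precisely the index variety wanted for the $(\m L-1)$-power.

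Third, I would assemble the two identities using the geometric description of power structures from \cite{gzlmh} and \cite{bm}: for $\Phi(t)=\sum_d[\c N_d]t^d$, the coefficient of $t^d$ in $\Phi(t)^{[X]}$ parametrises finite configurations of points of $X$ with each point decorated by an object parametrised by $\c N_\bullet$, decorations totalling $d$. Taking $X=\m A^1$ and $\Phi=\Phi_A^{\omega-\r{nilp}}$, a decorated point $(\lambda,V_\lambda)$ is, via the shift isomorphism, exactly a generalised eigenspace of $\omega$, and by Step 1 the resulting configuration space is $\r{Rep}_d(A)$; thus $(\Phi_A^{\omega-\r{nilp}})^{\m L}=\Phi_A$, and restricting $X$ to $\m A^1\setminus\{0\}$ forces $\omega$ invertible, giving $(\Phi_A^{\omega-\r{nilp}})^{\m L-1}=\Phi_A^{\omega-\r{inv}}$. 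To see this at the level of virtual rather than naive motives, one uses that on the block-diagonal locus $\r{Tr}(W')_d$ is the Thom--Sebastiani sum of the potentials on the blocks, so $[\phi_{f_d}]$ factors as the exterior product of the blocks' vanishing-cycle classes, matching the multiplicativity built into the power structure; the $\m L^{\pm1/2}$ normalisations and the $\hat\mu$-equivariant structure must be carried along unchanged.

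\textbf{The main obstacle} is Step 2: making the shift isomorphism rigorous. The element $\omega$ is a genuine element of the Jacobi algebra built from the arrows of $\widehat Q'$ (under $\Psi$ it corresponds to something like $z^{\pm n}\in\pi_1(M_{g,\varphi})$, essentially a product of isomorphism arrows around a vertex-orbit), so ``replacing $\omega$ by $\omega-\lambda$'' is not visibly an algebra automorphism, and one must either produce such an automorphism of a suitable localisation, exhibit $\m CQ^\#$ with its Jacobi relations as a polynomial-type extension in which $\omega$ is a free central variable, or argue the identification directly on critical loci and check the needed invariance of $[\phi_{f_d}]$ under the resulting change of coordinates. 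A secondary difficulty is that ``$\omega$ has a single eigenvalue'' is a closed, not open, condition, so the restriction formula $[\phi_f]|_{Z\cap U}=[\phi_{f|_U}]$ from the Remark does not apply verbatim to the strata; instead one should stratify the smooth atlas $M_d(\widehat Q')$ and transport vanishing cycles across the stratification using motivic dimensional reduction \cite{bbs}, \cite{np} and the relative form of Thom--Sebastiani.
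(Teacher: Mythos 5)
Your proposal takes essentially the same route as the paper's own heuristic justification (the paper does not claim a proof, only ``some remarks on why \Cref{conj4.1} should be true''). Both approaches decompose a $d$-dimensional $A$-module into generalised eigenspaces for the central element $\omega$, observe that centrality makes every arrow block-diagonal across the decomposition, interpret the resulting stratification of $\r{Rep}_d(A)$ via the configuration-space description of power structures, and invoke motivic Thom--Sebastiani to match the multiplicativity on the block-diagonal locus. You have also correctly isolated the two places where the argument is genuinely incomplete, and these correspond precisely to the two assumptions the paper states explicitly.

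Where the paper is more careful than your sketch: your ``shift isomorphism'' identifying $\r{Rep}_d^{(\omega-\lambda)-\r{nilp}}(A)$ with $\r{Rep}_d^{\omega-\r{nilp}}(A)$ over $\m A^1_\lambda$ is exactly what the paper posits as assumption \eqref{eq:4.04}, i.e.\ the isomorphism $\xi:\r{Rep}_d^\alpha(A)\xrightarrow{\sim}\big(V_\alpha\times(\prod_i\m A^{\alpha_i}\setminus\Delta)\big)/S_\alpha$; the paper does not attempt to construct it either and leaves it as a hypothesis. Your ``secondary difficulty'' --- that having a single eigenvalue is a closed rather than open condition, so the restriction formula for $[\phi_f]$ does not apply directly --- is also correct, and the paper's mechanism for dealing with it is worth internalising: it stratifies $\r{Rep}_d(A)$ by partitions $\alpha$ as in \eqref{eq:4.01}, restricts first to the open loci $\r{Rep}_d^\alpha(B)'$ (eigenvalue multiplicities at most those of $\alpha$), and then applies Joyce's holomorphic Morse--Bott \cref{lem4.2} to write $f_d^\alpha=f_d^\alpha|_{X_\alpha}+\sum_j x_j^2$ on a smooth subvariety $X_\alpha$ of block matrices. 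Thom--Sebastiani and $\int_{\m C}[\phi_{x^2}]=1$ then kill the quadratic contribution, and the remaining identification of vanishing cycles along the map $j$ versus $\iota$ in diagram \eqref{eq:6.0} is the paper's second assumption \eqref{eq:6.1}/\eqref{eq:6.2}, which you gesture at by saying $[\phi_{f_d}]$ must be ``invariant under the resulting change of coordinates.'' Finally, the symmetrised bookkeeping you invoke informally is handled in the paper by working with $S_\alpha$-equivariant motives and descending through the quotient map $q_\alpha$ via Bittner's result; you should make that explicit, and also record (as the paper does) the requirement that $\int[\phi_{\r{Tr}(W')_i}]|_{\omega-\r{nilp}}$ land in the $\lambda$-subring $K_0(\r{Var}/\m C)[[\r{GL}_n]^{-1}]$ of $K_0^{\hat\mu}$, since the power structure is tied to the $\lambda$-ring structure and the exotic $\hat\mu$-equivariant one from [\cite{dm}] does not automatically match.
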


Since $\r{Jac}(\widetilde{Q}',W')$ is a localisation of $A$ (with respect to the isomorphism arrows $\{r_{i_u,t}\}$) we try find a central element $\omega \in A$ such that $A_{\omega} = \r{Jac}(\widetilde{Q}',W')$. We expect to be able to do this because $\m C[\pi_1(M_{g, \varphi})] \cong \m C[\pi_1(\Sigma_g) \rtimes_\varphi \m Z]$ and so $\omega = ([1],n) \in \pi_1(\Sigma_g) \rtimes_\varphi \m Z$ is central, then using \cref{thm3.12} we have $\r{Jac}(\widetilde{Q}',W') \cong \r{Mat}_{m \times m}(\m C[\pi_1(M_{g, \varphi})])$ so by construction $A$ should have such elements. We can then apply \Cref{conj4.1} to write $\Phi_{\r{Jac}(\widetilde{Q}',W')}(t) = \Phi_{A_\omega}(t) = \Phi_A^{\omega-\r{inv}}(t)$ in terms of $\Phi_A(t)$. We get that
$$\Phi_{\r{Jac}(\widetilde{Q}',W')}(t) = \big(\Phi_{A}(t) \big)^{\frac{\m L-1}{\m L}}.$$
Then because \Cref{thm3.12} implies that
$$\Phi_{\m C[\pi_1(M_{g,\varphi})]}(t^m) = \Phi_{\r{Jac}(\widetilde{Q}',W')}(t)$$
where $m = |Q_0|$, it follows that
\begin{align}
    \Phi_{\m C[\pi_1(M_{g,\varphi})]}(t^m) = \big(\Phi_{A}(t) \big)^{\frac{\m L-1}{\m L}}\,.
\end{align}
Writing $d = (d, \ldots, d) \in \m N^m$ we have that $\r{Rep}_{d}(B) \cong M_{d}(B)/G$ with
$$M_{d}(B) = \prod_{r_{i_u,t}} \r{Mat}_{d \times d}(\m C) \times \prod_{a \in Q^\#_1} \r{GL}_d(\m C)$$
which is globally of the form $\m A^r \times Z$. Let $f_{d} =  \r{Tr}(W')_{d} : M_{d}(B) \rightarrow \m C$ and define a $\m G_m$-action on $M_{d}(B)$ by scaling the non-invertible matrices only. This gives us a $\m G_m$-action such that the induced action on $Z$ is trivial, and because $W'$ is homogeneous of degree $n$ in the isomorphism arrows $\{r_{i_u,t}\}$ it follows that $f_{d}$ is equivariant of degree $n$. Hence we can apply [\cite{dm} Theorem 5.9] to the virtual motives in the partition function $\Phi_A(t)$ reducing the problem to calculating the motives $[f_{d}^{-1}(0)]$ and $[f_{d}^{-1}(1)]$ for each $d$.

To see this in action let us consider our running example, namely $\Sigma_2$ with $\varphi$ being the rotation by $180^\circ$ and with the brane tiling given in \Cref{eg2.1} and quivers $Q, Q'$ and potentials $W, W'$ given in \Cref{eg3.1} and \cref{eg:3.16}. Let $B= \m C \widehat{Q}'$ and $A= \r{Jac}(\widehat{Q}', W')$ be the algebras described above. In particular the generators in $B$ are $\{a^{\pm 1}, b^{\pm 1}, c^{\pm 1}, d^{\pm 1}, e^{\pm 1}, r\}$. 

\begin{lem}
The element $\omega=rere+erer \in A$ is central and $A_\omega = \r{Jac}(\widetilde{Q'},W')$.
\end{lem}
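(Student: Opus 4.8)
The plan is to prove the two assertions separately: first that $\omega = rere + erer$ is central in $A = \r{Jac}(\widehat{Q'},W')$, and then that inverting $\omega$ has the same effect as inverting the single isomorphism arrow $r$, so that $A_\omega = \r{Jac}(\widetilde{Q'},W')$.

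For centrality, note that $A$ is generated as an algebra by $e_1,e_2,a^{\pm1},b^{\pm1},c^{\pm1},d^{\pm1},e^{\pm1}$ and $r$, and that $\omega = e_1\omega e_1 + e_2\omega e_2$ already commutes with the idempotents $e_1,e_2$; since commutation with an invertible element forces commutation with its inverse, it suffices to check that $\omega$ commutes with each of $a,b,c,d,e,r$. The cases of $e$ and $r$ are immediate, as $\omega e = erere = e\omega$ and $\omega r = rerer = r\omega$ already hold at the level of paths: multiplying by $e$ (resp. $r$) simply interchanges the two vertex-components $rere\in e_1Ae_1$ and $erer\in e_2Ae_2$ of $\omega$. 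The content therefore lies in the four identities
\[
rere\cdot a = a\cdot rere, \qquad rere\cdot b = b\cdot rere, \qquad erer\cdot c = c\cdot rere, \qquad erer\cdot d = d\cdot rere
\]
in $A$. Starting from the relation $\partial W'/\partial e = 0$, i.e. $rer = abreabr$, one rewrites $rere = abreabre$ and $erer = eabreabr$ and then unwinds the four identities using the remaining relations $\partial W'/\partial a = \partial W'/\partial b = \partial W'/\partial c = \partial W'/\partial d = 0$ together with the invertibility of $a,b,c,d,e$. This mirrors the computation in \Cref{eg:3.16} that produces $crr = rrc$, $arr = rra$ and the companion relations for $b$ and $d$, with the crucial difference that it must be carried out inside $A$, where $r$ is \emph{not} invertible, so the substitutions have to be organised so as never to divide by $r$. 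Arranging this bookkeeping is the main obstacle. (A formal alternative is to show that the localisation map $A\to A_r$ is injective and to observe that the image of $\omega$ in $A_r\cong\r{Mat}_{2\times2}(\m C[\pi_1(M_{g,\varphi})])$ equals $([1],\pm 2)\cdot\mathrm{id}$, which is central there because $([1],2)$ is central in $\pi_1(\Sigma_2)\rtimes_\varphi\m Z$ by \eqref{eq:3.915}; centrality in the subring $A$ then follows. Establishing the injectivity becomes the obstacle instead.)

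Granting centrality, $A_\omega$ is the ordinary central localisation, and it remains to identify it with $\r{Jac}(\widetilde{Q'},W')$, which is precisely the universal localisation $A_r$ of $A$ obtained by inverting the arrow $r$ (the only isomorphism arrow in this example). In $A_r$ both $e$ and $r$ are units, so $rere = (re)^2\in e_1A_re_1$ and $erer = (er)^2\in e_2A_re_2$ are units in their corner algebras, and hence $\omega$ is a unit in $A_r$; by the universal property of the central localisation this yields a homomorphism $A_\omega\to A_r$ restricting to the canonical map on $A$. Conversely, in $A_\omega$ the central element $\omega^{-1}$ splits as $w_1+w_2$ with $w_i\in e_iA_\omega e_i$, and $w_1$ is a two-sided inverse of $rere = (re)^2$ in $e_1A_\omega e_1$; from $(re)\big((re)w_1\big) = e_1 = \big(w_1(re)\big)(re)$ it follows that $re$ is a unit in $e_1A_\omega e_1$, and then $r = (re)e^{-1}$ has the two-sided inverse $e(re)^{-1}\in e_2A_\omega e_1$, so $r$ is invertible in $A_\omega$. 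By the universal property of $A_r$ this gives a homomorphism $A_r\to A_\omega$ restricting to the canonical map on $A$. The two homomorphisms are mutually inverse, since each composite restricts to the identity on $A$ and $A$ generates both localisations, so $A_\omega\cong A_r = \r{Jac}(\widetilde{Q'},W')$.
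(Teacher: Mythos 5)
Your argument for the identification $A_\omega = A_r = \r{Jac}(\widetilde{Q'},W')$ is correct and in fact more systematically organised than the paper's: the paper simply exhibits $er\omega^{-1}e = r^{-1}$, showing $r^{-1} \in A_\omega$, while you run the universal-property argument in both directions, which is the cleaner way to say it. (Minor typo: $w_i \in e_iA_\omega e_1$ should be $w_i \in e_iA_\omega e_i$.)

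The genuine gap is in the centrality claim, which you flag honestly but do not close. You sketch two routes and declare each an ``obstacle'': (a) a bare-hands computation in $A$ that must avoid dividing by $r$, and (b) proving $A \hookrightarrow A_r$ and reading off centrality from the matrix description. You carry out neither, and centrality is exactly the part of the lemma that needs real work. Worse, your proposed ordering (centrality first, then $A_\omega \cong A_r$) makes the centrality step gratuitously hard, because you cannot yet use the identification of the localisation with a matrix algebra.

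The paper proceeds in the opposite order, which is what makes it go through. It first shows $A_\omega = \r{Jac}(\widetilde{Q'},W')$ with $A_\omega$ interpreted as the universal localisation at $\omega$ (so no centrality is needed to form it), then invokes $\Psi\colon \r{Jac}(\widetilde{Q'},W') \xrightarrow{\sim} \r{Mat}_{2\times 2}(\m C[\pi_1(M_{2,\varphi})])$ from \cref{thm3.12}: the image of $\omega$ is the scalar matrix with entries $([1],-2)$, which is central because $\varphi^2 = \r{id}_{\Sigma_2}$. Finally, centrality of $\omega$ in $A$ is deduced from the assertion that $\omega$ is not a zero-divisor, so that $A \to A_\omega$ is injective and the commutator relations descend to $A$. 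This is exactly your option (b), and the key missing ingredient you identified --- injectivity of $A \to A_\omega$ --- is exactly what the paper invokes (as ``$\omega$ is not a 0-divisor''), though the paper also does not prove that claim. So you located the crux correctly, but you neither committed to the non-zero-divisor input nor realised that proving the localisation statement first lets you read off centrality for free from the matrix algebra.
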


\begin{proof}
We first show that $A_\omega =\r{Jac}(\widetilde{Q'},W').$ It suffices to show that $r^{-1}$ exists in $A_\omega$. And so because
$$er\omega^{-1}e = er(e^{-1}r^{-1}e^{-1}r^{-1}+r^{-1}e^{-1}r^{-1}e^{-1})e = err^{-1}e^{-1}r^{-1}e^{-1}e =r^{-1}$$
and $e, r, \omega^{-1}$ are all elements of $A_\omega$, then $r^{-1} \in A_\omega$.

Then to show $\omega \in A$ is central, since $\omega$ is not a 0-divisor it suffices to show that $\omega \in A_\omega$ is central. Now 
$$A_\omega= \r{Jac}(\widetilde{Q'},W') \xrightarrow[\sim]{\Psi} \r{Mat}_{2\times 2}(\m C[\pi_1(M_{2,\varphi})])$$
where, viewing $\pi_1(M_{2,\varphi}) \cong \pi_1(\Sigma_2)\rtimes_\varphi \m Z$, $\omega$ is sent under this isomorphism to the matrix 
$$\omega' = \begin{pmatrix}
([1],-2) & 0 \\
0 & ([1],-2)
\end{pmatrix}.$$
Since $\varphi^2 = \varphi^{-2} = \r{id}_{\Sigma_2}$ we have for $([\alpha], m) \in \pi_1(\Sigma_2) \rtimes_\varphi \m Z$
$$([\alpha],m) \cdot ([1],-2) = ([\varphi^{-2}(\alpha)], m-2) = ([\alpha], -2+m) = ([1],-2) \cdot ([\alpha],m).$$
Hence $\omega'$ is central and therefore so is $\omega$.
\end{proof}

It follows that we can apply the conjecture in this case for the particular choice of $\omega = rere + erer$. We have that $\r{Rep}_{(d,d)}(B) \cong \big(\r{Mat}_{d \times d}(\m C) \times \r{GL}_d^5 \big)/\r{GL}_d^2$ and $\r{Rep}_{(d,d)}(A) \cong \r{crit}(\widetilde{f_d})$ where $\widetilde{f_d} = \r{Tr}(W')_d: \r{Rep}_{(d,d)}(B) \rightarrow \m C$. Therefore
\begin{align*}
    \Phi_{\m C[\pi_1(M_{2,\varphi})]}(t^2) &= \sum_{d=0}^\infty \Big[\r{Rep}_d(\m C[\pi_1(M_{2,\varphi})])\Big]_{\r{vir}} t^{2d} \\
    &= \sum_{d=0}^\infty \Big[\r{Rep}_{2d}(A_\omega)\Big]_{\r{vir}} t^{2d} \\
    &= \sum_{d=0}^\infty \Big[\r{Rep}_{(d,d)}(A_\omega)\Big]_{\r{vir}} t^{2d} \\
    &= \sum_{d=0}^\infty \Big[\r{Rep}_{(d,d)}(A)\Big]_{\r{vir}}^{\omega-\r{inv}} t^{2d} \\
    &= \Phi_A^{\omega-\r{inv}}(t)
\end{align*}
where we take the series in the variable $t^2$ because a $d$-dimensional representation of $\m C[\pi_1(M_{2, \varphi})]$ corresponds to a $(d,d)$-dimensional representation of the Jacobi algebra $\r{Jac}(\widetilde{Q'}, W)$ which has two vertices. So by \Cref{conj4.1}
\begin{align*}
    \Phi_{\m C[\pi_1(M_{2,\varphi})]}(t^2) &= (\Phi_A(t))^{\frac{\m L-1}{\m L}}\\
    &= \Big(\Phi_A(t)^{\m L^{-1}}\Big)^{[\m C^*]} \\
    &=  \Phi_A(\m L^{-1}t)^{[\m C^*]} \\
    &=  \Phi_A(t)^{[\m C^*]}|_{t \mapsto \m L^{-1}t}.
\end{align*}
We can then apply [\cite{dm} Theorem 5.9] to find the motivic vanishing cycles for $A$ because our regular function $\widetilde{f}_d$ lifts to $f_d:\r{Mat}_{d \times d}(\m C) \times \r{GL}_d^5 \rightarrow \m C$ where $\r{Mat}_{d \times d}(\m C) \times \r{GL}_d^5$ is of the form $\m A^r \times Z$ and $f_d$ is equivariant of degree 2 when $\m G_m$ acts by scaling on $\r{Mat}_{d \times d} (\m C)$. We get that
\begin{align*}
    \Big[\r{Rep}_{(d,d)}(A)\Big]_{\r{vir}} &= \m L^{2d^2}[\r{GL}_d]^{-2} \int_{\r{crit}(f_d)} [\phi_{f_d}]\\
    &= \m L^{2d^2}[\r{GL}_d]^{-2} \int_{\r{crit}(f_d)} [\phi_{f_d}^{\r{eq}}]\\
    &= \m L^{-d^2} [\r{GL}_d]^{-2} \Big([f_d^{-1}(0)]-[f_d^{-1}(1)] \Big)
\end{align*}
and so we can find the motivic DT partition function of $\m C[\pi_1(M_{2,\varphi})]$ by studying the fibres of $f_d$ over $0$ and $1$ for each $d \in \m N$.

We end with some remarks on why \Cref{conj4.1} should be true, taking inspiration from [\cite{bbs} Section 2.4 and Proposition 2.6] and [\cite{dr} Section 3]. Let $\alpha =  (\alpha_1, \ldots, \alpha_d)$ be a partition of $d$ (ie. $\sum_{i=1}^d \alpha_i \cdot i = d$), let $\omega \in A$ be a central element and let $\widetilde{\omega} \in B$ be a lift of $\omega$. Denote by $\r{Rep}_d^\alpha(B)$ (resp. $\r{Rep}_d^\alpha(A)$) the closed substack of $d$-dimensional representations of $B$ for which $\widetilde{\omega}$ has generalised eigenvalues of shape $\alpha$ (resp. the closed substack of $d$-dimensional representations of $A$ for which $\omega$ has generalised eigenvalues of shape $\alpha$). Also denote by $\r{Rep}_d^\alpha(B)'$ (resp. $\r{Rep}_d^\alpha(A)'$) the open substack of $d$-dimensional representations of $B$ such that the generalised eigenvalues of $\widetilde{\omega}$ are distinct when split according to $\alpha$ (resp. the open substack of $d$-dimensional representations of $A$ such that the generalised eigenvalues of $\omega$ are distinct when split according to $\alpha$). Put another way we have
\begin{align*}
    \r{Rep}_d^\alpha(B)' =  \bigsqcup_{\alpha' \leq \alpha} \r{Rep}_d^{\alpha'}(B) \\
    \r{Rep}_d^\alpha(A)' =  \bigsqcup_{\alpha' \leq \alpha} \r{Rep}_d^{\alpha'}(A)
\end{align*}
where $\alpha' \leq \alpha$ runs over all sub-partitions of $\alpha$. We obtain stratifications
\begin{align}
    \r{Rep}_d(B) =  \bigsqcup_{\alpha \vdash d} \r{Rep}_d^\alpha(B) \nonumber\\
    \r{Rep}_d(A) =  \bigsqcup_{\alpha \vdash d} \r{Rep}_d^\alpha(A). \label{eq:4.01}
\end{align}
Fixing a presentation of $B$ and of $A$, a $d$-dimensional representation $\rho$ of $A$ can be characterised by a vector space $V$ of dimension $d$ along with $d \times d$-matrices $\rho(a_i)$ for each of the finite number of generators $a_i \in A$ that act on $V$ subject to the relations in $A$. Consider the generalised eigenspace decomposition of $V$ with respect to $\omega$
$$V= \bigoplus_{i=1}^r V_i$$
where
$$V_i = \r{Im}\Big(\prod_{j \neq i} (\rho(\omega) - \lambda_j \r{Id}_d)^{s_j}\Big)$$
with $\prod_j (x-\lambda_j)^{s_j}$ the characteristic polynomial of $\rho(\omega)$. Since $\omega \in A$ is central it follows that $\rho(a)$ respects this decomposition for any $a \in A$ i.e. $\rho(a)(V_i) \subset V_i$ for all $i$. Hence the matrix $\rho(a)$ is a block-diagonal matrix, and so for a partition $\alpha \vdash d$ we get an isomorphism
\begin{align}
    \r{Rep}_d^\alpha(A) \cong Z \subset \prod_{i=1}^d \big(\r{Rep}_i^{(0, \ldots, 0,1)}(A) \big)^{\alpha_i}/S_\alpha \label{eq:4.02}
\end{align}
where $(0, \ldots 0, 1) \vdash i$ is the partition consisting of the whole of $i$ and so $\r{Rep}_i^{(0, \ldots, 0,1)}(A)$ is the stack of $i$-dimensional representations of $A$ for which $\omega$ has a single generalised eigenvalue, $S_\alpha = \prod_{i=1}^a S_i^{\alpha_i}$ is a product of symmetric groups that acts on $\prod_{i=1}^d \big(\r{Rep}_i^{(0, \ldots, 0,1)}(A) \big)^{\alpha_i}$ by permuting the factors, and $Z$ is the substack for which the generalised eigenvalue of $\omega$ is distinct for each factor in the product. 

We now make the following two assumptions. The first is that we have an isomorphism of stacks
\begin{align}
    \r{Rep}_d^\alpha(A) \xrightarrow[\sim]{\xi} \bigg(\prod_{i=1}^d \big(\r{Rep}_i^{\omega-\r{nilp}}(A)\big)^{\alpha_i} \times \bigg(\prod_{i=1}^d\m A^{\alpha_i} \setminus \Delta\bigg)\bigg)/S_\alpha \label{eq:4.04}
\end{align}
where $\Delta \subset \prod_{i=1}^d \m A^{\alpha_i}$ is the big diagonal ie. the set of tuples $(x_j)$ with  $x_j \in \m A^1$ such that $x_{j_1}= x_{j_2}$ for some $j_1 \neq j_2$, and $S_\alpha$ acts on both $\prod_{i=1}^d \big(\r{Rep}_i^{\omega-\r{nilp}}(A)\big)^{\alpha_i}$ and $\prod_{i=1}^d\m A^{\alpha_i} \setminus \Delta$ by permuting the factors. For the second assumption consider the projections
$$\bigg(\prod_{i=1}^d \big(\r{Rep}_i(B)\big)^{\alpha_i} \times \bigg(\prod_{i=1}^d\m A^{\alpha_i} \setminus \Delta\bigg)\bigg)/S_\alpha \xrightarrow{\pi_B} \prod_{i=1}^d \big(\r{Rep}_i(B)\big)^{\alpha_i}/S_\alpha$$
and
$$\bigg(\prod_{i=1}^d \big(\r{Rep}_i(A)\big)^{\alpha_i} \times \bigg(\prod_{i=1}^d\m A^{\alpha_i} \setminus \Delta\bigg)\bigg)/S_\alpha \xrightarrow{\pi_A} \prod_{i=1}^d \big(\r{Rep}_i(A)\big)^{\alpha_i}/S_\alpha$$
and the natural inclusion
$$\prod_{i=1}^d \big(\r{Rep}_i^{\omega-\r{nilp}}(A)\big)^{\alpha_i}/S_\alpha \xhookrightarrow{\,\, j' \,\,} \prod_{i=1}^d \big(\r{Rep}_i(A)\big)^{\alpha_i}/S_\alpha.$$
Then we have the non-commuting triangle
\begin{equation}\label{eq:6.0}
\begin{tikzcd}[column sep=5em, row sep=5em]
\r{Rep}_d^\alpha(A) \arrow[d, "\xi", "\sim"' sloped] \arrow[r, hook, "\iota"] & \prod_{i=1}^d \big(\r{Rep}_i(A)\big)^{\alpha_i}/S_\alpha\\
Z_\alpha/S_\alpha \arrow[ru, "j"]
\end{tikzcd}    
\end{equation}
where $V_\alpha = \prod_{i=1}^d \big(\r{Rep}_i^{\omega-\r{nilp}}(A)\big)^{\alpha_i}$ and $Z_\alpha = V_\alpha \times \Big(\prod_{i=1}^d\m A^{\alpha_i} \setminus \Delta \Big)$, and $j$ is the composition $\pi_A \circ (j' \times \r{id}_{\prod_i \m A^{\alpha_i} \setminus \Delta})$. We assume that we have the equality of motives over $\r{Spec}(\m C)$
\begin{align}
    \int_{\r{Rep}_d^\alpha(A)} \iota^* [\phi_{\sum_i \alpha_i \overline{\r{Tr}(W')}_i}] &= \int_{\r{Rep}_d^\alpha(A)} \xi^* j^* [\phi_{\sum_i \alpha_i \overline{\r{Tr}(W')}_i}] \label{eq:6.1}
\end{align}
where the bar denotes that these functions are taken on the quotients modulo $S_\alpha$. Note that we have
\begin{align*}
   \xi^* j^* [\phi_{\sum_i \alpha_i \overline{\r{Tr}(W')}_i}] &= \xi^* (j' \times \r{id})^* \pi_A^* [\phi_{\sum_i \alpha_i \overline{\r{Tr}(W')}_i}]\\
    &= \xi^* (j' \times \r{id})^* \m L^{\frac{\sum_i \alpha_i}{2}}[\phi_{\sum_i \alpha_i \overline{\r{Tr}(W')}_i \circ \pi_B}]\\
    &= \xi^*\Big([\phi_{\sum_i \alpha_i \overline{\r{Tr}(W')}_i}]|_{V_\alpha/S_\alpha} \cdot \Big[\Big(\prod_{i=1}^d \m A^{\alpha_i} \setminus \Delta \Big)/S_\alpha \Big] \Big)
\end{align*}
where the second equality follows from [\cite{dm}, Proposition 5.3 (3)], and the third equality follows from the motivic Thom-Sebastiani isomorphism ([\cite{dl1}, Theorem 5.2.2] or [\cite{dm} Proposition 5.8 (3)]) and [\cite{dm}, Proposition 5.8 (5)]. Hence the assumption \eqref{eq:6.1} is equivalent to
\begin{align}
    \int_{\r{Rep}_d^\alpha(A)} \iota^* [\phi_{\sum_i \alpha_i \overline{\r{Tr}(W')}_i}] &= \int_{V_\alpha/S_\alpha} \Big( [\phi_{\sum_i \alpha_i \overline{\r{Tr}(W')}_i}]|_{V_\alpha/S_\alpha} \Big) \cdot \Big[\Big(\prod_{i=1}^d \m A^{\alpha_i} \setminus \Delta \Big)/S_\alpha \Big]. \label{eq:6.2}
\end{align}

Consider the closed substack of $\r{Rep}_d^\alpha(B)'$
$$X_\alpha = \prod_{i=1}^d \Big(\r{Rep}_i(B)\Big)^{\alpha_i}/S_\alpha \cap \r{Rep}_d^\alpha(B)'.$$
Then by using the following holomorphic Morse-Bott style lemma we have that locally around each point in $\r{Rep}_d^\alpha(A)' \subset \r{Rep}_d^\alpha(B)'$ the function $f_d^\alpha = \r{Tr}(W')_d^\alpha: \r{Rep}_d^\alpha(B)' \rightarrow \m C$ can be written as
\begin{align}
    f_d^\alpha = f_d^\alpha|_{X_\alpha} + \sum_j x_j^2 \label{eq:4.06}
\end{align}
where $j$ runs over all the coordinates that cut out $X_\alpha$ as a subspace of $\r{Rep}_d^\alpha(B)'$, and recall for functions $f: X \rightarrow \m A^1$ and $g: Y \rightarrow \m A^1$ we write $f+g$ for the composition $X \times Y \xrightarrow{f \times g} \m A^1 \times \m A^1 \xrightarrow{+} \m A^1$.

\begin{lem}[\cite{joy} Proposition 2.22]\label{lem4.2}
Let $Y$ be a smooth complex variety of dimension $d$, $f: Y \rightarrow \m C$ a regular function, and $X\subset Y$ a smooth subvariety of dimension $m$ such that $\r{crit}(f)= \r{crit}(f|_X)$. Then analytically locally around any $x \in \r{crit}(f)$ we may write
$$f=f|_X + \sum_{i=m+1}^d x_i^2$$
for local coordinates $x_1, \ldots, x_d$.
\end{lem}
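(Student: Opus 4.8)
The statement is analytic-local around $x$, so the plan is to pass to a coordinate chart, put $X$ into linear position, reduce the hypothesis to a nondegeneracy statement for the transverse Hessian of $f$, and then invoke a holomorphic Morse lemma with parameters.

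First I would choose analytic local coordinates $(x_1, \dots, x_d)$ centred at $x$ on a neighbourhood $U$ of $x$ in $Y$ such that $X \cap U = \{x_{m+1} = \dots = x_d = 0\}$; write $x' = (x_1, \dots, x_m)$, $x'' = (x_{m+1}, \dots, x_d)$ and $g = f|_X$, viewed as an analytic function of $x'$. Hadamard's lemma (Taylor expansion with integral remainder in the $x''$-directions about $x'' = 0$) lets one write $f(x', x'') = g(x') + \sum_{i>m} b_i(x')\, x_i + \sum_{i,j>m} Q_{ij}(x', x'')\, x_i x_j$ with $b_i, Q_{ij}$ analytic and $(Q_{ij}(0))_{i,j>m} = \tfrac12 (\partial^2 f/\partial x_i \partial x_j(0))_{i,j>m}$.

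The heart of the argument is to deduce from $\r{crit}(f) = \r{crit}(f|_X)$ — read as an equality of closed subschemes, i.e. of the vanishing loci of the respective Jacobian ideals — that the transverse Hessian $(\partial^2 f/\partial x_i \partial x_j(0))_{i,j>m}$, equivalently the matrix $(Q_{ij})$, is invertible near $0$, and that after an adjustment of the transverse foliation $f$ vanishes to second order along $X$ (so that the terms $b_i x_i$ can be absorbed). In the chosen coordinates the critical scheme of $f|_X$, viewed inside $Y$, is cut out by $(x_{m+1}, \dots, x_d, \partial_{x_1} f, \dots, \partial_{x_m} f)$ in $\c O_{Y,x}$, while $\r{crit}(f)$ is cut out by $(\partial_{x_1} f, \dots, \partial_{x_d} f)$; equating the two ideals forces $x_{m+1}, \dots, x_d$ to lie in the Jacobian ideal of $f$, and a Nakayama-type inspection of the degree-one parts of the generators then pins down the rank of the normal block of $\r{Hess}_f(0)$. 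I expect this step to be the main obstacle, both because it is the only place the precise (scheme-theoretic) form of the hypothesis is used — a set-theoretic reading being insufficient, as $f = x_1^2 x_2^2 + x_2^3$ on $\m C^2$ with $X = \{x_2 = 0\}$ has equal critical \emph{sets} but admits no such normal form — and because one must simultaneously arrange the transverse coordinates so that $\partial_{x_i} f$ vanishes on $X$ for $i > m$ while keeping $X$ a coordinate subspace.

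Once the transverse Hessian is nondegenerate, I would conclude by the holomorphic Morse lemma with parameters, proved by induction on the codimension $d - m$ in the manner of Palais's proof of the Morse lemma: at each stage, after a linear change among the remaining transverse coordinates one of the quadratic coefficients $Q_{ii}$ is a unit, and one completes that square — substituting $x_i \rightsquigarrow \sqrt{Q_{ii}}\,\bigl(x_i + \text{analytic linear combination of the later transverse coordinates}\bigr)$, which is legitimate in the analytic category since a unit has an analytic square root — simultaneously disposing of the linear term $b_i x_i$. After $d - m$ steps $f$ takes the form $c(x_1, \dots, x_m) + \sum_{i>m} x_i^2$ in new analytic coordinates, which can be taken so that $X$ remains the locus $\{x_{m+1} = \dots = x_d = 0\}$ (every substitution used lies in, or alters only along, the ideal of $X$); restricting to $X$ then identifies $c$ with $f|_X$, which is the asserted normal form.
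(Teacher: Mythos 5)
The paper cites [\cite{joy}] Proposition 2.22 for this and gives no proof of its own, so there is nothing in the paper to compare your argument against; I can only assess it on its merits. Your overall architecture is the right shape: read the hypothesis as an equality of closed subschemes (your example $f=x_1^2x_2^2+x_2^3$ on $\mathbb{C}^2$ with $X=\{x_2=0\}$ correctly shows the set-theoretic reading is too weak), Hadamard-expand along $X$, deduce transverse nondegeneracy, finish with a parametrised holomorphic Morse lemma. But the pivotal middle step, which you yourself flag as the main obstacle, is carried out incorrectly in two ways.

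First, the normal block $(\partial^2f/\partial x_i\partial x_j(0))_{i,j>m}$ of $\mathrm{Hess}_f(0)$ is \emph{not} forced to be nondegenerate in the initially chosen coordinates. Consider $f=\frac{1}{2}x_1^2+x_1x_2$ on $\mathbb{C}^2$ with $X=\{x_2=0\}$: the Jacobian ideal of $f$ is $(x_1+x_2,\,x_1)=(x_1,x_2)$, and the ideal cutting out $\mathrm{crit}(f|_X)$ inside $Y$ is $(x_2,\,x_1)$, so the scheme-theoretic hypothesis holds, yet $\partial^2_{x_2}f(0)=0$. The lemma is still true here --- $y_1=x_1+x_2$, $y_2=ix_2/\sqrt{2}$ gives $f=\frac{1}{2}y_1^2+y_2^2$ with $X=\{y_2=0\}$ --- but only in coordinates that mix $x_1$ with $x_2$; your Nakayama computation only shows that the rows of the full Hessian span the normal cotangent directions $x_{m+1},\ldots,x_d$ modulo $\mathfrak m^2$, not that the normal block itself has full rank. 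Second, the terms $b_ix_i$ cannot be disposed of by substitutions confined to $x_{m+1},\ldots,x_d$, which is what keeping $X=\{x''=0\}$ requires: under any $y_i=\sum_{j>m}A_{ij}(x)\,x_j$ the tuple $(b_i)$ transforms by an invertible matrix, so it is zero afterwards iff it was zero before; while completing the square on $x_i+b_i/(2Q_{ii})$, as in the ordinary Morse lemma, produces a normal coordinate that fails to vanish on $X$ and sheds a term $-b_i^2/(4Q_{ii})$ that spoils the summand which is supposed to restrict to $f|_X$. The move the hypothesis actually licenses is a \emph{tangential} one: $b_i=(\partial_{x_i}f)|_X$ lies in the Jacobian ideal of $g:=f|_X$, say $b_i=\sum_{j\le m}c_{ij}\,\partial_{x_j}g$, and replacing $x_j$ by $x_j+\sum_{i>m}c_{ij}\,x_i$ for $j\le m$ fixes $X=\{x''=0\}$ pointwise and cancels $\sum_i b_ix_i$ against the induced change in $g$ to leading order in $x''$. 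Only after this reduction does the degree-one analysis force the transverse Hessian to be nondegenerate, and the parametrised Morse lemma --- now purely in the normal variables, with no linear term left --- closes the argument.
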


We can apply this lemma since
\begin{align*}
    \r{crit}(f_d^\alpha|_{X_\alpha}) &= \bigg(\prod_{i=1}^d \Big(\r{Rep}_i(A)\Big)^{\alpha_i}\bigg)/S_\alpha \cap \r{Rep}_d^\alpha(A)'\\
    &= \r{Rep}_d^\alpha(A)'\\
    &= \r{crit}(f_d^\alpha)
\end{align*}
because $\r{Rep}_d^\alpha(A)' \subset \prod_{i=1}^d \big(\r{Rep}_i(A)\big)^{\alpha_i}/S_\alpha$ as $\omega \in A$ is central.

Now consider the Cartesian diagram
\[\begin{tikzcd}[column sep=6em, row sep=6em]
\r{Rep}_d^\alpha(B) \arrow[r, hook, "\widetilde{\iota}_\alpha''"] & \r{Rep}_d^\alpha(B)' \arrow[r, hook, "\widetilde{\iota}_\alpha'"] & \r{Rep}_d(B)\\
\r{Rep}_d^\alpha(A) \arrow[u, hook] \arrow[r, hook, "\iota_\alpha''"] \arrow[rr, bend right=20, hook, "\iota_\alpha"] & \r{Rep}_d^\alpha(A)' \arrow[u, hook] \arrow[r, hook, "\iota_\alpha'"] & \r{Rep}_d(A) \arrow[u, hook]
\end{tikzcd}\]
where recall $\widetilde{\iota}_\alpha'$ and $\iota_\alpha'$ are open inclusions and $\widetilde{\iota}_\alpha''$, $\iota_\alpha$ and $\iota_\alpha''$ are closed inclusions. Also consider the Cartesian square
\[\begin{tikzcd}[column sep=6em, row sep=6em]
X_\alpha \arrow[r, hook, "\widetilde{j}_\alpha"] & \prod_{i=1}^d \big(\r{Rep}_i(B)\big)^{\alpha_i}/S_\alpha\\
\r{Rep}_d^\alpha(A)' \arrow[u, hook] \arrow[r, hook, "j_\alpha"] & \prod_{i=1}^d \big(\r{Rep}_i(A)\big)^{\alpha_i}/S_\alpha \arrow[u, hook]
\end{tikzcd}\]
where $\widetilde{j}_\alpha$ and $j_\alpha$ are open inclusions. Note that $j_\alpha \circ \iota_\alpha'' = \iota$ from \eqref{eq:6.0}. Then using \eqref{eq:4.06} it follows, at least locally, that
\begin{align}
    [\r{Rep}_d(A)]_{\r{vir}} &= \sum_{\alpha \vdash d} \int_{\r{Rep}_d^\alpha(A)} \iota_\alpha^* [\phi_{\r{Tr}(W')_d}] \nonumber\\
    &= \sum_{\alpha \vdash d} \int_{\r{Rep}_d^\alpha(A)} \iota_\alpha^{\prime \prime *} \iota_\alpha^{\prime *} [\phi_{\r{Tr}(W')_d}] \nonumber\\
    &= \sum_{\alpha \vdash d} \int_{\r{Rep}_d^\alpha(A)} \iota_\alpha^{\prime \prime *} [\phi_{\r{Tr}(W')_d \circ \widetilde{\iota}_\alpha'}] \nonumber\\
    &= \sum_{\alpha \vdash d} \int_{\r{Rep}_d^\alpha(A)} \iota_\alpha^{\prime \prime *} [\phi_{\r{Tr}(W')_d^\alpha}] \nonumber\\
    &= \sum_{\alpha \vdash d} \int_{\r{Rep}_d^\alpha(A)} \iota_\alpha^{\prime \prime *} [\phi_{f_d^\alpha|_{X_\alpha}+ \sum_j x_j^2}] \nonumber\\
    &= \sum_{\alpha \vdash d} \int_{\r{Rep}_d^\alpha(A)} \iota_\alpha^{\prime \prime *} \Big([\phi_{f_d^\alpha|_{X_\alpha}}] \cdot [\phi_{\sum_j x_j^2}]\Big) \nonumber\\
    &= \sum_{\alpha \vdash d} \int_{\r{Rep}_d^\alpha(A)} \iota_\alpha^{\prime \prime *} [\phi_{f_d^\alpha|_{X_\alpha}}] \nonumber\\
    &= \sum_{\alpha \vdash d} \int_{\r{Rep}_d^\alpha(A)} \iota_\alpha^{\prime \prime *} [\phi_{\sum_i \alpha_i \overline{\r{Tr}(W')}_i \circ \widetilde{j}_\alpha}] \nonumber\\
    &= \sum_{\alpha \vdash d} \int_{\r{Rep}_d^\alpha(A)} \iota_\alpha^{\prime \prime *} j_\alpha^* [\phi_{\sum_i \alpha_i \overline{\r{Tr}(W')}_i}] \nonumber\\
    &= \sum_{\alpha \vdash d} \int_{\r{Rep}_d^\alpha(A)} \iota^* [\phi_{\sum_i \alpha_i \overline{\r{Tr}(W')}_i}] \nonumber\\
    &= \sum_{\alpha \vdash d} \int_{V_\alpha/S_\alpha} \Big([\phi_{\sum_i \alpha_i \overline{\r{Tr}(W')}_i}]|_{V_\alpha/S_\alpha} \Big) \cdot \Big[\Big(\prod_{i=1}^d \m A^{\alpha_i} \setminus \Delta \Big)/S_\alpha \Big]. \label{eq:4.07}
\end{align}
where the first equality follows from the cut and paste relations and the stratification \eqref{eq:4.01}, the third equality from the fact that $\widetilde{\iota}_\alpha'$ and $\iota_\alpha'$ are open and the square is Cartesian, the fifth equality from \eqref{eq:4.06}, the sixth equality from the motivic Thom-Sebastiani theorem, the seventh equality from the fact that the motivic vanishing cycle of a quadratic term $\int_{\m C} [\phi_{x^2}]$ is equal to 1, the eight equality from the fact that the function $\overline{\r{Tr}(W')}_d$ when restricted to block-diagonal matrices of the form $\alpha$ is equal to the function $\sum_{i=1}^d \alpha_i \overline{\r{Tr}(W')}_i$, the ninth equality from the fact that $\widetilde{j}_\alpha$ and $j_\alpha$ are open and the square is Cartesian, and the final equality from \eqref{eq:6.2}.

Let $q_\alpha: K^{S_\alpha}_0(\r{Var}/\m C)\big[[\r{GL}_n]^{-1} : n \in \m N \big] \rightarrow K_0(\r{Var}/\m C)\big[[\r{GL}_n]^{-1} : n \in \m N \big]$ be the quotient map (see [\cite{bbs} (1.5)] or [\cite{dr} (1.3)]) from the ring of $S_\alpha$-equivariant motives over $\r{Spec}(\m C)$ to the ring of motives over $\r{Spec}(\m C)$. Then if the function $g: Y \rightarrow \m A^1$ is $S_\alpha$-invariant and acts freely on $Y$ [\cite{bitt} Proposition 8.6] says that
$$q_\alpha \int_{Y} [\phi_g] = \int_{Y/S_\alpha} [\phi_{\overline{g}}]$$
where $\overline{g}: Y/S_\alpha \rightarrow \m A^1$ is the induced function on the quotient. It follows that
\begin{align}
    \int_{V_\alpha/S_\alpha}  \Big([&\phi_{\sum_i \alpha_i \overline{\r{Tr}(W')}_i}]|_{V_\alpha/S_\alpha} \Big) \cdot \Big[\Big(\prod_{i=1}^d \m A^{\alpha_i} \setminus \Delta \Big)/S_\alpha \Big] = \nonumber\\
    &= q_\alpha \bigg( \int_{V_\alpha} \Big([\phi_{\sum_i \alpha_i \r{Tr}(W')_i}]|_{V_\alpha} \Big) \cdot \Big[\prod_{i=1}^d \m A^{\alpha_i} \setminus \Delta \Big] \bigg)\nonumber\\
    &= q_\alpha \Bigg(\prod_{i=1}^d \bigg(\int_{\r{Rep}_i^{\omega-\r{nilp}}(A)} [\phi_{\r{Tr}(W')_i}]|_{\omega-\r{nilp}}\bigg)^{\alpha_i} \cdot \bigg[\prod_{i=1}^d \m A^{\alpha_i} \setminus \Delta \bigg] \Bigg). \label{eq:4.09}
\end{align}
Hence combining \eqref{eq:4.07} and \eqref{eq:4.09} gives us that locally
\begin{align}
    [\r{Rep}_d(A)]_{\r{vir}} &= \sum_{\alpha \vdash d} q_\alpha \Bigg(\prod_{i=1}^d \bigg(\int_{\r{Rep}_i^{\omega-\r{nilp}}(A)} [\phi_{\r{Tr}(W')_i}]|_{\omega-\r{nilp}}\bigg)^{\alpha_i} \cdot \bigg[\prod_{i=1}^d \m A^{\alpha_i} \setminus \Delta \bigg] \Bigg) \nonumber\\
    &= \sum_{\alpha \vdash d} q_\alpha \Bigg(\prod_{i=1}^d \Big(\big[\r{Rep}_i(A)\big]_{\r{vir}}^{\omega-\r{nilp}}\Big)^{\alpha_i} \cdot \bigg[\prod_{i=1}^d \m A^{\alpha_i} \setminus \Delta \bigg] \Bigg)\,. \label{eq:4.10}
\end{align}
Since power structures are linked to $\lambda$-ring structures, and the $\lambda$-ring structure on $K_0^{\hat{\mu}}(\r{Var}/\m C)$ is the exotic one from [\cite{dm} Lemma 4.1] we additionally require that
$\int [\phi_{\r{Tr}(W')_i}]|_{\omega-\r{nilp}}$ lies in the $\lambda$-subring 
$$K_0(\r{Var}/\m C)\big[[\r{GL}_n]^{-1} : n \in \m N \big] \subset K^{\hat{\mu}}_0(\r{Var}/\m C)\big[[\r{GL}_n]^{-1} : n \in \m N \big].$$
Then if \eqref{eq:4.10} can be upgraded to a global statement, from the definition of the power structure (see for example [\cite{bbs} equation (1.6)]) for the standard $\lambda$-ring structure on $K_0(\r{Var}/\m C)$, this tells us exactly that 
$$\big(\Phi_A^{\omega-\r{nilp}}(t)\big)^{\m L} = \Phi_A(t)$$
and restricting to the $\omega$-invertible locus implies that
$$\big(\Phi_A^{\omega-\r{nilp}}(t)\big)^{\m L -1} = \Phi_A^{\omega-\r{inv}}(t)$$
as per \Cref{conj4.1}.

\newpage
\bibliographystyle{plain}
\bibliography{references}
\end{document}